\title{A model theoretic study of right-angled buildings}
\date{August 16, 2017}
\author{Andreas Baudisch, Amador Martin-Pizarro and Martin Ziegler}
\address{Institut f\"ur Mathematik,Humboldt-Universit\"at zu Berlin,
  D-10099 Berlin, Germany
\newline
\indent Universit\'e de Lyon, CNRS UMR 5208, Universit\'e Lyon
1, Institut Camille Jordan, 43 boulevard du 11 novembre 1918,
F--69622 Villeurbanne Cedex, France
\newline
\indent Mathematisches Institut, Albert-Ludwigs-Universit\"at
Freiburg, D-79104 Freiburg, Germany}
\email{baudisch@mathematik.hu-berlin.de}
\email{pizarro@math.univ-lyon1.fr} \email{ziegler@uni-freiburg.de}
\thanks{The second author conducted research with support of programme
  ANR-09-BLAN-0047 Modig, ANR-13-BS01-0006 Valcomo as well as an
Alexander von Humboldt-Stiftung Forschungsstipendium f\"ur erfahrene
Wissenschaftler 3.3-FRA/1137360
  STP}
\keywords{Model Theory, ampleness, Coxeter, buildings}
\subjclass{Primary 03C45; Secondary 51E24}
\theoremstyle{plain}
\newtheorem{theorem}{Theorem}[section]
\newtheorem{cor}[theorem]{Corollary}
\newtheorem{prop}[theorem]{Proposition}
\newtheorem*{claim}{Claim}
\newtheorem{lemma}[theorem]{Lemma}
\newtheorem*{thmA}{Theorem A}
\newtheorem*{thmB}{Theorem B}
\newtheorem*{thmC}{Theorem C}
\newtheorem*{thmD}{Theorem D}
\theoremstyle{definition}
\newtheorem{remark}[theorem]{Remark}
\newtheorem{definition}[theorem]{Definition}
\newtheorem*{notation}{Notation}
\theoremstyle{definition}
\newcommand{\nc}{\newcommand}
\nc{\Q}{\mathbb{Q}}
\nc{\C}{\mathfrak{C}}
\nc{\N}{\mathbb{N}}
\nc{\cb}{\operatorname{Cb}}
\nc{\Kom}{\mathbb{K}} % Complete Graph
\nc{\K}{\mathcal{K}}
\nc{\tp}{\operatorname{tp}}
\nc{\stp}{\operatorname{stp}}
\def\dcl{\mathrm{dcl}}
\def\acl{\mathrm{acl}}
\def\aclq{\mathrm{acl}^\mathrm{eq}}
\nc{\RM}{\operatorname{RM}}
\nc{\U}{\operatorname{U}}
\nc{\p}{\operatorname{p}}
\nc{\A}{\mathcal{A}} %Color
\nc{\Cent}{\operatorname{C}} %Centraliser : i's mit [i,u]=1
\nc{\lmto}[1]{\xrightarrow[{#1}]{}}
\nc{\ggen}[1]{\langle #1\rangle} % Erzeugte Gruppe
\nc{\rest}[2]{#1\!\!\upharpoonright_{#2}}
\nc{\restr}[1]{\!\upharpoonright\!#1}
\nc{\cox}{\mathrm{Cox}(N)} %Cox(N)
\nc{\coxnu}{\mathrm{Cox}^0} %Cox0
\nc{\wob}{\mathrm{Wob}} %Wobbeln
\nc{\s}{\mathcal{S}} %class of embeddings
\nc{\sr}{\s_\mathrm{R}} %schluckindizes rechts
\nc{\sL}{\s_\mathrm{L}} %schluckindizes links
\nc{\str}{\xrightarrow{\ast}} %strong reduction
\nc{\inv}{^{-1}}
\DeclareMathOperator{\m}{M}
\nc{\mi}{\m_\infty}
\nc{\mo}{\m_0(\Gamma)}
\nc{\psg}{\mathrm{PS}_{\,\Gamma}}
\nc{\leqr}{\leq_\mathrm{r}}
\renewcommand{\iff}{if and only if\xspace}
\def\Ind#1#2{#1\setbox0=\hbox{$#1x$}\kern\wd0\hbox to 0pt{\hss$#1\mid$\hss}
\lower.9\ht0\hbox to 0pt{\hss$#1\smile$\hss}\kern\wd0}
\def\Notind#1#2{#1\setbox0=\hbox{$#1x$}\kern\wd0\hbox to
0pt{\mathchardef\nn="0236\hss$#1\nn$\kern1.4\wd0\hss}\hbox
to 0pt{\hss$#1\mid$\hss}\lower.9\ht0
\hbox to 0pt{\hss$#1\smile$\hss}\kern\wd0}
\def\ind{\mathop{\mathpalette\Ind{}}}
\def\nind{\mathop{\mathpalette\Notind{}}}
\nc{\sop}[1]{\xrightarrow[#1]{}}
\nc{\op}[1]{\xrightarrow{#1}}
\DeclareMathOperator{\nh}{N} %Nice Hull
\DeclareMathOperator{\w}{w} %Abstandswort
\DeclareMathOperator{\pp}{P} %Property P
\nc{\ez}{einfach zusammenhängend\xspace}
\nc{\SC}{simply connected\xspace}
\DeclareMathOperator{\Rd}{R_{div}}
\nc{\ks}{\prec^*}
\DeclareMathOperator{\Rkl}{R_\prec}
\nc{\arcThroughThreePoints}[4][]{
\coordinate (middle1) at ($(#2)!.5!(#3)$);
\coordinate (middle2) at ($(#3)!.5!(#4)$);
\coordinate (aux1) at ($(middle1)!1!90:(#3)$);
\coordinate (aux2) at ($(middle2)!1!90:(#4)$);
\coordinate (center) at ($(intersection of middle1--aux1 and
middle2--aux2)$);
\draw[#1]
 let \p1=($(#2)-(center)$),
      \p2=($(#4)-(center)$),
      \n0={veclen(\p1)},       % Radius
      \n1={atan2(\x1,\y1)}, % angles
      \n2={atan2(\x2,\y2)},
      \n3={\n2>\n1?\n2:\n2+360}
    in (#2) arc(\n1:\n3:\n0);
}
\begin{document}
\begin{abstract}
  We study the model theory of right-angled buildings with
  infinite residues. For every Coxeter graph we obtain a complete
  theory with a natural axiomatisation, which is $\omega$-stable and
  equational. Furthermore, we provide sharp lower and upper bounds for
  its degree of ampleness, computed exclusively in terms of the
  associated Coxeter graph. This generalises and provides an
  alternative treatment of the free pseudospace.
\end{abstract}
\maketitle

\tableofcontents

%Anfang des Files \input{alt1} %Introduction
\section{Introduction}

A \emph{Coxeter group} $(W,\Gamma)$ consists of a group $W$ with a
fixed set $\Gamma$ of generators and defining relations
$(\gamma\cdot\delta)^{m_{\gamma,\delta}}=1$, where
$m_{\gamma,\gamma}=1$ and $m_{\gamma,\delta}=m_{\delta,\gamma}$, for
$\gamma\neq \delta$, is either $\infty$ or an integer larger than
$1$. We will exclusively consider finitely generated Coxeter groups,
with $\Gamma$ finite. A \emph{word} $w$ is a
finite sequence on the generators from $\Gamma$, and $w$ is
\emph{reduced} if its length is minimal with respect to all words
representing the same element of $W$. A \emph{chamber system}
$(X,W,\Gamma)$ for the Coxeter group $(W,\Gamma)$ is a set $X$,
equipped with a family of equivalence relations $(\sim_{\gamma}$,
$\gamma\in\Gamma$). If $w=\gamma_1\cdots \gamma_n$ is a reduced word,
a \emph{reduced path of type} $w$ from $x$ to $y$ in $X$ is a
sequence $x=x_0,\ldots,x_n=y$ such that $x_{i-1}$ and $x_i$ are
$\sim_{\gamma_i}$-related and different for every $1\leq i\leq n$. A
chamber system $(X,W,\Gamma)$ is a \emph{building} if each
$\sim_{\gamma}$-class contains at least two elements and such that,
for every pair $x$ and $y$ in $X$, there exists a element $g\in W$
such that there is a reduced path of type $w$ from $x$ to $y$ \iff
the  word $w$ represents $g$. It follows that $g$ is uniquely
determined by $x$ and $y$, and that the reduced path connecting $x$
and $y$ is uniquely determined by its type $w$. We refer the reader to
\cite{tG02} for a pleasant introduction to buildings.

The Coxeter group $(W,\Gamma)$ is \emph{right-angled} if for every
$\gamma\neq\delta$, the value $m_{\gamma,\delta}$ is either $2$ or
$\infty$. So $W$ is determined by its \emph{Coxeter diagram}: a graph
with vertex set $\Gamma$ such that $\gamma$ and $\delta$ have an edge
connecting them, which we denote by $R(\gamma,\delta)$, if
$m_{\gamma,\delta}=\infty$. In an abuse of notation, we will denote
this graph by $\Gamma$ as well. The elements of $\Gamma$ will be
referred to as \emph{colours} or \emph{levels}. Note that, for
involutions $\gamma$ and $\delta$, the relation
$(\gamma\cdot\delta)^2=1$ means that $\gamma$ and $\delta$ commute. We
call a word $w'$ a \emph{permutation} of $w$ if it can be obtained
from $w$ by a sequence of swaps of commuting consecutive generators.
In right-angled Coxeter groups, a word $w$ is reduced \iff no
permutation of $w$ has the form $w_1\cdot \gamma\cdot\gamma\cdot w_2$,
for some generator $\gamma$. Every element of $W$ is represented by a
unique reduced word, up to permutation. In a building $(X, W,\Gamma)$,
the the class $x/\!\sim_{\gamma}$ is the $\gamma$\emph{-residue} of
$x$. A right-angled Coxeter group admits a unique (up to isomorphism)
countable building $\mathrm{B}_0(\Gamma)$ with infinite residues
\cite[Proposition 5.1]{HP03}, which we call \emph{rich}.

A right-angled building can also be described in terms of an incidence
geometry or, as we will refer to, a \emph{coloured graph}. The
vertices of colour $\gamma$ are equivalence classes of $\sim^\gamma$,
the transitive closure of all $\sim_{\gamma'}$, for
$\gamma'\not=\gamma$. Two vertices are connected by an edge if the
corresponding classes intersect. The coloured graph associated to the
rich building given by the diagram

{\begin{figure}[h]
   \centering   \begin{tikzpicture}[->=latex,text height=1ex,text
       depth=1ex]

     \fill (-2,0) node {$[0,n]$};
     \fill  (0,0)  node [above]  {$0$}  circle (2pt);
     \fill   (1,0)  node [above]  {$1$} circle (2pt);
     \fill   (2,0)  node [above] {$2$} circle (2pt);
      \fill   (5,0)  node [above] {$n-1$} circle (2pt);
       \fill   (6,0)  node [above] {$n$} circle (2pt);

     \draw  (0,0) --  (1,0) --  (2,0) ;
     \draw[dashed] (2,0) -- (5,0) ;
     \draw (5,0) -- (6,0) ;
   \end{tikzpicture}
 \end{figure}}

\noindent is, as noticed by Tent \cite{kT14}, the prime model of the
theory of the \emph{free $n$-dimensional pseudospace}. A model of a
theory is prime if it elementarily embeds into every model. The
$n$-dimensional pseudospace, also considered by the authors in
\cite{BMPZ13}, witnessed the strictness of the ample hierarchy for
$\omega$-stable theories. We are indebted to Tent for pointing out the
connection between the free pseudospace and Tits buildings, which was
the starting point of the present work.

Recall that a countable complete theory is $\omega$-stable if
there is a \emph{rank function} $R$ defined on the collection of definable
sets of a (sufficiently saturated) model $M$ with the following
principle: If $X\subset M^n$ is definable, then $R(X)>\alpha$ \iff $X$
contains an infinite family of pairwise disjoint definable sets $Y_i$ with
$R(Y_i)\geq \alpha$. The smallest rank function is called \emph{Morley
rank}. The Morley rank of a type is the smallest Morley rank of
its formulae. This notion agrees with the Cantor-Bendixson rank on the
space of types over an $\omega$-saturated model, equipped with the Stone
topology. For an algebraically closed field with no additional structure,
definable sets are exactly the Zariski constructible ones, and Morley rank
coincides with the Zariski
dimension.

If $M$ is a \emph{group of finite Morley rank}, that
is, it carries a definable group structure and the Morley rank is always
finite, this notion of dimension is well-behaved. For example, given a
definable fibration $S\subset X\times Y\to Y$, the subset consisting of
those $y$ in $Y$ such that the fibre over $y$ has dimension $k$ is
definable for every $k$ in $N$. If all fibres have constant Morley rank
$k$, then $\RM(X)=\RM(Y)+k$.

Motivated by a famous conjecture about the structure of \emph{strongly
  minimal} sets (that is, irreducible definable sets of rank $1$), the
Algebraicity Conjecture states that every simple group of finite
Morley rank can be seen as an algebraic group over an algebraically
closed field, which is itself interpretable in the mere group
structure. Though the general conjecture on strongly minimal sets was
proven to be false \cite{Hr93}, work on the Algebraicity Conjecture,
which remains open, has become a fruitful research area, combining
ideas from model theory as well as the classification of simple finite
groups.

 If an $\omega$-stable theory
does not interpret a certain incidence configuration present in
euclidean space, then it interprets neither infinite fields nor
specific possible counterexamples to the Algebraicity Conjecture,
called \emph{bad groups} \cite{Pi95}. The notion of $n$-ampleness
\cite{Pi00, Ev03} for a theory generalises the incidence configuration
given in euclidean $(n+1)$-space by flags of affine subspaces of
increasing dimension, from a single point to a hyperplane. Ampleness
introduces thus a geometrical hierarchy, according to which
algebraically closed fields are $n$-ample for every $n$. The first two
levels of this hierarchy suffice to describe the structure of
definable groups \cite{HrPi87, Pi95}: they are virtually abelian, if
the $\omega$-stable group is not $1$-ample, and virtually nilpotent if
the group has finite Morley rank and is not $2$-ample. However, little
is known from $2$ onwards. In particular, whether the ample hierarchy
was strict remained long unknown. Evans conjectured that his example
could be accordingly modified to illustrate the strictness. Extending
the construction in \cite{BP00}, where a $2$-ample theory was produced
which interprets no infinite group (and thus, no infinite field is
interpretable), the aforementioned free $n$-dimensional pseudospace
was constructed \cite{BMPZ13,kT14} for every $n$, whose theory is
$\omega$-stable and $n$-ample yet not $(n+1)$-ample. In particular,
the $n$-dimensional pseudospace is a graph with $n+1$ many colours,
labelled from $0$ to $n$ such that the induced subgraph on consecutive
colours is an infinite pseudoplane, whose theory was known to be
$1$-ample but not $2$-ample.

In this article, we will provide an alternative approach to the above
construction, which incorporates as well the rich buildings of every
right-angled Coxeter group. As explained in Section \ref{S:Buildings},
a right-angled building $B$ can be recovered from its associated
coloured graph $M$: The elements of $B$ correspond to the \emph{flags}
of $M$, coloured subgraphs of $M$ isomorphic to $\Gamma$. This article
deals with the model theory of $\mo$, the coloured graph associated to
the rich building $\mathrm{B}_0(\Gamma)$ given by $\Gamma$. Though
models of the theory of $\mo$ need not arise from buildings, given
flags $F$ and $G$ in a model, a notion of a \emph{reduced path}
between $F$ and $G$ can be defined, whose corresponding word consists
of letters which are non-empty connected subsets of $\Gamma$. We show
that the coloured graphs associated to rich buildings are \emph{simply
  connected}: any two reduced paths between two given flags have the
same word, up to permutation. A combinatorial study of the reduction
of a path between two flags allows us to show the following crucial
result (\emph{cf.}\ Theorem \ref{S:SC_elementar}):

\begin{thmA}
  Simple connectedness is an elementary property.
\end{thmA}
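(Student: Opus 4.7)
The plan is to axiomatise simple connectedness by a first-order scheme indexed by the finite combinatorial data of pairs of reduced words in the alphabet of non-empty connected subsets of $\Gamma$. The aim is to trade the a priori global quantification over arbitrary reduced paths in the definition for a collection of purely local first-order conditions.

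First I would verify that for any fixed reduced word $w$ in this alphabet, the property ``there exists a reduced path of type $w$ between the flags $F$ and $G$'' is uniformly first-order definable in the parameters $F,G$. Since $w$ has fixed finite length, this amounts to existentially quantifying over the intermediate flags along the path, with bounded first-order incidence conditions encoding each successive letter of $w$. A preparatory combinatorial analysis should certify that reducedness of a path in an arbitrary model is witnessed locally by reducedness of its type (in the Coxeter sense), rather than through any global minimality condition over all competing paths between the same endpoints.

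Next, I would consider the scheme consisting, for each pair $(w_1,w_2)$ of non-permutation-equivalent reduced words, of the sentence asserting that no pair of flags $F,G$ admits simultaneously a reduced path of type $w_1$ and one of type $w_2$. By the previous step each such sentence is first-order; taken together, they axiomatise simple connectedness, since every simply connected model satisfies each instance by definition, and conversely every model of the full scheme is simply connected directly from the definition.

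The main obstacle lies in the reducedness-locality step: one must show that in an arbitrary model of the theory (not merely in a rich building), a path is reduced exactly when its type is a reduced word. This is the genuine combinatorial content of the theorem, and it is where the careful study of path reduction between flags alluded to in the introduction enters, exploiting the richness of the residues, the right-angled structure of $\Gamma$, and the description of letters as connected subsets. Once this equivalence is secured, the axiomatisation above is automatic and the theorem follows.
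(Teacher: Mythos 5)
There is a genuine gap in the ``reducedness-locality'' step, and the approach as sketched would not go through. The difficulty is not that reducedness of the word $w$ fails to be a finitary condition --- it is --- but that the single-step relation $F\op{s}G$ (Definition \ref{D:splitting}) requires that there be \emph{no} weak flag path from $F$ to $G$ whose word is a splitting of $s$, and splitting paths can have arbitrary length. Thus a weak flag path $F_0\sop{s_1}F_1\sop{s_2}\cdots$ whose word $s_1s_2\cdots$ is reduced need \emph{not} be a flag path: some intermediate pair $F_i, F_{i+1}$ may also be joined by a long splitting path, which destroys the $\op{s_{i+1}}$ relation. So ``there exists a reduced flag path of type $w$ between $F$ and $G$'' is a priori a $\Pi_2$ condition with unbounded quantification over path lengths, not a first-order formula, and your claim that this ``amounts to existentially quantifying over the intermediate flags along the path, with bounded first-order incidence conditions'' is false for this very reason.

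The paper circumvents this by replacing $\op{s}$ with the graded, genuinely first-order relations $F\op{s,n}G$ (no splitting path of length $\le n$), together with the word property $E(s,n)$ (Definition \ref{D:PropertyE}), and then invoking the Reduction Lemma \ref{L:red} via Proposition \ref{P:hauptprop}: if each step satisfies $E(s_i,n)$ then the whole path admits a reduction of length $\ge n$, hence cannot be closed and trivial. That quantitative combinatorial control is precisely what is missing from your proposal; without it, your axiomatisation scheme indexed by pairs of non-equivalent reduced words cannot even be written down as a set of first-order sentences. Your reformulation of simple connectedness via uniqueness of the connecting word (Proposition \ref{P:Eindwort}) is correct, but that reformulation is not by itself a cure: the problem is in expressing ``connected by a reduced flag path of type $w$,'' not in the choice of which words to compare.
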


Let $\psg$ denotes the collection of sentences stating the following
two elementary properties: Simple connectedness and that, given any
flag $G$ and a colour $\gamma$, there are infinitely many flags which
differ from $G$ only at the vertex of colour $\gamma$. The following
theorem (\emph{cf.}\ Theorem \ref{T:el} and Corollaries \ref{C:stab}
and \ref{C:Ranks}) yields that $\psg$ axiomatises the complete theory
of $\mo$:

\begin{thmB}
  The theory $\psg$ is complete and $\omega$-stable of Morley rank
  $\omega^{(K-1)}$, where $K$ is the cardinality of a connected
  component of\/ $\Gamma$ of largest size. The coloured graph $\mo$,
  associated to the countable rich building $\mathrm{B}_0(\Gamma)$, is
  the unique prime model of $\psg$.
\end{thmB}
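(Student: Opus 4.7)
The plan is to prove completeness, $\omega$-stability, the Morley rank computation, and primeness of $\mo$ within a single back-and-forth framework that leans on Theorem A throughout.

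\emph{Completeness and the prime model.} I would run a back-and-forth between two $\omega$-saturated models $M, N \models \psg$, matching finite unions of flags. The crux is that simple connectedness turns such a partial match into a rigid combinatorial datum: once a base flag $F \subset M$ is sent to $F' \subset N$, Theorem A forces the image of any flag $G$ reached from $F$ by a reduced path to be determined by the path's word. The extension step — realising in $N$ a prescribed reduced word from the matched region to a new flag — is supplied by the axiom that every residue is infinite, together with $\omega$-saturation. This proves completeness. The very same argument, applied to a finite tuple $\bar a \in \mo$, shows that the formula describing the finite coloured subgraph generated by $\bar a$ together with its internal path data isolates $\tp(\bar a)$, so $\mo$ is atomic. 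Since a countable atomic model over an $\omega$-stable theory is prime, this will identify $\mo$ as the unique prime model of $\psg$ once $\omega$-stability is in hand.

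\emph{$\omega$-stability.} The back-and-forth also shows that the type of a tuple over a countable set $A$ is determined by the isomorphism type of the induced coloured subgraph together with reduced-word data recording paths between it and finite subconfigurations of $A$. By Theorem A these paths are encoded by words in a fixed finite alphabet (the connected subsets of $\Gamma$) up to permutations of commuting letters, so each datum admits only countably many possibilities. Summing over finite fragments of $A$ yields $|S(A)| \leq \aleph_0$.

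\emph{Morley rank and main obstacle.} I would induct on $K$, the size of a largest connected component of $\Gamma$. For $K=1$ the diagram has no edges, $\mo$ is a disjoint union of infinite unrelated sorts, and the rank is $1 = \omega^{(0)}$. For the inductive step, pick a colour $\gamma$ whose removal strictly shortens a longest component and fibre each flag over the sub-flag obtained by deleting its $\gamma$-coordinate. The base is governed by $\psg$ for the smaller diagram $\Gamma \setminus \{\gamma\}$, whose largest component has size $\leq K-1$, so by induction its rank is $\omega^{(K-2)}$. A generic fibre then chooses a new vertex of colour $\gamma$ attached to its prescribed neighbours; simple connectedness combined with the infinite-residue axiom lets this vertex range over infinitely many pairwise disjoint definable sets of positive rank while no single one exhausts the generic locus, giving fibre rank $\omega$. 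Composing via ordinal exponentiation produces $\omega^{(K-1)}$. The principal obstacle is exactly this fibre-rank computation: one must justify that the rank of a generic vertex of colour $\gamma$ over the rest of its flag is \emph{precisely} $\omega$, and that the fibration composes to a tower of exponentials rather than an ordinal sum or product. This will demand a careful analysis, controlled by the combinatorics of reduced paths in $\Gamma$, of how disjoint definable subsets can nest inside a generic residue across the inductive step.
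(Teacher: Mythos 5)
Your approach to completeness, $\omega$-stability, and primeness of $\mo$ is broadly in the spirit of the paper's, which runs a back-and-forth between $\omega$-saturated models; but the paper does this via partial isomorphisms between finite \emph{nice} substructures (Definition \ref{D:nice}, Theorem \ref{T:el}), together with the fact that every point lies in a nice set reachable by finitely many simple extensions (Proposition \ref{P:nice-bandf}). Matching arbitrary finite unions of flags with ``internal path data'' is not enough unless the matched sets are nice, since Corollary \ref{C:nicetype} (type determined by quantifier-free type) holds only for nice sets. Likewise, the primeness argument uses a specific feature of $\mo$ that you do not mention: in $\mo$ the only reduced paths have singleton words, which is what makes each simple extension over a finite nice set have isolated type, hence $\mo$ constructible. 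Your claim that ``the formula describing the finite coloured subgraph generated by $\bar a$ together with its internal path data isolates $\tp(\bar a)$'' is not true in an arbitrary model of $\psg$; it is exactly the singleton-path property of $\mo$ that makes it true there.

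The Morley rank computation is where there is a genuine gap. You claim the fibre that adds a new vertex of colour $\gamma$ to the rest of a flag has rank $\omega$. It does not: passing from $F\setminus\{f_\gamma\}$ to $F$ is a simple extension of type $\{\gamma\},G$, whose Lascar rank (and in fact Morley rank) is $\Rd(\{\gamma\}) = \omega^0 = 1$, not $\omega$. The source of the exponential $\omega^{K-1}$ is elsewhere: it comes from the foundation rank $\Rkl$ of the well-founded order $\prec$ on reduced words, as in Lemma \ref{L:ranks}. The key combinatorics is that a single connected letter $s$ of size $K$ can be split into words containing arbitrarily many letters of size $K-1$, so $\Rd(s) \geq m\cdot\omega^{K-2}$ for all $m$, hence $\Rd(s)=\omega^{K-1}$ for $|s|=K$. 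The paper then transfers this to types via $\U(\p_u(G))=\Rd(u)\leq\RM(\p_u(G))\leq\Rkl(u)$ (Lemma \ref{L:URdiv}) and handles individual vertices with Lascar inequalities (Corollary \ref{C:Ranks}). Your fibration over deletion of a single colour, with base rank $\omega^{K-2}$, would therefore require fibre rank $\omega$ to reach $\omega^{K-1}$, but the fibre has rank $1$; as written the induction does not produce the stated value, and the obstacle you flag at the end is in fact insurmountable for that decomposition.
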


Morley rank defines a notion of independence, which agrees in the
$\omega$-stable case with non-forking, as introduced by Shelah. A
remarkable feature of non-forking independence, which rules out the
existence of infinite definable groups, is \emph{total triviality}:
whenever we consider a base set of parameters $D$, given tuples $a$,
$b$ and $c$ such that $a$ is independent both from $b$ and from $c$
over $D$, then it is independent from $b,c$ over $D$. Recall that a
canonical basis of a type $p$ over a model is some set, fixed
pointwise by exactly those automorphisms $\alpha$ of a sufficiently
saturated model $N$ fixing the global non-forking extension
$\mathbf{p}$ of $p$ over $N$. If $\cb(p)$ exists, then it is unique,
up to interdefinability, though generally canonical bases only exist
as \emph{imaginary} elements in the expansion $T^\mathrm{eq}$ of an
$\omega$-stable theory $T$. Within the wider class of stable theories,
there is a distinguished subclass consisting of the \emph{equational}
ones, where each definable set in every cartesian product $N^n$ of a
model $N$ is a boolean combination of instances of $n$-equations
$\varphi(x,y)$, that is, the tuple $x$ has length $n$ and the family
of finite intersections of instances $\varphi(x,a)$ has the descending
chain condition. Whilst all known examples of stable theories arising
naturally in nature are equational, the only stable non-equational
theory constructed so far \cite{Hr91} is an expansion of the free
$2$-dimensional pseudospace. We obtain the following
(\emph{cf.}\ Corollaries \ref{C:Eq} and \ref{C:WEI} and Proposition
\ref{P:totally_trivial}):

\begin{thmC}
  The theory $\psg$ is equational, totally trivial with weak
  elimination of imaginaries: every type over a model has a canonical
  basis consisting of real elements.
\end{thmC}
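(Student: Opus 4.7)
The plan is to treat the three assertions — equationality, total triviality, and weak elimination of imaginaries — by exploiting the combinatorial description of types and independence in $\psg$ via reduced paths between flags, which rests on the simple connectedness established in Theorem A and the description of the prime model in Theorem B.

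For equationality I would isolate a natural family of candidate equations: for each word $w$ over non-empty connected subsets of $\Gamma$, the formula $\varphi_w(x,y)$ expressing ``there is a reduced path of type $w$ from the flag $x$ to the flag $y$''. Simple connectedness forces the type of a reduced path between two flags to be unique up to permutation, which pins down these predicates rigidly. The main verification is that finite intersections of instances $\varphi_w(x,b_i)$ satisfy the descending chain condition: once $x$ has been placed on reduced paths of fixed types to finitely many flags, the combinatorial flexibility is exhausted, essentially because any further constraint either coincides with one already present or reduces the configuration to a smaller one in a well-founded way. Combined with a relative quantifier-elimination statement describing every definable set as a boolean combination of such path-type predicates (which in turn follows from the axiomatisation given in Theorem B), this yields equationality.

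For total triviality I would use the fact that, in the associated coloured graph, non-forking of a flag $F$ over a parameter set $D$ is governed by the initial segments of reduced paths from $F$ towards the closure of $D$: $F \ind_D G$ iff the reduced paths from $F$ to the flags in $G$ branch off the closure of $D$ immediately. Because branching is a pairwise geometric phenomenon, if $a \ind_D b$ and $a \ind_D c$ then the reduced paths from $a$ to $b$ and from $a$ to $c$ each branch away from $D$ at the first step, and no interaction between $b$ and $c$ can re-route a path through $D$; hence $a \ind_D bc$.

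For weak elimination of imaginaries, total triviality implies that the canonical base of a type $p = \tp(F/M)$ over a model is determined pairwise by the flags in $M$ that lie on reduced paths realising $p$. Concretely, $\cb(p)$ should coincide with the flag (or tuple of vertices) at which the reduced path from $F$ first enters $M$, and this is a real tuple. Thus every type over a model admits a canonical basis built from real elements of $\mo$, giving weak elimination of imaginaries.

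The hard part will be the equationality step: one must show both that the path-type predicates $\varphi_w$ actually satisfy the descending chain condition on finite intersections of instances — which is a delicate combinatorial statement about how reduced paths can be simultaneously constrained — and that these predicates are rich enough to generate, under boolean combinations, every definable set. Once equationality is in place, total triviality and weak elimination follow by routine manipulation of the geometric description of forking.
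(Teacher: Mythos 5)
Your overall strategy matches the paper's: use path-type formulas as candidate equations, get DCC from well-foundedness of $\prec$, and derive total triviality and weak elimination of imaginaries from the combinatorial description of types. But the plan glosses over the two places where the real work happens.

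First, you say that the fact that every definable set is a Boolean combination of path-type predicates ``follows from the axiomatisation given in Theorem B.'' It does not follow directly; the back-and-forth in Theorem~\ref{T:el} tells you that the quantifier-free type of a \emph{nice set} determines its type, but to get from there to a description of types of \emph{arbitrary tuples of flags} you need Theorem~\ref{T:Eq}: the type of $(F_1,\dots,F_n)$ is determined by the pairwise connecting words $\w(F_i,F_j)$. This in turn rests on Proposition~\ref{P:Canred}, a nontrivial combinatorial decomposition describing how the three words $\w(G,F)$, $\w(F,H)$, $\w(G,H)$ interact (yielding a unique base-point of $F$ on a reduced path from $G$ to $H$). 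Nothing in your sketch anticipates this lemma, and it is the heart of both the equationality and the total triviality arguments. Similarly, your DCC verification (``the combinatorial flexibility is exhausted'') needs to be pinned down: the paper proves it via Lemma~\ref{L:Komp} (a compactness argument showing that any two instances $\pp_u(X,Y)$, $\pp_v(X,Y)$ meet in a finite union of instances $\pp_{w_i}$ with $w_i\preceq u,v$) and Lemma~\ref{L:divisors} (which uses the base-point decomposition to bring the centre parameters into play and strictly decrease the words). Without these you have only the intuition, not the proof.

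Second, the weak elimination claim ``$\cb(p)$ should coincide with the flag at which the reduced path from $F$ first enters $M$'' is close in spirit but wrong as stated. The canonical base of $\p_u(G)$ is not the full base-point flag $G$ but the partial flag $G/\sr(u)$ (Corollary~\ref{C:cb_type_pG}): it is precisely the vertices of $G$ whose colours lie outside $\sr(u)$, because $G$ can wobble inside its $\sr(u)$-residue without changing the global nonforking extension. You need that quotient, and you need Corollary~\ref{C:acl_rigid} ($\acl = \dcl$ over real sets) to turn ``algebraic over'' into ``definable over'' and thereby get stationarity of types over arbitrary real sets, which is what feeds weak elimination. The total triviality derivation is essentially right in spirit, but the precise statement you invoke --- that one flag's independence over a nice set is entirely visible through the words to base-points --- is again Theorem~\ref{T:Eq}, so it needs Proposition~\ref{P:Canred}. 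In short, the missing idea is the canonical three-flag decomposition; everything else in your outline is a correct reading of the strategy but is not yet a proof.
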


To conclude, we provide lower and upper bounds on the ample degree of
the theory $\psg$, which can be described in terms of the underlying
Coxeter graph $\Gamma$. Set $r$ to be the minimal valency of the
non-isolated points of $\Gamma$ and $n$ the largest integer such that
the graph $[0,n]$, as before, embeds as a full subgraph of $\Gamma$.
We deduce (\emph{cf.}\ Theorem \ref{T:amplebounds}):

\begin{thmD}
  The theory $\psg$ is $n$-ample but not $(|\Gamma|-r+1)$-ample
\end{thmD}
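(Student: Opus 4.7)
The plan is to establish the lower and upper bounds independently, since they require rather different arguments.

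For the lower bound, fix a full embedding $\iota\colon [0,n]\hookrightarrow \Gamma$ of the longest path subgraph witnessing the invariant $n$. The strategy is to construct an explicit $n$-ample configuration of flags $F_0,\ldots,F_n$ in $\mo$ by exploiting the fact that the ``slice'' of $\mo$ along the colours $\iota([0,n])$ behaves essentially as the free $n$-dimensional pseudospace, whose $n$-ampleness was established in \cite{BMPZ13, kT14}. Concretely, I would pick the $F_i$ so that $F_i$ and $F_{i+1}$ agree on all vertices whose colour does not lie in $\iota(\{0,\ldots,i\})$ and differ in a controlled way along the embedded path. Using the description of non-forking in $\psg$ developed in the preceding sections, together with the weak elimination of imaginaries from Theorem~C, one verifies the required chain of independences and that $\cb(F_{j+1}\cdots F_n/F_0\cdots F_j)\in \aclq(F_j)$ at each level $j$.

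For the upper bound, set $k=|\Gamma|-r+1$ and assume for contradiction that $a_0,\ldots,a_k$ realises a $k$-ample configuration. By weak elimination of imaginaries each $a_i$ can be replaced, up to interalgebraicity, by a real tuple corresponding to a coloured subconfiguration $P_i$ of the ambient graph. The $k$-ample conditions translate into the requirement that the colour-support of the $P_i$ strictly shrinks with $i$, because the containment $\cb(a_{i+1}/a_0\cdots a_i)\subseteq\aclq(a_i)$ forces certain colours to be ``committed'' at each step. Choosing a non-isolated vertex $v\in\Gamma$ of minimal valency $r$, the connectivity constraints of $\Gamma$ around $v$ will bound the maximal length of such a shrinking chain by $|\Gamma|-r$, contradicting the existence of a configuration of length $k=|\Gamma|-r+1$.

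The main obstacle will be the upper bound. The bookkeeping of canonical bases along an ample chain is delicate even in pseudospace-like settings, and here the letters of reduced paths between flags are non-empty connected subsets of $\Gamma$ rather than single colours, so one must identify the correct combinatorial invariant attached to each $a_i$ — presumably the minimal set of colours on which its canonical flag-piece is supported — and show it strictly decreases from one step to the next. The sharpness of the bound will come precisely from the minimal-valency vertex: only $r$ edges leave it, so at most $|\Gamma|-r$ reductions can occur before the configuration trivialises, giving the stated cut-off.
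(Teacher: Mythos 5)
Your lower-bound argument tracks the paper's closely: you localise to a full subgraph isomorphic to $[0,n]$ and import $n$-ampleness from the free $n$-dimensional pseudospace. The paper does the same through Corollary~\ref{C:Unternice} and Lemma~\ref{L:ample_persistent}, which show that the $\Gamma'$-residue of a fixed flag carries the induced structure of a model of $\mathrm{PS}_{\Gamma'}$; this lets one invoke the known $n$-ampleness of $\mathrm{PS}_{[0,n]}$ directly rather than re-verifying the ample axioms inside $\psg$, as your plan would require. The idea is the same; the paper's route is just lighter on hand-verification.

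The upper bound, however, has a genuine gap. Your proposed invariant --- ``the colour-support of the $P_i$'' obtained after replacing each $a_i$ by a real tuple --- is not the quantity that decreases. After the normalisation of Lemma~\ref{L:redmod}, the tuples $a_0,\ldots,a_{N-1}$ enumerate small elementary submodels, which contain flags realising every colour; their colour-support is all of $\Gamma$ at every stage, so that invariant cannot shrink. The paper instead keeps $a_0,\ldots,a_{N-1}$ as models, replaces only $a_N$ by a single flag $F$ (using total triviality), and for each $i$ considers the reduced word $u^i=\w(F,G_i)$ connecting $F$ to a base-point $G_i$ in $a_i$. The decreasing invariant is the \emph{final segment} $\tilde u^i$ of $u^i$: Lemma~\ref{L:Endstueckwirdgroesser}, applied to each triangle $(F,a_i,a_{i+1})$, shows $\tilde u^{i+1}\subsetneq \tilde u^i$. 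This is a nontrivial combinatorial statement about the symmetric decomposition of $u^i$ and $\w(G_{i+1},G_i)$, proved via the Wobbling Lemma~\ref{L:Wob} and base-point Lemma~\ref{L:basept}; your sketch contains no analogue of it, and nothing in the ample conditions by themselves ``forces certain colours to be committed at each step.''

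The way the minimal valency enters is also more delicate than you describe. From the same Lemma~\ref{L:Endstueckwirdgroesser} one extracts a word $v'$ that is properly right-absorbed by $u^1$ yet $|v'|\nsubseteq\tilde u^1$. A colour $\gamma\in |v'|\setminus\tilde u^1$ then commutes with $\tilde u^1$ (because absorption past the final segment forces commutation), so it is non-isolated and adjacent to at most $|\Gamma|-|\tilde u^1|-1$ vertices. Combined with $|\tilde u^1|\geq N-1$, this gives $r\leq|\Gamma|-N$, i.e.\ $N<|\Gamma|-r+1$. Your ``connectivity constraints around a minimal-valency vertex bound the chain'' is pointing in the right direction, but without the precise identification of $\gamma$ and the commutation argument there is no way to see why the valency of a \emph{specific} vertex controls the length of the chain. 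In short: the invariant you propose is wrong, and the two key lemmas (\ref{L:Endstueckwirdgroesser} and the extraction of $\gamma$) are missing.
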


\noindent These bounds are sharp and attained by $[0,n]$, by the
circular graph on $n+2$ points or by the extremal case of the complete
graph $\mathrm{K}_N$ on $N\geq 2$ elements, whose theory is $1$-ample
but not $2$-ample.

\begin{center}
  {\sc acknowledgements}
\end{center}

We thank the referee for a careful reading and for providing many valuable
comments, which helped us improve the presentation of this article.

%%% Local Variables:
%%% mode: latex
%%% TeX-master: "alt"
%%% ispell-local-dictionary: "british"
%%% End:
%Ende des Files \input{alt1} %Introduction
%Anfang des Files \input{alt2} %Right-angled buildings and geometries
\section{Buildings and geometries}\label{S:Buildings}

\begin{definition}
  A graph consists of set of vertices together with a symmetric and
irreflexive binary relation. Two vertices $a$ and $b$ are
\emph{adjacent} if the pair $(a,b)$ lies in the relation.

  Given a finite graph $\Gamma$, its associated \emph{right-angled
    Coxeter group} $(W,\Gamma)$ consists of the group $W$ generated by
  the elements of $\Gamma$ with defining relations:
  \begin{align*}
    \gamma^2&=1 &\text{for all $\gamma$ in
      $\Gamma$}\\ \gamma\delta&=\delta\gamma &\text{if $\gamma$ and
      $\delta$ are not adjacent.}
  \end{align*}
\end{definition}
\noindent As a convention, no element $\gamma$ \emph{commutes} with
itself.\\

{\bf From now on, all Coxeter groups are right-angled.}\\

Fix a Coxeter group $(W,\Gamma)$. A word $v=\gamma_1\cdots \gamma_n$
in the generators is \emph{reduced}
if there is no pair $i\neq j$ such that $\gamma_i$ equals $\gamma_j$
and commutes  (i.e.\ is not adjacent) with every letter occurring
between $\gamma_i$ and $\gamma_j$. Two words are \emph{equivalent} if
they
represent the same element of $W$. Clearly, every word is equivalent
to a reduced one. A reduced word $w$ \emph{commutes} with
$\gamma\in\Gamma$ if every element of $w$ does.

A word $v'$ is a \emph{permutation} of $v$ if it can be obtained from
$v$ by a sequence of commutations on pairs of commuting generators. A
permutation of a reduced word is again reduced.

The following is easy to see.
\begin{lemma}
  Two reduced words $u$ and $v$ are equivalent \iff $u$ is a
  permutation of $v$.
\end{lemma}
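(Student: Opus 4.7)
The backward direction is immediate from the defining relations: the relation $\gamma\delta=\delta\gamma$ holds in $W$ for every non-adjacent pair, so swapping two consecutive commuting generators leaves the represented element unchanged, and iterating shows that any permutation of $v$ is equivalent to $v$.

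For the forward direction, my plan is to induct on the common length $n$ of $u$ and $v$; that equivalent reduced words have the same length is a consequence of the argument but can be taken from the standard fact that the length function of a Coxeter group is well-defined on $W$. Write $u=\gamma u'$ with $\gamma$ the leading generator. The heart of the argument is an exchange-type claim: if $v$ is a reduced word of length $n$ and $\gamma v$ is equivalent to some word of length strictly less than $n+1$, then $v$ admits a permutation whose first letter is $\gamma$. Granting this, one passes by commutation from $v$ to a word $\gamma v''$ with $v''$ reduced of length $n-1$. Then $\gamma u'=\gamma v''$ in $W$ forces $u'=v''$ in $W$, and the inductive hypothesis applied to the pair $u',v''$ produces a permutation of $u'$ to $v''$; prefixing $\gamma$ shows $u$ permutes to $\gamma v''$, and thence to $v$.

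The exchange claim itself rests on the equivalence between the paper's syntactic notion of reducedness and length-minimality under equivalence. Concretely, if $\gamma v$ fails to be length-minimal, then $\gamma v$ must also fail the syntactic criterion: some pair of positions $i<j$ must share a letter commuting with every letter strictly between them. Since $v$ is syntactically reduced, this pair cannot lie entirely inside $v$; hence $i=0$ corresponds to the prepended $\gamma$, and there is an index $j$ with $v_j=\gamma$ commuting with $v_1,\dots,v_{j-1}$. Successive commutations then move $v_j$ to the front of $v$, delivering the claim (and preserving reducedness, as already noted in the paragraph above the lemma). The principal obstacle is exactly this equivalence of the two notions of reducedness, which I would establish as a short preliminary lemma by the same commute-and-cancel rewriting procedure that guarantees every word is equivalent to a syntactically reduced one.
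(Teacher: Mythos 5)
Your backward direction is fine, and the inductive skeleton for the forward direction — peel off the leading letter $\gamma$ of $u$, apply an exchange-type claim to extract $\gamma$ from the front of a permutation of $v$, then induct — is a perfectly reasonable strategy; it is essentially the way this is done in standard references. The problem is the preliminary lemma you defer to, namely that a syntactically reduced word is automatically of minimal length in $W$ (equivalently, that a non-minimal word must fail the syntactic criterion). This is not delivered by ``the same commute-and-cancel rewriting procedure.'' That procedure only shows that every word is equivalent to \emph{some} syntactically reduced word of smaller or equal length; it says nothing about whether a given syntactically reduced word could still be beaten by a shorter equivalent word reached along a different route. That is a confluence statement about the rewriting system (swap commuting neighbours, cancel $\gamma\gamma$), and it is not free.

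In fact your preliminary lemma is equivalent in strength to the lemma you are trying to prove: granting the lemma, if $u$ is syntactically reduced and $w\equiv u$ is strictly shorter, reduce $w$ to a syntactically reduced $w'$ with $|w'|\le|w|<|u|$, and the lemma forces $u$ and $w'$ to be permutations, hence of equal length, a contradiction. Your exchange-claim argument goes the other way. So as written the proof reduces the lemma to an unproved statement of the same depth. To close the gap you need an independent input: for instance the Tits/Matsumoto solution of the word problem for Coxeter groups, the deletion condition obtained from the geometric (root-system) representation, or a direct verification that the commute-and-cancel system is locally confluent (which, together with termination, gives uniqueness of normal forms and hence both the preliminary claim and the lemma simultaneously). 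Any of these would make the argument complete; the current appeal to the rewriting procedure does not.
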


\noindent For $s$ a subset of $\Gamma$, let $\ggen s$ denote the
subgroup of $W$
generated by $s$.
\begin{cor}\label{C:GG_Inter}
  Given two subsets $s$ and $t$ of\/ $\Gamma$,
  \[\ggen{s\cap t}=\ggen s \cap \ggen t.\]
\end{cor}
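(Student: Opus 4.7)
The inclusion $\ggen{s\cap t}\subseteq\ggen{s}\cap\ggen{t}$ is immediate from monotonicity of the generation operator, so the content lies in the reverse containment. The plan is: given an element $g$ in $\ggen{s}\cap\ggen{t}$, produce a reduced word representing $g$ whose letters all lie in $s\cap t$.

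Since $g\in\ggen s$, it is represented by some (possibly non-reduced) word over $s$. By successively applying commutations and deletions of consecutive pairs $\gamma\gamma$, this word can be turned into an equivalent reduced word $w_s$. Crucially, neither commutation nor deletion ever introduces a letter which was not already present, so the letters of $w_s$ remain a subset of $s$. Analogously, produce a reduced word $w_t$ representing $g$ all of whose letters belong to $t$.

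Now $w_s$ and $w_t$ are both reduced and both represent $g$, so by the preceding lemma $w_s$ is a permutation of $w_t$. Since permutations preserve the multiset of letters occurring in a word, every letter of $w_s$ belongs simultaneously to $s$ and to $t$, i.e.\ to $s\cap t$. Thus $g$ lies in $\ggen{s\cap t}$, as desired.

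There is really no substantial obstacle here; the only small point to verify carefully is that the reduction procedure (commuting adjacent commuting letters and cancelling adjacent equal letters) never introduces new letters into a word, so that the reduced representative stays inside the chosen generating subset. Once that observation is in place, the statement follows directly from the uniqueness of reduced representatives up to permutation given in the previous lemma.
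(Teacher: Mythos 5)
Your proof is correct and matches the implicit argument the paper intends: the corollary is stated immediately after the lemma that reduced words representing the same group element are permutations of one another, and your derivation is the standard way to deduce the intersection formula from that uniqueness statement. The one observation you flag — that reduction by commutation and deletion of adjacent equal pairs never introduces new generators, so a reduced representative stays inside the chosen generating subset — is exactly the small point one needs to make, and you state it accurately.
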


\begin{definition}
  A \emph{chamber system} $(X,W,\Gamma)$  for the Coxeter group
  $(W,\Gamma)$ consists of a set $X$ equipped with a family of
equivalence
  relations $\sim_{\gamma}$ for each $\gamma\in\Gamma$. Given a word
  $w=\gamma_1\cdots \gamma_n$, a \emph{path of type} $w$ from $x$ to
  $y$ in $X$ is a sequence $x=x_0,\ldots,x_n=y$ such that $x_{i-1}$
  and $x_i$ are different and $\sim_{\gamma_i}$-related for every
  $1\leq i\leq n$. A path of type $w$ is \emph{reduced} if $w$ is.

  A chamber system $(X,W,\Gamma)$ is a \emph{building} if each
  $\sim_{\gamma}$-class contains at least two elements, and such that,
  for every pair $x$ and $y$ in $X$, there exists a element $g\in W$
  with the property that there is a reduced path of type $w$ from $x$
  to $y$ \iff the word $w$ represents $g$.
\end{definition}
\noindent We will refer to a chamber system $(X,W,\Gamma)$ uniquely
by the underlying set $X$ if the corresponding Coxeter group
$(W,\Gamma)$ is clear. The following two lemmas can be easily shown.

\begin{lemma}\label{L:unique_path}
  In a building $X$, a reduced path of type $w$ connecting $x$ and
  $y$ is uniquely determined by $w$, $x$ and $y$.
\end{lemma}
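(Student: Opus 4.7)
The plan is to induct on the length $n$ of the word $w=\gamma_1\cdots\gamma_n$. The base case $n\leq 1$ is immediate, since a reduced path of type $\gamma_1$ from $x$ to $y$ consists of the two vertices $x,y$ themselves.

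For the inductive step, suppose that $x=x_0,x_1,\ldots,x_n=y$ and $x=x_0,x_1',\ldots,x_n'=y$ are two reduced paths of type $w$ from $x$ to $y$. I would compare the second vertices. If $x_1=x_1'$, then the two suffixes from $x_1$ to $y$ are reduced paths of type $\gamma_2\cdots\gamma_n$ (which remains reduced as a sub-word of a reduced word), and the induction hypothesis forces the two paths to agree.

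The only case that requires work is $x_1\neq x_1'$. Since both are $\sim_{\gamma_1}$-equivalent to $x_0$ and distinct from it, we have $x_1\sim_{\gamma_1}x_1'$ with $x_1\neq x_1'$. Then the concatenation $x_1',x_1,x_2,\ldots,x_n$ is a path from $x_1'$ to $y$ of type $\gamma_1\gamma_2\cdots\gamma_n=w$, and it is reduced since $w$ is. On the other hand, $x_1',x_2',\ldots,x_n'$ is a reduced path of type $\gamma_2\cdots\gamma_n$ from $x_1'$ to $y$. By the building axiom, the unique element $g'\in W$ associated to the pair $(x_1',y)$ is represented by both reduced words $w$ and $\gamma_2\cdots\gamma_n$. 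Since equivalent reduced words are permutations of each other by the previous lemma, they have the same length, contradicting that $|w|=n\neq n-1=|\gamma_2\cdots\gamma_n|$.

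The main obstacle is this last step: producing, from the assumed non-uniqueness, two reduced words of different lengths that represent the same Coxeter group element. One must verify carefully that attaching $x_1'$ at the start of the original path yields a genuine path (consecutive vertices distinct) whose type is still reduced; once this is in place, the contradiction is immediate from the length function on $W$.
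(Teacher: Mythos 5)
Your proof is correct. The paper does not actually supply a proof of this lemma (it is dismissed with ``can be easily shown''), but your argument is the natural one: after passing to the first point of divergence, attaching $x_1'$ in front of the first path produces a reduced path of type $w$ from $x_1'$ to $y$, while the tail of the second path is a reduced path of type $\gamma_2\cdots\gamma_n$ from $x_1'$ to $y$; the building axiom forces both reduced words to represent the same group element, and the length discrepancy $n\neq n-1$ contradicts the preceding lemma that equivalent reduced words are permutations of one another. The checks you flag as the ``main obstacle'' — that $x_1'\sim_{\gamma_1}x_1$ with $x_1'\neq x_1$, that the extended sequence has distinct consecutive terms, and that a suffix of a reduced word is reduced — all go through, so the argument is complete. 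Note that the paper later remarks that this lemma also holds for quasi-buildings, where the building axiom is unavailable; there one would instead argue via condition (d) of Lemma \ref{L:building_equiv} (no non-trivial closed reduced paths), but for the lemma as stated your appeal to the building axiom is perfectly appropriate.
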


We will denote the existence of a path of type $w$ connecting $x$ to
$y$ by $x\op{w}y$. In particular, we have that $x\op{\gamma}y$ \iff
$x\neq y$ are $\sim_\gamma$-related.

\begin{lemma}\label{L:building_equiv}
  A chamber system $X$ is a building \iff the following four
  conditions hold:
  \quad\begin{enumerate}[\quad (a)]
  \item Every $\sim_\gamma$-class has at least two elements.
  \item\label{L:building_equiv:connect} Every two elements of $X$ are
    connected by a path.
  \item\label{L:building_equiv:commute} Given two commuting
generators $\gamma$ and $\delta$, if the elements $x$ and $y$ are
connected by a path of type $\gamma\delta$, then $x$ and $y$ are also
connected by a path  of type $\delta\gamma$.
  \item\label{L:building_equiv:simple} There is no non-trivial closed
    reduced path.
  \end{enumerate}
\end{lemma}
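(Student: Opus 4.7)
The plan is to treat the two directions separately. The ``only if'' direction essentially unpacks the definition of a building: (a) is built in; for (b), any reduced word representing the element $g$ that the building associates to $(x,y)$ yields a reduced path from $x$ to $y$; for (c), the words $\gamma\delta$ and $\delta\gamma$ are both reduced and represent the same element of $W$, so the building property gives $x\op{\delta\gamma} y$ whenever $x\op{\gamma\delta} y$; for (d), a non-trivial closed reduced path of type $w$ at $x$, combined with the trivial path, would make both $w$ and the empty word reduced representatives of the element associated to $(x,x)$, forcing $w$ to represent the identity and contradicting the fact that no non-empty reduced word in a Coxeter group represents $e$.

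For the converse I assume (a)--(d). The engine is a path-reduction procedure: if the type $w$ of a path is not reduced, then some pair of equal letters $\gamma\cdots\gamma$ has only commuting letters between them, so by (c) I bring them into a consecutive block $\gamma\gamma$ and then shorten the path by one or two steps using the equivalence relation $\sim_\gamma$ (deleting a back-and-forth when the endpoints of the $\gamma\gamma$-segment coincide, or merging two $\gamma$-steps into one otherwise). Combined with (b), this produces a reduced path from $x$ to $y$ of some type $w_0$, and I set $g\in W$ to be the element it represents. By the previous lemma together with repeated applications of (c), every reduced word representing $g$ then admits a reduced path from $x$ to $y$ of that very type. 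For the remaining direction of the building axiom, given another reduced path $x\op{w} y$, I concatenate with the reverse of the $w_0$-path to obtain a closed path at $x$ of type $w\cdot w_0^{\mathrm{rev}}$; reducing this closed path and invoking (d) forces the resulting closed reduced path to be trivial, and tracking how the $W$-element of the type evolves yields $w\cdot w_0^{\mathrm{rev}}=e$ in $W$, so that $w$ is a permutation of $w_0$ by the previous lemma.

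The main obstacle lies precisely in the tracking in the last step. The deletion variant of the reduction preserves the element of $W$ represented by the type (because $\gamma\gamma = e$), but the ``merge two into one'' variant multiplies it by $\gamma$, so one cannot blindly pass to the limit. I expect to resolve this either by arguing that the merge case cannot arise when reducing a closed path built from two reduced subpaths with opposite orientations, or by prescribing a reduction strategy that deliberately avoids it; alternatively, an induction on $|w|+|w_0|$ which peels off a common first or last letter using (c) and the uniqueness provided by axiom (a) at the shared endpoint seems like a cleaner route. Once this tracking is handled, the required element $g$ is exhibited uniquely and the building property follows.
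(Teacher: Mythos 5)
The paper itself leaves this lemma unproved (it says the two surrounding lemmas ``can be easily shown''), so there is no written proof to compare against; what follows is a review of your attempt on its own terms. Your forward direction is correct, and your setup of the converse --- reduce an arbitrary path via (b) and (c), then compare two reduced paths by forming a closed path and invoking (d) --- is the right strategy. You have also correctly located the genuine difficulty: the merge step $z_0\op{\gamma}z_1\op{\gamma}z_2\rightsquigarrow z_0\op{\gamma}z_2$ changes the element of $W$ represented by the type, so one cannot simply pass to the limit of the reduction and read off $w\cdot w_0^{\mathrm{rev}}=e$. However, none of the three resolutions you sketch actually closes this gap. You cannot ``prescribe a reduction strategy that avoids the merge,'' because whether the step is a deletion or a merge is determined by whether $z_0=z_2$, which the path geometry fixes and you do not get to choose. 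And the ``uniqueness provided by axiom (a)'' is simply not there: (a) says each $\sim_\gamma$-class has \emph{at least} two elements --- a lower bound, not an upper bound --- and in the buildings this paper cares about the classes are infinite, so $z\sim_\gamma y\sim_\gamma z'$ gives no grounds to conclude $z=z'$.

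Your last suggestion (induction on $|w|+|w_0|$) is the one that works, but the case $z\neq z'$ must be killed by a contradiction rather than by a nonexistent uniqueness. Concretely: since $w\cdot(w_0)^{\mathrm{rev}}$ is not reduced and $w$, $w_0$ individually are, some letter $\gamma$ is an end of both, so after permuting via (c) you get reduced paths $x\op{w_1}z\op{\gamma}y$ and $x\op{w_0'}z'\op{\gamma}y$ with $w\approx w_1\gamma$, $w_0\approx w_0'\gamma$. If $z\neq z'$, then $x\op{w_1}z\op{\gamma}z'$ is a reduced path of type $w_1\gamma$, and $x\op{w_0'}z'$ is a reduced path of type $w_0'$, both of total length $|w|+|w_0|-1$. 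The inductive hypothesis then gives that $w_1\gamma$ and $w_0'$ represent the same element of $W$, so by the lemma stated just before this one they are permutations of each other; but then $w_0\approx w_0'\gamma$ is a permutation of $w_1\gamma\gamma$, which is not reduced, contradicting the hypothesis that $w_0$ is reduced. Hence $z=z'$ is forced, and the induction on $w_1$ and $w_0'$ goes through cleanly. Without this contradiction argument, the proof has a genuine hole at exactly the point you flagged.
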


\noindent A chamber system satisfying conditions
$(\ref{L:building_equiv:connect})$ and
$(\ref{L:building_equiv:commute})$ is called \emph{strongly
  connected}. A strongly connected chamber system is a
\emph{quasi-building} if it satisfies condition
$(\ref{L:building_equiv:simple})$. Lemma \ref{L:unique_path}
holds for quasi-buildings, as well.

\begin{remark}\label{R:unique_b*}
Given  elements $b \op{w} a$ in a quasi-building $A$
and $\lambda \in \Gamma$ commuting with $w$, if $a\sim_\lambda a^*\in A$,
then there is a unique $b^*$ in $A$ with $b\sim_\lambda b^*\op{w} a^*$.
\end{remark}

\begin{proof}

 Iterating \ref{L:building_equiv} $(\ref{L:building_equiv:commute})$, the
reduced path $b\op{w} a \sim_\lambda a^*$ yields some element $b^*$ such
that $b\sim_\lambda b^* \op{w} a^*$. Furthermore, this element is uniquely
determined by $b$, $a^*$ and the reduced word $\lambda\cdot w$, by Lemma
\ref{L:unique_path}.
\end{proof}

We will now produce certain  extensions of a given
quasi-building $A$. Fix some $\lambda$ in $\Gamma$ and an equivalence
class $a/{\sim_\lambda}$ in $A$. We will extend $A$ to a
quasi-building containing a new element in
$a/{\sim_\lambda}$. Let $B$ be the set of elements $x$ in $A$ which
are connected to $a$ by a reduced path of type $w$, where $w$
and $\lambda$ commute. In particular, the generator $\lambda$
does
not occur in $w$. Furthermore, if $b$ and $c$ in $B$ are
$\sim_\gamma$-related, then $\lambda$ and $\gamma$ commute.

Observe that $a$ lies in $B$. For every $b\in B$, introduce a new element
$b^*$. Denote  \[A(a^*)=A\cup\{b^*\}_{b\in B},\]
 \noindent and extend the
chamber structure of $A$ to $A(a^*)$ by setting
\[ b^*\sim_\gamma c^* \;\Leftrightarrow\; b\sim_\gamma
c\]
for all $b$, $c$ in $B$, and
\[b^*\sim_\gamma a' \;\Leftrightarrow\; \lambda=\gamma
\text{ and }b\sim_\lambda a',\]
for all $b\in B$ and $a'\in A$. In particular, if
$b$ is in $B$ and $b\op{w} a$, then we obtain a reduced path $b^*\op{w}
a^*$.

The extension $A(a^*)$ is called \emph{simple}.

\begin{lemma}\label{L:Aastar}
   The chamber system $A(a^*)$ is a quasi-building.
\end{lemma}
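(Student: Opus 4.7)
The plan is to verify the three conditions characterising a quasi-building: strong connectedness, via $(\ref{L:building_equiv:connect})$ and $(\ref{L:building_equiv:commute})$, together with the absence $(\ref{L:building_equiv:simple})$ of non-trivial closed reduced paths. The principal tool is the natural projection $\pi\colon A(a^*) \to A$ defined by $\pi(b^*)=b$ for $b \in B$ and $\pi(a')=a'$ for $a' \in A$. A direct inspection of the extended chamber structure shows that $\pi$ is a morphism of chamber systems preserving distinct adjacencies: if $x \neq y$ in $A(a^*)$ with $x \sim_\gamma y$, then $\pi(x) \neq \pi(y)$ and $\pi(x) \sim_\gamma \pi(y)$ in $A$. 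Hence any reduced path of type $w$ in $A(a^*)$ projects to a reduced path of type $w$ in $A$, so condition $(\ref{L:building_equiv:simple})$ follows at once: a non-trivial closed reduced path in $A(a^*)$ would project to one in $A$, contradicting that $A$ is a quasi-building.

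For connectedness $(\ref{L:building_equiv:connect})$, I would use that $a^* \sim_\lambda a$ and, more generally, for any $b \in B$ with reduced path $b \op{w} a$ in $A$, every intermediate vertex also lies in $B$, as suffixes of $w$ remain reduced and commute with $\lambda$. Replacing each intermediate vertex $c$ by $c^*$ gives a reduced path $b^* \op{w} a^*$ in $A(a^*)$, so $b^*$ is connected to $A$.

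The substantial content is the commutativity condition $(\ref{L:building_equiv:commute})$. Given $x \op{\gamma\delta} y$ in $A(a^*)$ through an intermediate $m$, with commuting (hence distinct) generators $\gamma$ and $\delta$, I would proceed by cases according to which of $x, m, y$ lie outside $A$. The key observation is that any cross-adjacency $b^* \sim_\gamma a'$ forces $\gamma = \lambda$; this excludes the subcases where only $m$ is starred, or where $x$ and $y$ are both starred with $m \in A$, since each would require $\gamma = \delta = \lambda$. In the remaining subcases, commutativity in $A$ applied to $\pi(x) \op{\gamma\delta} \pi(y)$ yields a witness $z \in A$ with $\pi(x) \op{\delta} z \op{\gamma} \pi(y)$. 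When the desired lift demands $z^* \in A(a^*)$, one verifies that $z \in B$ by concatenating the edge from $z$ of colour $\delta$ or $\gamma$ (each of which commutes with $\lambda$, as they arise from adjacencies within $B$) with a reduced path from the relevant endpoint to $a$, noting that the resulting word, and hence any reduced word equivalent to it, still commutes with $\lambda$. Lifting appropriately then yields the desired path $x \op{\delta\gamma} y$ in $A(a^*)$.

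The main obstacle is this last verification: in each nontrivial subcase, ensuring that the commutativity witness $z$ in $A$ actually lies in $B$, so that $z^*$ is available. This requires simultaneous bookkeeping of reducedness and of the ``commutes with $\lambda$'' property for candidate paths to $a$, and rests on the defining feature of $B$ that any adjacency between two of its elements uses a generator commuting with $\lambda$.
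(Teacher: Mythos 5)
Your argument for conditions~$(\ref{L:building_equiv:connect})$ and~$(\ref{L:building_equiv:commute})$ follows essentially the same route as the paper: a case analysis on which of the three vertices lie in $A(a^*)\setminus A$, using the commutation in $A$ together with the observation that cross-adjacencies $b^*\sim_\gamma a'$ force $\gamma=\lambda$. The bookkeeping you flag as the ``main obstacle'' is real but manageable, and matches the paper's ``the other cases are treated in a similar fashion''.

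However, your argument for condition~$(\ref{L:building_equiv:simple})$ has a genuine gap. You claim that the projection $\pi\colon A(a^*)\to A$, with $\pi(b^*)=b$ and $\pi\restr{A}=\mathrm{id}$, satisfies: if $x\neq y$ and $x\sim_\gamma y$, then $\pi(x)\neq\pi(y)$. This is false. Taking $x=b\in B\subset A$ and $y=b^*$, we have $b\sim_\lambda b^*$ with $b\neq b^*$, yet $\pi(b)=b=\pi(b^*)$. Thus $\pi$ collapses precisely the $\lambda$-edges linking $A$ to $A(a^*)\setminus A$, so a reduced path in $A(a^*)$ that crosses the boundary does \emph{not} project to a path in $A$ at all (two consecutive vertices become equal), let alone a reduced one of the same type. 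Since a closed path crossing the boundary must cross an even, possibly positive, number of times, your conclusion does not follow.

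The missing ingredient, and the paper's actual argument, is that a \emph{reduced} path cannot cross the boundary twice. The only cross-edges have type $\lambda$, and any subpath lying entirely on one side between two crossings has a word commuting with $\lambda$ (by the defining property of $B$ and the inherited chamber structure). Hence two crossings would produce a subword $\lambda\cdot w\cdot\lambda$ with $w$ commuting with $\lambda$, contradicting reducedness. A closed reduced path therefore crosses zero times, so it lies entirely in $A$ (trivial since $A$ is a quasi-building) or entirely in $A(a^*)\setminus A$ (where $\pi$ \emph{is} injective, and the image is a closed reduced path in $A$, hence trivial). Your projection $\pi$ is a useful tool, but only after this crossing argument restricts its domain of application.
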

\begin{proof}
  Properties $(\ref{L:building_equiv:connect})$ and
  $(\ref{L:building_equiv:commute})$ of Lemma \ref{L:building_equiv}
  can be easily shown. For example, suppose that $\delta$ and
  $\gamma_1$ commute, suppose $b^*\sim_{\gamma_1} d^*\sim_\delta c^*$.
  Then $b\sim_{\gamma_1} d\sim_\delta c$, so there is some $d'$ in $A$
  such that $b\sim_\delta d'\sim_{\gamma_1} c'$, whence
  $b^*\sim_\delta (d')^*\sim_{\gamma_1} c^*$. The other cases are
  treated in a similar fashion. For property
  $(\ref{L:building_equiv:simple})$, note first that any two elements
  of $B$ are connected by a word which commutes with $\lambda$. Thus, a
reduced path cannot change sides twice between $A$
  and $A(a^*)\setminus A$, for otherwise it would contain a reduced
subpath of the form $\gamma\cdot w\cdot\gamma$, where $w$ commutes
with $\gamma$. A closed reduced path is hence either fully contained in
$A$ -- and thus trivial since $A$ is a quasi-building -- or fully
contained in $A(a^*)\setminus A$, in which case it is in bijection with a
closed reduced path in $A$, which must be then trivial.
\end{proof}

\begin{cor}\label{C:infte_resid}
   For every Coxeter group $(W,\Gamma)$, there is a countable building
   in which all $\sim_\gamma$-equivalence classes are infinite.
\end{cor}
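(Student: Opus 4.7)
The plan is to iterate the simple extension construction from Lemma~\ref{L:Aastar}, starting from a trivial quasi-building and producing at each stage a finite quasi-building that adds a fresh element to every existing $\sim_\gamma$-class. After countably many stages, the ascending union will yield a countable building with infinite residues.

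First, I would take $A_0$ to be a single point, which is trivially a quasi-building. Given a finite quasi-building $A_n$, I would enumerate the finitely many pairs $(a,\gamma)\in A_n\times\Gamma$ and, processing them one after another, form a finite quasi-building $A_{n+1}\supseteq A_n$ by successive simple extensions, so that for each such pair a new element is adjoined to the current $\sim_\gamma$-class of $a$. By Lemma~\ref{L:Aastar}, each intermediate step remains a quasi-building, and the construction stays finite at each stage. Set $A_\omega=\bigcup_{n\in\mathbb{N}} A_n$, equipped with the union of the chamber structures; it is clearly countable.

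Next, I would check that $A_\omega$ is a quasi-building by verifying the conditions of Lemma~\ref{L:building_equiv}. Each of (b), (c), and (d) is finitary: any two elements, any triple $x\sim_\gamma y\sim_\delta z$ with $\gamma,\delta$ commuting, and any reduced closed path all lie in some $A_n$, in which the corresponding property already holds. Thus $A_\omega$ satisfies (b), (c), (d). Moreover, for every $a\in A_\omega$ and every $\gamma\in\Gamma$, once $a\in A_n$ every subsequent stage $A_{k+1}$ with $k\geq n$ contributes a new element to the $\sim_\gamma$-class of $a$, so this class is infinite in $A_\omega$. In particular condition (a) of Lemma~\ref{L:building_equiv} holds, and therefore $A_\omega$ is a countable building with all $\sim_\gamma$-classes infinite.

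The only subtlety, and the main point to get right, is the bookkeeping at Step 2: one must argue that at each stage every pair $(a,\gamma)$ is actually handled and that the finitely many simple extensions applied consecutively can be combined without disturbing each other (each preserves the quasi-building property by Lemma~\ref{L:Aastar}, and the newly created elements of earlier sub-steps become legitimate $a$'s in subsequent sub-steps). Apart from this bookkeeping, every remaining verification reduces to an appeal to Lemmas~\ref{L:building_equiv} and \ref{L:Aastar} together with the finitary nature of the defining properties of a quasi-building.
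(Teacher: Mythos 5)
The paper presents this corollary without proof, immediately after establishing Lemma~\ref{L:Aastar}; your argument supplies exactly the implicit construction the authors intend, and it is correct. You iterate simple extensions starting from a singleton (which is indeed a quasi-building, since conditions (b)--(d) of Lemma~\ref{L:building_equiv} are vacuous or trivial there), verify that (b), (c), (d) persist to the directed union because each is witnessed by a finite configuration already contained in some finite stage, and observe that a reduced closed path lies in some $A_n$ and reducedness is a purely combinatorial property of the word (independent of the ambient chamber system), so (d) transfers. Your handling of the bookkeeping is sound: since simple extensions only add and never identify elements, and since every pair $(a,\gamma)$ with $a\in A_n$ is processed at every subsequent stage, each $\sim_\gamma$-class acquires at least one fresh element per stage and hence is infinite in the limit, giving both condition (a) and the stronger ``infinite residues'' requirement. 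This is the same approach the paper leaves to the reader, carried out carefully.
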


We will now show that every subset of a quasi-building has a strongly
connected hull, which is attained by a sequence of simple extensions.

\begin{prop}\label{P:str_conn_bldgs}
  Given a strongly connected subset $A$ of a quasi-building $X$ and
  $a^*$ in $X\setminus A$ with $a^*\sim_{\lambda} a \in A$, the
  smallest strongly connected subset of $X$ containing
  $A\cup\{a^*\}$ is isomorphic to $A(a^*)$.
\end{prop}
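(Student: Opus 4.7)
The plan is to define a chamber-system embedding $\phi: A(a^*) \to X$ that is the identity on $A$ and sends the new vertex $a^*\in A(a^*)$ to the given $a^*\in X$, and then to argue that its image $A'$ is precisely the smallest strongly connected subset of $X$ containing $A\cup\{a^*\}$.

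For each $b\in B$, fix a reduced path $b\op{w} a$ in $A$ whose type $w$ commutes with $\lambda$. By Remark \ref{R:unique_b*} applied in the quasi-building $X$, there is a unique element of $X$, call it $\phi(b^*)$, satisfying $b\sim_\lambda \phi(b^*)\op{w} a^*$. Well-definedness (independence from the choice of $w$) follows from the observation that any two reduced paths from $b$ to $a$ in the quasi-building $A$ have types that are permutations of each other. A sequence of commutation swaps applied \emph{within} the $w$-part of $\lambda\cdot w$, via Lemma \ref{L:building_equiv} $(\ref{L:building_equiv:commute})$, leaves the initial $\lambda$-edge intact; combined with the uniqueness of a reduced path of a given type furnished by Lemma \ref{L:unique_path}, this shows that the elements obtained from any two choices of $w$ coincide.

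It remains to verify that $\phi$ is an isomorphism of chamber systems onto its image. Preservation of adjacencies going from $A(a^*)$ to $X$ follows from the construction, exactly as in the proof of Lemma \ref{L:Aastar}: the commuting squares guaranteed by Remark \ref{R:unique_b*} realise in $X$ every adjacency required by the definition of $A(a^*)$. The main obstacle is the converse direction, namely ruling out spurious adjacencies in $X$. Given a putative edge $\phi(b^*)\sim_\gamma d$ in $X$ (with either $d\in A$ or $d=\phi(c^*)$), we combine it with the reduced paths of $\phi(b^*)$ and $\phi(c^*)$ back to $a^*$ and with a suitable path in $A$ to form a closed path in $X$; since $X$ admits no non-trivial closed reduced path, the corresponding word must reduce to the identity in $W$, which forces $\gamma$ to commute with $\lambda$ (and, when $d\in A$, to equal $\lambda$) and the required adjacency already to hold in $A$. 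Injectivity is checked analogously: if $\phi(b^*)=\phi(c^*)$, concatenating the reversed reduced paths yields a loop whose type reduces to the identity, collapsing $b=c$.

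Finally, the image $A'=\phi(A(a^*))$ is strongly connected since $A(a^*)$ is by Lemma \ref{L:Aastar} and $\phi$ is an isomorphism onto $A'$. For minimality, any strongly connected subset $C\subseteq X$ containing $A\cup\{a^*\}$ is closed under Lemma \ref{L:building_equiv} $(\ref{L:building_equiv:commute})$; iterating this commutation axiom from $a\sim_\lambda a^*$ along any reduced path $b\op{w}a$ in $A$ commuting with $\lambda$ forces each $\phi(b^*)$ into $C$ by the uniqueness in Remark \ref{R:unique_b*}, whence $A'\subseteq C$.
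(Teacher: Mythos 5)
Your overall strategy is the same as the paper's: use Remark \ref{R:unique_b*} to map each $b^*$ of the abstract extension $A(a^*)$ to a concrete element of $X$, then verify the chamber structure matches. The well-definedness, injectivity and minimality arguments are fine (the paper gets injectivity and minimality a bit more directly from Remark \ref{R:unique_b*} and Lemma \ref{L:unique_path} rather than via closed-path considerations, but that difference is harmless).

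The gap is in the sentence ``Preservation of adjacencies going from $A(a^*)$ to $X$ follows from the construction, exactly as in the proof of Lemma \ref{L:Aastar}.'' This is the crux of the whole proposition, and it does not follow from the construction. Remark \ref{R:unique_b*} produces commuting squares \emph{along the chosen reduced path} $b\op{w_b}a$; it says nothing about an edge $b\sim_\gamma c$ that is transverse to those paths. Concretely, if $b\sim_\gamma c$ with $b,c\in B$, one still has to show that the element $d$ obtained by pushing the edge $c\sim_\gamma b$ through the $\lambda$-square at $c$ is actually $\phi(b^*)$, i.e.\ that $d\op{w_b}a^*$. This requires knowing how the reduced words $w_b$, $w_c$ and the edge $\gamma$ interact, and there are genuinely three cases (up to permutation $w_b\approx \gamma\cdot w_c$, or $w_c\approx\gamma\cdot w_b$, or both paths fork at a common neighbour $a'\sim_\gamma b$, $a'\sim_\gamma c$). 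The paper isolates exactly this trichotomy in the Claim inside the proof, and the fork case needs its own treatment (reducing to the first case applied to $a'$, $b$ and $a'$, $c$). Lemma \ref{L:Aastar} is of no help here: it concerns the abstractly glued $A(a^*)$, where $b^*\sim_\gamma c^*$ is true by fiat, whereas in $X$ it must be proved. Without the trichotomy your argument does not go through, so the proposal has a real gap at its central step.
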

\begin{proof}
Observe that $A$ is a quasi-building, since $X$ is. Let $B\subset A$
be as in the construction of $A(a^*)$, that is, the
set of elements $b\in A$, with $b\op{w} a$ and $w$ commutes with $\lambda$.

Every $b$ in $B$ yields a unique $b^*$ in $X$ such that
$b\op{\lambda}b^*\op{w}a^*$, by Remark \ref{R:unique_b*}. Since $w$ is
uniquely determined, up to permutation, by $b$ (and $a$), the element
$b^*$ depends only on $b$. By symmetry,
the element $b$ is determined by $b^*$. Thus, the elements $b^*$ are
pairwise distinct. Note that none of the $b^*$'s belong to $A$, since
$b^*\op{w}a^*\op{\gamma}a$ and $A$ is strongly connected.

Therefore $A(a^*)$ may be identified with a subset of $X$, which is
contained in every strongly connected extension of $A\cup\{a^*\}$. We
need only show that the chamber structure of $A\cup\{a^*\}$ agrees
with the structure induced by $X$. Given distinct elements $b$ and $c$
in $B$ with $b\sim_\delta c$, we need only show that $b^*\sim_\delta
c^*$, since the converse follows by replacing $a$, $b$ and $c$ with
$a^*$, $b^*$ and $c^*$.

The set $A$ is strongly connected, so there are reduced words $u$ and
$v$ such that $a\op{u}b$ and $a\op{v}c$ in $A$.

\begin{claim}
  Let $a$, $b$ and $c$ be distinct elements of a quasi-building $X$
  with $b\sim_\delta c$. Suppose that $a\op{u}b$ and $a\op{v}c$ for reduced
  words $u$ and $v$. Then there are three possibilities:
  \begin{enumerate}
  \item $u\cdot\delta$ is reduced.
  \item\label{L:dreieck:wdelta} $v\cdot\delta$ is reduced.
  \item\label{L:dreieck:gabel} There exists a reduced word
$w\cdot\delta$ and an element $a'$ such that $a\op{w}a'$,
$a'\op{\delta}b$ and $a'\op{\delta}c$.
  \end{enumerate}
The third case implies that both $u$ and $v$ are equivalent to
$w\cdot\delta$
\end{claim}

\begin{proof}[{\it Proof of the claim:}]
\renewcommand{\qed}{\noindent\emph{End of the proof of the
claim.}}
 If $u\cdot\delta$ is not reduced, up to permutation, we may assume
that
$u=w\cdot \delta$. Choose $a'$ in $X$ with
$a\op{w}a'\op{\delta}b$. Thus $a'\sim_\delta c$. Either $a'=c$, so $w$
is equivalent to $v$, which gives case
$(\ref{L:dreieck:wdelta})$, or $a'\op{\delta}c$, which gives case
  $(\ref{L:dreieck:gabel})$.

\end{proof}

If we apply the previous claim to our situation, we obtain three
possibilities:
  \begin{enumerate}
  \item The word $u\cdot\delta$ is reduced. Up to permutation,
    we have that $v=u\cdot\delta$. In particular, the elements
$\gamma$ and $\delta$ commute. Since $b\sim_\delta c \sim_\gamma
c^*$, there is some $c'$ in $X$ with $b\sim_\delta c' \sim_\gamma
c$. Note that, since $a^*\op{v} c^*$ and $a^*\op{u}b^*\op{\delta} c'$, we
have that $a^*\op{v} c'$ as well. Thus $c^*=c'$, by Remark
\ref{R:unique_b*}, so  $c^*\sim_\delta b^*$.

  \item The word $v\cdot\delta$ is reduced, which is treated
similar to the first case.
  \item
    For some reduced word $w\cdot\delta$, there is an $a'$ such
that
    $a\op{w}a'$, $a'\op{\delta}b$ and $a'\op{\delta}c$. Since $A$ is
strongly connected, the element $a'$ lies in $A$. By the
    first case, we have that $(a')^*\sim_\delta b^*$ and
$(a')^*\sim_\delta c^*$, whence $b^*\sim_\delta c^*$.
  \end{enumerate}

  If $b^*\sim_\delta a'$ for some $b$ in $B$ and $a'$ in $A$, then
the path $b\sim_\lambda b^* \sim_\delta a'$ cannot be reduced, since
$b^*$ does not lie in $A$. Thus $\delta=\lambda$ and
$b^*\sim_\lambda a'$.

\end{proof}

Corollary \ref{C:infte_resid} and Proposition \ref{P:str_conn_bldgs}
yield immediately the following result:

\begin{cor}(\emph{cf.} \cite[Proposition
    5.1]{HP03})\label{C:b_unten_0} Every Coxeter group $(W,\Gamma)$
  has, up to isomorphism, a unique countable building
  $\mathrm{B}_0(\Gamma)$, in which all $\sim_\gamma$-equivalence
  classes are infinite.
\end{cor}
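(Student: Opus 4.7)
Existence of the countable building with infinite residues is immediate from Corollary \ref{C:infte_resid}, so uniqueness is what requires argument. My plan is to run a standard back-and-forth between two such buildings $X$ and $Y$ for $(W,\Gamma)$, using the simple extensions of Lemma \ref{L:Aastar} and Proposition \ref{P:str_conn_bldgs} as the atomic move.

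After fixing enumerations $X=\{x_n\}_{n<\omega}$ and $Y=\{y_n\}_{n<\omega}$, I would construct an increasing chain of finite strongly connected subsets $A_n\subset X$ together with chamber-system isomorphisms $\phi_n\colon A_n\to A_n'\subset Y$ onto strongly connected subsets of $Y$, arranging in the usual alternating manner that $x_n\in A_{2n+1}$ and $y_n\in A_{2n+2}'$. The direct limit $\bigcup_n \phi_n$ will then be the sought isomorphism $X\to Y$.

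The atomic step absorbs a single new element. Suppose $A\subset X$ is finite and strongly connected, $\phi\colon A\to A'\subset Y$ is a chamber isomorphism onto a strongly connected set, and $a^*\in X\setminus A$ satisfies $a^*\sim_\lambda a$ for some $a\in A$. Proposition \ref{P:str_conn_bldgs} identifies the strongly connected hull of $A\cup\{a^*\}$ inside $X$ with the simple extension $A(a^*)$, whose chamber structure is determined only by the isomorphism type of the triple $(A,a,\lambda)$. Since $Y$ has infinite $\sim_\lambda$-residues and $A'$ is finite, I can pick $a'^*\in Y\setminus A'$ with $a'^*\sim_\lambda\phi(a)$; a second application of Proposition \ref{P:str_conn_bldgs} identifies the strongly connected hull of $A'\cup\{a'^*\}$ in $Y$ with $A'(a'^*)$. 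Hence $\phi$ lifts canonically to an isomorphism $A(a^*)\to A'(a'^*)$. To accommodate an arbitrary new element of $X$ in the domain (or of $Y$ in the range), I would iterate this atomic step along a path connecting $A$ (respectively $A'$) to the target, whose existence is guaranteed by Lemma \ref{L:building_equiv}(\ref{L:building_equiv:connect}).

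The only potential obstacle is that at each atomic step the element $a'^*$ must be chosen outside the already-constructed finite set $A'$; this is precisely what the infinite-residue hypothesis on $Y$ ensures. Everything else follows formally from Proposition \ref{P:str_conn_bldgs}, which guarantees that the simple extension is intrinsically determined and embeds uniquely into any strongly connected extension containing the new element.
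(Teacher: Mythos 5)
Your proof is correct and is exactly the back-and-forth argument the paper compresses into the single phrase ``Corollary~\ref{C:infte_resid} and Proposition~\ref{P:str_conn_bldgs} yield immediately the following result,'' using simple extensions of finite strongly connected subsets as the atomic move and the infinite-residue hypothesis to supply the image of the new point. The paper also notes an alternative derivation via uniqueness of prime models and Theorem~\ref{T:biinterpretierbar}, but that is a later remark; your route is the intended one at this point in the text.
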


We will study the model theory of buildings using the following
expansion of the natural language:

\begin{definition}\label{D:equ}
  Let $X$ be a chamber system for $(W,\Gamma)$ and $s$ a subset of
  $\Gamma$. By $\sim_\emptyset$, we denote the diagonal in $X\times
  X$. Otherwise, for $\emptyset\neq s\subset\Gamma$, the relation
  $\sim_s$ is the transitive closure of all $\sim_{\gamma}$, with
  $\gamma \in s\subset \Gamma$. The $\sim_s$-class of an element is
  called its $s$\emph{-residue}. In particular, its
  $\gamma$\emph{-residue} is its $\sim_\gamma$-class, which is often
  called $\gamma$\emph{-panel} in the literature.

For $\gamma$ in $\Gamma$, set $\sim^\gamma=\sim_{\Gamma\setminus
\{\gamma\}}$. The chamber system  $(X,\sim^{\gamma})_{\gamma\in\Gamma}$ is
called the associated  \emph{dual chamber system} of $X$.
\end{definition}

It is easy to see that $x\sim_s y$ \iff $x\op{w}y$ for some
$w\in\ggen{s}$. If $X$ is a quasi-building, the word $w$ is uniquely
determined as an element of $W$, so Corollary \ref{C:GG_Inter} implies
\begin{align*}
\label{E:GG_Inter} \sim_{s_1\cap
  s_2}&=\sim_{s_1}\cap\sim_{s_2},\\ \intertext{and particularly}
\tag{\dag}\sim_{\gamma} &=\bigcap\limits_{\beta\neq \gamma} \sim^{\beta}\/.
\end{align*}

\noindent Thus, for a quasi-building $X$, the chamber system
$(X,\sim_\gamma)_{\gamma\in\Gamma}$ is definable in its associated
dual chamber system $(X,\sim^\gamma)_{\gamma\in\Gamma}$. Clearly, the
latter is only definable if countable disjunctions are allowed.

\noindent The aim of this article is to study the complete theory of
$\mathrm{B}^0(\Gamma)$, \label{seite:definition_b_oben_0} the
associated dual chamber system of $\mathrm{B}_0(\Gamma)$, which was
defined in Corollary \ref{C:b_unten_0}.

\begin{lemma}\label{L:buildings}
  The dual chamber system $(X,\sim^\gamma)_{\gamma\in\Gamma}$ of a
  quasi-building $X$ has the following elementary properties:
  \begin{enumerate}
  \item\label{L:buildings:equals_all_gamma} Given $x$ and $y$ in $X$
    with $x\sim^{\gamma} y$ for all $\gamma \in \Gamma$, then $x=y$.
  \item\label{L:buildings:lift_flag} If
    $(x_{\gamma})_{\gamma\in\Gamma}$ is a \emph{coherent} sequence in
    $X$, i.e.\ whenever $\gamma$ and $\delta$ are adjacent, there
exists some $y_{\gamma,\delta}$ in $X$ with
    $x_\gamma\sim^{\gamma} y_{\gamma,\delta} \sim^{\delta}
x_{\delta}$, then there exists an
    element $z\in X$ with $z\sim^{\gamma} x_\gamma$ for all
    $\gamma\in\Gamma$.
  \end{enumerate}
\end{lemma}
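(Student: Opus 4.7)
For part (1), my plan is to iterate Corollary \ref{C:GG_Inter}. Since $\bigcap_{\gamma \in \Gamma} (\Gamma \setminus \{\gamma\}) = \emptyset$, the corollary yields
\[
  \bigcap_{\gamma \in \Gamma} \sim^{\gamma} \;=\; \bigcap_{\gamma \in \Gamma} \sim_{\Gamma \setminus \{\gamma\}} \;=\; \sim_{\emptyset},
\]
which is the diagonal. Hence $x \sim^{\gamma} y$ for every $\gamma \in \Gamma$ forces $x = y$.

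For part (2), I would proceed by induction on $|\Gamma|$; the base case $|\Gamma| \leq 1$ is immediate. For the inductive step, fix a colour $\gamma_0 \in \Gamma$, set $\Gamma' := \Gamma \setminus \{\gamma_0\}$, and let $C$ denote the $\sim^{\gamma_0}$-class of $x_{\gamma_0}$. First I would verify that $(C, (\sim_{\gamma'}|_{C})_{\gamma' \in \Gamma'})$ is itself a quasi-building for the induced Coxeter system on $\Gamma'$: strong connectedness inside $C$ is built into the definition of $\sim^{\gamma_0}$; any commutativity witness for a commuting pair $\gamma, \delta \in \Gamma'$ produced inside $X$ stays in $C$, since it differs from a given point of $C$ by a single $\sim_\delta$-step with $\delta \in \Gamma'$; and the absence of non-trivial closed reduced paths is inherited from $X$. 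A further application of Corollary \ref{C:GG_Inter} gives the identity $\sim^{\gamma}_{C} = \sim^{\gamma}_{X} \cap (C \times C)$ for each $\gamma \in \Gamma'$, so that the $\Gamma'$-dual structure on $C$ agrees with the restriction of the $\Gamma$-dual structure on $X$.

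The heart of the argument is to lift the coherent sequence $(x_\gamma)_{\gamma \in \Gamma'}$ to a coherent sequence $(x'_\gamma)_{\gamma \in \Gamma'}$ inside $C$ with $x'_\gamma \sim^{\gamma}_X x_\gamma$, together with matching witnesses $y'_{\gamma, \delta} \in C$ for each adjacent pair in $\Gamma'$. When $\gamma$ is adjacent to $\gamma_0$ in the Coxeter diagram, the coherence witness $y_{\gamma, \gamma_0}$ already lies in $C$, and I set $x'_\gamma := y_{\gamma, \gamma_0}$. For $\gamma$ commuting with $\gamma_0$, the construction is more delicate: if $\gamma$ lies in the same connected component of $\Gamma$ as $\gamma_0$, I iterate the coherence witnesses along an adjacency path from $\gamma$ to $\gamma_0$, invoking Remark \ref{R:unique_b*} at each step to carry the target point into $C$; if they lie in different components, I exploit the commutation in $W$ of the corresponding parabolic subgroups to factor the distance $d(x_\gamma, x_{\gamma_0})$ across components and produce $x'_\gamma$, again via Remark \ref{R:unique_b*}. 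An analogous procedure, carried out consistently with the lifting of the $x_\gamma$'s, yields the required witnesses $y'_{\gamma, \delta} \in C$.

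Applying the inductive hypothesis to $(x'_\gamma)_{\gamma \in \Gamma'}$ inside the quasi-building $C$ for $(W', \Gamma')$ then provides $z \in C$ with $z \sim^{\gamma}_C x'_\gamma$ for all $\gamma \in \Gamma'$. Transitivity and the inclusion $\sim^{\gamma}_{C} \subseteq \sim^{\gamma}_{X}$ give $z \sim^{\gamma}_{X} x_\gamma$ for $\gamma \in \Gamma'$, while $z \sim^{\gamma_0}_{X} x_{\gamma_0}$ is automatic since $z \in C$. The main obstacle throughout will be the commuting case of the lifting step: amalgamating the coherence data along adjacency chains, or across independent components of $\Gamma$, requires careful coordination of Remark \ref{R:unique_b*} with the commutativity axiom of quasi-buildings.
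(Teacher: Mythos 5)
Your part (1) is the same argument as the paper's: both are immediate from iterating $\sim_{s_1\cap s_2}=\sim_{s_1}\cap\sim_{s_2}$ (Corollary \ref{C:GG_Inter}).

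For part (2) you propose a genuinely different strategy. The paper inducts along the filtration of the quasi-building by simple extensions supplied by Proposition \ref{P:str_conn_bldgs}, showing the lifting property passes from $A$ to $A(a^*)$; you instead induct on $|\Gamma|$ and pass to the $\sim^{\gamma_0}$-residue $C$. Your preliminary observations are sound: $C$ is indeed a $\Gamma'$-quasi-building for $\Gamma'=\Gamma\setminus\{\gamma_0\}$, and $\sim^\gamma_C=\sim^\gamma_X\cap(C\times C)$ for $\gamma\in\Gamma'$, since elements of $C$ are automatically $\sim_{\Gamma\setminus\{\gamma_0\}}$-related.

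However, there is a genuine gap exactly where you flag it. When $\gamma$ commutes with $\gamma_0$, coherence provides no witness $y_{\gamma,\gamma_0}$, and producing $x'_\gamma\in C$ with $x'_\gamma\sim^\gamma x_\gamma$ is not achievable by transporting along a single adjacency chain with Remark \ref{R:unique_b*}. Already with one intermediary, $\gamma-\beta-\gamma_0$, this would require the reduced word $w$ joining $y_{\gamma,\beta}$ to $y_{\beta,\gamma_0}$ (an arbitrary reduced word in $\langle\Gamma\setminus\{\beta\}\rangle$) to factor as a word avoiding $\gamma$ times a word avoiding $\gamma_0$; this can fail. In the $4$-cycle $\Gamma=\{1,2,3,4\}$ with $\gamma_0=1$, $\gamma=3$, $\beta=2$, the reduced word $w=13413$ does not lie in $\langle 1,2,4\rangle\cdot\langle 2,3,4\rangle$: the middle $3$ is blocked on both sides by letters it does not commute with and can be cancelled neither from the left nor from the right. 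This is not an artefact of the example — in the associated $\Gamma$-graph, a vertex of colour $\gamma$ and one of colour $\gamma_0$ with $\gamma,\gamma_0$ non-adjacent need not lie on a common flag, so your $x'_\gamma$ genuinely depends on the \emph{entire} coherence data and is in effect the conclusion of the lemma in disguise. You also still need to manufacture witnesses $y'_{\gamma,\delta}$ inside $C$ for the lifted sequence, and the original witnesses need not lie in $C$; this is a second unresolved step. Both issues are precisely what the paper's simple-extension filtration finesses: in $A(a^*)$ there is a single offending colour $\lambda$, only $x_\lambda$ can leave $A$, the relevant witnesses are forced to be of the form $b^*$, and what remains is the short (if slightly hidden) verification that the element $c$ furnished by the inductive hypothesis inside $A$ lies in the set $B$, so that $c^*$ exists.
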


\begin{proof}
  Property (\ref{L:buildings:equals_all_gamma}) clearly follows from
  condition (\ref{E:GG_Inter}).

  In order to show Property $(\ref{L:buildings:lift_flag})$, note that
  any singleton is a quasi-building and has property
  $(\ref{L:buildings:lift_flag})$. By Proposition
  \ref{P:str_conn_bldgs}, we need only show now that, if $A$ is a
  quasi-building with Property $(\ref{L:buildings:lift_flag})$, then
  so is $A(a^*)$.

  Let $(x_{\gamma})_{\gamma\in\Gamma}$ be a coherent sequence in
  $A(a^*)$, where $a^*\sim_{\lambda} a \in A$. Each $x_\gamma$ lies
  either in $A$ or is $\sim_\lambda$ connected to some element in $A$.
  Since only the $\sim^\gamma$-class of $x_\gamma$ matters, we may
  assume that all $x_\gamma$ belong to $A$, for $\gamma\neq\lambda$.
  If $x_\lambda$ is in $A$, then the result follows, since Property
  (\ref{L:buildings:lift_flag}) holds in $A$. Otherwise, if
  $x_\lambda$ does not belong to $A$, we have that $x_\lambda
  \op{w}a^* $, for a reduced word $w$ which commutes with $\lambda$.
Thus $x_\lambda \sim^\lambda a^*$ and we may assume that
  $x_\lambda=a^*$.

  By assumption, for every $\gamma$ adjacent with $\lambda$, there is
  some $y_{\lambda,\gamma}$ in $A(a^*)$ such that $a^*\sim^\lambda
  y_{\lambda,\gamma}\sim^\gamma x_\gamma$. The element
  $y_{\lambda,\gamma}$ cannot lie in $A$, for otherwise the reduced
path $a\sim_\lambda a^*\sim^\lambda y_{\lambda,\gamma}$ implies that
so is $a^*$. Thus $y_{\lambda,\gamma}$ is of the form
  $b^*_{\lambda,\gamma}$ for some $b_{\lambda,\gamma}\in A$.

  It follows that $a\sim^\lambda b_{\lambda,\gamma}\sim^\gamma
  x_\gamma$. Replacing the element $x_\lambda$ in the sequence
$(x_\gamma)$  by $a$, yields a new sequence contained in
$A$ and coherent. Thus, we find an element $c$ in $A$ such that
$c\sim^\lambda a$ and
  $c\sim^\gamma x_\gamma$ for $\gamma\neq\lambda$. Observe that
  $c^*\sim^\lambda a^*$ and $c^*\sim^\gamma x_\gamma$, so $c^*$ is
the desired element.

\end{proof}

\begin{definition}\label{D:pre-building}
  A chamber system $(X,\sim^\gamma)_{\gamma \in \Gamma}$ is a
  \emph{dual quasi-building} if it has properties
  $(\ref{L:buildings:equals_all_gamma})$ and
  $(\ref{L:buildings:lift_flag})$ from Lemma \ref{L:buildings}.
\end{definition}

The following remark can easily be verified.
\begin{remark}
  A chamber system $(X,\sim^\gamma)_{\gamma \in \Gamma}$ is a dual
  quasi-building \iff for every coherent sequence
  $(x_\gamma)_{\gamma\in\Gamma}$ there is a unique $z$ with
  $z\sim^{\gamma} x_\gamma$ for all $\gamma\in\Gamma$.
\end{remark}

\begin{definition}\label{D:colsp}
  A $\Gamma$\emph{-graph} $M$ is a coloured graph with colours
  $\A_{\gamma}(M)$ for $\gamma$ in $\Gamma$, and no edges between
  elements of $\A_{\gamma}(M)$ and $\A_{\delta}(M)$ if $\gamma$ and
  $\delta$ are not adjacent.

  A \emph{flag} $F$ of the $\Gamma$-graph $M$ is a subgraph
  $F=\{f_{\gamma}\}_{\gamma\in\Gamma}$, where each $f_{\gamma}$ lies
  in $\A_{\gamma}(M)$, such that the map $\gamma\mapsto
f_{\gamma}$ induces a
  graph isomorphism between $\Gamma$ and $F$.

  The $\Gamma$-graph $M$ is a $\Gamma$\emph{-space} if the following
  two additional properties are satisfied:
  \begin{enumerate}
  \item Every vertex belongs to a flag of $M$.
  \item Any two adjacent vertices in $M$ can be expanded to a flag of
    $M$.
  \end{enumerate}
\end{definition}

In particular, if $\Gamma$ is the complete graph $\Kom_3$, then the
following $\Kom_3$-graph is not a $\Kom_3$-space:

\begin{figure}[h]
  \centering

  \begin{tikzpicture}[->=latex,text height=1ex,text depth=1ex]

     \fill  (0,0)  node [below left]  {$1$}  circle (2pt);
     \fill   (2,0)  node [below right]  {$3$} circle (2pt);
     \fill   (1,1)  node [above] {$2$} circle (2pt);
      \fill   (3,0)  node [below right] {$1$} circle (2pt);
       \fill   (2,1)  node [above] {$3$} circle (2pt);
	\fill   (4,1)  node [above] {$2$} circle (2pt);

     \draw  (0,0) --  (2,0) --  (1,1) -- (0,0) ;
     \draw (1,1) -- (3,0) ;
     \draw (3,0) -- (4,1) -- (2,1) -- (3,0) ;

  \end{tikzpicture}
\end{figure}

\begin{theorem}\label{T:biinterpretierbar}
  The class of dual quasi-buildings for $(W,\Gamma)$ and the class of
  $\Gamma$-spaces are bi-interpretable.
\end{theorem}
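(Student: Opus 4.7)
The plan is to construct two uniform interpretations between the classes and verify they are mutually inverse. Starting from a dual quasi-building $X$, I define $M(X)$ as the coloured graph with colour classes $\A_\gamma(M(X))=X/\!\sim^\gamma$ and an edge between $[x]_{\sim^\gamma}$ and $[y]_{\sim^\delta}$ exactly when $\gamma\ne\delta$ are adjacent in $\Gamma$ and the two classes share a common element of $X$. For each chamber $x\in X$, the tuple $F_x=([x]_{\sim^\gamma})_{\gamma\in\Gamma}$ is then a flag: its components all share $x$, so whenever their colours are adjacent in $\Gamma$ they are adjacent in $M(X)$, while edges between vertices of non-adjacent colours are excluded by construction. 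Thus every vertex lies in a flag, and two adjacent vertices sharing a witness $z$ both sit in $F_z$, making $M(X)$ a $\Gamma$-space.

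Conversely, given a $\Gamma$-space $M$, let $X(M)$ be the set of flags of $M$ with $F\sim^\gamma G$ iff $F$ and $G$ have the same colour-$\gamma$ vertex. Property (\ref{L:buildings:equals_all_gamma}) of Lemma \ref{L:buildings} is immediate, since a flag is determined by its vertices. For property (\ref{L:buildings:lift_flag}), given a coherent sequence of flags $(F_\gamma)_\gamma$ write $f_\gamma$ for the colour-$\gamma$ vertex of $F_\gamma$. Coherence at an adjacent pair $\gamma,\delta$ produces a flag $Y_{\gamma,\delta}$ containing both $f_\gamma$ and $f_\delta$, hence an edge between them in $M$; for non-adjacent pairs the $\Gamma$-graph axiom automatically forbids any edge between colours $\gamma$ and $\delta$. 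Therefore $\{f_\gamma\}_\gamma$ is an induced subgraph of $M$ isomorphic to $\Gamma$, that is, a flag, and it supplies the required element of $X(M)$.

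It remains to check that the constructions are mutually inverse. The composite $X\to X(M(X))$ sends $x$ to $F_x$: it is injective by property (\ref{L:buildings:equals_all_gamma}), and for surjectivity a flag in $M(X)$ determines a coherent sequence in $X$ via the shared-element witnesses built into the edge relation, so property (\ref{L:buildings:lift_flag}) supplies a preimage chamber. Symmetrically, $M\to M(X(M))$ identifies a vertex $v\in\A_\gamma(M)$ with the $\sim^\gamma$-class of any flag containing it; this is well-defined and surjective because every vertex lies in a flag, injective because a flag has a unique vertex of each colour, and adjacency transfers using that any two adjacent vertices lie in a common flag. Since all these maps are uniformly first-order definable in the two structures, this yields the bi-interpretation.

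The only genuinely delicate point is recognising that the two axioms of a $\Gamma$-space are exactly the data dual to those of a dual quasi-building: \emph{every vertex lies in a flag} corresponds under the quotient $X\twoheadrightarrow X/\!\sim^\gamma$ to property (\ref{L:buildings:equals_all_gamma}), while \emph{any two adjacent vertices expand to a flag} provides precisely the witnesses $y_{\gamma,\delta}$ needed in the coherence condition of Lemma \ref{L:buildings}(\ref{L:buildings:lift_flag}). Once this matching is made, both verifications reduce to unwinding the definitions.
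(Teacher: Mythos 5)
Your proof is correct and follows essentially the same route as the paper: you set up the same pair of constructions $X\mapsto\mathcal{M}(X)$ and $M\mapsto\mathcal{X}(M)$, verify the two defining properties of a $\Gamma$-space respectively a dual quasi-building in the same way, and check the two composites are definable isomorphisms by the same injectivity/surjectivity arguments using properties $(\ref{L:buildings:equals_all_gamma})$ and $(\ref{L:buildings:lift_flag})$.
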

\begin{proof}
  In a chamber system $(X,\sim^\gamma)_{\gamma\in \Gamma}$, we
  interpret a $\Gamma$-graph $\mathcal{M}(X)$ as follows: for every
  $\gamma \in \Gamma$, the colour $\A_{\gamma}$ is $X/\sim^\gamma$,
  the set of $\sim^\gamma$-classes of elements in $X$. We consider the
  $\A_{\gamma}$ as being pairwise disjoint. For the graph structure on
  $\mathcal{M}(X)$, we impose that two elements $u$ and $v$ are
  adjacent if $u \in \A_{\gamma}$ and $v \in\A_\delta$, the colours
  $\gamma$ and $\delta$ are adjacent and there is some $z \in X$ with
  $z \sim^{\gamma} u$ and $z \sim^{\delta} v$.

  Since every $x\in X$ gives rise to the flag
  $\phi(x)=\{x/\!\!\sim^\gamma\,\mid\gamma\in\Gamma\}$ of
  $\mathcal{M}(X)$, it is straight-forward to see that
  $\mathcal{M}(X)$ is a $\Gamma$-space.

  Given a $\Gamma$-graph $M$, we define a chamber system
  $(\mathcal{X}(M),\sim^\gamma)_{\gamma\in \Gamma}$, whose underlying
  set is the collection of flags of $M$. Two flags $F$ and $G$ are
  $\sim^{\gamma}$-related if their $\gamma$-vertices agree.

  We will first show that $\mathcal{X}(M)$ is a dual quasi-building.
  We need only check Property (\ref{L:buildings:lift_flag}). Assume
  that $(F_\gamma)$ is a coherent system of flags. Let $f_\gamma$
  denote the $\gamma$-vertex of $F_\gamma$. If $\gamma$ and $\delta$
  are adjacent, there is a flag $G$ such that $F_\gamma\sim^\gamma
  G\sim^\delta F_\delta$. In particular, the flag $G$ contains an edge
  between $f_\gamma$ and $f_\delta$. So $H=(f_\gamma)$ is a flag of
  $M$ and $H\sim^\gamma F_\gamma$ for all $\gamma\in\Gamma$.

  For a chamber system $(X,\sim^\gamma)_{\gamma\in \Gamma}$, the
  correspondence $x\mapsto\phi(x)$ defines a map
  $\phi:X\to\mathcal{X}(\mathcal{M}(X))$. It is easy to see that
  $\phi(x)\sim^\gamma\phi(y)$ \iff $x\sim^\gamma y$. If $X$ is a dual
  quasi-building, Property (\ref{L:buildings:equals_all_gamma})
  implies that $\phi$ is injective and Property
  (\ref{L:buildings:lift_flag}) that $\phi$ is surjective. Thus $X$
  and $\mathcal{X}(\mathcal{M}(X))$ are definably isomorphic.

  Given a $\Gamma$-graph $M$, the correspondence
  $a:\mathcal{M}(\mathcal{X}(M))\to M$ which associates to each class
  $F/\sim^\gamma$ the $\gamma$-vertex of $F$ is a bijection between
  $\mathcal{M}(\mathcal{X}(M))$ and the collection of vertices of $M$
  which belong to a flag of $M$. For adjacent $\gamma$ and $\delta$,
  there is an edge between $F/\sim^\gamma$ and $G/\sim^\delta$ \iff
  $a(F/\sim^\gamma)$ and $a(G/\sim^\delta)$ belong to a common flag of
  $M$. This shows that $a$ is a definable isomorphism if $M$ is a
  $\Gamma$-space.

  Thus, the classes of dual quasi-buildings and of
  $\Gamma$-spaces are bi-interpretable, as desired.
\end{proof}

In order to describe the model-theoretical properties of the dual
quasi-building $\mathrm{B}^0(\Gamma)$ (introduced right before Lemma
\ref{L:buildings}), we may therefore consider the
first-order theory of

\[\label{seite:definition_m_0} \mo =
\mathcal{M}(\mathrm{B}^0(\Gamma)),\]

\noindent its associated $\Gamma$-space. Our reason to do this is that
$\Gamma$-spaces, for certain Coxeter groups $\Gamma$, will be familiar
to the readers of \cite{BP00,  kT14, BMPZ13}. In particular, many of
the tools developped in \cite{BMPZ13} can be easily generalised and
adapted to this context. However, the whole model-theoretical
study of $\mathrm{B}^0(\Gamma)$ could be  done without passing to its
corresponding $\Gamma$-space.

%%% Local Variables:
%%% TeX-master: "alt"
%%% ispell-local-dictionary: "british"
%%% End:
%Ende des Files \input{alt2} %Right-angled buildings and geometries
%Anfang des Files \input{alt3} %Paths and Words
\section{Simply connected $\Gamma$-spaces}\label{S:Simply_connected}

Recall that by a Coxeter group we mean a right-angled finitely
generated Coxeter group. From now on, fix a Coxeter group
$(W,\Gamma)$, which we will denote simply by $W$, with underlying
Coxeter graph $\Gamma$. In order to describe the first-order theory of the
structure $\mo$ obtained before, we will need to study non-standard paths
between flags.

\begin{notation}
   A \emph{letter} is a non-empty connected subset of the
graph $\Gamma$. Characters such as $s$ and $t$ will exclusively refer to
  letters. A \emph{word} $u$ is a finite sequence of letters.
\end{notation}
Every generator $\gamma$ in $\Gamma$ defines the letter $\{\gamma\}$.
In this way, every word in the generators can be considered as a word
in the above sense.

\begin{definition}\label{D:comm}
  Two letters $s$ and $t$ \emph{commute} if $s\cup t$ is not a letter,
  i.e.\ if the elements of $s$ commute with all elements of $t$. In
  particular, no letter commutes with itself. Two words commute if
  their letters respectively do. A word is \emph{commuting} if it
  consists of pairwise commuting letters. A \emph{permutation} of a
  word is obtained by repeatedly permuting adjacent commuting letters.
  Two words $u$ and $v$ are \emph{equivalent}, denoted by $u\approx
  v$, if one can be permuted into the other.
\end{definition}
The following is easy to see.
\begin{remark}\label{R:commuting_word}
  A commuting word $w=s_1\cdots s_n$ is determined up to equivalence
  by its \emph{support} \[|w|=s_1\cup\cdots\cup s_n,\] where the
$s_i$'s are the connected components of $|w|$.
\end{remark}
\noindent We will often write $w$ instead of $|w|$ if $w$ is a
commuting word.\\

Throughout this section, we will work inside some ambient
$\Gamma$-space.

\begin{definition}\label{D:Flageq}
  A \emph{weak flag path} $P$ from the flag $F$ to the flag $G$ is a
finite sequence  $F=F_0,F_1,\ldots,F_n=G$ of flags such that the colours
where
$F_{i+1}$ and $F_i$
  differ form a letter $s_{i+1}$. To such a path, we associate the
  word $u=s_1\cdots s_n$ and denote this by $F\sop{u} G$.
\end{definition}
%{\bf Wir haben frueher gesagt, dass der Pfad den \emph{Typ} $u$
%hat.}\\

In the light of Theorem \ref{T:biinterpretierbar}, we transfer to
$\Gamma$-spaces Definition \ref{D:equ} and say that two flags $F$
and $G$ are \emph{$A$-equivalent},
\[F\sim_AG,\]
if the set of colours where $F$ and $G$ differ is contained in
$A\subset\Gamma$. Similarly as in \cite[Lemma 6.3]{BMPZ13}, by
decomposing any subset of $\Gamma$ as a disjoint union of its
connected components, we obtain the following.

\begin{lemma}\label{L:modlemma}
  Two flags $F$ and $G$ are $A$-equivalent \iff they can be connected
  by a weak flag path whose word consists of letters contained in $A$.
  In particular, setting $A=\Gamma$, any two flags can be connected by
  a weak flag path.
\end{lemma}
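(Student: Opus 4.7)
One direction is immediate from the definition: if $F\sop{u}G$ with each letter of $u$ contained in $A$, then the set of colours where $F$ and $G$ differ is a subset of the union of the letters of $u$, hence of $A$, so $F\sim_A G$.

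For the converse, assume $F\sim_A G$ and let $B\subseteq A$ be the set of colours where $F$ and $G$ differ. Decompose $B$ into its connected components in $\Gamma$, writing $B=s_1\sqcup\cdots\sqcup s_n$ where each $s_i$ is a non-empty connected subset of $\Gamma$, that is, a letter. My plan is to construct inductively a weak flag path $F=F_0,F_1,\ldots,F_n=G$ such that $F_i$ is obtained from $F_{i-1}$ by replacing the vertices at colours in $s_i$ by the corresponding vertices of $G$.

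The main point to verify is that each $F_i$ so defined is again a flag. It suffices to treat the first step: let $F_1$ agree with $G$ on $s_1$ and with $F$ elsewhere. Given adjacent colours $\gamma,\delta\in\Gamma$, I check that the chosen vertices at $\gamma$ and $\delta$ are adjacent in the $\Gamma$-space. If $\gamma,\delta\in s_1$, adjacency holds because $G$ is a flag; if $\gamma,\delta\notin s_1$, it holds because $F$ is a flag. The remaining mixed case, $\gamma\in s_1$ and $\delta\notin s_1$, splits into two subcases. If $\delta\notin B$, then $F$ and $G$ coincide at $\delta$, so the adjacency reduces to adjacency in the flag $G$. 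If $\delta$ lies in some other component $s_j$ with $j\neq 1$, then $\gamma$ and $\delta$ would connect $s_1$ and $s_j$ inside $B$, contradicting that these are distinct connected components of $B$; so this subcase does not occur. Hence $F_1$ is a flag, and by construction differs from $F$ exactly at $s_1$ (the letter labelling the step).

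Iterating, having built $F_i$, the flag $F_i$ differs from $G$ precisely at $s_{i+1}\cup\cdots\cup s_n$, whose connected components are letters, so the same argument produces $F_{i+1}$. After $n$ steps we reach $F_n=G$, yielding a weak flag path $F\sop{s_1\cdots s_n} G$ whose letters all lie in $B\subseteq A$. The ``in particular'' assertion follows by taking $A=\Gamma$, since then $F\sim_\Gamma G$ holds trivially for any two flags. The only substantive step is the case analysis above; the rest is a straightforward induction on the number of connected components of $B$.
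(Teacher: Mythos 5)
Your proof is correct and follows the same route as the paper: decompose the set of colours where $F$ and $G$ differ into connected components, and interpolate by swapping one component at a time. You additionally spell out the verification that each intermediate $F_i$ is a flag (in particular ruling out the cross-component adjacency case), a detail the paper leaves implicit.
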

\begin{proof}
  For $F=\{f_\gamma\mid\gamma\in\Gamma\}$ and
  $G=\{g_\gamma\mid\gamma\in\Gamma\}$, let $s_1,\ldots,s_n$ be the
  connected components of $\{\gamma\in\Gamma\mid
  f_\gamma\not=g_\gamma\}$. Set
  \[F_i=\{f_\gamma\mid\gamma\not\in s_1\cup\ldots\cup
  s_i\}\cup\{g_\gamma \mid\gamma\in s_1\cup\ldots\cup s_i\}.\] Then
  $F_0,\ldots,F_n$ is a weak path which connects $F$ and $G$, and it
  has word $s_1\cdots s_n$.
\end{proof}

The proof yields the following two corollaries.

\begin{cor}\label{C:permute_path}
  Given flags $F$ and $G$, there exists a commuting word $u$, unique
  up to equivalence, such that $F\sop{u}G$. For every permutation $u'$
  of $u$, there is a unique weak flag path from $F$ to $G$ with word
  $u'$.
\end{cor}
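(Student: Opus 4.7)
The plan is to exploit the explicit construction from the proof of Lemma \ref{L:modlemma}. Writing $F = \{f_\gamma\}_{\gamma \in \Gamma}$ and $G = \{g_\gamma\}_{\gamma \in \Gamma}$, let $D = \{\gamma \in \Gamma \mid f_\gamma \neq g_\gamma\}$, and let $s_1, \ldots, s_n$ be the connected components of $D$, viewed as an induced subgraph of $\Gamma$. Each $s_i$ is a letter, and distinct components are neither overlapping nor joined by an edge of $\Gamma$; hence by Definition \ref{D:comm} they pairwise commute. The hybrid flags built in that proof then give a weak flag path $F \sop{u} G$ with $u = s_1 \cdots s_n$ a commuting word, settling existence.

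For uniqueness up to equivalence, I would argue via supports. The letters of any commuting word $v$ lie in distinct connected components of $|v|$, so by Remark \ref{R:commuting_word}, $v$ is determined up to equivalence by $|v|$. If $F \sop{v} G$ for such a $v$, then each colour $\gamma$ appears in at most one letter of $v$ (the letters being pairwise disjoint), so the $\gamma$-vertex changes along the path precisely when $\gamma \in |v|$. Since the path runs from $F$ to $G$, necessarily $|v| = D = |u|$, whence $v \approx u$.

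For the last assertion, fix a permutation $u' = s_{\sigma(1)} \cdots s_{\sigma(n)}$ of $u$; since the letters of $u$ pairwise commute, $\sigma$ may be any permutation of $\{1, \ldots, n\}$. Define the candidate path $F = F'_0, F'_1, \ldots, F'_n = G$ by
\[F'_i = \{f_\gamma \mid \gamma \notin s_{\sigma(1)} \cup \cdots \cup s_{\sigma(i)}\} \cup \{g_\gamma \mid \gamma \in s_{\sigma(1)} \cup \cdots \cup s_{\sigma(i)}\}.\]
The delicate point is verifying that each $F'_i$ is actually a flag of the ambient $\Gamma$-space: for adjacent $\gamma, \delta \in \Gamma$ both lying inside, or both outside, the union $s_{\sigma(1)} \cup \cdots \cup s_{\sigma(i)}$, the required edge is inherited from $G$ or $F$ respectively; in the mixed case, the \emph{outside} vertex must lie outside $D$ altogether (otherwise it would join the same connected component of $D$ as the \emph{inside} one), so $f_\delta = g_\delta$ and the edge is again present in both $F$ and $G$.

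Uniqueness of this path follows by bookkeeping: each $\gamma \in D$ appears in exactly one letter $s_{\sigma(k)}$ of $u'$, so along any weak flag path with word $u'$ the $\gamma$-vertex changes precisely at step $k$, necessarily from $f_\gamma$ to $g_\gamma$, recovering $F'_i$ at every stage. I expect the main obstacle to be the flag verification in the preceding paragraph; the rest is a routine manipulation of the support of a commuting word.
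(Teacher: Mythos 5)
Your proof is correct and follows essentially the same route as the paper's, which simply invokes the explicit hybrid-flag construction from the proof of Lemma~\ref{L:modlemma} and notes that the letters of $u$ must be the connected components of the set of colours where $F$ and $G$ differ. You have merely spelled out the details that the paper leaves implicit — in particular the check that the mixed-case adjacency forces the outside colour to lie off $D$, and the step-by-step bookkeeping for uniqueness of the path — and these details are accurate.
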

Uniqueness of $u$ follows from fact that the letters of $u$ are the
connected components of the set of colours where $F$ and $G$ differ.
\begin{cor}\label{C:path_with_commuting_words}
  In a path $F\sop{u}H\sop{v}G$, where $u$ and $v$ commute, the
  flag $H$ is uniquely determined by $F$, $G$, $u$ and $v$.
\end{cor}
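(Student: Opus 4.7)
The plan is to show directly that each vertex of $H$ is determined, colour-by-colour, by $F$, $G$ together with the supports $|u|$ and $|v|$. In fact, we shall never need the internal sequences of $u$ and $v$; only their unions of letters matter.

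First, I would exploit the convention on commutation from Definition \ref{D:comm}: no letter commutes with itself. Consequently, if two letters $s$ and $t$ commute, they must be disjoint, for any common element $\gamma\in s\cap t$ would commute with itself. Since the hypothesis ``$u$ and $v$ commute'' means that every letter of $u$ commutes with every letter of $v$, it follows that $|u|\cap|v|=\emptyset$.

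Second, I would unpack the definition of a weak flag path. Along $F=F_0,F_1,\ldots,F_k=H$ with word $u=s_1\cdots s_k$, each transition $F_{i-1}\to F_i$ modifies only the vertices coloured in $s_i$, and therefore $f_\gamma=h_\gamma$ for every $\gamma\notin|u|$. Applying the same remark to $H\sop{v}G$ gives $h_\gamma=g_\gamma$ for every $\gamma\notin|v|$. Combining these with the disjointness above produces the desired prescription for $H$: if $\gamma\in|u|$, then $\gamma\notin|v|$ and hence $h_\gamma=g_\gamma$; symmetrically, if $\gamma\in|v|$ then $h_\gamma=f_\gamma$; and if $\gamma$ belongs to neither support, then $f_\gamma=h_\gamma=g_\gamma$. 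This pins $H$ down uniquely from the data $F$, $G$, $u$, $v$.

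I do not expect any serious obstacle: the argument is a short bookkeeping exercise once the supports are known to be disjoint. The only point worth underlining is that the words $u$ and $v$ themselves need not be commuting words in the internal sense of Definition \ref{D:comm} — they may well contain repeated letters or internally non-commuting ones — but this is irrelevant since the proof depends exclusively on the sets $|u|$ and $|v|$.
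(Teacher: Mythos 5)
Your proof is correct and matches the paper's intended derivation: the paper presents this corollary as an immediate consequence of the proof of Lemma~\ref{L:modlemma}, and the key observation there is precisely yours, namely that commuting words have disjoint supports, so $h_\gamma=g_\gamma$ for $\gamma\notin|v|$ and $h_\gamma=f_\gamma$ for $\gamma\in|v|\subset\Gamma\setminus|u|$, pinning $H$ down colour by colour. Your remark that only the supports $|u|$, $|v|$ enter the argument is an accurate and worthwhile clarification.
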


To each relation $F\sop{s} G$ associate the natural bijection
$\A_{s}(F)\to\A_{s}(G)$. It is easy to see, that a weak flag path
$P:F_0\sop{s_0}F_1\cdots F_{n-1}\sop{s_{n-1}}F_n$ is completely
determined by $F_0$ (likewise by $F_n$) and the sequence of associated
maps. If $s_1\cdots s_n$ is commuting, this sequence of maps is
equivalent to the collection $\A_{s_i}(F_0)\to\A_{s_i}(F_n)$, which
gives an alternative proof of \ref{C:permute_path}. More generally,
the following lemma holds.

\begin{lemma}[Permutation of a path]\label{L:path_permutation}
  Given a weak flag path $P:F_0\sop{u}F_n$ and  a permutation $u'$
of $u$, there is a unique weak flag path $P':F_0\sop{u'}F_n$
such that the associated map of each letter in $P$ is the same as the
  associated map of the corresponding occurrence of that letter in
  $P'$.
\end{lemma}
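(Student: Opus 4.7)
The plan is to reduce the lemma to an elementary single-swap step and then invoke Corollary \ref{C:permute_path}. Since any permutation of $u$ is obtained by iterating swaps of adjacent commuting letters, I would proceed by induction on the minimal number of such swaps required to transform $u$ into $u'$. The base case $u=u'$ is trivial, so for the inductive step it suffices to treat the case where $u'$ differs from $u$ by a single swap at positions $i,i+1$: say $u=s_1\cdots s_i s_{i+1}\cdots s_n$ and $u'=s_1\cdots s_{i+1}s_i\cdots s_n$, with $s_i$ and $s_{i+1}$ commuting letters. Note that commuting letters are automatically disjoint as subsets of $\Gamma$, since otherwise their union would be a connected subset and hence a letter, contradicting Definition \ref{D:comm}.

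I would then leave every $F_j$ for $j\ne i$ unchanged and produce a new middle flag $F_i'$. The segment $F_{i-1}\sop{s_i}F_i\sop{s_{i+1}}F_{i+1}$ realises the commuting word $s_i\cdot s_{i+1}$, so applying Corollary \ref{C:permute_path} to the permuted word $s_{i+1}\cdot s_i$ yields a unique weak flag path from $F_{i-1}$ to $F_{i+1}$ with that word, i.e.\ a unique flag $F_i'$ satisfying $F_{i-1}\sop{s_{i+1}}F_i'\sop{s_i}F_{i+1}$. To see that the associated maps are preserved under the swap, observe that since $s_i$ and $s_{i+1}$ are disjoint, the $s_i$-vertices of $F_i'$ coincide with those of $F_{i-1}$ (the single step $F_{i-1}\sop{s_{i+1}}F_i'$ only affects $s_{i+1}$-vertices), while the $s_i$-vertices of $F_i$ coincide with those of $F_{i+1}$ (by the same reasoning applied to $F_i\sop{s_{i+1}}F_{i+1}$). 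Consequently the natural bijection $\A_{s_i}(F_i')\to\A_{s_i}(F_{i+1})$ attached to the letter $s_i$ in $P'$ literally agrees with the bijection $\A_{s_i}(F_{i-1})\to\A_{s_i}(F_i)$ attached to $s_i$ in $P$. A symmetric argument handles the letter $s_{i+1}$, and all other letters occupy identical positions in $P$ and $P'$ between the same flags, so their associated maps are trivially unchanged.

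Uniqueness of $P'$ follows from the observation recorded just before the lemma: a weak flag path is completely determined by its initial flag together with its sequence of associated maps, so enforcing the matching condition pins $P'$ down. The main piece of care, rather than a genuine obstacle, is the bookkeeping required to track through a composite permutation which occurrence of a repeated letter in $u$ corresponds to which occurrence in $u'$; the induction handles this for free, since each elementary swap transposes two specific occurrences, and composing such transpositions transports the correspondence through the whole permutation.
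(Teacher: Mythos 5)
Your proof is correct, and since the paper leaves this lemma without an explicit proof (stating it as an extension of the preceding discussion), your argument is a legitimate fill-in that uses exactly the ingredients the paper gestures at: the fact that a weak flag path is determined by its initial flag together with the sequence of associated maps, and Corollary~\ref{C:permute_path} for the commuting two-letter case. Reducing to a single adjacent swap and checking that disjointness of commuting letters makes the two relevant associated maps literally agree is the natural way to realise the paper's sketch, so I would count this as essentially the same approach.
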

\noindent Such a path $P'$ is a \emph{permutation} of $P$.

\begin{definition}\label{D:splitting}
  \par\noindent
  \begin{enumerate}[$\bullet$]\setlength{\itemsep}{.5em}
  \item A \emph{splitting} of a letter $s$ is a (possibly trivial)
    word, whose letters are properly contained in $s$. Given words $u$
    and $v$, we say that $u\prec v$ if $u$ is equivalent to a word
    obtained from $v$ by replacing at least one occurrence of a letter
    in $v$ by a splitting. We write $u\preceq v$ if either $u\prec v$
    or $u\approx v$.
   \item Whenever $F\sop{s}G$ and there is no weak flag
     path from $F$ to $G$ whose word is a splitting of $s$, write
      $F\op{s}G$. A  \emph{flag path} from $F$ to $G$ with word
$u=s_1\cdots s_n$,
     denoted by $F\op{u} G$,
     is a weak flag path
     $F=F_0,\ldots,F_n=G$ such that $F_i\op{s_{i+1}}F_{i+1}$ for
     $i=0,\ldots,n-1$.
  \end{enumerate}
\end{definition}

\noindent It is easy to see that the relation $\prec$ is transitive,
irreflexive and well-founded (\emph{cf.}\ \cite[Lemma 5.26]{BMPZ13}.)
A permutation of a flag path is again a flag path. If $F\sop{s}G$,
whether $F\op{s}G$ depends on the ambient $\Gamma$-space.

\vspace{1em}

\begin{lemma}\label{L:Flagpath}
 If $F\sop{u} G$, then $F\op{v} G$ for some $v\preceq u$.
\end{lemma}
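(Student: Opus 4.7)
My plan is to prove the lemma by well-founded induction on $u$ with respect to the relation $\prec$, whose well-foundedness was already noted right after Definition \ref{D:splitting}.

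Suppose we are given a weak flag path $F=F_0\sop{s_1}F_1\sop{s_2}\cdots\sop{s_n}F_n=G$ with word $u=s_1\cdots s_n$. Consider each step $F_{i-1}\sop{s_i}F_i$. Either every such step already satisfies $F_{i-1}\op{s_i}F_i$, in which case the concatenated sequence is itself a flag path $F\op{u}G$ and we take $v=u$; or, for some index $i$, the step $F_{i-1}\sop{s_i}F_i$ is witnessed not to be a flag step, meaning there exists a weak flag path from $F_{i-1}$ to $F_i$ whose word $w_i$ is a splitting of $s_i$ (all letters of $w_i$ are properly contained in $s_i$).

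In the second case, I splice the weak flag path witnessing $F_{i-1}\sop{w_i}F_i$ into the original sequence, replacing the single edge $F_{i-1}\sop{s_i}F_i$. The result is a weak flag path from $F$ to $G$ whose associated word is $u'=s_1\cdots s_{i-1}\cdot w_i\cdot s_{i+1}\cdots s_n$. By construction $u'$ is obtained from $u$ by replacing one occurrence of the letter $s_i$ by a splitting, so by definition $u'\prec u$. The inductive hypothesis applied to the weak path $F\sop{u'}G$ now gives a flag path $F\op{v}G$ with $v\preceq u'$. Transitivity of $\preceq$ (which follows formally from transitivity of $\prec$ and the definition $v\preceq u' \Leftrightarrow v\prec u' \vee v\approx u'$) then yields $v\preceq u$, as required.

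I expect the only minor point to verify is the implicit combinatorial step of concatenating two weak flag paths to get a weak flag path — but this is immediate from Definition \ref{D:Flageq}, since the sequence of flags concatenates and the sets of colors where consecutive flags differ are still letters. The well-foundedness of $\prec$ ensures that the induction terminates, so no further obstacle arises.
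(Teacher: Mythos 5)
Your proof is correct and takes essentially the same approach as the paper: the paper phrases the argument as a descending replacement process that must terminate by well-foundedness of $\prec$, while you phrase it as well-founded induction on $u$, but these are the same argument.
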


\begin{proof}
  Suppose $F\sop{u}G$. If this is not a flag path, it contains a step
  $F'\sop{s}G'$ which can be replaced by $F'\sop{w}G'$, where $w$ is a
  splitting of $G$. This yields a weak flag path $F\sop{u'}G$ with
  $u'\prec u$. Since $\prec$ is well-founded, this procedure stops
  with a flag path $F\op{v} G$ for some word $v\preceq u$.
\end{proof}

\vspace{1em}

\begin{notation}
  The notation $s\subset t$ means that $s$ is a subset of $t$,
  possibly with $s=t$. We will use the notation $s\subsetneq t$ to
  emphasise that $s$ is a proper subset of $t$.
\end{notation}

\vspace{1em}

\begin{definition}\label{D:Op_redpath}
  \par\noindent

  \begin{enumerate}[$\bullet$]\setlength{\itemsep}{.5em}
  \item A word $v=s_1\cdots s_n$ is \emph{reduced} if there is no pair
    $i\neq j$ such that $s_i\subset s_j$ and $s_i$ commutes with all
    letters in $v$ between $s_i$ and $s_j$.

  \item A flag path is \emph{reduced} if its associated word is.

  \item The reduced word $v$ is a \emph{reduct} of $u$ if it can
    obtained from $u$ by the following rules\\[-.5em]
    %\vspace{.4em}
    \begin{description}\setlength{\itemsep}{.5em}
      \setlength{\itemindent}{0em}
      \item[\sc Commutation] Permute consecutive commuting
        letters.
      \item[\sc Absorption] If $s$ is contained in $t$, replace a
        subword $s\cdot t$ (or $t\cdot s$) by $t$.
      \item[\sc Splitting] Replace a subword $s\cdot s$ by a splitting
        of $s$.
    \end{description}
    \vspace{.4em}
    We will denote this by $u\str v$ (\emph{cf.}\ \cite[Definition
      5.24]{BMPZ13}). Clearly $u\str v$ implies $v\preceq u$.
  \end{enumerate}
\end{definition}
It is easy to see that a word $u$ is reduced \iff any permutation of
$u$ is. Similarly, a path $P$ is reduced \iff any permutation of
$P$ is.

\vspace{1em}

Consider indexes $i\neq j$ in a word $v=s_1\cdots s_n$ such that
$s_i\subset s_j$ and $s_i$ commutes with all letters in $v$ between
$s_i$ and $s_j$. Using Commutation and Absorption, we can delete the
occurrence of the letter $s_i$. If $s_i=s_j$, we may also replace $s_j$
by a splitting of $s_j$. We call such an operation a
 \emph{generalised} Absorption or Splitting. It is easy to see
that
every reduct of a word can be obtained by a sequence of generalised
Absorptions and Splittings, followed by a permutation.

\begin{lemma}\label{L:RedPath}
  If $F\op{u} G$, then $F\op{v} G$ for some reduced $v$ with $u\str
  v$.
\end{lemma}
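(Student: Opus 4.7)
I will argue by induction on $u$ with respect to the well-founded relation $\prec$. The base case is when $u$ is already reduced, and I take $v = u$. For the inductive step, assume $u$ is not reduced: by Definition \ref{D:Op_redpath} there are indices $i \neq j$ with $s_i \subseteq s_j$ and $s_i$ commuting with every letter of $u$ strictly between positions $i$ and $j$. Using Lemma \ref{L:path_permutation}, I permute the flag path so that the two occurrences become adjacent, and write the resulting word as $u_1 \cdot s_i \cdot s_j \cdot u_2$ with associated flag path
\[
F \op{u_1} F' \op{s_i} F'' \op{s_j} F''' \op{u_2} G.
\]
Since $s_i \subseteq s_j$, we have $F' \sim_{s_j} F'''$, and Corollary \ref{C:permute_path} produces a unique commuting word $w_*$ with $|w_*| \subseteq s_j$ and $F' \sop{w_*} F'''$. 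Lemma \ref{L:Flagpath} then supplies a flag path $F' \op{v_0} F'''$ with $v_0 \preceq w_*$.

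If $s_i = s_j = s$, then either $v_0 = s$, in which case Absorption on $s \cdot s$ produces the word $u_1 \cdot s \cdot u_2$; or $v_0 \prec s$ is a proper splitting of $s$, and the Splitting rule turns $s \cdot s$ into $v_0$, producing $u_1 \cdot v_0 \cdot u_2$. In either situation the new word is obtained from $u$ by one step of $\str$, lies strictly below $u$ in $\prec$, and the obvious concatenation is a flag path from $F$ to $G$.

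The case $s_i \subsetneq s_j$ requires more care, since among the rules of $\str$ only Absorption applies to $s_i \cdot s_j$, yielding the single letter $s_j$. I will show $v_0 = s_j$, so that this Absorption gives a legitimate flag path word. Suppose for contradiction that $v_0 \prec s_j$; then every letter of $v_0$ lies strictly inside $s_j$, so $v_0$ is a splitting of $s_j$. Since the relation Op is symmetric in its endpoints, the step $F'' \op{s_i} F'$ is valid, and prepending it to the flag path $F' \op{v_0} F'''$ yields a weak flag path
\[
F'' \sop{s_i \cdot v_0} F''',
\]
whose word is a splitting of $s_j$ (both $s_i$ and the letters of $v_0$ are proper subsets of $s_j$). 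This contradicts the Op condition of the original step $F'' \op{s_j} F'''$, which by Definition \ref{D:splitting} forbids any weak flag path between $F''$ and $F'''$ whose word is a splitting of $s_j$. Hence $v_0 = s_j$, and Absorption gives the word $u_1 \cdot s_j \cdot u_2 \prec u$ with flag path $F \op{u_1} F' \op{s_j} F''' \op{u_2} G$.

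In every case I have produced a flag path $F \op{u'} G$ with $u \str u'$ and $u' \prec u$. The inductive hypothesis applied to $u'$ yields a reduced $v$ with $u' \str v$ and $F \op{v} G$, and transitivity of $\str$ gives $u \str v$. The main obstacle is the case $s_i \subsetneq s_j$: the rules of $\str$ do not directly allow one to break the single letter $s_j$, so one must exploit the Op property of the step $F'' \op{s_j} F'''$ in the original flag path to rule out the need for any proper splitting.
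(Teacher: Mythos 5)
Your proof is correct and follows essentially the same route as the paper's: permute the flag path so the offending pair of letters becomes adjacent, then handle the $s_i = s_j$ case by the Absorption/Splitting dichotomy and the $s_i \subsetneq s_j$ case by showing the intermediate flag can be dropped because any proper splitting of $s_j$ between $F'$ and $F'''$ would, after prepending the $s_i$-step, contradict the original step's Op condition. The only stylistic differences are that you phrase termination as an explicit $\prec$-induction while the paper argues termination from well-foundedness directly, and you only write out one of the two symmetric orderings of the containment pair ($s_i$ before $s_j$); the paper acknowledges both orderings with its parenthetical "(or $t \subset s$)", but the other ordering is handled identically by reversing the path.
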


\begin{proof}
  If the path $F\op{u} G$ is not reduced, possibly after permutation,
we may
  assume that it contains a subpath $F'\op{s}H'\op{t}G'$, where
  $s\subset t$ (or $t\subset s$). One of the following reduction
  steps now applies:
  \vspace{.4em}
  \begin{description}\setlength{\itemsep}{.5em}
  \item[\sc Proper Absorption] If $s\subsetneq t$, remove $H'$
    since $F'\op{t}G'$, for otherwise, there would be a a splitting
$x$ of
    $t$ such that $F'\sop{x}G'$, which implies $H'\sop{s\cdot x}G'$,
    contradicting $H'\op{t}G'$.

  \item[\sc Absorption/Splitting] If $s=t$, note that $F'\sim_s G'$.
    Lemmata \ref{L:modlemma} and \ref{L:Flagpath} yield:
\vspace{.4em}
    \begin{description}\setlength{\itemsep}{.3em}
    \item[\sc Absorption] $F'\op{s}G'$, or
    \item[\sc Splitting] $F'\op{x}G'$ for some splitting $x$ of $s$.
    \end{description}
    Therefore, the flag $H'$ can be removed.
  \end{description}
    \vspace{.3em}
  \noindent Note that both Absorption and Splitting yield words which
are $\prec$-smaller than $u$. Thus, the process must eventually
stop.
\end{proof}

\begin{remark}
  We will see in Remark \ref{R:paths_exists} that, for every reduction
  $u\str v$, there is a flag path of word $u$ in a suitable
  $\Gamma$-space which can be reduced to a path with word $v$ by the
above procedure.
\end{remark}

\begin{cor}
  Any two flags can be connected by a reduced path.
\end{cor}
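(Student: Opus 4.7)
The plan is straightforward: this corollary follows by chaining the three connectivity/reduction lemmas that have just been established, namely Lemma \ref{L:modlemma}, Lemma \ref{L:Flagpath}, and Lemma \ref{L:RedPath}. Given any two flags $F$ and $G$ in our ambient $\Gamma$-space, I would first apply Lemma \ref{L:modlemma} (with $A=\Gamma$) to produce a weak flag path $F\sop{u}G$ for some commuting word $u$.

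Next, I would invoke Lemma \ref{L:Flagpath} on this weak flag path: this upgrades it to an honest flag path $F\op{v}G$ with $v\preceq u$, by iteratively replacing any step $F'\sop{s}G'$ that admits a splitting by the corresponding shorter weak path and using well-foundedness of $\prec$.

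Finally, I would apply Lemma \ref{L:RedPath} to $F\op{v}G$, which provides a reduced word $w$ with $v\str w$ and a reduced flag path $F\op{w}G$. The composition of these three steps yields a reduced path between $F$ and $G$, as desired.

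I don't anticipate any real obstacle here, since all the work has already been done upstream; the corollary is essentially a bookkeeping statement recording that existence of weak flag paths (from Lemma \ref{L:modlemma}) survives both reduction procedures. The only minor point worth noting in the write-up is that nothing in the chain requires any additional hypothesis on the $\Gamma$-space beyond what is used in the three preceding lemmas, so the statement holds in full generality.
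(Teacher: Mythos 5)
Your chain of Lemma \ref{L:modlemma} (with $A=\Gamma$) to get a weak flag path, Lemma \ref{L:Flagpath} to upgrade it to a flag path, and Lemma \ref{L:RedPath} to reduce it is exactly the intended argument; the paper states the corollary immediately after Lemma \ref{L:RedPath} without proof precisely because this bookkeeping chain is routine. Your observation that no extra hypotheses on the ambient $\Gamma$-space are needed is also correct.
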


The following property of the ambient space will ensure that all
reduced paths between two given flags have equivalent words
(\emph{cf.}\ Proposition \ref{P:Eindwort}).

\begin{definition}\label{D:sc}
  A $\Gamma$-space $M$ is \emph{\SC} if there are no
  non-trivial closed reduced flag paths.
\end{definition}

\begin{lemma}\label{L:SCequiv}
  The $\Gamma$-space $M$ is \SC \iff the word of any closed flag path
  can be reduced to the trivial word\/ $1$.
\end{lemma}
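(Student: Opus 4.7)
The plan is to use Lemma \ref{L:RedPath} --- which replaces any given flag path by a reduced flag path whose word is obtained from the original by the reduction relation $\str$ --- together with the observation that a reduced word is $\str$-minimal, up to permutation.

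For the implication $(\Rightarrow)$, assuming $M$ is simply connected, I would take an arbitrary closed flag path $F\op{u} F$ and invoke Lemma \ref{L:RedPath} to obtain a reduced word $v$ with $u\str v$ and a reduced closed flag path $F\op{v} F$. Simple connectedness forces this reduced closed path to be trivial, so $v$ is the empty word $1$, which yields $u\str 1$ as required.

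For the implication $(\Leftarrow)$, I would take a closed reduced flag path $F\op{u} F$; by hypothesis $u\str 1$. The key point is that no generalised Absorption or Splitting move can be applied to a reduced word, since every such move requires indices $i\neq j$ with $s_i\subset s_j$ and $s_i$ commuting with every letter of $u$ lying strictly between them, which is precisely what reducedness forbids. As noted right after Definition \ref{D:Op_redpath}, every reduct is obtained by a sequence of generalised Absorptions and Splittings followed by a permutation, so the only reducts of a reduced $u$ are permutations of $u$. Hence $1$ must be a permutation of $u$, which forces $u=1$, and the closed path is trivial.

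There is no serious obstacle here; the argument is essentially a two-line application of Lemma \ref{L:RedPath}, combined with the elementary observation that reduced words are $\str$-minimal. The only step that deserves a line of justification is the minimality of reduced words under $\str$, and this is immediate from the description of reductions as generalised Absorptions/Splittings followed by a permutation.
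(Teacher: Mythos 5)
Your proof is correct and follows essentially the same route as the paper's: the $(\Rightarrow)$ direction is a direct application of Lemma \ref{L:RedPath}, and the $(\Leftarrow)$ direction hinges on the fact that a reduced word only reduces to its own permutations. The paper states this latter fact tersely ("since $u\str 1$, then $u=1$, as $u$ is already reduced"); you spell out the justification via the description of reductions as generalised Absorptions/Splittings followed by a permutation, but the content is identical.
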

\begin{proof}
  Suppose the condition on the right holds. Given a closed reduced
  flag path with word $u$, since $u\str 1$, then $u=1$, as $u$ is
  already reduced. For the other direction, given a closed path $P$
  with word $u$, apply Lemma \ref{L:RedPath} to obtain a closed
  reduced path whose word $v$ is a reduct of $u$. If $M$ is \SC, the
  word $v$ must be $1$, thus $u\str 1$.
\end{proof}

\begin{theorem}\label{T:M0_SC}
  The $\Gamma$-space $\mo$ is \SC.
\end{theorem}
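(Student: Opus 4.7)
The plan is to transfer the claim to the underlying building $\mathrm{B}_0(\Gamma)$ via the bi-interpretation of Theorem \ref{T:biinterpretierbar} and apply the building axiom (Lemma \ref{L:building_equiv} $(\ref{L:building_equiv:simple})$) that $\mathrm{B}_0(\Gamma)$ admits no non-trivial closed reduced path. The key step, and the main obstacle, will be to show that every atomic flag-path step $F\op{s}G$ in $\mo$ is forced to have $|s|=1$; granting this, a closed reduced flag path in $\mo$ is essentially a closed reduced path in the building.

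To prove the key step, let $x,y$ denote the chambers of $\mathrm{B}_0(\Gamma)$ with $\phi(x)=F$ and $\phi(y)=G$. Since $F$ and $G$ differ at exactly the colours in $s$, the chambers satisfy $x\sim^\gamma y$ for every $\gamma\notin s$; combining these equivalences via the intersection formula $(\dag)$ yields $x\sim_s y$. The building axiom then furnishes a reduced path $x=x_0\op{\delta_1}\cdots\op{\delta_m}x_m=y$ with each $\delta_i\in s$, which translates back via $\phi$ into a weak flag path $F\sop{\delta_1\cdots\delta_m}G$. If $|s|\geq 2$, every singleton $\{\delta_i\}$ is properly contained in $s$, so this word is a splitting of $s$, contradicting $F\op{s}G$. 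Hence $s$ must be a singleton $\{\gamma\}$.

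With atomic letters being singletons, a reduced flag path $F_0\op{\gamma_1}\cdots\op{\gamma_n}F_n$ in $\mo$ translates step by step into a path $x_{F_0}\op{\gamma_1}\cdots\op{\gamma_n}x_{F_n}$ in $\mathrm{B}_0(\Gamma)$, since for each $i$ the flags $F_{i-1}$ and $F_i$ differ exactly at $\gamma_i$ and so the corresponding chambers are $\sim_{\gamma_i}$-related and distinct. The paper's notion of reducedness for a word of singletons $\{\gamma_1\}\cdots\{\gamma_n\}$ coincides with the building's notion of reducedness, since in a right-angled Coxeter group two generators commute \iff they are non-adjacent in $\Gamma$. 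A closed reduced flag path in $\mo$ would therefore yield a closed reduced path in $\mathrm{B}_0(\Gamma)$, which must be trivial by Lemma \ref{L:building_equiv} $(\ref{L:building_equiv:simple})$; we conclude that $\mo$ is simply connected.
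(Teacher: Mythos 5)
Your proof is correct and follows essentially the same route as the paper: show that $F\op{s}G$ in $\mo$ forces $|s|=1$ (by lifting the relation $x\sim_s y$ to a reduced singleton path in the building and recognizing it as a splitting when $|s|\geq 2$), then identify reduced singleton flag paths with reduced paths in $\mathrm{B}_0(\Gamma)$ and invoke the building axiom. The paper states the key observation that all flag paths in $\mo$ are singleton paths in a more compressed form, but your argument supplies exactly the same underlying reasoning.
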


\begin{proof}
  By the definition of $\mo$, as explained in the proof of
\ref{T:biinterpretierbar}, two flags $F$ and $G$ in $\mo$ have the same
$\gamma$-vertex if they can be connected by a flag path, whose letters are
singletons different from $\gamma$. Thus, if $F\op{s} G$, then $s$ must be
a singleton. All paths are singleton paths. Since
  $\mathrm{B}_0(\Gamma)$ is a building, there are no non-trivial
  closed reduced singleton paths.
\end{proof}

An interesting feature of \SC $\Gamma$-spaces is that the word of a
reduced flag path connecting two given flags is unique, up to
equivalence. For the proof of the following proposition, we need a
definition and a lemma.
\begin{definition}\label{D:abs}
  The letter $t$ is (properly) \emph{left-absorbed} by the word
$s_1\cdots
  s_n$, resp.\ \emph{right-absorbed}, \iff $t$ is
  (properly) contained in some $s_i$ and commutes with $s_1\cdots
s_{i-1}$, resp.\ with $s_{i+1}\cdots
  s_{n}$. A word $u$ is
  \emph{left-absorbed},  resp.\ \emph{right-absorbed}, by $v$ if each
letter in $u$ is.
\end{definition}

A word is reduced \iff it cannot be written as $u\cdot t\cdot v$,
where $t$ is either right-absorbed  by $u$ or left-absorbed  by
$v$.
The word $u=s_1\cdots
s_n$ left-absorbs $t$  \iff $u^{-1}$
right-absorbs $t$, where $u^{-1}=s_n\cdots s_1$.

\begin{lemma}\label{L:reduce_one_letter}
  Let $u\cdot s$ be reduced and $x$ be a splitting of $s$. Every
  reduct of $u\cdot x$ is equivalent to $u\cdot x_1$ for some
  $x_1\preceq x$.
\end{lemma}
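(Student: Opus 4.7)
The plan is to induct on $x$ with respect to the well-founded strict order $\prec$ from Definition~\ref{D:splitting}. Let $v$ be a reduct of $u\cdot x$. If $u\cdot x$ is already reduced, then $v$ is obtained from $u\cdot x$ by Commutation steps alone, so $v\approx u\cdot x$, and the conclusion holds with $x_1=x$.

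Otherwise, I would examine a first non-commutation reduction step, applied to some word $w$ equivalent to $u\cdot x$. Such a step involves two letters $a$ (at position $q_1$) and $b$ (at position $q_2 > q_1$) of $w$, with one containing the other, such that the smaller commutes with every letter strictly between them. Each letter of $w$ retains the label of whether it originated in $u$ or in $x$. The key claim is that $a$ must come from $x$, and moreover that $b$ must come from $x$ whenever $a\subseteq b$. Indeed, if both $a$ and $b$ came from $u$, this would contradict the reducedness of $u$. If $a\in u$ and $b\in x$ with $a\subseteq b$, then $b\subsetneq s$ forces $a\subsetneq s$, while the commutation hypothesis on $a$ forces it to commute with every letter of $u$ originally occurring after it (these being among the letters between $a$ and $b$ in $w$); this contradicts the reducedness of $u\cdot s$ via the pair $(a,s)$. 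Finally, the case $a\in x$, $b\in u$ is ruled out because $a$ and $b$ have a containment relation and hence share a vertex of $\Gamma$, so they do not commute and cannot have been transposed past one another by any sequence of commutations starting from $u\cdot x$.

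The only remaining possibilities are therefore: either $a\in u$, $b\in x$ with $b\subsetneq a$, in which case the reduction deletes the letter $b$ from $x$; or both $a,b$ lie in $x$, in which case the reduction takes place entirely within $x$. In both situations the resulting word is equivalent to $u\cdot x'$ for some $x'\prec x$, and each letter of $x'$ remains a proper subset of $s$, so $x'$ is still a splitting of $s$. Thus the inductive hypothesis applied to $u\cdot x'$ yields some $x_1\preceq x'$ with $v\approx u\cdot x_1$; since $x_1\preceq x'\prec x$, this completes the induction.

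The main obstacle is the bookkeeping needed to exclude the dangerous configurations in which a letter of $u$ would have to be absorbed, merged, or split. The reducedness of $u\cdot s$ is exactly the hypothesis that rules this out, but verifying that every forbidden configuration really produces a bad pair $(a,s)$ in $u\cdot s$ requires carefully identifying which letters of $u$ sit between the reducing pair in the (possibly commuted) word, and using that a letter cannot commute with any letter sharing a vertex with it.
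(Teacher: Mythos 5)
Your argument rests on the same observation as the paper's one-sentence proof: the reducedness of $u\cdot s$ prevents any absorption or splitting step from deleting or modifying a letter that came from $u$, so every reduct has the form $u\cdot x_1$.

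Two details need repair, though. First, your stated ``key claim'' ($a$ must come from $x$) contradicts your own subsequent case analysis, which allows $a\in u$, $b\in x$ with $b\subsetneq a$; what you actually need is that the \emph{smaller} of the pair (the letter that gets deleted) always comes from $x$, and that when $a=b$ both occurrences must. More substantively, the parenthetical ``(these being among the letters between $a$ and $b$ in $w$)'' is not accurate. The first non-commutation step in the reduction is a plain Absorption or Splitting acting on two \emph{adjacent} letters of $w$, so nothing lies between them; and in general a letter $c$ of $u$ that originally followed $a$ may have migrated to either side of the pair under the preceding commutations. The correct argument is: such a $c$ must have been swapped either past $a$ (so $c$ commutes with $a$) or past $b$ (so $c$ commutes with $b$, hence with $a$ since $a\subseteq b$). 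Either way $a$ commutes with the whole tail of $u$, and then $a\subsetneq s$ together with this commutation makes the pair $(a,s)$ witness non-reducedness of $u\cdot s$. The ``both in $u$'' case needs the same care, since you invoke reducedness of $u$ without justifying that the relevant letter commutes with everything between the pair in the original ordering of $u$. With those adjustments your proof is correct and is essentially a fleshed-out version of the paper's.
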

\begin{proof}
  Since $u\cdot s$ is reduced, a generalised Absorption or Splitting
  for $u\cdot x$ cannot happen for a pair $s_i\subset s_j$, where
  $s_i$ is contained in $u$. So letters contained in $u$ will never be
  deleted.
\end{proof}

\begin{prop}\label{P:Eindwort}
  If the flags $F$ and $G$ in a \SC $\Gamma$-space $M$ are connected
  by reduced flag paths with respective words $u$ and $v$, then
  $u\approx v$.
\end{prop}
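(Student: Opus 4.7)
The plan is to use simple connectedness to rephrase the statement as a combinatorial claim about the reduction system $\str$, and then to prove this claim by induction on $|u|+|v|$.

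Since each relation $\sim_\gamma$ is symmetric, and the step relation $\op{s}$ depends only on the letter and the unordered pair of flags, the reverse of a flag path is again a flag path, and reducedness of a word is preserved under reversal. Concatenating $F\op{u}G$ with the reverse of $F\op{v}G$ therefore yields a closed flag path based at $F$ with word $u\cdot v^{-1}$, where $v^{-1}$ denotes $v$ read right-to-left. By Lemma \ref{L:SCequiv}, simple connectedness of $M$ gives $u\cdot v^{-1}\str 1$. It remains to prove the purely combinatorial statement that, for reduced words $u$ and $v$, $u\cdot v^{-1}\str 1$ implies $u\approx v$.

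I would argue this by induction on $|u|+|v|$. The base case is trivial. For the induction step, consider a reduction sequence $u\cdot v^{-1}\str 1$ and focus on its first non-commutation step. Since $u$ and $v^{-1}$ are individually reduced --- and reducedness is preserved by internal permutations --- no Absorption or Splitting step can take place entirely within one of them. Hence the first such step must involve a letter $s$ of $u$ brought to the boundary by commutations (so $s$ commutes with every letter of $u$ standing to its right) and a letter $t$ of $v$ likewise brought to the boundary (so $t$ commutes with every letter of $v$ standing to its right). In the case $s=t$, I would apply Splitting with the trivial splitting $s\cdot s\to 1$, reducing the word to a permutation of $u'\cdot (v')^{-1}$, where $u'$ (resp.\ $v'$) is obtained from $u$ (resp.\ $v$) by removing the chosen occurrence of $s$; both $u'$ and $v'$ are reduced subwords of $u$ and $v$, so the induction hypothesis applies to them and yields $u'\approx v'$. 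Then $u\approx u'\cdot s\approx v'\cdot s\approx v$.

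The main obstacle is ruling out the cases $s\subsetneq t$ and $t\subsetneq s$. Intuitively, a proper Absorption $s\cdot t\to t$ (with $s\subsetneq t$) deletes $s$ without contributing it to any pairing, leaving the letter $t$ ``unmatched'': every letter still present in the word must eventually be consumed by some later reduction step, yet every such step requires a matching partner, and no partner for this surviving $t$ exists in the remainder of the reduced word. Formalising this exclusion is where most of the work lies, and Lemma \ref{L:reduce_one_letter} is the key technical tool: it tracks how splittings of letters in a reduced word propagate, ensuring that no further sequence of reductions can cancel an unmatched letter that arose from a proper Absorption. Once these cases are excluded, the induction closes in the remaining case $s=t$, delivering $u\approx v$.
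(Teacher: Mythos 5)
Your reduction via Lemma~\ref{L:SCequiv} to the combinatorial claim ``for reduced words $u,v$, if $u\cdot v^{-1}\str 1$ then $u\approx v$'' is valid, and your analysis of the first non-commutation step (it must pair an end $s$ of $u$ with an end $t$ of $v$) is correct. But two steps are genuinely unjustified. In the case $s=t$ you silently replace the given reduction by one starting with the \emph{trivial} Splitting $s\cdot s\to 1$; yet the actual first step may be Absorption or a nontrivial Splitting $s\cdot s\to x$, leaving $u_1\cdot x\cdot v_1^{-1}\str 1$, which is no longer of the form $(\text{reduced})\cdot(\text{reduced})^{-1}$, and since $\str$ with Splitting is not confluent (the two reducts $1$ and $x$ of $s\cdot s$ already witness this) you cannot simply absorb $x$ into $u_1$ via Lemma~\ref{L:reduce_one_letter} and conclude the remainder still reduces to $1$. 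As for the cases $s\subsetneq t$ and $t\subsetneq s$: these are not to be ``excluded.'' Your unmatched-letter heuristic is unsound --- the surviving $t$ can perfectly well be consumed by later Absorptions or Splittings --- and in fact proper Absorption does occur, but then self-destructs: after $s\cdot t\to t$ the word is a permutation of $u_1\cdot v^{-1}$ with both factors reduced and still reducing to $1$, so by induction $u_1\approx v$, whence $s$ is right-absorbed by $u_1$, contradicting that $u=u_1\cdot s$ is reduced.

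The paper's proof makes exactly the same case split (whether the end $s$ of $u$ is properly absorbed by $v$, or is itself an end of $v$) but runs the $\prec$-induction on flag paths rather than on words, and that is precisely what closes the first gap. The penultimate flags $F_1$ and $G_1$ are actual objects of $M$, joined by a unique reduced flag path whose word is either $s$ (yielding the proper-absorption contradiction again) or a specific reduced splitting $x$ of $s$; applying Lemma~\ref{L:reduce_one_letter} to the genuine flag path $F\op{v_1}G_1\op{x}F_1$ produces a reduced path $F\op{v_1\cdot x_1}F_1$ with $v_1\cdot x_1\prec v$, to which the induction hypothesis applies directly, with no appeal to confluence of the word rewriting system.
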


\begin{proof}
  We prove it by $\prec$-induction on $u$ and $v$. If $F=G$, the claim
  is equivalent to simple connectedness. We may therefore assume that
  $F\neq G$. Since $u\cdot v\inv$ belongs to a closed non-trivial flag
  path, it cannot be a reduced word. So assume that $u=u_1\cdot s$ and
  $s$ is right-absorbed by $v$. Splitting the first path accordingly,
  \[ F\op{u_1}F_1\op{s}G,\]

  \noindent  we distinguish two cases:
  \begin{enumerate}[(1)]
  \item
    The letter $s$ is properly right-absorbed by $v$.
Then $v$  is the only reduct of $v\cdot s$ and therefore
$F\op{v}G\op{s}F_1$
    gives $F\op{v}F_1$.

    \begin{figure}[h]
      \centering

      \begin{tikzpicture}[>=latex,text height=1ex,text depth=1ex]

        \fill  node [left] (0,0) (F) {$F$}  circle (2pt);
        \fill   (2,0)  node [above]  {$F_1$} circle (2pt);
        \fill   (4,0)  node [right] (G) {$G$} circle (2pt);

     \draw[->]  (F) -- node[pos=.5,above] {$u_1$} (2,0);
     \draw[->]  (2,0) -- node [pos=.5, above] {$s$} (G);

     %\draw[->] (0,0) arc(180 :360 :1cm)  ;
     \draw[bend right=60,->]  (0,0) to node [pos=.7, left] {$v$}
     (2,0);
     \draw[bend right=60,->]  (0,0) to node [pos=.7, left] {$v$}
     (4,0);

      \end{tikzpicture}

    \end{figure}
    \noindent Since $u_1\prec u$, induction yields that $u_1 \approx v$.
    In particular, the letter $s$ is right-absorbed by $u_1$,
    contradicting that $u$ is reduced.  \vskip3mm

  \item After a permutation $v$ has the form $v_1\cdot s$. We split
    the second path as \[F\op{v_1}G_1\op{s}G.\] We have then either
    $G_1\op{s}F_1$ or $G_1\op{x}F_1$ for a reduced splitting $x$ of
    $s$. If $G_1\op{s}F_1$, then $F\op{v}F_1$, which contradicts as before
that $F\op{u_1}F$. So $G_1\op{x}F_1$.
    \begin{figure}[h]
      \centering

      \begin{tikzpicture}[>=latex,text height=1ex,text depth=1ex]

        \fill  node [left] (0,0) (F) {$F$}  circle (2pt);
        \fill   (2,1)  node [above]  {$F_1$} circle (2pt);
        \fill   (2,-1)  node [below right]  {$G_1$} circle (2pt);
     \fill   (4,0)  node [right ] (G) {$G$} circle (2pt);

     \draw[->]  (0,0)  --  node[pos=.5,above left] {$u_1$}(2,1);
     \draw[->] (2,1)  --   node [pos=.8, above left] {$s$}(4,0);

     \draw[->]  (F) -- node [pos=.5,below left] {$v_1$} (2,-1);
     \draw[->] (2,-1) -- node [pos=.5, below right] {$s$} (4,0);
     \draw[->]  (2,-1) -- node [pos=.5, left] {$x$} (2,1);
      \end{tikzpicture}

    \end{figure}

    \noindent By Lemma \ref{L:reduce_one_letter} the path
    $F\op{v_1}G_1\op{x}F_1$ reduces to a path $F\op{v_1\cdot x_1}F_1$
    with $x_1\preceq x$. Since $v_1\cdot x_1 \prec v$, induction
    yields that $v_1\cdot x_1\approx u_1$. So $v_1\cdot x_1\cdot
    s\approx u$ is reduced, which is only possible if $x_1=1$. Whence
    $v_1\approx u_1$ and therefore $v\approx u$.
  \end{enumerate}
\end{proof}

\begin{definition}\label{D:w_FG}
  Given $F$ and $G$ flags in a \SC $\Gamma$-space $A$, we say that
  the word $u$ \emph{connects $F$ and $G$}, if $u$ is the word of a
  reduced path from $F$ to $G$. Since $u$ is uniquely determined up
  to equivalence we denote it by $\w_A(F,G)$
\end{definition}

In order to show that simple connectedness is an elementary property,
we will first give a general description of a reduction of a flag
path.\\

For $i\neq j$, a pair of letters $s_i$, $s_j$, occurring in $v$ is
called \emph{reduced} if either $s_i$ and $s_j$ are not comparable, or
neither $s_i$ nor $s_j$ commute with all letters in between $s_i$ and
$s_j$. The word $v$ is reduced \iff all pairs of letters occurring in
$v$ are. A pair of two disjoint subwords $w_1$ and $w_2$ of a word
$v$, possibly not reduced, is \emph{reduced in $v$} if all pairs of
letters $s$ and $t$, where $s$ occurs in $w_1$ and $t$ occurs in
$w_2$, are reduced in $v$. By a sequence of generalised Absorptions
and Splittings applied to letters in $w_1$ and $w_2$, we may replace
$w_1$, $w_2$ by a pair $w_1^*$, $w_2^*$, which is reduced in the
resulting word $v^*$. We call such a process a \emph{reduction} of
$w_1$, $w_2$ in $v$. If $v$ is the word of a flag path, we call a
corresponding transformation of the path also a reduction of $w_1$,
$w_2$.

\begin{lemma}[Reduction Lemma]{\label{L:red}}
  Let $w_1\cdot w\cdot w_2$ be the (possibly non-reduced) word of a
flag path, where both $w_1$ and $ w_2$ are reduced. Then there are
words $u_1,v_1,u_2,v_2$  and a reduction of $w_1$, $w_2$ in the path
with resulting  word $w_1^*\cdot w\cdot w_2^*$, such
that:
  \begin{enumerate}[\quad $\bullet$]\setlength{\itemsep}{.5em}
  \item $w_1\approx u_1\cdot v_1$ and $v_2\cdot u_2\approx w_2$,
  \item $w_1^*=u_1\cdot x_1$ for some $x_1\preceq v_1$ and
    $x_2\cdot u_2=w_2^*$ for some $x_2\preceq v_2$,
  \item $v_1$ and $v_2$ commute with $w$,
  \item $|v_1|$ and $|v_2|$ are contained in $|w_1|\cap|w_2|$.
  \end{enumerate}
\end{lemma}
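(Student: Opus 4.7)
I would proceed by $\prec$-induction on $w_1\cdot w\cdot w_2$, exploiting the same well-foundedness used for Lemma \ref{L:RedPath}: every generalised Absorption or Splitting produces a $\prec$-smaller word (with the convention that the empty word is a valid splitting of any letter). The base case is when the pair $(w_1,w_2)$ is already reduced in $w_1\cdot w\cdot w_2$; the trivial choice $u_i=w_i$, $v_i=1$, $w_i^*=w_i$, $x_i=1$ for $i=1,2$ satisfies the four bullet points vacuously.

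For the inductive step, assume the pair is not reduced in $w_1\cdot w\cdot w_2$. Then there exist comparable letters $s$ occurring in $w_1$ and $t$ occurring in $w_2$, the smaller of which commutes with every letter strictly between their positions. The crucial observation is that those intermediate letters include every letter of $w$, so the smaller letter commutes with $w$, and it also commutes with all letters of $w_1$ to its right and all letters of $w_2$ to the left of $t$.

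Suppose first that $s\subsetneq t$ (the case $t\subsetneq s$ is symmetric). Write $w_1=A\cdot s\cdot B$; permute $s$ rightwards through $B$, through $w$, and through the prefix of $w_2$ until it sits immediately left of $t$, then absorb $s\cdot t$ into $t$. The new word $w_1'\cdot w\cdot w_2$ with $w_1'\approx A\cdot B$ is $\prec$-smaller, and $w_1'$ remains reduced (an unreduced pair in $w_1'$ would, because $s$ commutes with $B$, yield an unreduced pair in $w_1$). Apply induction to obtain $w_1'\approx u_1\cdot v_1'$, $w_2\approx v_2\cdot u_2$, together with $w_1'^*=u_1\cdot x_1'$ and $w_2^*=x_2\cdot u_2$ meeting the four conditions. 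Set $v_1:=v_1'\cdot s$ and $x_1:=x_1'$; then $w_1\approx u_1\cdot v_1'\cdot s\approx u_1\cdot v_1$, the word $v_1$ still commutes with $w$ (since $s$ does), and $|v_1|\subset|w_1|\cap|w_2|$ because $s\subset t\subset|w_2|$. The remaining bullet $x_1\preceq v_1$ holds since $x_1'\preceq v_1'\preceq v_1$.

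The case $s=t$ is the main obstacle. After commuting the two copies of $s$ together and across $w$, the pair $s\cdot s$ in the resulting flag path may be reduced, by Lemmas \ref{L:modlemma} and \ref{L:Flagpath}, either to a single $s$ (Absorption, one copy disappears) or to a splitting $x\prec s$ (Splitting, both copies merge). Absorption is treated exactly as above. In the Splitting subcase the residue $x$ straddles the $w_1$--$w_2$ boundary; I would place all of $x$ on the $w_2$-side, which is permitted because each letter of $x$ is properly contained in $s$ and therefore commutes with $w$. Since this may leave the $w_2$-side non-reduced, I would first perform any internal reductions of that side, tracking which letters stem from the original $w_2$ (they contribute to $u_2$) and which stem from $x$ (they contribute to $v_2$), before invoking the induction hypothesis on the resulting $\prec$-smaller, now reduced, configuration. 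The bound $x\preceq s$ combined with the inductive bounds delivers the required $x_i\preceq v_i$, and the support condition $|v_i|\subset|w_1|\cap|w_2|$ follows because every letter of $x$ is a subset of $s$, which lies in $|w_1|\cap|w_2|$.
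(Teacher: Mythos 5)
Your overall strategy is a legitimate dual of the paper's: you run a backward $\prec$-induction, peeling off one generalised Absorption or Splitting and invoking the inductive hypothesis on the smaller word, whereas the paper performs a forward induction on the number of steps in the reduction, maintaining the sextuple $u_1,v_1,x_1,u_2,v_2,x_2$ as an explicit invariant after every step. Your base case and your Absorption cases are sound --- in particular the check that $w_1'$ stays reduced after deleting $s$ is correct, and the lift of the inductive decomposition is immediate there because $w_2$ is untouched.

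The Splitting subcase, which you rightly call the main obstacle, has a genuine gap. First, a slip: you say the letters stemming from $x$ ``contribute to $v_2$'', but $v_2$ must be a subword of the original $w_2$ (the first bullet requires $v_2\cdot u_2\approx w_2$), and the residue letters, being proper subsets of $s$, do not occur in $w_2$; they must be routed into $x_2$, not $v_2$. Second and more importantly, the inductive hypothesis applied to $w_1'\cdot w\cdot w_2''$ returns a decomposition $w_2''\approx v_2'\cdot u_2'$, $(w_2'')^*=x_2'\cdot u_2'$ of $w_2''$, not of $w_2$, and there is no automatic way to pull it back: once $s$ has been deleted from $w_1$, the residue letters of $x_1\preceq x$ need no longer interact with any letter of $w_1'$ (their support lies in $s\subset|w_1|$ but need not lie in $|w_1'|$), so they can land inside the ``untouched'' factor $u_2'$. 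One cannot then set $u_2:=u_2'$, since $u_2$ must be a subword of $w_2$. Repairing this means peeling the residue occurrences out of $u_2'$ and pushing them into $x_2$, which in turn requires showing that each such residue letter commutes with every genuine $w_2$-letter preceding it in $u_2'$. This is true --- one uses that $s$ commutes with the part of $w_2$ to its left, together with the observation that a reversal of relative order under $\approx$ already forces commutation --- but that argument is absent from your outline. The paper's forward invariant sidesteps this reconstruction entirely: at each step it records whether the current occurrence of $s$ lies in $u_2$ or in $x_2$ and updates the six words accordingly, which is precisely the bookkeeping your backward composition has to recover after the fact.
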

\begin{proof}
  Since both $w_1$ and $ w_2$ are reduced, we may assume that the
  words $w_1^*$ and $w_2^*$ are obtained by a sequence of generalised
  Absorptions and Splitting, each one involving a letter in the first
  word and a letter in the last word, where after every step we apply
  a reduction on both the first and the last word. It is enough to
  show by induction that, at every intermediate step of the reduction,
  the word $w_1'\cdot w\cdot w_2'$ satisfies the conclusion of the
  lemma. Start by setting $u_j=w_j$ and $v_j=x_j$ the empty word,
  for $j=1,2$.
  Assume that $u_1,v_1,x_1,v_2,u_2,x_2$ witness this at the
$i^\text{th}$ step. In particular $w_1'=u_1\cdot  x_1$ and $x_2\cdot
u_2=w_2'$, where $x_i\preceq v_i$ for $i=1,2$.

 We will treat the case of a generalised Splitting and leave to the
 reader the easier case of a generalised
 Absorption. Suppose hence
 that $w_1''\cdot w\cdot w_2''$ is obtained from $w_1'\cdot w\cdot
 w_2'$ by a generalised Splitting followed by a reduction of the first
 and last word. Then there is a letter $s$ occurring both in $w_1'$ and
 $w_2'$, which commutes with $w$ as well as with the letters of $w_1'$
 on its right (resp.\ the letters of $w_2'$ on its left). Suppose
 furthermore that the word $w_1''$ is obtained from $w_1'$ by deleting
 $s$. Note that $w_1''$ is reduced.

  The word $w_2''$ is obtained from $w_2'$ by replacing $s$ by a
  splitting $y$ of $s$ followed by a further reduction. By Lemma
  \ref{L:reduce_one_letter}, we may assume that $w_2''$ is reduced and
  obtained, up to a permutation, by replacing $s$ with some word
  $y_2\preceq y\prec s$. If $s$ occurred in $u_2$, then set $u_2'$ the
  word obtained by removing $s$ from $u_2$ as well as $v_2'=v_2\cdot
  s$ and $x_2'=x_2\cdot y_2$. If $s$ occurred in $x_2$, then replace
  $s$ by $y_2$ and leave $v_2$ and $u_2$ unchanged.

  Likewise,  modify the words $u_1$, $v_1$ and $x_1$ accordingly.
\end{proof}

\begin{definition}\label{D:PropertyE}
  Given a letter $s$ and a natural number $n$, the reduced word $w$
  \emph{satisfies} $E(s,n)$ if $|w|\subset s$ and no permutation of
  $w$ is a product of $n$ many words $u_i$,  with $|u_i|\subsetneq s$.
\end{definition}
\noindent The properties $E(s,n)$ get stronger as $n$ increases.
In
particular, the word $w$ satisfies $E(s,0)$ \iff $|w|\subset s$ and
$w\neq 1$. Similarly, the word $w$ satisfies $E(s,1)$ \iff $|w|=s$.

\begin{cor}\label{C:paarreduktion}
  Let $u=s_1\cdots s_n$ be a reduced word and $w_1,\ldots,w_n$ reduced
  words with $|w_k|=s_k$. Consider two indices $i<j$ and a reduction
  $w_i^*,w_j^*$ of the pair $w_i,w_j$ in $w_i\cdots w_j$ as in the
  Reduction Lemma \ref{L:red}. Then the following holds
  \begin{enumerate}[(1)]
  \item\label{C:paarreduktion:neues_E} If $w_i$ satisfies $E(s_i,m)$
    for some $m>0$, then $w_i^*$ satisfies $E(s_i,m-1)$, similarly for
    $w_j$ and $w_j^*$.
  \item\label{C:paarreduktion:bleibt_reduziert} Assume that $w_i$ and
    $w_j$ satisfy $E(s_i,2)$ and $E(s_j,2)$, respectively. If a pair
    $w_{i'},w_{j'}$ is already reduced in $w=w_1\cdots w_n$, then the
    corresponding pair remains reduced in $w^*=w_1\cdots w_i^*\cdots
    w_j^*\cdots w_n$.
  \end{enumerate}
\end{cor}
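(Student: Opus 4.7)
The plan is to extract from the Reduction Lemma the decompositions $w_i\approx u_1\cdot v_1$ and $w_i^\ast=u_1\cdot x_1$, with $x_1\preceq v_1$ and $|v_1|\subseteq s_i\cap s_j$, and to exploit them together with the reducedness of $u=s_1\cdots s_n$. The crucial preliminary observation is the strict inequality $|v_1|\subsetneq s_i$: if equality held, then $s_i\subseteq s_j$; moreover, since $v_1$ commutes with $w=w_{i+1}\cdots w_{j-1}$, every vertex of $s_i=|v_1|$ would be non-adjacent to every vertex of any letter of $w_k$, i.e.\ to every vertex of $s_k$, for each $i<k<j$. Hence $s_i$ would commute with every such $s_k$, contradicting the reducedness of $u$. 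Symmetrically $|v_2|\subsetneq s_j$, whence $|x_1|\subseteq|v_1|\subsetneq s_i$ and $|x_2|\subseteq|v_2|\subsetneq s_j$.

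For part (\ref{C:paarreduktion:neues_E}) I will argue contrapositively: suppose some permutation of $w_i^\ast=u_1\cdot x_1$ takes the form $u_1'\cdots u_{m-1}'$ with $|u_l'|\subsetneq s_i$. Since equivalence of words is generated by swaps of adjacent commuting letters, such a swap affects the relative order of the letters originating from $u_1$ only when both swapped letters lie in $u_1$, in which case the swap is already a commutation available inside $u_1$ alone. Writing $a_l$ for the subword of $u_l'$ formed by those letters coming from $u_1$, in their order of appearance, I obtain $u_1\approx a_1\cdots a_{m-1}$ with $|a_l|\subseteq|u_l'|\subsetneq s_i$. Concatenating with $v_1$ yields $w_i\approx u_1\cdot v_1\approx a_1\cdots a_{m-1}\cdot v_1$, a decomposition of $w_i$ into $m$ factors whose supports are properly contained in $s_i$; this contradicts $E(s_i,m)$.

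For part (\ref{C:paarreduktion:bleibt_reduziert}) I first invoke part (\ref{C:paarreduktion:neues_E}) with $m=2$ to deduce the support identities $|w_i^\ast|=s_i$ and $|w_j^\ast|=s_j$. The main tool is then a support-based transfer of non-commutation: if a letter $r$ with $|r|\subseteq s_i$ fails to commute with a letter $c$, the witnessing vertex $\delta\in r$ adjacent to some $\gamma\in c$ lies in $s_i=|w_i^\ast|$, hence in some letter $r'$ of $w_i^\ast$; the edge $\gamma\delta$ then forces $r'\cup c$ to be connected, so $r'$ also fails to commute with $c$. The analogous statement holds for $w_j^\ast$. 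Given that a pair $(w_{i'},w_{j'})$ is reduced in $w$, every comparable letter pair $(s,t)$ in it is witnessed by a specific non-commuting letter in between: if that witness lies in some $w_k$ with $k\notin\{i,j\}$ it is preserved verbatim in $w^\ast$, and if it lies in $w_i$ or $w_j$ it is replaced, by the transfer principle, by a same-support letter of $w_i^\ast$ or $w_j^\ast$ which also witnesses non-commutation in $w^\ast$. For the letter pairs newly introduced in $w_i^\ast$ by letters $s\in x_1\setminus v_1$, i.e.\ proper splittings of some $s'\in v_1\subseteq s_i\cap s_j$, one uses that $s$ commutes with all of $w$ and is strictly contained in $s'$: any comparability $(s,t)$ with $t\in w_{j'}$ is then controlled by the reducedness of $(s',t)$ in $w$ together with the just-established support preservation.

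The main obstacle I foresee is precisely the last case: a naive transfer of reducedness from $(s',t)$ to $(s,t)$ can fail when the non-commutation witness of $(s',t)$ involves a vertex of $s'$ absent from $s$, so that verifying reducedness of $(s,t)$ requires simultaneously exploiting the commutation of $x_1$ with the intermediate word $w$ and the support preservation of both $w_i^\ast$ and $w_j^\ast$ delivered by part (\ref{C:paarreduktion:neues_E}).
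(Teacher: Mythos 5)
Your proof of part~(1) is correct and takes the paper's approach; the careful step of tracing the letters of $u_1$ through the permutation to extract $u_1\approx a_1\cdots a_{m-1}$ is worth spelling out, as the paper leaves it implicit.

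Your proof of part~(2) has a genuine gap beyond the one you flag, and the route you are pursuing is harder than the paper's. The transfer principle is precisely the paper's argument for the case $\{i',j'\}\cap\{i,j\}=\emptyset$ (it amounts to observing that commutation with $w_i$ only depends on $|w_i|=s_i=|w_i^*|$). But for the remaining cases it has a positional defect you do not address: when $i'=i$, say, the letter $r'$ of $w_i^*$ that the transfer produces need not lie strictly between $s$ and $t$; it can land on the wrong side of the occurrence of $s$ inside $w_i^*$. Both this problem and your ``last case'' obstacle dissolve once one uses a simplification the paper relies on: for a comparable pair, ``not reduced'' is equivalent to \emph{the smaller} of the two letters commuting with everything in between (if the larger does, so does the smaller, being a subset). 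With $w_i\approx u_i\cdot v_i$, $w_i^*=u_i\cdot x_i$, $w_j\approx v_j\cdot u_j$, $w_j^*=x_j\cdot u_j$ and $|u_i|=s_i$, $|u_j|=s_j$ from part~(1), the three nondisjoint cases up to symmetry are then handled directly. In the case $i'<j'=i$: each letter of $x_i$ has support inside $s_i=|u_i|$ and is preceded by all of $u_i$, so the smaller of any comparable pair has a vertex in $|u_i|$ and thus a witness inside $u_i$, while pairs using letters of $u_i$ see an unchanged in-between word. In the case $i=i'<j'<j$: $v_i$ and $x_i$ commute with $w_{j'}$, so no letter of $x_i$ is even comparable to a letter of $w_{j'}$, and for $s\in u_i$ the smaller of the pair is contained in a letter of $w_{j'}$ and therefore already commutes with $v_i$ --- the old witness was never in $v_i$ and survives verbatim. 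In the case $i=i'<j<j'$: the containments $|x_i|,|v_i|,|v_j|\subseteq s_j=|u_j|$ relocate any witness formerly in $v_i$ or $v_j$ into $u_j$, which lies between. Your argument never isolates where the smaller letter's support sits relative to $|u_i|$ and $|u_j|$, and that is the mechanism that makes the witnesses survive.
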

\begin{proof}
  In order to prove $(\ref{C:paarreduktion:neues_E})$, choose
  $u_i,v_i, x_i, u_j, v_j, x_j$ as in the Reduction Lemma. Since
  $|v_i|$ is contained in $s_i\cap s_j$ and $v_i$ commutes with
  $s_{i+1}\cdots s_{j-1}$, then $|v_i|$ is a proper subset of $s_i$,
  for otherwise the pair $s_i,s_j$ would not be reduced in $u$. As
  $w_i\approx u_i\cdot v_i$, if $w_i$ has property $E(s_i,m)$, then
  $u_i$ has property $E(s_i,m-1)$, and so does $w_i^*$.

  For $(\ref{C:paarreduktion:bleibt_reduziert})$, assume that $w_i$
  and $w_j$ satisfy $E(s_i,2)$ and $E(s_j,2)$, respectively. Therefore
  $w_i^*$ and $w_j^*$ have property $E(s_i,1)$ and $E(s_j,1)$,
  respectively, so $|w_i^*|=s_i$ and $|w_j^*|=s_j$. Suppose now
  that the pair $w_{i'},w_{j'}$ is reduced in $w$. Since the words $w_i^*$
  and $w_j^*$ commute with the same letters as $w_i$ and $w_j$,
  respectively, the pair $w_{i'},w_{j'}$ remains reduced in $w^*$ if
  $\{i',j'\}$ and $\{i,j\}$ are disjoint. By symmetry, it suffices to
  consider the following three other cases:

  Case $i'<j'=i$. We have to show that the pair $w_{i'},u_i\cdot x_i$
  is reduced in $w^*$ if the pair $w_{i'},u_i\cdot v_i$ is reduced in
  $w$. This follows easily from $|x_i|\subset s_i=|u_i|$, since $w_i$
satisfies $E(s_i,2)$.

  Case $i=i'<j'<j$. Here we have to show that the pair $u_i\cdot
  x_i,w_{j'}$ is reduced in $w^*$ if the pair $u_i\cdot v_i,w_{j'}$ is
  reduced in $w$. This follows easily from the fact that $v_i$ and
  $x_i$ commute with $w_{j'}$.

  Case $i=i'<j<j'$. Again we have to show that the pair $u_i\cdot
  x_i,w_{j'}$ is reduced in $w^*$ if the pair $u_i\cdot v_i,w_{j'}$ is
  reduced in $w$. This follows easily from
  $|x_i|,|v_i|,|v_j|\subset|u_j|$.

\end{proof}

\begin{prop}\label{P:hauptprop}
  Let $u=s_1\cdots s_n$ be a reduced word. Given reduced flag paths
  $F_{i-1} \op{w_i} F_i$ such that each $w_i$ has property $E(s_i,n)$,
  then the path \[F_0 \op{w_1} F_1 \cdots F_{n-1} \op{w_n} F_n\] has a
  reduction of length $\geq n$.
\end{prop}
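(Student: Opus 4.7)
The plan is to induct on $n$, tracking how the $E$-parameters of the individual blocks $w_i$ deteriorate during a reduction of the concatenated path. The base case $n=1$ is immediate: the word $w_1$ is already reduced, and $E(s_1,1)$ forces $|w_1|=s_1\neq\emptyset$, so $w_1$ itself provides a reduction of length at least $1$.

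For the inductive step, I would consider an arbitrary reduction of $F_0\op{w_1}F_1\cdots\op{w_n}F_n$ to some reduced path with word $v$. Applying Lemma \ref{L:RedPath} and iterating the Reduction Lemma \ref{L:red}, such a reduction can be decomposed as a sequence of pair reductions, each affecting exactly two blocks $w_i$ and $w_j$ while leaving the intermediate blocks untouched. Write $k_i$ for the number of pair reductions that involve the block $w_i$. By Corollary \ref{C:paarreduktion}(\ref{C:paarreduktion:neues_E}), each such reduction strictly decreases the $E$-parameter of $w_i$ by one, so the terminal block $w_i^*$ satisfies $E(s_i,n-k_i)$.

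The crucial use of Corollary \ref{C:paarreduktion}(\ref{C:paarreduktion:bleibt_reduziert}) is to ensure that, as long as every block retains at least $E(s_i,2)$, previously reduced pairs of blocks remain reduced after subsequent pair reductions. Since $u=s_1\cdots s_n$ is reduced, all pairs $(s_i,s_j)$ are reduced at the support level, and this reducedness is preserved throughout the process. It follows that the terminal word is the reduced concatenation $w_1^*\cdots w_n^*$, where each $|w_i^*|=s_i$ (since $n-k_i\geq 1$). Each $w_i^*$ then contributes at least one letter to $v$, so $v$ has length at least $n$, as required.

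The main obstacle will be the bookkeeping: arranging the sequence of pair reductions so that the Reduction Lemma can be iterated cleanly, and verifying that no block must undergo more than $n-1$ reductions. The number $n$ in the hypothesis $E(s_i,n)$ is precisely calibrated against the $n-1$ other blocks that can possibly interact with $w_i$. A secondary subtlety is to ensure that the block structure really survives intact through the reduction, so that the terminal reduced word is faithfully represented as the concatenation of the $w_i^*$'s rather than collapsing adjacent pieces into fewer letters; here again Corollary \ref{C:paarreduktion}(\ref{C:paarreduktion:bleibt_reduziert}) does the work.
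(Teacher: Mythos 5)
Your proposal uses the same key ingredients as the paper's proof -- pairwise application of the Reduction Lemma \ref{L:red}, the $E$-parameter bookkeeping from Corollary \ref{C:paarreduktion}(\ref{C:paarreduktion:neues_E}), the preservation of reducedness from Corollary \ref{C:paarreduktion}(\ref{C:paarreduktion:bleibt_reduziert}), and the count that each index lies in exactly $n-1$ pairs so that $E(s_i,n)$ degrades to $E(s_i,1)$ at worst. The arithmetic works: at the time of the last pair reduction touching block $i$, the prior $\leq n-2$ reductions still leave $E(s_i,2)$, so part (\ref{C:paarreduktion:bleibt_reduziert}) applies throughout.

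However, there is a framing error that matters. You write ``I would consider an arbitrary reduction of $F_0\op{w_1}F_1\cdots\op{w_n}F_n$ to some reduced path with word $v$'' and then claim ``such a reduction can be decomposed as a sequence of pair reductions, each affecting exactly two blocks.'' Neither Lemma \ref{L:RedPath} nor the Reduction Lemma \ref{L:red} says anything about the structure of an \emph{arbitrary} reduction; they provide a recipe for constructing a \emph{specific} one. An arbitrary reduction step in the sense of Definition \ref{D:Op_redpath} need not respect the block structure at all. The proposition only asserts that some reduction of length $\geq n$ exists, and the paper proves exactly that: fix an enumeration of the $\binom{n}{2}$ pairs $i<j$, apply the Reduction Lemma to each in turn, and read off the resulting reduced path. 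The stronger statement that \emph{every} reduct of $w_1\cdots w_n$ has length $\geq n$ is stated as a remark immediately after the proposition, but deriving it requires Remark \ref{R:paths_exists} (which in turn depends on the completeness theory developed later) together with Proposition \ref{P:Eindwort}; it is not accessible by the pairwise machinery alone. Finally, the induction on $n$ you announce does no work -- the inductive step never invokes the hypothesis -- and should be dropped in favour of the direct construction.
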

\begin{proof}
  Choose any enumeration of all pairs $i<j$ of indices between $1$ and
  $n$ and apply the Reduction Lemma to each pair in order. Observe
that $k$ occurs in at most $n-1$ reductions. At every step of the
reduction, the resulting words satisfy $E(s_k,2)$, so the resulting
path
\[F_0 \op{w_1^*} F_1^* \cdots F_{n-1}^*\op{w_n^*} F_n,\]
is reduced, by Corollary
\ref{C:paarreduktion} $(\ref{C:paarreduktion:bleibt_reduziert})$. None
of the words $w_i^*$ is trivial, by Corollary \ref{C:paarreduktion}
$(\ref{C:paarreduktion:neues_E})$.

\end{proof}

Together with Remark \ref{R:paths_exists}, the
previous proposition will imply the following (a priori) stronger
form.

\begin{remark}
  Let $u=s_1\cdots s_n$ be a reduced word and $w_1,\ldots,w_n$ reduced
  words with property $E(s_i,n)$. Then every reduct of $w_1\cdots w_n$
  has length at least $n$.
\end{remark}

\begin{theorem}\label{S:SC_elementar}
  Simple connectedness is an elementary property for\/ $\Gamma$-spaces.
\end{theorem}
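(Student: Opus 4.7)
The plan is to exhibit a first-order axiom scheme $\Phi$ equivalent to simple connectedness, using Proposition~\ref{P:hauptprop} to control the ``depth'' of splittings that need to be tested.

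The only obstruction to naively writing down axioms is the relation $F\op{s} G$: by definition it is $F\sop{s}G$ together with the non-existence of \emph{any} weak flag path $F\sop{x}G$ whose word $x$ is a splitting of $s$. Since splittings of $s$ can be arbitrarily long, this is not \emph{a priori} first-order. I plan to introduce approximations $F\op{s}^{(k)} G$, meaning $F\sop{s}G$ together with the non-existence of a weak flag path of word a reduced splitting of $s$ of length at most $k$. Because $\Gamma$ is finite, the set of splittings of $s$ of length $\le k$ is finite, and the witnessing intermediate flags are at most $k-1$, so $\op{s}^{(k)}$ is indeed expressible by a single first-order formula. For each reduced non-trivial word $u=s_1\cdots s_n$ and each $k\geq 1$, let $\varphi_{u,k}$ be the first-order sentence asserting that no closed path $F_0\op{s_1}^{(k)} F_1\cdots \op{s_n}^{(k)}F_n=F_0$ exists. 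The proposed scheme is $\Phi=\{\varphi_{u,k}\}$.

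One direction is straightforward. If $M$ is not \SC, Lemma~\ref{L:RedPath} supplies a closed reduced flag path $F_0\op{u}F_n=F_0$ with $u$ reduced and non-trivial; since each step is $\op{s_i}$, a fortiori $\op{s_i}^{(k)}$ for every $k$, and this witnesses the failure of $\varphi_{u,k}$ for every $k$. The subtle direction is to show that $M\models\Phi$ whenever $M$ is \SC. Given a putative configuration violating $\varphi_{u,k}$, I would refine it to a genuine closed flag path via Lemma~\ref{L:Flagpath} (replacing any step admitting a splitting, necessarily of length $>k$, by such a splitting), then reduce via Lemma~\ref{L:RedPath} to a closed reduced flag path $F_0\op{v}F_0$. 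By \SC, such a closed reduced flag path must satisfy $v=1$; here is where Proposition~\ref{P:hauptprop} is essential: since each original letter $s_i$ trivially satisfies $E(s_i,n)$, the Reduction Lemma and Corollary~\ref{C:paarreduktion} control the reduction process, ensuring that a reduction of the refined path has length at least $n\geq 1$, hence $v\neq 1$, a contradiction.

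The main obstacle is exactly this second direction. Concretely, one must verify that the iterative refinement of a closed $\op{s_i}^{(k)}$-configuration, supplemented by the combinatorial information coming from $E(s_i,n)$, cannot produce a path whose reduced word is trivial. I would handle this by keeping careful track, as in Corollary~\ref{C:paarreduktion}, of how successive pairwise reductions weaken the $E(s_i,N)$-property, choosing $N$ initially large enough (say $N=n$) that the final reduced word is still non-empty. Once this bookkeeping is in place, Proposition~\ref{P:hauptprop} applies and the contradiction with \SC is obtained, completing the equivalence $M\models\Phi \iff M$ is \SC, and thereby showing that simple connectedness is an elementary property.
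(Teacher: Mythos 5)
There is a genuine gap, and it lies exactly where you flag the ``subtle direction'': the approximation $\op{s}^{(k)}$ does not provide what Proposition~\ref{P:hauptprop} actually requires. That proposition needs each connecting word $w_i$ (the reduced word of a flag path from $F_{i-1}$ to $F_i$) to satisfy $E(s_i,n)$, i.e.\ not to decompose into $n$ sub\-words whose supports are proper subsets of $s_i$. Your $\op{s_i}^{(k)}$ only bounds from below the \emph{length} of the shortest reduced splitting connecting $F_{i-1}$ to $F_i$; it does not control the number of proper-support factors. The remark that ``each original letter $s_i$ trivially satisfies $E(s_i,n)$'' is true but beside the point: it is the refined words $w_i$, not the letters $s_i$, that appear in the hypotheses of Proposition~\ref{P:hauptprop}, and they are only guaranteed to satisfy $E(s_i,1)$ (their support equals $s_i$). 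A word such as $\{a\}\cdot\{b\}\cdot\{a\}\cdots\{a\}\cdot\{b\}\cdot\{c\}$ of arbitrary length in $\Kom_3$ has support $\{a,b,c\}$ yet already violates $E(\{a,b,c\},2)$, so raising $k$ does not recover the $E$-property.

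Concretely, your scheme $\Phi$ is strictly stronger than simple connectedness, because $\varphi_{u,1}$ fails in $\mo$ for $\Gamma=\Kom_3$. Indeed, $\op{s}^{(1)}$ collapses to $\sop{s}$: a reduced splitting of $s$ of length $\le 1$ is a single letter $t\subsetneq s$, which can never connect two flags differing at exactly $s$. Now take $u=\{a,b\}\cdot\{b,c\}\cdot\{a,c\}$, which is reduced. Starting from any flag $H$, let $F_0,F_1,F_2$ be obtained from $H$ by replacing only the $a$-, $b$-, $c$-vertex respectively (these exist by the infinite-panel axiom). Then $F_0\sop{\{a,b\}}F_1\sop{\{b,c\}}F_2\sop{\{a,c\}}F_0$ is a closed weak flag path with word $u$, violating $\varphi_{u,1}$ in a simply connected model. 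The paper repairs exactly this by using $F\op{s,n}G$, which forbids \emph{chains of $n$ steps} $F\sim_{A_1}F_1\sim\cdots\sim_{A_n}G$ with $A_i\subsetneq s$; this condition immediately yields that $\w(F,G)$ satisfies $E(s,n)$, and Proposition~\ref{P:hauptprop} then applies with $n$ tied to the length of $u$. So the overall strategy (approximate $\op s$ by first-order conditions, bound the reduction length below via Proposition~\ref{P:hauptprop}) is sound, but your particular approximation, bounding the length of a splitting rather than the number of proper-support factors, must be replaced by the one the paper uses.
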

\begin{proof}
  For each natural number $n$ and letter $s$, consider the following
  elementary property of two given flags $F$ and $G$: We have that
$F\sim_s G$, but there exist no proper subsets $A_1,\ldots,A_n$ of $s$ and
flags $F_1,\ldots,F_{n-1}$ such that
  \[F\sim_{A_1}F_1\sim_{A_2}\cdots \sim_{A_{n-1}}F_{n-1}\sim_{A_n} G.\]
  We denote this by $F\op{s,n}G$. Observe that, if $F\op{s,n}G$, then
there is a path $F\op{w}G$ for some reduced $w$, which satisfies property
$E(s,n)$. Indeed, since $F\sim_s G$, there exists a reduced word $w$ with
support contained in $s$ connecting $F$ to $G$. Any such word
satisfies $E(s,n)$.

If suffices to show that a $\Gamma$-space $M$ is \SC \iff for all
natural numbers $n$ and all non-trivial reduced words $u=s_1\cdots s_n$,
there is no sequence
  $F_0\op{s_1,n}F_1\cdots F_{n-1}\op{s_n,n}F_n=F_0$.

 Clearly, right-to-left is obvious, since $F\op{s}G$ implies
$F\op{s,n}G$, by Lemma \ref{L:modlemma}. Suppose now that $M$ is \SC and
let $F_0\op{s_1,n}F_1\cdots  F_{n-1}\op{s_n,n}F_n$ be a weak flag path
for some non-trivial reduced word $u=s_1\cdots s_n$. By the above
discussion, there are
  words $w_i$, each satisfying property $E(s_i,n)$, respectively, such that
  \[F_0\op{w_1}F_1\dotsb
  F_{n-1}\op{w_n}F_n.\] Proposition \ref{P:hauptprop} yields that this
  path has a reduction of length at least $n$, so $F_0\not=F_n$, since
$M$ is \SC.
\end{proof}

Simple connectedness allows us to generalise \cite[Remark
  4.9]{BMPZ13}, which will be needed for the proof of Proposition
\ref{P:nice=nosplit}.

\begin{lemma}\label{L:xy-Kreis}
  Given two adjacent colours $\gamma$ and $\delta$, if $M$ is a \SC
  $\Gamma$-space, the subgraph
  $\A_{\gamma,\delta}(M)=\A_\gamma(M)\cup\A_{\delta}(M)$ has no
  non-trivial circles.
\end{lemma}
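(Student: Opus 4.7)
The plan is to argue by contradiction. Suppose $M$ is \SC and yet $\A_{\gamma,\delta}(M)$ contains a non-trivial circle. Since this subgraph is bipartite, any such circle has even length $2n$ with $n \geq 2$; take one of minimal length, with all vertices distinct, labelled $a_1, b_1, a_2, b_2, \ldots, a_n, b_n$ where $a_i \in \A_\gamma(M)$ and $b_i \in \A_\delta(M)$. For each $i \pmod n$, use that $M$ is a $\Gamma$-space (Definition \ref{D:colsp}) to extend the adjacent pairs $\{a_i, b_i\}$ and $\{a_{i+1}, b_i\}$ to flags $F_i$ and $G_i$, producing a closed weak flag path
\[F_1 \to G_1 \to F_2 \to G_2 \to \cdots \to F_n \to G_n \to F_1.\]

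Next I will examine the word of this closed path. The step $F_i \to G_i$ fixes the $\delta$-vertex $b_i$ but changes the $\gamma$-vertex from $a_i$ to $a_{i+1}$, so the colours on which these flags differ lie in $\Gamma \setminus \{\delta\}$ and include $\gamma$. By Lemma \ref{L:modlemma}, this transition refines into atomic steps whose letters are the connected components of the differing colour set, giving a commuting word $W_i$ exhibiting a distinguished letter $s_i$ with $\gamma \in s_i$ and $\delta \notin s_i$. Symmetrically, $G_i \to F_{i+1}$ yields a commuting word $V_i$ with distinguished letter $t_i$ satisfying $\delta \in t_i$ and $\gamma \notin t_i$; all other letters of $W_i$ and $V_i$ are \emph{neutral}, containing neither $\gamma$ nor $\delta$. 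The full path word is $u = W_1 V_1 W_2 V_2 \cdots W_n V_n$, and simple connectedness together with Lemma \ref{L:SCequiv} forces $u \str 1$.

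The contradiction will arise from obstructing this reduction. Because $\gamma$ and $\delta$ are adjacent in $\Gamma$ they do not commute, so no letter containing $\gamma$ commutes with any letter containing $\delta$; moreover, two letters both containing $\gamma$ (resp.\ both containing $\delta$) have union a connected superset of $\{\gamma\}$ (resp.\ $\{\delta\}$), hence a letter, so they too fail to commute. Define the $\gamma\delta$-\emph{skeleton} of a word to be the subsequence of its letters containing either $\gamma$ or $\delta$. By induction on the length of a reduction starting from $u$, I claim that the $\gamma\delta$-skeleton is preserved as the alternating sequence $s_1, t_1, s_2, t_2, \ldots, s_n, t_n$: Commutation cannot swap two non-commuting letters, so the skeleton letters cannot permute amongst themselves; an Absorption or Splitting involving a $\gamma$- or $\delta$-containing letter would require two same-type skeleton letters to be word-adjacent, which is prevented by the intervening opposite-type skeleton letter supplied by the inductive hypothesis; and an Absorption or Splitting of a neutral letter can neither introduce skeleton letters (proper sub-letters of a neutral remain neutral) nor bring two same-type skeleton letters into adjacency without already violating the previous reduct's invariant. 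The main obstacle lies precisely in this last bookkeeping, since \emph{a priori} removing a neutral could collapse the word around it — but the alternating skeleton structure guarantees that any two same-type skeleton letters are already separated in the word by an opposite-type one, which no neutral operation can displace. Once the invariant is established, every reduct of $u$ contains at least $2n \geq 4$ letters, contradicting $u \str 1$ and completing the proof.
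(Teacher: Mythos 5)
Your argument takes a genuinely different route from the paper's — a direct skeleton-tracking argument under $\str$, rather than the Reduction Lemma \ref{L:red} — but there is a gap at the bridge to simple connectedness. You build a closed \emph{weak} flag path via Lemma \ref{L:modlemma}, with word $u = W_1V_1\cdots W_nV_n$, and invoke Lemma \ref{L:SCequiv} to obtain $u\str 1$. However, Lemma \ref{L:SCequiv} concerns \emph{flag paths}, not weak flag paths, and these are genuinely different: a step $F'\sop{s}G'$ of a weak flag path need not satisfy $F'\op{s}G'$, since there may exist a splitting $x$ of $s$ with $F'\sop{x}G'$. Passing from a weak flag path to a flag path (Lemma \ref{L:Flagpath}) replaces letters by arbitrary splittings, i.e.\ uses the $\prec$-relation, which is \emph{not} one of the three $\str$-moves. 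In fact the word of a closed weak flag path need not $\str$-reduce to $1$: if $F\sop{s}G$ but also $F\sop{x}G$ for a splitting $x$ of $s$, the closed weak flag path $F\to G\to F$ (out along $s$, back along $x\inv$) has word $s\cdot x\inv$, and since every letter of $x\inv$ is properly contained in $s$, this $\str$-reduces to $s\neq 1$.

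This is not a cosmetic slip, because the natural repair undermines your invariant. When Lemma \ref{L:Flagpath} splits the distinguished $\gamma$-type letter $s_i$ into the word of a flag path, the resulting letters all still avoid $\delta$, and at least one contains $\gamma$ — but there may be \emph{several} $\gamma$-containing letters in the block, possibly with repeated occurrences. Your skeleton is then no longer strictly alternating, and a generalised Splitting of two identical same-type letters inside one block is no longer obstructed by an intervening opposite-type letter: it deletes both occurrences and may insert only neutral letters, so a block can in principle be emptied. Controlling this requires more than the alternating-sequence invariant. The paper avoids the issue by constructing the closed \emph{flag} path directly (the pieces $w_i$ are reduced flag-path words between consecutive edge-flags) and then applying the Reduction Lemma, whose conclusion — the removed material $v_1,v_2$ has $|v_1|,|v_2|\subset |w_i|\cap|w_j|$ and commutes with the intermediate segment — forces it to avoid both $\gamma$ and $\delta$. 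That localisation is exactly what your skeleton-tracking loses once blocks may contain repeated same-type letters.
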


\begin{proof}
  Since any edge in $\A_{\gamma,\delta}$ lies within a flag in $M$, by
  property $(2)$ of Definition \ref{D:colsp}, a path with no
  repetitions in $\A_{\gamma,\delta}$ induces a (possibly non-reduced)
  flag path of the form $$ F_0 \op{\text{$w_1$}} F_1 \op{\text{$w_2$}}
  \cdots F_n,$$ where the reduced words $w_1,\ldots,w_n$ have the
  following properties: \renewcommand{\theenumi}{\alph{enumi}}
\begin{enumerate}
 \item  $\delta \in |w_{2k+1}|\subset \Gamma\setminus\{\gamma\}$,
 \item $\gamma \in |w_{2k}|\subset \Gamma\setminus\{\delta\}$.
\end{enumerate}

\noindent By repeatedly applying Lemma \ref{L:red} to each pair
$w_i,w_j$ for $i\neq j$, it is easy to see that the above conditions
remain in the reduct $w^*_1 \cdots w^*_n$ of the word $w_1\cdots w_n$.
In particular, the word $w^*_1 \cdots w^*_n$ is not trivial and thus
$F_0\neq F_n$. Hence, the original path in $\A_{\gamma,\delta}$ was
not closed.

\end{proof}

%%% Local Variables:
%%% TeX-master: "alt"
%%% ispell-local-dictionary: "british"
%%% End:
%Ende des Files \input{alt3} %Paths and Words
%Anfang des Files \input{alt4} %Axiomatisation and completeness
\section{The Theory $\psg$}\label{S:PS}

\begin{definition}\label{D:axiome}
  In the language of graphs enriched with unary predicates for the
  colours $\{\A_\gamma\}_{\gamma\in\Gamma}$, let the theory $\psg$ be
  a collection of sentences stating that the structure is a
  $\Gamma$-space with the following properties:
  \begin{enumerate}
  \item\label{D:axiome:SC} simple connectedness,
  \item\label{D:axiome:unendlich} for any colour $\gamma$ in $\Gamma$,
    the $\sim_\gamma$-class of any flag $G$ is infinite (Observe that the
relation $\sim_\gamma$ is definable in this language).
  \end{enumerate}
\end{definition}
Axiom (\ref{D:axiome:SC}) is a first-order property, by Theorem
\ref{S:SC_elementar}. Clearly, so is Axiom (\ref{D:axiome:unendlich}).
The $\Gamma$-space $\mo$, as defined on page
\pageref{seite:definition_m_0}, is a
model of $\psg$ by Theorem \ref{T:M0_SC}, so $\psg$ is consistent.

The rest of this section is devoted to proving
the completeness of $\psg$. \\

We first generalise \cite[Definition 4.3]{BMPZ13}.
\begin{definition}
  Fix some letter $s$, and let $F$ be a flag in a $\Gamma$-graph $A$.
  Create a new flag $F^*=\{f^*_\gamma\}_{\gamma\in\Gamma}$ which
  agrees with $F$ on the colours of $\Gamma\setminus s$ but
  $f^*_\gamma\notin A$ for $\gamma\in s$. We define a
  $\Gamma$-graph \[A(F^*)\] with vertices $A\cup F^*$ and edges those
  of $A$ and of $F^*$. A $\Gamma$-graph $B\supset A$ is a \emph{simple
    extension} of $A$ of \emph{type} $(s,F)$ if it is $A$-isomorphic
  to $A(F^*)$.
\end{definition}
\noindent Note that $F^*\sop{s}F$ by construction.

\begin{remark}\label{R:A*space}
  If $A$ is a $\Gamma$-space, then so is $A(F^*)$.
\end{remark}

These simple extensions generalise those simple extensions defined
after Remark \ref{R:unique_b*}, as the following easy remark shows.
\begin{remark}\label{R:newflags}
  Let $\partial s= s\cup \{ \delta \in \Gamma\,| \text{ adjacent to
    some } \gamma \in s\}$ denote the set of all $\gamma$ in $\Gamma$
  which do not commute with $s$. Let $H$ be a flag in $A$ which agrees
  with $F$ on the colours in $\partial s$. For each $\gamma\in s$, replace
  $h_\gamma$ in $H$ by $f^*_\gamma$ in order to obtain a
  flag $H^*$ of $A(F^*)$. It is easy
  to see, since $s$ is connected, that that this construction defines a a
1-to-1-correspondence between the flags $H$ of $A$ with
$H\sim_{\Gamma\setminus\partial
    s}F$ and the new flags $H^*$ of $A(F^*)$. Note that $H^*$ is
  uniquely determined by
  \[H^*\sim_{\Gamma\setminus \partial s}F^*\text{ and } H^*\sim_sH.\]
\end{remark}

\vspace{1em}

In order to prove that the theory $\psg$ is complete, we will need the
appropriate interpretation of a strongly connected subset in this
context.

\begin{definition}\label{D:nice}
  A non-empty subgraph $D$ of a $\Gamma$-space $M$ is \emph{nice} if
  it satisfies the following conditions:
  \begin{itemize}
  \item Any point $a$ in $D$ lies in a flag in $D$.
  \item Given flags $F$ and $G$ in $D$ and a letter $s$, if $F\op{s}
    G$ in $D$, then $F\op{s} G$ in $M$.
\end{itemize}
\end{definition}

Any nice set  is the union of all the flags contained in it. Niceness
is a transitive property. Proposition \ref{P:Eindwort}
yields that a non-empty subset $D$ of a \SC $\Gamma$-space $M$ is
nice \iff the following hold:
\begin{itemize}
\item Any point $a$ in $D$ lies in a flag in $D$.
\item Given flags $F$ and $G$ in $D$ and a reduced word $u$, if
  $F\op{u}G$ in $M$, then there exists such a path in $D$ with the same
word.
\end{itemize}
\noindent In particular, if $D$ is nice in $M$, then $D$ is \SC
whenever $M$ is.

\begin{remark}\label{R:nice}
  \begin{enumerate}
  \item\label{R:nice:A*} The $\Gamma$-space $A$ is nice in $A(F^*)$.
  \item\label{R:nice:space} A nice subset of a $\Gamma$-space is
    itself a $\Gamma$-space.
  \end{enumerate}
\end{remark}

\begin{proof}
  \noindent For $(\ref{R:nice:A*})$, given flags $G$ and $H$ in $A$
with $G\op{t}H$ in $A$, suppose there is a splitting $x=t_1\cdots t_n$
of
$t$ such that $$G=F_0\sop{t_1}F_1\cdots
  F_{n-1}\sop{t_n} F_n=H$$ in $A(F^*)$. We replace each $F_i$ by a
flag
$F'_i$ in $A$ as follows: If $F_i$ belongs to $A$, set $F_i'=F_i$.
Otherwise, by
  Remark \ref{R:newflags}, the flag $F_i$ has the form $H^*_i$ and we
set
  $F_i'=H_i$. Note that $F_{i-1}'\sim_{t_i}F_i'$, so
  $G$ and $H$ can be connected in $A$ be a weak flag path
  whose word is $\preceq$-smaller than $t_1\cdots t_n$, contradicting
$G\op{t} H$.

 \noindent In order to show $(\ref{R:nice:space})$, consider two
elements $a$  and $b$ in a nice subset $A$ of $M$, connected by a
flag $G$ in $M$. Choose
flags $F$
and
$H$ in $A$ containing $a$ and $b$ respectively. Let $\gamma$ be the
colour
of $a$ and $\delta$ the colour of $b$. Since $F
  \sim_{\Gamma\setminus\{\gamma\}} G\sim_{\Gamma\setminus\{\delta\}}
  H$, there is a reduced path $F\op{u}G'\op{v}H$ in $M$ such that
  $\gamma$ does not occur in $u$ and $\delta$ does not occur in $v$.
By
  niceness, we may therefore assume that $G'$ belongs to $A$. Clearly
  $G'$ contains $a$ and $b$.
\end{proof}

We can now prove the analogue version of \cite[Lemma 4.21]{BMPZ13}.

\begin{prop}\label{P:nice=nosplit}
 Given a flag $F$ in a nice subset $A$ of a  \SC $\Gamma$-space $M$,
and a flag $F^*$ in $M$ which is $s$-equivalent to $F$ for some
letter $s$, the following are equivalent:
\renewcommand{\theenumi}{\alph{enumi}}
\begin{enumerate}
  \item\label{P:nice=nosplit:niceagain} The $\Gamma$-graph $A\cup F^*$
    is a simple extension of $A$ of type $(s,F)$ and is nice in $M$.
 \item\label{P:nice=nosplit:nosplit} Whenever $G$ is a flag in $A$ and
   $x$ a splitting of $s$, then $F^*\not\sop{x} G$ in $M$.
\end{enumerate}
\end{prop}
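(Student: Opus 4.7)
The plan is to prove the two implications separately, exploiting the explicit description of the flags of the simple extension $A(F^*)$ provided by Remark \ref{R:newflags}, together with Lemma \ref{L:Flagpath} and the hypothesis that $A$ is nice in $M$.

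For the direction (\ref{P:nice=nosplit:niceagain}) $\Rightarrow$ (\ref{P:nice=nosplit:nosplit}), assume $A\cup F^*$ is a simple extension of type $s,F$ and is nice in $M$. If, towards a contradiction, $F^*\sop{x} G$ holds in $M$ for some flag $G\in A$ and some splitting $x$ of $s$, Lemma \ref{L:Flagpath} refines this weak flag path to a flag path $F^*\op{y} G$ with $y\preceq x$; in particular, every letter of $y$ is still a proper subset of $s$. By niceness, this reduced flag path can be realised inside $A\cup F^*\simeq A(F^*)$. However, the description in Remark \ref{R:newflags} forces any flag path in $A(F^*)$ from a new flag $H^*$ to a flag $K\in A$ to contain a transition step $H^*\sop{t} K$ for which the equality $H^*\sim_{\Gamma\setminus t} K$ holds; since $H^*$ disagrees with every vertex of $A$ on all colours of $s$, this forces $s\subseteq t$, contradicting that every letter of $y$ is a proper subset of $s$.

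For the converse direction, assume (\ref{P:nice=nosplit:nosplit}). The argument splits into a structural part establishing $A\cup F^*\simeq A(F^*)$ over $A$, and a subsequent verification of niceness in $M$. For the structural part, we verify two conditions: that $f^*_\gamma\notin A$ for every $\gamma\in s$, and that no $M$-edge connects such a new vertex to a vertex of $A$ outside $F$. Either violation will be shown to supply, by means of the $\Gamma$-space extension axioms in Definition \ref{D:colsp} applied inside $A$ (which is itself a $\Gamma$-space, by Remark \ref{R:nice}(\ref{R:nice:space})), a flag $G\in A$ whose set of colours of disagreement with $F^*$ is a proper subset of $s$, producing a weak flag path $F^*\sop{x} G$ with $x$ a splitting of $s$ that contradicts (\ref{P:nice=nosplit:nosplit}). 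Connectedness of $s$ inside $\Gamma$, as exploited in Remark \ref{R:newflags}, plays the same role here in propagating the local modification consistently across the flag. Once the identification $A\cup F^*\simeq A(F^*)$ is secured, niceness in $M$ follows by combining niceness of $A$ in $M$ for flag paths entirely contained in $A$ with the same transition analysis used in the first direction for paths crossing between $A$ and $A(F^*)\setminus A$.

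The main obstacle will be the careful construction, in the structural part of (\ref{P:nice=nosplit:nosplit}) $\Rightarrow$ (\ref{P:nice=nosplit:niceagain}), of the witnessing flag $G\in A$: one must manufacture a flag that agrees with $F^*$ on $\Gamma\setminus s$ together with at least one colour in $s$, using only the weak extension axioms of a $\Gamma$-space together with the ambient flag $F\in A$ and niceness of $A$ in $M$.
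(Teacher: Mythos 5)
Your argument for $(\ref{P:nice=nosplit:niceagain})\Rightarrow(\ref{P:nice=nosplit:nosplit})$ is essentially the paper's: reduce the offending weak path, use niceness to pull it into $A\cup F^*$, and observe that any reduced path in $A(F^*)$ crossing from the new flags into $A$ must have a crossing letter $t\supseteq s$, which is incompatible with all letters lying properly inside $s$. That direction is fine.

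The converse direction has a genuine gap, and I think the plan as stated cannot be carried out. You propose to rule out both $f^*_\gamma\in A$ (for $\gamma\in s$) and a new edge $f^*_\gamma\text{--}a$ by \emph{constructing} a flag $G\in A$ whose disagreement set with $F^*$ is a proper nonempty subset of $s$, invoking only the $\Gamma$-space axioms of Definition~\ref{D:colsp}. But those axioms only let you extend a single vertex, or a single edge, to a flag; they do not let you complete a partial flag on the colours $(\Gamma\setminus s)\cup\{\gamma\}$ inside $A$. Worse, for a new edge $f^*_\gamma\text{--}a$ with $a$ of colour $\delta\notin s$, there is no candidate $G$ at all: any flag in $A$ through $a$ already disagrees with $F^*$ at the colour $\delta\notin s$, so its disagreement set is not contained in $s$ and no splitting of $s$ arises. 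The paper handles precisely this sub-case not by producing a splitting path, but by extracting a closed walk in $\A_{\gamma,\delta}(A)\cup\{f^*_\gamma\}$ and invoking Lemma~\ref{L:xy-Kreis} (no non-trivial circles on two adjacent colours in a simply connected space). Your plan does not mention that lemma and has no replacement for it.

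What you are actually missing is the paper's intermediate claim, which does all the work you are trying to do by hand: \emph{for every flag $G\in A$, any word $u$ with $F^*\sop{u}G$ in $M$ has a letter containing $s$}. This is proved from $(\ref{P:nice=nosplit:nosplit})$ together with niceness of $A$, by observing that $u$ is a reduct of $s\cdot \w(F,G)$ in which no splitting can occur. Once you have it, $f^*_\gamma\notin A$ is immediate (take $G\in A$ through $f^*_\gamma$: then $F^*\sop{u}G$ with $\gamma\notin|u|$, contradicting the claim), the new-edge case with $\delta\in s$ is similarly immediate, the case $\delta\notin s$ is handled via Lemma~\ref{L:xy-Kreis}, and the claim is again used repeatedly in the niceness verification (which your proposal dismisses as ``the same transition analysis'', but which really requires a three-way case split on whether the endpoints of the step lie in $A$). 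In short: establish the claim first; do not try to manufacture a witnessing flag, because in general you cannot.
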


\begin{proof}
  \noindent $(\ref{P:nice=nosplit:niceagain})\to
  (\ref{P:nice=nosplit:nosplit})$: Set $B=A\cup F^*$. If $$F^* \sop{x}
  G$$ in $M$, then we cannot have that $F^* \op{s} G$ in $B$, for $B$
  is nice. Thus, there is a splitting $x'$ of $s$ such that
  $F^*\op{x'} G$ in $B$. All flags in $B$ which are $s$-equivalent to
  $F$ are either $F^*$ itself or contained in $A$, by Remark
  \ref{R:newflags}, so $F^*\sim_{t} G'$ for some $G'$ in $A$, where
  $t\subsetneq s$ is the first letter of $x'$. This is impossible, for
  no vertex of $F^*$ with colour in $s$ lies in $A$.

  \noindent $(\ref{P:nice=nosplit:nosplit})\to
  (\ref{P:nice=nosplit:niceagain})$: Since $F$ lies in $A$, the
  hypothesis implies that $F^*\op{s} F$. We will first show that, for
  any flag $G$ in $A$, if $F^*\op{u} G$ in $M$, then $s\preceq u$. We
  may assume that $u$ is reduced. Since $A$ is nice, there is a
  reduced path $F\op{v}G$ in $A$, which remains reduced in $M$. The
  word $u$ is thus a reduct of $s\cdot v$. If in the reduction
  splitting ever occurs, it produces a flag in $A$ which connects to
  $F^*$ by a splitting of $s$, contradicting the assumption. Thus, the
  reduction involves only commutation and possibly absorption of $s$
  by $u$. Hence $s\preceq u$.

  In particular, for any flag $G$ in $A$ with $F^*\op{u} G$, there
  exists a letter $t$ in $u$ containing $s$. Actually, by Lemma
  \ref{L:Flagpath} it suffices to assume $F^*\sop{u} G$.

  In order to show that $B=A\cup F^*$ is a simple extension of $A$, we
  need to show that there is no new edge consisting of an element $b$
  in $\A_{\gamma}(B)\setminus A$ and some $a$ in
  $\A_{\delta}(A)\setminus F$. Suppose otherwise that there exists a
  flag $F'$ in $M$ passing through $a$ and $b$. Take a flag $G$ in $A$
  containing $a$.

  Note that $\gamma$ is in $s$. Suppose first that $\delta$ lies in
  $s$ as well. Since $F^* \sim_{\Gamma\setminus\{\gamma\}}
  F'\sim_{\Gamma\setminus\{\delta\}} G$, we obtain reduced words $u$
  and $v$ such that $\gamma$ does not occur in $u$, the colour
  $\delta$ does not occur in $v$ and

  \[ F^* \op{u} F' \op{v} G.\]

  The reduction $F^*\op{w} G$ in $M$ satisfies that every letter in
  $w$ does not contain either $\gamma$ or $\delta$, contradicting the
  previous discussion.

  If $\delta$ does not lie in $s$, then, with the choice of flags as
  before, we obtain the following path

  \[ F \op{s}F^* \op{u}
  F'\op{v} G\]

  As before, since $A$ is nice, this implies that $F\op{w} G$ in $A$,
  where each letter in $w$ avoids either $\gamma$ or $\delta$. This
  induces a path in $\A_{\gamma,\delta}(A)$ between $a'$ and $a$,
  where $a'$ is the $\delta$-vertex of $F$. This, together with the
  connection $a-b-a'$, yields a non-trivial circle, contradicting Lemma
  \ref{L:xy-Kreis}.

  Let us now show that $B$ is nice in $M$. Given  flags $G_1\op{t}G_2$
  in $B$, we distinguish the following cases:

  \begin{itemize}
  \item Both flags lie in $A$. Then $G_1\op{t} G_2$ also in $A$, and
    thus in $M$, since $A$ is nice.
  \item None of the flags lies in $A$. By Remark
\ref{R:newflags}, we have $G_1\sim_{\Gamma\setminus\partial s} G_2$
and whence  $t\subset \Gamma\setminus\partial s$. Thus we find $H_1$ and
$H_2$
    in $A$ such that $H_1\sim_{\Gamma\setminus\partial s} H_2$ and
    $G_i\sim_s H_i$ for $i=1,2$. This implies $H_1\op{t}H_2$ in $B$
    and also in $A$. Therefore $H_1\op{t}H_2$ in $M$, for $A$ is nice,
    which implies that $G_1\op{t}G_2$ in $M$ as well.
  \item Exactly one flag, say $G_1$, is not fully contained in
$A$. Again by
    Remark \ref{R:newflags}, we have that $F^*\op{w}G_1$ for a word
    $w$ which commutes with $s$ and a flag $H_1$ in $A$ with
    $G_1\sop{s}H_1$. Since $F^*\sop{w\cdot t}G_2$, some letter of
    $w\cdot t$ must contain $s$, so $s\subset t$. If $s=t$ but $G_1\op{x}
G_2$ in $M$ for some $x\prec t=s$, then $F^*\op{w}G_1\op{x} G_2$ in $M$,
whose reduction yields a word where no letter contains $s$. Thus
    $G_1\op{t}G_2$ in $M$. Otherwise, if
    $s\subsetneq t$, then $H_1\op{t}G_2$ in $B$. Since $A$ is nice, we
    have $H_1\op{t}G_2$ in $M$, which implies $G_1\op{t}G_2$ in $M$.
  \end{itemize}
\end{proof}

In particular, setting $s=\{\gamma\}$, for $\gamma$ in $\Gamma$, we
deduce the following result.
\begin{cor}\label{C:one_pt_ext}
  Given a flag $G$ in a nice subset $A$ of a \SC $\Gamma$-space $M$
  and $\gamma$ in $\Gamma$, if the flag $F$ is $\{\gamma\}$-equivalent
  to $G$ and the $\gamma$-vertex of $F$ does not lie in $A$, then the
  set $B=A\cup F$ is nice and a simple extension of $A$ of type
  $(\{\gamma\},G)$.
\end{cor}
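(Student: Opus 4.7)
The plan is to apply Proposition \ref{P:nice=nosplit} directly in the degenerate case of the singleton letter $s=\{\gamma\}$, with the proposition's `$F$' playing the role of our $G$ (the flag in $A$) and the proposition's `$F^*$' playing the role of our $F$. The assumption that $F$ is $\{\gamma\}$-equivalent to $G$ already matches the required $s$-equivalence, so everything reduces to verifying condition (b) of that proposition and then reading off conclusion (a), which simultaneously yields the simple extension structure and niceness in $M$.

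To verify condition (b), I would unwind Definition \ref{D:splitting}: a splitting of $\{\gamma\}$ is a word whose letters are \emph{properly} contained in $\{\gamma\}$. Since letters are by convention non-empty connected subsets of $\Gamma$, and the only proper subset of $\{\gamma\}$ is $\emptyset$, there are no admissible non-empty letters. Hence the only splitting of $\{\gamma\}$ is the trivial (empty) word $1$. Now a relation $F \sop{1} H$ is simply the equality $F = H$, which would place all vertices of $F$ inside $A$; this contradicts the hypothesis that the $\gamma$-vertex of $F$ does not lie in $A$. So condition (b) holds vacuously for every flag $H$ in $A$.

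Applying Proposition \ref{P:nice=nosplit} then gives immediately that $B = A \cup F$ is a simple extension of $A$ of type $\{\gamma\}, G$ and that $B$ is nice in $M$, which is exactly the statement to be proved. I do not anticipate any substantive obstacle: the content of the corollary is entirely carried by the previous proposition, and the specialisation $s = \{\gamma\}$ simply collapses its nontrivial side condition because singleton letters admit no non-trivial splittings.
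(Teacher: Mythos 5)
Your proposal is correct and matches the paper's intent: the paper derives this corollary immediately from Proposition~\ref{P:nice=nosplit} by setting $s=\{\gamma\}$, with no further proof given. You have correctly identified and filled in the routine verification (the only splitting of a singleton letter is the trivial word, and $F\sop{1}H$ forces $F=H$, which is excluded because the $\gamma$-vertex of $F$ lies outside $A$), so condition (\ref{P:nice=nosplit:nosplit}) holds vacuously.
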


The next proposition shows that a \SC space is the increasing union of
simple extensions of nice subsets. sets (\emph{cf.}
\cite[Theorem 4.22]{BMPZ13}).

\begin{prop}\label{P:nice-bandf}
  Given a nice subset $A$ of a \SC $\Gamma$-space $M$ and $b$ in $M$,
  there exists a nice subset $B$ containing $b$ which can be obtained
  from $A$ by a finite number of simple extensions.
\end{prop}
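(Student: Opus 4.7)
The plan is to induct on the length $n$ of a shortest reduced flag path in $M$ from a flag contained in $A$ to a flag containing $b$. The base case $n=0$ is immediate: $b$ lies in a flag of $A$, so we take $B=A$. For the inductive step, fix such a reduced flag path $F_0 \op{s_1} F_1 \op{s_2} \cdots \op{s_n} F_n$ in $M$ with $F_0\subseteq A$ and $b\in F_n$. It suffices to produce, via finitely many simple extensions of $A$, a nice subset $A_1$ containing $F_1$: for then $b$ lies at distance at most $n-1$ from $A_1$ via the subpath $F_1 \op{s_2}\cdots \op{s_n}F_n$, and applying the inductive hypothesis to $A_1$ and $b$ yields the desired $B$.

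To construct $A_1$ from $A$, I will argue that $A\cup F_1$ is a nice simple extension of $A$ of type $s_1,F_0$, by verifying the criterion of Proposition \ref{P:nice=nosplit} with $F=F_0$, $F^*=F_1$ and $s=s_1$. If the no-splitting condition holds directly, we are done. If it fails, there exists a flag $G\in A$ and a non-trivial splitting $x$ of $s_1$ with $F_1\sop{x}G$ in $M$. In that case $F_1$ is $|x|$-equivalent to $G$, where $|x|\subsetneq s_1$, so the problem reduces to strictly smaller letters, given by the connected components of the set of colours where $F_1$ and $G$ differ. Running a secondary induction on the letter $s_1$ along the strict inclusion order, which is well-founded because $s_1$ is finite, one decomposes the extension into finitely many smaller sub-problems, each eventually resolved either via Corollary \ref{C:one_pt_ext} in the singleton base case or via a direct application of Proposition \ref{P:nice=nosplit}. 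The situation in which a vertex of $F_1$ of colour $\gamma\in s_1$ already lies in $A$ is handled the same way: pick a flag of $A$ containing that vertex to play the role of $G$, which again shrinks the effective letter to a proper subset of $s_1$.

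The main obstacle will be organising the secondary induction cleanly: at each stage, one must verify that the intermediate flag to be added is $s'$-equivalent to a flag of the current nice set for a strictly smaller letter $s'$, and that each intermediate simple extension preserves niceness so that the inductive hypothesis continues to apply. This amounts to careful bookkeeping using Lemma \ref{L:modlemma} to decompose differences into connected components, Remark \ref{R:nice} for the transitivity of niceness under iterated simple extensions, and the uniqueness of the reduced word connecting two flags in the simply connected ambient space from Proposition \ref{P:Eindwort}, to certify that the no-splitting criterion is eventually satisfied.
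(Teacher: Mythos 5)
The basic architecture (induct on a minimal path, peel off the flag adjacent to $A$) is the paper's too, but the paper works with a word that is \emph{$\prec$-minimal} among words of reduced flag paths connecting a flag containing $b$ to a flag in $A$, whereas you work with a path that is \emph{length}-minimal. This is not a cosmetic difference: $\prec$-minimality directly forces the no-splitting hypothesis of Proposition \ref{P:nice=nosplit} for the step adjacent to $A$ (if $F_{n-1}\sop{x}G$ for some splitting $x$ of $s_n$ and $G\in A$, then Lemmata \ref{L:Flagpath} and \ref{L:RedPath} give a reduced flag path to $A$ with word $\prec u$), so the simple extension is available immediately and the $\prec$-induction closes in one line, with no secondary induction at all. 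Length-minimality gives you nothing of the sort, which is exactly why you find yourself needing a secondary induction.

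That secondary induction, as written, has a genuine gap. You assert that if the no-splitting criterion fails, so that $F_1\sop{x}G$ for some $G\in A$ and some splitting $x$ of $s_1$, then ``$F_1$ is $|x|$-equivalent to $G$, where $|x|\subsetneq s_1$,'' and that the connected components of the set of colours where $F_1$ and $G$ differ are strictly smaller letters. This is false. A splitting of $s_1$ only requires each \emph{letter} of $x$ to be properly contained in $s_1$; the \emph{support} $|x|$ can equal $s_1$. For example, for $\Gamma$ containing a path $0-1-2$, with $s_1=\{0,1,2\}$, the word $x=\{0,1\}\cdot\{1,2\}$ is a splitting of $s_1$ with $|x|=s_1$. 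In that situation $F_1$ and $G$ can differ on all of $s_1$, the single connected component of the difference set is $s_1$ itself, and your measure does not decrease. So the induction on strict inclusion of letters, as you have set it up, does not terminate. The argument can be repaired --- after invoking Lemma \ref{L:Flagpath} you obtain a reduced flag path $F_1\op{v}G$ with every \emph{letter} of $v$ properly contained in $s_1$, and you can then add those intermediate flags one step at a time, inducting on the individual letters rather than on the difference set --- but this is not what you wrote, and the distinction between ``letters of $x$ are $\subsetneq s_1$'' and ``$|x|\subsetneq s_1$'' is precisely where your proposal breaks. The paper's choice of minimality criterion is what makes this whole detour unnecessary.

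Your handling of the side case where a vertex of $F_1$ of colour $\gamma\in s_1$ already lies in $A$ has the same defect: picking a flag $G$ of $A$ through that vertex does not control the set of colours on which $F_1$ and $G$ differ, which need not even be contained in $s_1$, let alone be a proper subset of it. In fact this case is already subsumed by the failure of the no-splitting criterion (a simple extension of type $s_1,F_0$ requires all $s_1$-coloured vertices of the new flag to be new), so it should not be treated as a separate branch at all.
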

\begin{proof}
  Given a flag $F$  in $M$ containing $b$, choose a reduced path
  \[F=F_0\op{s_1}F_1\cdots \op{s_n}F_n\]
  connecting $F$ with a flag $F_n$ in $A$ such that the word
  $u=s_1\cdots s_n$ is $\prec$-minimal. We prove the claim by
  $\prec$-induction on $u$. If $u=1$, there is nothing to show.
  Otherwise, minimality of $u$ implies that there is no path which
  connect $F_{n-1}$ to a flag in $A$ whose word is a splitting of
  $s_n$. It follows from Proposition \ref{P:nice=nosplit} that
  $A'=A\cup F_{n-1}$ is a simple extension of $A$ of type $(s_n,F_n)$,
  so $A'$ is nice in $M$. Now $F$ can be connected to some flag in
  $A'$ by a reduced word path, whose word $u'$ is $\prec$-minimal
  such, with $u'\preceq s_1\cdots s_{n-1}\prec u$. By induction, the
  element $b$ is contained in a nice set $B$ which can be obtained
  from $A'$ (and thus, from $A$) by a finite number of simple
  extensions.
\end{proof}

\begin{lemma}\label{L:Zus}
Let $s$ be a letter in $\Gamma$ which is not a singleton. There
are $\gamma$ and $\gamma'$ in $s$ distinct such that both $s\setminus
\{\gamma\}$ and $s\setminus\{\gamma'\}$ are connected.
\end{lemma}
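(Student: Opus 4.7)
The plan is to invoke the elementary fact from graph theory that every connected graph on at least two vertices has at least two non-cut vertices. Since a letter is by definition a non-empty connected subset of $\Gamma$ (viewed as an induced subgraph), and $s$ is assumed not to be a singleton, $s$ has at least two vertices and is connected.

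First I would pass to a spanning tree $T$ of the induced subgraph on $s$, which exists because $s$ is connected and finite. Since $T$ is a tree with $|s|\geq 2$ vertices, it must have at least two leaves; call two distinct such leaves $\gamma$ and $\gamma'$. For any leaf $\gamma$ of $T$, the subgraph $T\setminus\{\gamma\}$ is again a tree on the vertex set $s\setminus\{\gamma\}$, hence is connected; since $T\setminus\{\gamma\}$ is a spanning subgraph of the induced subgraph on $s\setminus\{\gamma\}$, the latter is connected as well. The same argument applies to $\gamma'$, which finishes the proof.

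There is no real obstacle here: the statement is a standard exercise and the use of a spanning tree together with the existence of at least two leaves in any non-trivial tree makes the argument immediate.
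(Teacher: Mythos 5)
Your proof is correct and takes essentially the same approach as the paper: both pass to a spanning tree of $s$ and use the fact that a non-trivial tree has at least two leaves, whose removal keeps the tree (and hence the induced subgraph) connected. You spell out the spanning-subgraph step a bit more explicitly, but the argument is the same.
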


\begin{proof}
  By repeatedly removing edges, it is enough to prove it for \emph{a
    spanning tree} $s$, that is, whenever we remove an edge between
  two points in $s$, the resulting graph is no longer connected. In
  particular, such an $s$ contains no cycles. The assertion now
  follows, since any non-trivial tree has at least two extremal
  points.
\end{proof}

\begin{cor}\label{C:equiv_forms}
  Given a letter $s$ and a flag $G$ contained in some finite nice
  subset $A$ of an $\omega$-saturated model $M$ of $\psg$, then $M$
  contains a simple extension of $A$ of type $(s,G)$ which is nice in
  $M$.
\end{cor}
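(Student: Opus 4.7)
The plan is to proceed by induction on the cardinality $|s|$.

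For the base case $|s| = 1$, say $s = \{\gamma\}$, axiom~(\ref{D:axiome:unendlich}) of $\psg$ provides infinitely many flags $\sim_\gamma$-equivalent to $G$, and hence infinitely many corresponding $\gamma$-vertices. Since $A$ is finite, one can pick $F^*\sim_\gamma G$ whose $\gamma$-vertex does not lie in $A$. Corollary~\ref{C:one_pt_ext} then immediately yields that $A \cup F^*$ is a nice simple extension of $A$ of type $\{\gamma\},G$ in $M$.

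For the inductive step $|s| \geq 2$, Lemma~\ref{L:Zus} provides $\gamma \in s$ such that $s_1 := s \setminus \{\gamma\}$ is still a letter. The tempting strategy is to first apply the induction hypothesis to obtain $F_1$ with $A_1 := A \cup F_1$ a nice simple extension of type $s_1, G$, and then apply the base case at $F_1$ to obtain a flag $F^* \sim_\gamma F_1$ whose $\gamma$-vertex is fresh. This does produce an $F^*$ whose $s$-vertices all lie outside $A$, but it also yields the weak flag path $F^* \sop{\{\gamma\} \cdot s_1} G$, whose word is a splitting of $s$ ending at a flag in $A$. By Proposition~\ref{P:nice=nosplit}, this is precisely what prevents $A \cup F^*$ from being a nice simple extension of $A$ of type $s,G$. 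So the naive iteration of simple extensions cannot succeed, and a more global argument is required.

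The remedy is to realize, rather than construct, the flag $F^*$, using the $\omega$-saturation of $M$. Consider the partial type $\Phi(\bar x)$ over $A$ in variables $\bar x = (x_\gamma)_{\gamma \in s}$ stating that $\bar x$ together with the $(\Gamma \setminus s)$-vertices of $G$ forms a flag $F^*$ whose $s$-vertices lie outside $A$, and such that, for every splitting $x$ of $s$ and every flag $H \in A$, no weak flag path $F^* \sop{x} H$ exists. By Proposition~\ref{P:nice=nosplit}, any realization of $\Phi$ yields the required nice simple extension. To invoke $\omega$-saturation, one must verify that $\Phi$ is consistent with $\mathrm{Th}(M)$, which I would establish by exhibiting a model of $\psg$ realizing $\Phi$: start with the abstract simple extension $A(F^*)$ of type $s,G$, which is a $\Gamma$-space by Remark~\ref{R:A*space}, show it is simply connected by adapting Lemma~\ref{L:Aastar} (a closed reduced path in $A(F^*)$ lies either entirely inside $A$, and so is trivial by the inherited simple connectedness, or entirely within flags through $F^*$'s new vertices; any attempted crossing between these regions is forced to contain a reducible pattern), and then extend to a full model of $\psg$ by iteratively adding simple extensions to enforce the infinite-residue axiom. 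The hardest part of the plan is precisely this simple connectedness verification and checking that the iterated saturation of residues does not secretly create a new splitting path from $F^*$ to some flag in $A$ -- the no-splitting condition must be preserved under the closure process, which requires a careful analysis of how reduced words from $F^*$ to flags in $A$ must contain the full letter $s$ as a non-decomposable subword.
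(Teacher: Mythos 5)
Your plan has the right skeleton: reduce by Proposition~\ref{P:nice=nosplit} and $\omega$-saturation to realizing a type over $A$ asserting the absence of splitting paths to $A$, and your observation that naive iteration of a simple extension of type $s_1,G$ followed by a singleton extension produces exactly the forbidden splitting path is a genuine and correct insight. The base case $|s|=1$ matches the paper exactly. Where you diverge is the consistency argument: you propose to build an abstract model of $\psg$ containing $A$ and $F^*$ and transfer via completeness, whereas the paper keeps everything inside $M$ and proves finite satisfiability directly.

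The gap is that the crucial step of your route is only sketched, and it is not a routine adaptation. You need (i) that the abstract simple extension $A(F^*)$ is simply connected, and (ii) that a countable iteration of further simple extensions (to enforce infinite residues) preserves simple connectedness and keeps $A(F^*)$ nice. The paper never proves (i) or (ii), and they are not obvious: in Lemma~\ref{L:Aastar} the ``cannot change sides twice'' argument rests on each step being a single generator $\gamma$, so a crossing--recrossing pattern $\gamma\cdot w\cdot\gamma$ is visibly non-reduced. In the $\Gamma$-space setting the crossing letters are full letters $t_i, t_j \supset s$ that need not be comparable to each other, and the intermediate letters commute with $s$ but not necessarily with $t_i\setminus s$, so non-reducedness of the crossing subword is not automatic. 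You explicitly flag this as ``the hardest part'' and leave it open; until it is closed your proof does not go through.

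For comparison, the paper avoids the abstract lemma entirely: it uses Lemma~\ref{L:Zus} to pick two non-commuting subletters $s_0,s_1\subset s$ of size $|s|-1$, applies the induction hypothesis (on $|s|$) inside $M$ to build a long alternating chain $G_{N+n+1}\op{s_{f(N+n)}}\cdots\op{s_0}G_0=G$ of simple extensions, and then a counting argument kills the would-be short splitting path: a splitting $x\cdot u$ of $s$ (with $x$ short and $u$ bounded by the finiteness of $A$ and simple connectedness) has fewer than $N+n+1$ letters and none of size $\ge |s|$, so it can never reduce to a word with $N+n+1$ letters of size $|s|-1$, since splitting can only decrease maximal letter size. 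This stays entirely within the already-established machinery and sidesteps the abstract preservation question your proof depends on.
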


\begin{proof}
   For $s=\{\gamma\}$, pick any flag $G^*$ which is
   $\gamma$-equivalent to $G$ and its $\gamma$-vertex does not lie in
   (the finite set) $A$, by property (\ref{D:axiome:unendlich}). The
   set $A\cup G^*$ is nice in $M$ and a simple of $A$ type $(s,G)$, by
   Corollary \ref{C:one_pt_ext}.

  Suppose now that $|s|\geq 2$. By Proposition \ref{P:nice=nosplit}
  and saturation, it is enough to produce, for every $n$, a flag $G_n$
  with $G_n\sop{s}G$ and $G_n\not\sop{x}G'$, whenever $G'$ is a flag
  in $A$ and $x$ a splitting of $s$ of length at most $n$.

  By Lemma \ref{L:Zus}, find two subletters $s_0$ and $s_1$ of $s$ of
  cardinality $|s|-1$ and not commuting with each other. Since $A$
  contains only finitely many flags, simple connectedness yields an
  upper bound $N$ for the length of the word of any reduced flag path
  between any two flags in $A$. Set $A_0=A$ and $G_0=G$. By induction
  on $|s|$, there is a sequence of pairs $\{(G_i,A_i)\}_{i\leq N+n}$
  such that $G_i$ is a flag in the (finite) nice set $A_i$ and
  $A_{i+1}=A_i \cup G_{i+1}$ is a simple extension of type
  $(s_{f(i)},G_i)$, where $f(i)$ in $\{0,1\}$ is the residue of $i$
  modulo $2$. The flag path

  $$G_{N+n+1}\op{s_{f(N+n)}} G_{N+n}\cdots G_{1}\op{s_0} G_0$$

  \noindent is reduced, since $s_0$ and $s_1$ do not commute.

  Clearly $G_{N+n+1}\sop{s}G_0$. Suppose there is some flag $G'$ in $A$
such that \[G_{N+n+1}\sop{x}G'\] in $A$ for some splitting $x$ of $s$ of
length at most $n$. Reducing this path, we obtain a reduced path
$G'\op{u}G$, where $u$ is also a splitting of $s$. By
  niceness of $A$, we may assume that this path lies in $A$, so $u$
  has length at most $N$. Proposition \ref{P:Eindwort} implies that $x
  \cdot u \str s_{f(N+n)}\cdots s_0$. However, at every step of the
  reduction, the number of letters of size exactly $|s|-1$ is bounded
  by $N+n$, contradicting our choice of $ s_{f(N+n)}\cdots s_0$.
\end{proof}

We can now conclude that the theory $\psg$ is complete and
that the type of a nice set is determined by
its  quantifier-free type.

\begin{theorem}\label{T:el}
Any two $\omega$-saturated models of $\psg$ have the back-and-forth
property with respect to the collection of partial isomorphisms
between finite nice substructures. In particular, any partial
isomorphism $f:A\to A'$
between two finite nice subsets of two models of $\psg$ is
elementary. The theory $\psg$ is complete.
\end{theorem}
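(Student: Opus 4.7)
The plan is to prove the back-and-forth property; the two subsequent assertions (elementarity of partial isomorphisms between nice finite subsets of arbitrary models, and completeness of $\psg$) will follow immediately by Karp's criterion after passing to $\omega$-saturated elementary extensions. Let $M$ and $M'$ be $\omega$-saturated models of $\psg$, and let $\mathcal{F}$ denote the collection of partial isomorphisms $f: A \to A'$ between finite nice substructures $A \subset M$ and $A' \subset M'$. Observe that $\mathcal{F}$ is non-empty: any single flag is a finite nice subset, and the canonical identification of two flags (via the colour labels) is an isomorphism lying in $\mathcal{F}$. Hence, once back-and-forth is established, the initial flag-to-flag map is elementary, proving that $M$ and $M'$ have the same theory, which yields completeness of $\psg$.

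For the forth step, fix $f: A \to A' \in \mathcal{F}$ and an element $b \in M$. By Proposition \ref{P:nice-bandf}, there is a nice subset $B \subset M$ containing $b$ obtained from $A$ by a finite sequence of simple extensions
\[ A = A_0 \subset A_1 \subset \cdots \subset A_k = B, \]
where each $A_{i+1}$ is a simple extension of $A_i$ of some type $(s_i, G_i)$, with $G_i$ a flag in $A_i$. I extend $f$ inductively: assuming that $f_i : A_i \to A_i'$ in $\mathcal{F}$ has been constructed, apply Corollary \ref{C:equiv_forms} inside the $\omega$-saturated model $M'$ to the finite nice subset $A_i'$ and the flag $f_i(G_i)$, producing a nice simple extension $A_{i+1}' \subset M'$ of $A_i'$ of type $(s_i, f_i(G_i))$. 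Since simple extensions of a given type are determined, up to isomorphism over the base, by their type, the map $f_i$ extends uniquely to an isomorphism $f_{i+1}: A_{i+1} \to A_{i+1}'$ sending the new flag of $A_{i+1}$ to the new flag of $A_{i+1}'$. After $k$ steps, we obtain a partial isomorphism in $\mathcal{F}$ whose domain contains $b$. The back step is entirely symmetric.

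The main obstacle is the correct matching of simple extensions at each inductive step: we must ensure both that the prescribed type $(s_i, f_i(G_i))$ is realised by a \emph{nice} simple extension in $M'$ (which is exactly what Corollary \ref{C:equiv_forms} provides, using $\omega$-saturation and the fact that $A_i'$ is a finite nice subset), and that the resulting isomorphism $f_{i+1}$ genuinely preserves the $\Gamma$-graph structure. The latter follows from Remark \ref{R:newflags}, which describes the new flags and edges in a simple extension of type $(s_i, G_i)$ purely in terms of $A_i$, $s_i$ and $G_i$, so transport along $f_i$ yields an isomorphism.

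Finally, for the two corollaries within the statement: given any partial isomorphism $f : A \to A'$ between finite nice subsets of arbitrary models $N \models \psg$ and $N' \models \psg$, choose $\omega$-saturated elementary extensions $N \preceq M$ and $N' \preceq M'$, both models of $\psg$; niceness of $A$ and $A'$ is preserved (niceness in $N$ implies niceness in $M$ by Proposition \ref{P:Eindwort}), so the back-and-forth just established shows that $f$ is elementary between $M$ and $M'$, hence between $N$ and $N'$. Completeness of $\psg$ follows by applying this to the non-empty initial element of $\mathcal{F}$ provided by any flag-to-flag identification.
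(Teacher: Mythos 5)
Your proposal is correct and follows essentially the same route as the paper: decompose into a tower of simple extensions via Proposition \ref{P:nice-bandf}, match each step in the $\omega$-saturated model $M'$ using Corollary \ref{C:equiv_forms}, and conclude completeness from the fact that any two flags share a quantifier-free type. One small slip: the claim that $N \preceq M$ implies $N$ is nice in $M$ is not a consequence of Proposition \ref{P:Eindwort}; rather, it follows directly from elementarity, since the existence of a flag path with a given splitting word is a first-order condition, so if no such path exists in $N$, none exists in $M$ either.
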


\begin{proof}
  Let $M$ and $M'$ be two $\omega$-saturated models and let $f:A\to
  A'$ be a partial isomorphism between two finite nice substructures.
  Given $b$ in $M$, Proposition \ref{P:nice-bandf} yields a nice
  subset $B$ containing $A\cup\{b\}$ such that $A\leq B$ in finitely
  many steps. By $\omega$-saturation of $M'$ and Corollary
  \ref{C:equiv_forms} (finitely many times), we obtain a nice subset
  $B'$ of $M'$ containing $A'$ such that $f$ extends to isomorphism
  between $B$ and $B'$.

  Since any model $M$ is nice in any elementary extension, replacing
  the models by appropriate saturated extensions, we produce a
  back-and forth system. Completeness of $\psg$ then follows, since
  any two flags have the same quantifier-free type.
\end{proof}

\begin{cor}\label{C:nicetype}
The type of a nice set $A$ is determined by its quantifier-free type.
\end{cor}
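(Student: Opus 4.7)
The plan is to deduce this corollary directly from Theorem~\ref{T:el} by cutting $A$ down to finite nice substructures. Having the same quantifier-free type amounts to a $\Gamma$-graph isomorphism $f\colon A\to A'$ between two nice subsets of models $M$ and $M'$ of $\psg$, and the goal is to show that $f$ is elementary.

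First I would pass to $\omega$-saturated elementary extensions of $M$ and $M'$. A small check is required: a subset nice in $M$ remains nice in any elementary extension $N\succeq M$, because, for fixed flags $F,G\in A$, a fixed letter $s$ with $F\sop{s}G$, and each splitting $x$ of $s$, the non-existence of an $x$-labelled path from $F$ to $G$ in $M$ is first-order with parameters from $A$ and hence persists in $N$.

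The core step is then to approximate $A$ by finite nice substructures. Given a tuple $\bar{a}$ in $A$, Remark~\ref{R:nice}(\ref{R:nice:space}) ensures that $A$ is itself a \SC $\Gamma$-space. Starting from any flag contained in $A$ and iterating Proposition~\ref{P:nice-bandf} inside the ambient space $A$, I produce a finite subset $A_0\subseteq A$ containing $\bar{a}$ and nice in $A$; by transitivity of niceness, $A_0$ is then nice in $M$. Because $f$ is an isomorphism of $\Gamma$-graphs, the image $A_0'=f(A_0)$ is nice in $A'$, and therefore in $M'$.

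Finally, Theorem~\ref{T:el} applies to the restriction $\rest{f}{A_0}\colon A_0\to A_0'$, a partial isomorphism between finite nice substructures of two $\omega$-saturated models of $\psg$, yielding $\tp(\bar{a})=\tp(f(\bar{a}))$. As $\bar{a}$ was arbitrary, $f$ is elementary. The only point that is not purely formal is the preservation of niceness under elementary extensions; everything else is bookkeeping on top of the back-and-forth machinery already established in Theorem~\ref{T:el}.
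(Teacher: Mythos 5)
Your proof is correct and follows essentially the same route as the paper, which states the result in one line: the finite case is Theorem~\ref{T:el}, and infinite nice sets are directed unions of finite nice subsets. You supply the details the paper leaves implicit — that $A$ is itself a \SC $\Gamma$-space (by Remark~\ref{R:nice} and the remark after Definition~\ref{D:nice}), so Proposition~\ref{P:nice-bandf} applied inside $A$ produces finite nice subsets $A_0\subseteq A$ containing any prescribed finite tuple, which are then nice in $M$ by transitivity of niceness. The preliminary step of passing to $\omega$-saturated elementary extensions is not strictly needed, since the ``in particular'' clause of Theorem~\ref{T:el} already applies to finite nice subsets of arbitrary models; that reduction (together with the preservation of niceness under elementary extensions that you check) is precisely how the paper gets from the back-and-forth statement to that clause in the first place.
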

\begin{proof}
 For finite sets, this follows from Theorem \ref{T:el}. For infinite
nice sets, note that they are direct unions of finite
nice subsets.
\end{proof}

\begin{cor}\label{C:stab}
The theory $\psg$ is $\omega$-stable and the model $\mo$ is the
unique (up to isomorphism) countable prime model.
\end{cor}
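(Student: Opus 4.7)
For $\omega$-stability, I would reduce type-counting to counting isomorphism types of nice extensions. Given a countable parameter set $A$ in a model $M\models\psg$, apply Proposition~\ref{P:nice-bandf} iteratively, absorbing each element of $A$ into a growing finite nice subset, to produce a countable nice $\hat A\supseteq A$ inside $M$. Since $|S_1(A)|\leq|S_1(\hat A)|$, it suffices to bound the latter. Any $p\in S_1(\hat A)$ is realised by some $b$ in a sufficiently saturated elementary extension $M^*$, and a further application of Proposition~\ref{P:nice-bandf} gives a nice $B\supseteq\hat A\cup\{b\}$ obtained from $\hat A$ by finitely many simple extensions. The $\hat A$-isomorphism type of $(B,b)$ is determined by the finite sequence of pairs $(s_i,F_i)$ specifying the simple extensions, where each $s_i$ is a letter (finitely many choices) and $F_i$ is a flag in a countable intermediate nice subset; so there are only countably many such isomorphism types. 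Theorem~\ref{T:el}, applied to finite nice approximations and passed to direct limits, shows that two nice extensions of $\hat A$ with the same quantifier-free type over $\hat A$ realise the same type over $\hat A$. Hence $|S_1(\hat A)|\leq\aleph_0$, proving $\omega$-stability.

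To identify $\mo$ as the prime model I would show that $\mo$ is atomic, since a countable complete $\omega$-stable theory has a prime model, which is its unique countable atomic model. Given a finite tuple $\bar a$ in $\mo$, Proposition~\ref{P:nice-bandf} embeds $\bar a$ in a finite nice subset $A\subseteq\mo$. I claim that the quantifier-free type of $A$ together with the first-order assertion ``$A$ is nice in the ambient model'' isolates $\tp(A)$. The quantifier-free part is a finite formula; for niceness, for each pair of flags $F,G\in A$ and each letter $s$ with $F\op{s}G$ holding internally in $A$ (a combinatorial property of the quantifier-free type of $A$), one requires that $F\sim_sG$ and that, for every splitting $x=t_1\cdots t_m$ of $s$, there is no weak flag path $F\sop{x}G$ in $M$. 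Since a given letter has only finitely many splittings and each $x$ has a fixed length, this unfolds to a finite Boolean combination of first-order formulae in the parameters of $A$. Theorem~\ref{T:el} then ensures that this formula isolates $\tp(A)$, and existentially quantifying away the coordinates of $A\setminus\bar a$ isolates $\tp(\bar a)$.

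The main technical point I expect to verify carefully is the first-order coding of niceness for a finite set, specifically that the relation $F\op{s}G$ between flags in $M$ is expressible by a first-order formula; this hinges on the finiteness of the splitting set of a finite letter and the bounded length of paths of a fixed word. Granted this, atomicity of $\mo$ follows, and uniqueness of $\mo$ among countable models of $\psg$ is then the classical fact that any two countable atomic models of a complete theory are isomorphic.
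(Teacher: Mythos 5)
Your $\omega$-stability argument is essentially the paper's: reduce to a countable nice $\hat A$, observe each type is realised in a finite tower of simple extensions determined up to $\hat A$-isomorphism by countably many data (using Theorem~\ref{T:el}, Corollary~\ref{C:nicetype}, and Proposition~\ref{P:nice-bandf}). That part is sound.

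The primeness argument, however, has a genuine gap at precisely the point you flag as needing care. You assert that ``a given letter has only finitely many splittings'', so that $F\op{s}G$ is expressible by a single first-order formula and hence niceness of a finite set is first-order. This is false for any non-singleton letter: a splitting of $s$ is an arbitrary word whose letters are proper subsets of $s$, and such words have unbounded length (for $s=\{0,1\}$ already $\{0\}\{1\}\{0\}\{1\}\cdots$ are splittings of every length). The relation $F\op{s}G$ in a model $M$ is thus \emph{a priori} the conjunction of infinitely many conditions $F\op{s,n}G$ indexed by $n$ --- that is exactly why the proof of Theorem~\ref{S:SC_elementar} is non-trivial --- and niceness of an arbitrary finite $A$ is not isolated by a single formula. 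The paper avoids this issue by a different route: it shows $\mo$ is \emph{constructible}, isolating only the type of each one-element step $F/A$ in a tower of singleton simple extensions, and the key input is the observation (from the proof of Theorem~\ref{T:M0_SC}) that in $\mo$ every reduced flag path uses only singleton letters. Singleton letters admit only the trivial splitting, so $F\op{\{\gamma\}}G$ is just $F\sim_\gamma G\wedge F\neq G$, which is first-order. Your atomicity argument can be repaired by invoking exactly this feature of $\mo$: since every nice $A\subseteq\mo$ has only singleton internal paths, the niceness clause in your isolating formula reduces to finitely many singleton conditions. Without that observation, the proof does not go through as written.
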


\begin{proof}
  In order to show that $\psg$ is $\omega$-stable, we need to count
  $1$-types over a countable subset of $A$, which we may assume nice
  inside a given saturated model (\emph{cf.} \cite[Theorem
    5.2.6]{TZ12}). Every simple extension of $A$ is uniquely
  determined, up to $A$-isomorphism, by its type $(s,G)$, by Corollary
  \ref{C:nicetype}. Therefore, if $A$ is countable, there are, up to
  $A$-isomorphism, only countably many simple extensions. Now, every
  $1$-type is realised in a finite tower of simple extensions over
  $A$, by Proposition \ref{P:nice-bandf}, so there are only countably
  many types, as desired.

  In order to show that the countable model $\mo$ is the the prime
  model of $\psg$, it suffices to show that it is constructible. It
  follows from the proof of Theorem \ref{T:M0_SC} that the only words
  of reduced paths in $\mo$ are finite products of singletons. Given
  $\gamma$ in $\Gamma$ and a flag $G$ in a nice subset $A$, the type
  over $A$ of the simple extension $B=A \cup F$ of type $(\{\gamma\},
  G)$ is determined by its quantifier-free type, which amounts to
  saying, by Corollary \ref{C:one_pt_ext}, that $F$ and $G$ are
  $\gamma$-equivalent but that the $\gamma$-vertex of $F$ does not lie
  in $A$. Therefore, if $A$ is finite, the type of $B$ over $A$ is
  isolated.
\end{proof}

Uniqueness of prime models and Theorem \ref{T:biinterpretierbar} yield
the uniqueness result in Corollary \ref{C:b_unten_0}.

\begin{remark}\label{R:paths_exists}
  Let $M$ be an $\omega$-saturated model of $\psg$. For every word
$v$,  there is a path $F\op{v}G$ in $M$. Whenever a (possibly
non-reduced)   word $u$ can be reduced to $v$, then there is a flag
path from $F$  to $G$ with word $u$.
\end{remark}

%%% Local Variables:
%%% mode: latex
%%% TeX-master: "alt"
%%% ispell-local-dictionary: "british"
%%% End:
%Ende des Files \input{alt4} %Axiomatisation and completeness
%Anfang des Files \input{alt5} %Non-splitting reductions
\section{Non-splitting reductions}\label{S:NS_Reductions}

In order to describe the geometrical complexity of $\psg$, we will
need several auxiliary results on the combinatorics of reduction of
words when no splitting occurs, generalising some of the results of
\cite{BMPZ13}. For the sake of self-containment, we will provide,
whenever possible, different proofs.

\begin{definition}\label{D:beginning}
  A letter $s$ is a \emph{beginning} (resp.\ \emph{end}) of the word
  $u$ if $u\approx s\cdot v$ (resp.\ $u\approx v\cdot s$). The
  \emph{initial segment} (resp.\ \emph{final segment}) of $u$ is the
  commuting subword whose letters are beginnings (resp.\ ends) of $u$.
\end{definition}

By abuse of the language, we say that the word $u$ is an \emph{initial
  subword} of $v$ if $u$ is an initial subword (in the proper sense)
of some permutation of $v$. Likewise for \emph{final subword}. The
initial segment of $v$ is the largest commuting initial subword of
$v$. Inductively on the sum of their lengths, it is easy to see that
any two words $u$ and $v$ have a \emph{largest common initial
  subword}, resp.\ a \emph{largest common final subword}, none of
which need be commutative.

Common initial subwords can be removed, as seen easily.
\begin{lemma}\label{L:init_subw_kuerzen}
  If $u\cdot v'\approx u\cdot v''$, then $v'\approx
v''$. Likewise, if $ v'\cdot u\approx v''\cdot u$, then $v'\approx
v''$.
\end{lemma}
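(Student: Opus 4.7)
My plan is to reduce by induction on the length of $u$ to the case where $u$ is a single letter, and then to handle the single-letter case by tracking a distinguished occurrence through a chain of elementary commutations. The essential ingredient from Definition~\ref{D:comm} is that no letter commutes with itself, so two occurrences of the same letter can never swap past each other under $\approx$.

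For the inductive reduction, I would write $u=a\cdot u_1$ with first letter $a$, so that the hypothesis becomes $a\cdot(u_1\cdot v')\approx a\cdot(u_1\cdot v'')$. The single-letter case will give $u_1\cdot v'\approx u_1\cdot v''$, and induction on $|u|$ will then yield $v'\approx v''$.

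For the case $a\cdot v'\approx a\cdot v''$, fix a chain of elementary commutations $w_0\to w_1\to\cdots\to w_n$ with $w_0=a\cdot v'$ and $w_n=a\cdot v''$, and let $p_i$ denote the position in $w_i$ of the occurrence of $a$ originally at position $1$. Since $a$ does not commute with itself, this specific occurrence can never be swapped past another occurrence of $a$, so it remains the leftmost $a$ throughout; in particular $p_n=1$. I would then define $\pi(w_i)$ to be $w_i$ with the letter at position $p_i$ deleted, so that $\pi(w_0)=v'$ and $\pi(w_n)=v''$. A brief case analysis of each elementary step $w_i\to w_{i+1}$ shows that $\pi$ sends it either to a no-op (when the swap takes place at positions $(p_i-1,p_i)$ or $(p_i,p_i+1)$, i.e.\ moves the tracked $a$) or to a genuine elementary swap between the same two commuting letters in $\pi(w_i)$, with an index shift by one if the original swap occurred strictly to the right of $p_i$. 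Concatenating the non-trivial projected steps produces a derivation $v'\approx v''$.

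The right-cancellation statement follows by word reversal: writing $w^R$ for $w$ read backwards, the relation $\approx$ commutes with reversal, so from $v'\cdot u\approx v''\cdot u$ one obtains $u^R\cdot(v')^R\approx u^R\cdot(v'')^R$, and the already-proven left cancellation yields $(v')^R\approx(v'')^R$, whence $v'\approx v''$. The main technical obstacle is the case analysis of the projection $\pi$: one must verify carefully that swaps entirely to one side of $p_i$ remain valid after the deletion (with the correct index shift) and that swaps involving position $p_i$ really become trivial.
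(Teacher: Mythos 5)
Your proof is correct and supplies the argument that the paper dismisses with ``as seen easily'' and no proof. The reduction to the single-letter case by induction on $|u|$ is routine, and the core step is exactly the standard ``track-and-project'' argument for trace monoids: you isolate the two essential points, namely that two occurrences of the same letter can never cross in a chain of elementary commutations (this is where the convention that no letter commutes with itself is used), so the occurrence tracked from position~$1$ stays the leftmost $a$ and hence lands at position~$1$ in $w_n=a\cdot v''$; and that deleting the tracked occurrence sends each elementary swap to either the identity (when the swap moves the tracked letter) or to a legitimate elementary swap of the residual word, possibly after an index shift. The case analysis you flag as the ``main technical obstacle'' is indeed where the care is needed, but it goes through exactly as you describe, and the reversal trick for right-cancellation is valid since $\approx$ is preserved by word reversal.
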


\begin{lemma}\label{L:initial}
  Let $a$ be a final subword of $c\cdot d$ such that every end of $a$
commutes with $d$. Then $a$ and $d$ commute
  and $a$ is a final subword of $c$.
\end{lemma}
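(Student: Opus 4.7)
My plan is to proceed by induction on the length of $a$. The empty case being trivial, assume $a$ is non-empty, and pick any end $s$ of $a$, so that $a\approx a'\cdot s$ and, by hypothesis, $s$ commutes with $d$. Since no letter commutes with itself, $s$ cannot occur in $d$. Writing $c\cdot d\approx b\cdot a'\cdot s$, the final letter $s$ on the right corresponds to some occurrence of $s$ in $c\cdot d$ that must lie in $c$. For this occurrence to reach the last position of $c\cdot d$, it must commute with everything to its right; since it already commutes with $d$, it must also commute with all letters of $c$ to its right, so $s$ is an end of $c$. Writing $c\approx c'\cdot s$, we obtain $c\cdot d\approx c'\cdot s\cdot d\approx c'\cdot d\cdot s$, and cancelling $s$ via Lemma~\ref{L:init_subw_kuerzen} gives $c'\cdot d\approx b\cdot a'$, so that $a'$ is a final subword of $c'\cdot d$.

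The crux is to verify that every end $t$ of $a'$ still commutes with $d$, so that the inductive hypothesis applies to $a'$ and $c'\cdot d$. If $t$ commutes with $s$, then $a\approx a''\cdot t\cdot s\approx a''\cdot s\cdot t$ makes $t$ an end of $a$, and $t$ commutes with $d$ by hypothesis. Otherwise $t$ and $s$ do not commute. Choose a permutation realising $a'$ as a final subword of $c'\cdot d$ in which $t$ occupies the last position of $a'$; this corresponds to an occurrence of $t$ at some position $p$ in $c'\cdot d$. If $p$ were in $d$, then $t$ would occur in $d$ and the commutation of $s$ with $d$ would force $s$ to commute with $t$, contradicting this subcase. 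Hence $p$ lies in $c'$. For any position $q$ in $d$, the letter $\ell$ at $q$ ends up strictly before $t$ in $b\cdot a'$, while in $c'\cdot d$ one has $p<q$, so the permutation reverses the order of $t$ and $\ell$; non-commuting letters cannot have their order reversed by a sequence of adjacent commutations, so $t$ and $\ell$ commute.

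The inductive hypothesis then yields that $a'$ commutes with $d$ and is a final subword of $c'$; combined with $a\approx a'\cdot s$ and $c\approx c'\cdot s$, this gives that $a$ commutes with $d$ and is a final subword of $c$. The main obstacle is the middle paragraph, since a naive induction stumbles: an end of $a'$ need not be an end of $a$, so the hypothesis on ends does not directly transfer. The extra information that $a$ is a final subword of $c\cdot d$ is precisely what rules out the problematic case, both by forcing $s$ into $c$ and by constraining where occurrences of ends of $a'$ can sit in $c'\cdot d$.
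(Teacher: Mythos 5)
Your proof is correct, but it takes a more roundabout route than necessary, and you have in fact diagnosed the source of the extra work yourself. You peel off an \emph{end} $s$ of $a$, writing $a\approx a'\cdot s$; the price is that an end of $a'$ need not be an end of $a$, so the inductive hypothesis does not transfer for free, and your middle paragraph is a delicate occurrence-tracking argument (splitting into the cases $t$ commuting or not commuting with $s$, then using that a permutation cannot invert non-commuting letters) to recover the needed property. The paper instead peels off a \emph{beginning}, writing $a\approx s\cdot a_1$; then every end of $a_1$ is automatically an end of $a$ (from $a_1\approx a_1'\cdot t$ one gets $a\approx s\cdot a_1'\cdot t$), so the inductive hypothesis applies to $a_1$ immediately. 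After that, a single cancellation via Lemma~\ref{L:init_subw_kuerzen} yields $c'\cdot d\approx b\cdot s$, and one short case distinction on whether the relevant occurrence of $s$ sits in $d$ or in $c'$ finishes the proof. Both arguments are valid; the front-peeling decomposition simply avoids the obstacle that forces your longest paragraph. It is worth internalizing this as a heuristic: when a hypothesis is phrased in terms of ends of a word, strip from the front so the property is inherited, and vice versa.
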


\begin{proof}
  Proceed by induction on the length of $a$. If $a$ is the trivial
  word, there is nothing to show. Otherwise, write $a= s\cdot a_1$.
By induction, the subword $a_1$ commutes with $d$ and is
a final subword of $c\approx c'\cdot a_1$. Since
$c\cdot d\approx c' \cdot d \cdot a_1$,  Lemma
\ref{L:init_subw_kuerzen} implies that $s$ is an end of $c'\cdot d$.

If $s$ occurs in $d$, then $s$ and $a_1$ commute, so $s$ is an end of
$a$ and hence it must commute with $d$, by hypothesis, which is a
contradiction, since no letter commutes with itself. Therefore, the
letter $s$ must occur in $c'$, so in particular it must commute with
$d$. Thus, the word $a$ commutes with $d$ and is a final subword
of $c$, as desired.
\end{proof}

\begin{definition}
  A \emph{non-splitting reduction} of the word $u$, denoted by $u\to
v$, is a reduced word $v$
  obtained from $u$ by a reduction process, where only {\sc
    Commutation} and {\sc Absorption} occur (\emph{cf.}\ Definition
  \ref{D:Op_redpath}).
\end{definition}
By induction on the length of the reduction $u\to v$, the following can be
easily shown.
\begin{lemma}\label{L:non-split_vs_preq}
  If $v$ is a non-splitting reduction of $u$, then every $u'\preceq u$
  has a non-splitting reduction $v'\preceq v$.
\end{lemma}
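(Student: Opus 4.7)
The plan is to argue by induction on the length $n$ of the non-splitting reduction $u\to v$, relying on the following key principle: non-splitting reductions only refine words in the $\preceq$ ordering, since Commutation preserves the $\approx$-class and an Absorption step $r\cdot t\to t$ (with $r\subset t$, allowing $r=t$) amounts to replacing $r$ by the empty splitting. Consequently any non-splitting reduction $u\to v$ already satisfies $v\preceq u$, and transitivity of $\preceq$ will shoulder most of the combining work.

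For the base case $n=0$, the word $u=v$ is already reduced. Given $u'\preceq u$, pick any non-splitting reduction $u'\to v'$; this exists because, as long as the word is not reduced, some pair $s_i\subset s_j$ commuting through can be brought adjacent by Commutation and removed by Absorption, and each Absorption strictly shortens the word, so the procedure terminates. The key principle yields $v'\preceq u'\preceq u=v$, as required.

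For the inductive step, I will decompose the reduction as $u\to u_1\to v$ with a single first step. If that step is Commutation, then $u\approx u_1$, so $u'\preceq u_1$ and the inductive hypothesis applied to $u_1\to v$ finishes the case. If instead it is an Absorption $r\cdot t\to t$ with $r\subset t$, I will write $u'=\cdots x_r\cdot x_t\cdots$, where $x_r\preceq r$ and $x_t\preceq t$ occupy the positions of $r$ and $t$ (after commuting them adjacent in $u'$). The goal is to produce a non-splitting reduction $u'\to u_1'$ with $u_1'\preceq u_1$ and then invoke induction on $u_1\to v$. Two situations arise: either every letter of the block $x_r\cdot x_t$ is properly contained in $t$, in which case $x_r\cdot x_t$ is itself a valid splitting of $t$ and $u_1':=u'$ already satisfies $u_1'\preceq u_1$; or some letter of $x_r\cdot x_t$ equals $t$, and the constraints $x_r\preceq r\subset t$ and $x_t\preceq t$ then force $x_t=t$, or else $r=t$ together with $x_r=t$. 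In this second situation the remaining letters of the block are all proper subsets of $t$ and can be absorbed one by one into a kept copy of $t$ via Absorption (commuting each such letter adjacent beforehand), reducing the block to $t$ and yielding a suitable $u_1'\preceq u_1$.

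The main delicacy will be the case $r=t$, where the block $x_r\cdot x_t$ may carry two copies of $t$ flanked by proper subletters on both sides, so the required sequence of non-splitting steps mixes the $t\cdot t\to t$ form of Absorption with the ordinary form $(\text{proper subletter})\cdot t\to t$. Once one observes that any proper subletter of $t$ appearing in $x_r$ or $x_t$ can be commuted adjacent to a surviving copy of $t$ (since it is a subset of the common letter block), each individual absorption step is routine and the inductive bookkeeping collapses into the schematic described above.
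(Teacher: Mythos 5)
Your proof is correct and uses the same approach the paper sketches, namely induction on the length of the non-splitting reduction $u\to v$, splitting on whether the first step is a Commutation (trivial, since $\preceq$ respects $\approx$) or an Absorption $r\cdot t\to t$, and in the latter case analysing the block $x_r\cdot x_t\preceq r\cdot t$ in $u'$: either it is a splitting of $t$, giving $u'\preceq u_1$ directly, or it still contains an intact copy of $t$ into which the remaining proper subletters can be absorbed, giving a non-splitting reduction of $u'$ to a word $\preceq u_1$. One minor inaccuracy worth flagging: the parenthetical justification that a proper subletter can be ``commuted adjacent to a surviving copy of $t$'' because it is ``a subset of the common letter block'' is not valid --- two proper subletters of a connected letter $t$ need not commute --- but no such commutations are actually required, since the letters of the block are already consecutive and can be absorbed into the surviving $t$ one at a time, working outward from the position of $t$.
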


\begin{cor}[{\emph{cf.\ }\cite[Proposition
  5.3]{BMPZ13}}]\label{C:non-split-unique} Every word admits exactly
  one non-splitting reduction, up to permutation.
\end{cor}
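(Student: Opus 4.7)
The plan is to prove existence and uniqueness separately, with uniqueness being the substantive part and leveraging Lemma \ref{L:non-split_vs_preq} together with the well-foundedness of $\prec$.

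\textbf{Existence} is straightforward: starting from $u$, I would apply Commutation and Absorption steps in any order until neither is available. Each Absorption strictly decreases the number of letters while Commutation preserves it, so the procedure terminates. At the end, no letter $s_i$ of the resulting word is contained in another $s_j$ while commuting with everything between them (otherwise a Commutation followed by an Absorption would still apply), so the result is reduced.

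For \textbf{uniqueness}, the first key observation is that every non-splitting reduction step $u \rightsquigarrow u'$ satisfies $u' \preceq u$. Commutation leaves the word unchanged up to $\approx$; for Absorption, rewriting $s \cdot t$ as $t$ with $s \subset t$ amounts to replacing the letter $s$ by the empty word, which qualifies as a splitting of $s$ in the sense of Definition \ref{D:splitting} (the trivial word vacuously has all its letters properly contained in $s$). This subtle point (that the empty word is a legitimate splitting) is where I would be most careful, since it is what makes the whole argument go through. Iterating, any non-splitting reduction $v$ of $u$ satisfies $v \preceq u$.

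Now suppose $v_1$ and $v_2$ are two non-splitting reductions of $u$. By the observation above, $v_2 \preceq u$. Applying Lemma \ref{L:non-split_vs_preq} with $v_1$ as the non-splitting reduction of $u$, we obtain a non-splitting reduction $w$ of $v_2$ with $w \preceq v_1$. But $v_2$ is already reduced, so the only non-splitting reduction of $v_2$ (up to permutation) is $v_2$ itself, whence $v_2 \preceq v_1$. By symmetry, $v_1 \preceq v_2$. Since $\prec$ is irreflexive and transitive (hence $\preceq$ is antisymmetric modulo $\approx$), we conclude $v_1 \approx v_2$.
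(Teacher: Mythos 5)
Your proof is correct and follows essentially the same route as the paper's: note that any non-splitting reduction $v_i$ satisfies $v_i\preceq u$, apply Lemma \ref{L:non-split_vs_preq}, use that a reduced word is its own unique non-splitting reduction, and conclude by antisymmetry of $\preceq$ modulo $\approx$. The observation you flag — that Absorption amounts to replacing a letter by the trivial (empty) splitting, so every reduct is $\preceq$ the original — is indeed the hinge, and the paper records it as the remark ``$u\str v$ implies $v\preceq u$'' immediately after Definition \ref{D:Op_redpath}.
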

\begin{proof}
  Assume that $v_1$ and $v$ are two non-splitting reducts of $u$. In
particular $v_1\preceq u$, so  $v_1\to v'$ for some $v'\preceq
  v$, by Lemma \ref{L:non-split_vs_preq}. Thus $v'$ must be equivalent
to $v_1$, and hence $v_1\preceq  v$. Similarly, we obtain $v\preceq
v_1$, so $v_1\approx v$.
\end{proof}
\begin{notation}
  Given reduced words $u$ and $v$, we denote by $[u\cdot v]$ the
  \emph{non-splitting reduct} of $u\cdot v$, which is defined up to
  permutation.
\end{notation}
\begin{cor}[{\emph{cf.\ }\cite[Lemma 5.29]{BMPZ13}}]\label{C:Ord_Mon}
  Given reduced words $u$, $v$ and $x$ with $x\preceq v$, then
  $[u\cdot x]\preceq [u\cdot v]$.
\end{cor}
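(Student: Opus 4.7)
The plan is to derive the corollary quickly by combining Lemma \ref{L:non-split_vs_preq} with the uniqueness statement in Corollary \ref{C:non-split-unique}.

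First I would observe that concatenation is monotone with respect to $\preceq$: from $x \preceq v$ it follows immediately that $u\cdot x \preceq u\cdot v$. Indeed, if $x\approx v$, then $u\cdot x\approx u\cdot v$; and if $x\prec v$, then $x$ is (up to equivalence) obtained from $v$ by replacing some occurrences of letters in $v$ by splittings, and the very same replacements, applied to the subword $v$ of $u\cdot v$, produce $u\cdot x$.

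Next, I would apply Lemma \ref{L:non-split_vs_preq} to the non-splitting reduction $u\cdot v \to [u\cdot v]$: any word $\preceq u\cdot v$ admits a non-splitting reduction $\preceq [u\cdot v]$. Applied to $u\cdot x \preceq u\cdot v$, this yields some non-splitting reduction $w$ of $u\cdot x$ with $w\preceq [u\cdot v]$.

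Finally, Corollary \ref{C:non-split-unique} guarantees that any two non-splitting reductions of $u\cdot x$ are equivalent, so $w\approx [u\cdot x]$. This gives $[u\cdot x]\preceq [u\cdot v]$, as required. No step here is an obstacle; the whole argument is a one-line invocation of the two previously established facts, and the only verification worth spelling out is the monotonicity of $\preceq$ under right-concatenation.
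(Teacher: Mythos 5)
Your proof is correct and follows the route the paper intends: the paper states Corollary \ref{C:Ord_Mon} without an explicit proof because it is meant to be exactly this one-liner from Lemma \ref{L:non-split_vs_preq} together with the uniqueness of the non-splitting reduct (Corollary \ref{C:non-split-unique}). The preliminary observation that $x\preceq v$ implies $u\cdot x\preceq u\cdot v$ is the only verification one needs to make explicit, and you do it correctly by noting that the splittings witnessing $x\prec v$ apply verbatim to the $v$-part of $u\cdot v$.
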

\begin{cor}\label{C:kleinergleich}
  If $u$ and $v$ are reduced words, then  $u\preceq [u\cdot
    v]$.\qed
\end{cor}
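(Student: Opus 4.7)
The plan is to derive this corollary as a direct application of Corollary \ref{C:Ord_Mon}, with $x$ taken to be the trivial word $1$.

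First, I would check that $1\preceq v$ for every reduced word $v$. If $v\approx 1$ this is trivial; otherwise, a splitting of a letter $s$ is by Definition \ref{D:splitting} any (possibly trivial) word whose letters are properly contained in $s$, so the empty word is always a valid splitting of $s$. Hence replacing each letter of $v$ one at a time by the trivial splitting yields $1$, and since $\prec$ is transitive we conclude $1\prec v$.

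Next, applying Corollary \ref{C:Ord_Mon} with this choice $x=1$ gives $[u\cdot 1]\preceq[u\cdot v]$. Because $u$ is itself reduced, no commutation/absorption reduction applies to $u\cdot 1\approx u$, and Corollary \ref{C:non-split-unique} forces $[u\cdot 1]\approx u$. Combining these observations yields $u\preceq[u\cdot v]$.

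There is no real obstacle: the only point requiring a brief argument is that the trivial word lies $\preceq$-below any word, which follows immediately from the fact that the empty word is a legal splitting of every letter. Alternatively, one could proceed by induction on the length of $v$, using $[u\cdot v]\approx[[u\cdot v']\cdot s]$ when $v\approx v'\cdot s$, but the above one-line reduction to Corollary \ref{C:Ord_Mon} seems cleanest.
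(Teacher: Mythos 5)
Your proof is correct and is essentially the intended one: the corollary is stated immediately after Corollary \ref{C:Ord_Mon} with only a \qed, and specialising that corollary to $x=1$ (together with the observation that $[u\cdot 1]\approx u$ since $u$ is already reduced, and that $1\preceq v$ because the empty word is a legal splitting of every letter) is the natural one-line argument. Nothing is missing.
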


Recall (\emph{cf.}\
Definition \ref{D:abs}) that a letter $t$ is properly
left-absorbed, resp.\ right-absorbed, by the word $s_1\cdots
  s_n$ \iff $t$ is properly contained in some $s_i$ and commutes with
$s_1\cdots s_{i-1}$, resp.\ with $s_{i+1}\cdots
  s_{n}$.
\begin{prop}\label{P:Dec}(Symmetric Decomposition Lemma,
\emph{cf.}\ \cite[Corollary 5.23]{BMPZ13})
 Given two reduced words $u$ and $v$, there are unique
  decompositions (up to permutation):
  \begin{align*}
    u&=u_1\cdot u'\cdot w&
    w\cdot v'\cdot v_1&=v,
  \end{align*}
  such that:
  \renewcommand{\theenumi}{\alph{enumi}}
  \begin{enumerate}
  \item $w$ is a commuting word,
  \item $u'$ is properly left-absorbed by $v_1$,
  \item $v'$ is properly right-absorbed by $u_1$,
  \item $u'$, $w$ and $v'$ pairwise commute,
  \item $u_1\cdot w\cdot v_1$ is reduced.
  \end{enumerate}
  Furthermore,
  \[[u\cdot v]=u_1\cdot w\cdot v_1.\]
\end{prop}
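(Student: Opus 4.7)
The plan is to prove existence by induction on the length of a non-splitting reduction of $u\cdot v$, then to establish the formula $[u\cdot v]=u_1\cdot w\cdot v_1$ by carrying out the reduction explicitly, and finally to deduce uniqueness of the decomposition.

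For existence, the base case is when $u\cdot v$ is already reduced, in which case the trivial decomposition $u_1=u$, $v_1=v$, $u'=w=v'=1$ satisfies all of (a)--(e). In the inductive step, since $u$ and $v$ are each reduced, any non-reducibility of $u\cdot v$ must pair a letter $s$ of $u$ with a letter $t$ of $v$ with one contained in the other; after a suitable permutation we may take $s$ to be a rightmost end of $u$ and $t$ a leftmost beginning of $v$, each commuting with the letters of $u$ or $v$ lying between them. We perform one non-splitting step and apply the inductive hypothesis to the resulting shorter word, then extend the decomposition by placing the removed letter into $u'$ if $s\subsetneq t$, into $v'$ if $t\subsetneq s$, or into $w$ if $s=t$. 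The commutation relations that enabled the reduction step directly supply (a)--(d) for the extended decomposition, and condition (e) persists because, $s$ having commuted with the entire middle portion, its removal cannot have suppressed a reducible pair in $u_1\cdot w\cdot v_1$.

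The formula $[u\cdot v]=u_1\cdot w\cdot v_1$ is then checked by an explicit non-splitting reduction of $u\cdot v=u_1\cdot u'\cdot w\cdot w\cdot v'\cdot v_1$: the two copies of $w$ merge letter-by-letter using (a) and Absorption; each letter of $u'$ commutes past $w$ and $v'$ by (d) and past the relevant prefix of $v_1$ by (b), then is absorbed by its containing letter in $v_1$; symmetrically $v'$ is absorbed into $u_1$ via (c) and (d); the remaining word $u_1\cdot w\cdot v_1$ is reduced by (e), so by Corollary \ref{C:non-split-unique} it equals $[u\cdot v]$ up to permutation. For uniqueness of the decomposition itself, the common value of $u_1\cdot w\cdot v_1$ pins down the multiset of surviving letters; the commuting final subword $u'\cdot w$ of $u$ is then determined by Lemma \ref{L:initial}, and similarly $w\cdot v'$ inside $v$, while the split between merged letters (in $w$) and properly absorbed ones (in $u'$ or $v'$) is forced by conditions (b), (c) together with Lemma \ref{L:init_subw_kuerzen}.

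The main obstacle I anticipate is the inductive verification of condition (e). Intuitively, removing the outermost absorbable pair $(s,t)$ cannot uncover previously hidden comparable pairs in $u_1\cdot w\cdot v_1$, because $s$ commuted with the entire middle section of $u\cdot v$ and so its removal opens no new commutation path across the junction. Formalising this, however, requires careful bookkeeping: one must track which letters of $u$ and $v$ end up in each of $u_1$, $u'$, $w$, $v'$, $v_1$ across multiple inductive steps, and rule out pathological new reducibility created at the boundary.
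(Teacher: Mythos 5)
Your overall strategy mirrors the paper's: induct on length to establish existence, read off $[u\cdot v]=u_1\cdot w\cdot v_1$ from an explicit non-splitting reduction, and then prove uniqueness. The flaw is in the inductive step, and it is not where you anticipate it. You remove a letter (say an end $s$ of $u$ absorbed into a beginning $t$ of $v$), decompose $\tilde u$ and $v$ by induction as $\tilde u = \tilde u_1\cdot\tilde u'\cdot\tilde w$ and $\tilde w\cdot\tilde v'\cdot\tilde v_1 = v$, and then propose to place $s$ into $u'$, $w$, or $v'$ according to whether $s\subsetneq t$, $s=t$, or $t\subsetneq s$. For this to yield a valid decomposition of $u=\tilde u\cdot s$ you must show that $s$ commutes with both $\tilde w$ and $\tilde v'$ and that the letter of $v$ absorbing $s$ lands in $\tilde v_1$. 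Your justification---that ``the commutation relations that enabled the reduction step directly supply (a)--(d)''---is not right: after the permutation, $s$ is the last letter of $u$ and $t$ the first of $v$, so the reduction step itself involves no commutation at all. The needed commutations are a genuine observation: $s$ cannot be left-absorbed by $\tilde w$ (since $\tilde w$ is a commuting final subword of $\tilde u$, this would make $u$ non-reduced), nor by $\tilde v'$ (since $\tilde v'$ is properly right-absorbed by $\tilde u_1$ and commutes with $\tilde u'\cdot\tilde w$, absorption of $s$ there would again make $u$ non-reduced). This is exactly where the work is; by contrast, condition (e), which you flag as the hard part, is automatic---the new $u_1\cdot w\cdot v_1$ is just the inductive $\tilde u_1\cdot\tilde w\cdot\tilde v_1$ or a permutation of it.

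There is also a problem in the uniqueness sketch. You describe $u'\cdot w$ as ``the commuting final subword'' of $u$ and invoke Lemma~\ref{L:initial} to determine it, but $u'$ need not be commuting: take $\Gamma$ the path $1$--$2$--$3$--$4$--$5$, $u=\{2,3\}\cdot\{4,5\}$ and $v=\{2,3,4,5\}$; then $u_1=w=v'=1$, $v_1=v$, $u'=u$, and $u'$ is not commuting even though (a)--(e) all hold. The paper's uniqueness argument proceeds differently: $w$ is characterised as the intersection of the final segments of $u$ and $v\inv$ (this makes sense because $u_1\cdot w\cdot v_1$ is reduced), and then $u_1\cdot w$ is shown to be the largest common initial subword of $u$ and $[u\cdot v]$ using that $u'$ is \emph{properly} left-absorbed by $v_1$; this pins down $u_1$, $w$, $u'$ and symmetrically $v_1$, $v'$.
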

\begin{proof}
  We show the existence of such a decomposition by induction on the
  sum of the lengths of $u$ and $v$. If $u \cdot v$ is already
  reduced, then set $u_1=u$ and $v_1=v$, and define $v'$, $u'$ and $w$
  to be the trivial word.

  Otherwise, up to permutation and changing the roles of $u$ and $v$ , we
may assume that $u=\tilde u\cdot s$, where $s$ is left-absorbed by
$v$. In
  particular $[s\cdot
v]=v$. By induction, we
  find words $\tilde u_1$, $\tilde u'$, $\tilde w$, $\tilde v'$ and
  $\tilde v_1$ with the desired properties such that

 \begin{align*}
    \tilde u&=u_1\cdot \tilde u'\cdot \tilde w&
    \tilde w\cdot v'\cdot \tilde v_1&= v.
  \end{align*}

  Observe that $s$ cannot be left-absorbed by $\tilde w$ nor by
$\tilde
  v'$, for $u$ is reduced. Thus $s$ commutes with $\tilde
  w\cdot \tilde v'$ and is  absorbed by $\tilde v_1$. There
are
  two possibilities:
  \begin{enumerate}
  \item The letter $s$ is properly absorbed by $\tilde v_1$. Set
    $u'=\tilde u'\cdot s$, $w=\tilde w$ and $v_1=\tilde v_1$.
  \item Up to permutation, the letter $s$ is a beginning of $\tilde
    v_1=s\cdot v^*_1$. Set $u'=\tilde u'$, $w= \tilde w\cdot s$ and
    $v_1=v^*_1$.
  \end{enumerate}

  Let us finish by showing the uniqueness of the above decomposition. Note
  first that $[u\cdot v]=u_1\cdot w\cdot v_1$, since $u\cdot v\to
  u_1\cdot w\cdot v_1$. The word $w$ is exactly the intersection of the
  final segments of $u$ and $v\inv$, since $u_1\cdot w\cdot v_1$ is
  reduced. Furthermore, the word $u_1\cdot w$ is the largest common
  initial subword of $u$ and $[u,v]$, for otherwise, there exists a
  letter $s$ with $v_1=s\cdot \tilde v_1$ and $u'=s\cdot\tilde u'$,
  contradicting that $u'$ is properly left-absorbed by $v_1$.
  Similarly, the word $w\cdot v_1$ is the largest common final subword
  of $v$ and $[u,v]$, providing the desired result.
\end{proof}
\begin{cor}[{\emph{cf.\ }\cite[Corollary 5.14]{BMPZ13}}]
  \label{C:absorb=stab}
  Let $u$ and $v$ be reduced words. Then $u$ is left-absorbed by $v$
  \iff $[u\cdot v]=v$.
\end{cor}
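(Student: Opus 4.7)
My plan is to establish the two implications separately, both hinging on the Symmetric Decomposition Lemma (Proposition \ref{P:Dec}).

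For the forward direction, I would induct on the length of $u$. The base case is trivial. For the inductive step, write $u = s\cdot\tilde{u}$; since the letters of $\tilde{u}$ are among those of $u$, the word $\tilde{u}$ is still left-absorbed by $v$, so by induction $[\tilde{u}\cdot v]\approx v$. This shows that $u\cdot v$ admits a non-splitting reduction down to $s\cdot v$. Since $s$ is itself left-absorbed by $v$, there is a letter $t$ of $v$ with $s\subset t$ and $s$ commuting with the prefix of $v$ before $t$; a single Commutation followed by an Absorption then reduces $s\cdot v$ to $v$. Uniqueness of the non-splitting reduct (Corollary \ref{C:non-split-unique}) closes the case.

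For the converse, I would apply Proposition \ref{P:Dec} to obtain decompositions $u = u_1\cdot u'\cdot w$ and $v = w\cdot v'\cdot v_1$ with $[u\cdot v]\approx u_1\cdot w\cdot v_1$. The hypothesis $[u\cdot v]\approx v$ reads $u_1\cdot w\cdot v_1\approx w\cdot v'\cdot v_1$, and right-cancellation via Lemma \ref{L:init_subw_kuerzen} yields $u_1\cdot w\approx w\cdot v'$. Since equivalent words share the same multiset of letters, $u_1$ and $v'$ have identical multisets of letters.

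The main obstacle is to conclude from this that $u_1$ (and hence $v'$) is the trivial word $1$. For this I would exploit the proper containment packaged in the decomposition: every letter of $v'$ is properly contained in some letter of $u_1$. If $u_1$ were non-trivial, a letter of $u_1$ maximal under set inclusion among its finitely many letters would, by equality of multisets, also appear in $v'$, and would thus be properly contained in another letter of $u_1$, contradicting maximality. Once $u = u'\cdot w$ and $v = w\cdot v_1$, it remains to verify that $u$ is left-absorbed by $v$: each letter of $u'$ is properly contained in some letter of $v_1$ and commutes with the preceding letters of $v_1$ by the decomposition, and commutes with $w$ because $u'$ and $w$ commute, so it is left-absorbed by the whole of $v$; each letter of $w$ equals itself as a letter of $v$ and commutes with the previous letters of $w$ since $w$ is a commuting word, and so is also left-absorbed by $v$.
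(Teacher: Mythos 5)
Your proof is correct and, for the converse direction, follows essentially the same route as the paper: apply the Symmetric Decomposition Lemma, cancel $v_1$, and conclude that $u_1$ must be trivial — the paper obtains $u_1\approx v'$ by further cancelling $w$ (since $w$ and $v'$ commute) and observes that $u_1$ would then be properly right-absorbed by itself, while you reach the same contradiction via equal multisets and a maximal letter. Your explicit induction for the forward direction fills in a step the paper leaves as immediate, and your final verification that $u'\cdot w$ is left-absorbed by $w\cdot v_1$ is just a spelled-out version of what the paper asserts without comment.
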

\begin{proof}
  Note that $[u\cdot v]=v$ \iff $u_1\cdot w\cdot v_1\approx
  v'\cdot w\cdot v_1$. Lemma \ref{L:init_subw_kuerzen} implies
  $u_1\approx v'$. So $u_1$ is properly right-absorbed by itself, which
   can only happen if  $u_1=1$. Thus $u\approx u'\cdot w $ is
  left-absorbed by $v$.
\end{proof}
\begin{cor}[{\emph{cf.\ }\cite[Corollary 5.22]{BMPZ13}}]
  \label{C:kommwort}
  A reduced word is commuting \iff it left-absorbs itself.
\end{cor}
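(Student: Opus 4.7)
The forward direction is immediate: if $u=s_1\cdots s_n$ is commuting, then each letter $s_j$ commutes with $s_1,\ldots,s_{j-1}$, so the witness $i=j$ shows that $s_j$ is left-absorbed by $u$; hence $u$ left-absorbs itself. The real content is the converse, which I propose to establish by a direct combinatorial argument based on the definitions of reducedness and left-absorption; an alternative route, via Corollary~\ref{C:absorb=stab} (reducing the claim to $[u\cdot u]=u$) together with the Symmetric Decomposition Lemma~\ref{P:Dec}, is also available.

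For the converse, assume that $u=s_1\cdots s_n$ is reduced and left-absorbs itself, and suppose, for contradiction, that some letter of $u$ fails to commute with an earlier one. Let $j$ be the least such index; by minimality, the letters $s_1,\ldots,s_{j-1}$ pairwise commute. Since $s_j$ is left-absorbed by $u$, there exists an index $i$ with $s_j\subseteq s_i$ such that $s_j$ commutes with $s_1,\ldots,s_{i-1}$. The case $i>j$ is impossible, since that would place $s_j$ among $s_1,\ldots,s_{i-1}$ and force $s_j$ to commute with itself, which is forbidden. The case $i=j$ directly contradicts the choice of $j$.

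The remaining case $i<j$ is the main (and only genuinely delicate) point. Here I will show that $s_j$ also commutes with every $s_k$ for $i<k<j$; combined with the already given commutation of $s_j$ with $s_1,\ldots,s_{i-1}$, the pair of positions $(i,j)$ will then witness a failure of reducedness in $u$, namely $s_j\subseteq s_i$ (with $i\neq j$) and $s_j$ commuting with every letter in $u$ strictly between $s_i$ and $s_j$. To verify the extra commutation, observe that by minimality of $j$ the letters $s_i$ and $s_k$ commute, so no element of $s_i$ is adjacent to any element of $s_k$; since $s_j\subseteq s_i$, the same holds for $s_j$ and $s_k$, which is precisely their commutation. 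This yields the desired contradiction with reducedness and completes the proof.
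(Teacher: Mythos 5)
Your proof is correct, but it takes a genuinely different route from the paper's. The paper deduces the corollary from Corollary~\ref{C:absorb=stab} together with the Symmetric Decomposition Lemma~\ref{P:Dec}: writing $u=u_1\cdot u'\cdot w$ and $w\cdot v'\cdot v_1=v$ as in~\ref{P:Dec}, the two mutual absorption hypotheses force $u_1=u'=v'=v_1=1$, so $u=v=w$ is commuting; then $[u\cdot u]=u$ gives both left- and right-self-absorption at once, and one specialises to $v=u$. That argument fits naturally into the non-splitting-reduction calculus and, as a bonus, proves the stronger symmetric statement that two reduced words each of which absorbs the other must coincide and be commuting. Your argument, by contrast, is a direct combinatorial induction on the least position $j$ at which a letter fails to commute with an earlier one, and it uses nothing beyond the definitions of reducedness, commutation, and left-absorption. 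The case split on the witness $i$ is exactly right: $i>j$ would make $s_j$ commute with itself, $i=j$ contradicts minimality, and for $i<j$ you correctly derive the commutation of $s_j$ with $s_{i+1},\ldots,s_{j-1}$ by passing to the subset $s_j\subseteq s_i$, so that the pair $(i,j)$ witnesses non-reducedness. (One very small wording point: commutation of $s_j$ with $s_k$ requires disjointness as well as non-adjacency, but that too follows from $s_j\subseteq s_i$ and $s_i\cap s_k=\emptyset$, so nothing is actually missing.) Your route is shorter and self-contained; the paper's route is heavier but dovetails with the surrounding machinery and yields the two-word version for free.
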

\begin{proof}
 Suppose first that we have two reduced words $u$ and $v$, such that $u$
right-absorbs $v$ and $v$ left-absorbs $u$. The proof of
\ref{C:absorb=stab} yields both  $u_1=v'=1=v_1=u'$, so $u=v=w$ is
commuting. The statement now follows easily, since Corollary
\ref{C:absorb=stab} yields that if $u$ left-absorbs itself, then it also
right-absorbs itself.
\end{proof}
\begin{cor}\label{C:proper_absorb}
  If the reduced word $u$ is left-absorbed by the reduced word $v$, we
  can write (up to permutation)~:
  \begin{align*}
    u&=u'\cdot w& w\cdot v_1&=v,
  \end{align*}
  such that
  \begin{enumerate}
  \item $w$ is a commuting word,
  \item $u'$ is properly left-absorbed by $v_1$,
  \item $u'$ and $w$ commute.
  \end{enumerate}
  \qed
\end{cor}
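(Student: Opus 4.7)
The plan is to invoke the Symmetric Decomposition Lemma (Proposition \ref{P:Dec}) for the pair $(u,v)$ and observe that, under the hypothesis, two of the pieces in that decomposition must be trivial. Explicitly, Proposition \ref{P:Dec} provides unique decompositions
\[
  u = u_1 \cdot u' \cdot w, \qquad v = w \cdot v' \cdot v_1,
\]
satisfying properties (a)--(e) stated there, and moreover $[u\cdot v] = u_1\cdot w\cdot v_1$.

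Now, since $u$ is left-absorbed by $v$, Corollary \ref{C:absorb=stab} gives $[u\cdot v]=v$, i.e.\ $u_1\cdot w\cdot v_1\approx v'\cdot w\cdot v_1$. Lemma \ref{L:init_subw_kuerzen} (cancellation of a common final subword $w\cdot v_1$) yields $u_1 \approx v'$. But property (c) of the decomposition says that $v'$ is properly right-absorbed by $u_1$; applied to $u_1$ itself (via $v'\approx u_1$), this is only possible if $u_1 = 1$, and then also $v' = 1$. This is essentially the argument already used to prove Corollary \ref{C:absorb=stab}.

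Substituting $u_1 = v' = 1$ back into the decomposition provided by Proposition \ref{P:Dec} yields $u = u' \cdot w$ and $v = w \cdot v_1$, with $w$ a commuting word by (a), with $u'$ properly left-absorbed by $v_1$ by (b), and with $u'$ and $w$ commuting by (d) (which asserted that $u'$, $w$ and $v'$ pairwise commute). This is precisely the decomposition required. There is no serious obstacle: the result is essentially a specialisation of Proposition \ref{P:Dec} to the absorption case, the only non-trivial point being the vanishing of $u_1$ and $v'$, which follows as in the proof of Corollary \ref{C:absorb=stab}.
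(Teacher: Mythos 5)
Your proof is correct and is exactly the intended argument: the paper states this corollary with a bare \texttt{\textbackslash qed} because it follows by specialising the Symmetric Decomposition Lemma (Proposition~\ref{P:Dec}) via the observation $u_1=v'=1$, which you derive precisely as in the proof of Corollary~\ref{C:absorb=stab}. The only point worth making explicit is that $v'\approx u_1$ being properly right-absorbed by itself is impossible unless trivial (a letter of maximal size in $v'$ cannot be properly contained in any letter of $v'$), which you invoke correctly.
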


\begin{cor}\label{C:Red_inkompr}
  Given reduced words $u$, $v$ and $x$ such that $[u\cdot v]=u\cdot
  x$, then $x$ is a final subword of $v$. In particular, the word $x$
  commutes with any word commuting with $v$.
\end{cor}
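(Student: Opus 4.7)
The plan is to derive the result directly from the Symmetric Decomposition Lemma (Proposition \ref{P:Dec}), with essentially no further combinatorics. First, I would apply that lemma to $u$ and $v$, obtaining decompositions $u\approx u_1\cdot u'\cdot w$ and $v\approx w\cdot v'\cdot v_1$, where $u'$, $w$, $v'$ pairwise commute, $u_1\cdot w\cdot v_1$ is reduced, and $[u\cdot v]=u_1\cdot w\cdot v_1$.

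The hypothesis $[u\cdot v]=u\cdot x$ then yields the chain
\[
u_1\cdot w\cdot v_1\;\approx\;u\cdot x\;\approx\;u_1\cdot u'\cdot w\cdot x\;\approx\;u_1\cdot w\cdot u'\cdot x,
\]
where the last equivalence uses that $u'$ commutes with $w$. Cancelling the common initial subword $u_1\cdot w$ by successive applications of Lemma \ref{L:init_subw_kuerzen} gives $v_1\approx u'\cdot x$. Substituting back into the decomposition of $v$ produces $v\approx w\cdot v'\cdot u'\cdot x$, which exhibits $x$ as a final subword of $v$, as desired.

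For the \emph{in particular} clause, I would simply note that every letter occurring in $x$ occurs in $v$. Hence any word each of whose letters commutes with every letter of $v$ automatically commutes with every letter of $x$.

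No step is expected to pose real difficulty: the entire combinatorial content is already encapsulated in Proposition \ref{P:Dec}, and what remains is left cancellation together with an elementary observation about shared letters.
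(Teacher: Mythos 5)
Your proof is correct and follows the same route as the paper: apply the Symmetric Decomposition Lemma (Proposition \ref{P:Dec}) to $u$ and $v$, compare $u_1\cdot w\cdot v_1=[u\cdot v]$ with $u\cdot x\approx u_1\cdot w\cdot u'\cdot x$, and cancel $u_1\cdot w$ via Lemma \ref{L:init_subw_kuerzen} to obtain $v_1\approx u'\cdot x$. The paper then asserts $u'=1$ (which requires the observation that $u'$ is properly left-absorbed by $v_1\approx u'\cdot x$, forcing $u'$ trivial) before concluding, whereas you simply substitute $v_1\approx u'\cdot x$ back into $v\approx w\cdot v'\cdot v_1$ to exhibit $x$ as a final subword directly -- a small streamlining that sidesteps the unjustified step in the paper.
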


\begin{proof}
  Decompose  \begin{align*}
    u&=u_1\cdot u'\cdot w&
    w\cdot v'\cdot v_1&=v,
  \end{align*} as in Proposition \ref{P:Dec}. Hence $u_1\cdot w\cdot
  v_1=[u\cdot v]=u_1\cdot w\cdot u'\cdot x$, so $v_1\approx u'\cdot
  x$, by Lemma \ref{L:init_subw_kuerzen}. Thus $u'=1$ and $x$ is a
  final subword of $v$, as desired.
\end{proof}

\begin{lemma}\label{L:reordering}
  If the reduced word $u$ is left-absorbed by $v_1\cdot v_2$, then
  $u\approx u_1 \cdot u_2$, where each $u_i$ is left-absorbed by
  $v_i$, and $u_2$ commutes with $v_1$ (and therefore with $u_1$).
\end{lemma}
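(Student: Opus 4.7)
The plan is to induct on the length of $u$, after first classifying the letters of $u$ according to which half of $v_1\cdot v_2$ absorbs them. A letter $t$ of $u$ is left-absorbed by $v_1\cdot v_2$, witnessed by some $s$ in $v_1\cdot v_2$ with $t\subseteq s$ and $t$ commuting with the letters of $v_1\cdot v_2$ preceding $s$. If such a witness $s$ can be chosen in $v_1$, then (since the letters of $v_1$ preceding $s$ still all lie in $v_1\cdot v_2$) $t$ is already left-absorbed by $v_1$; otherwise every witness lies in $v_2$, and then $t$ commutes with the whole of $v_1$ and is left-absorbed by $v_2$.

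The crucial commutativity input will be the following observation: if a letter $t_1$ is contained in a letter $s$ of $v_1$ and a letter $t_2$ commutes with $v_1$, and in particular with $s$, then $t_1$ and $t_2$ commute. Indeed, no element of $t_2$ is adjacent to any element of $s$, hence to no element of $t_1\subseteq s$; and $t_1\ne t_2$, since otherwise $t_2\subseteq s$ would, by connectedness of $s$, force either $t_2=s$ (impossible, as no letter commutes with itself) or an element of $t_2$ to be adjacent to an element of $s\setminus t_2$ (contradicting that $t_2$ commutes with $s$).

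The induction then runs smoothly: if $u$ is trivial I take $u_1=u_2=1$, and otherwise I peel off the leftmost letter $t$, writing $u=t\cdot u'$. The subword $u'$ is reduced and each of its letters is still left-absorbed by $v_1\cdot v_2$, so induction applies and produces $u'\approx u_1'\cdot u_2'$ with the desired properties. If $t$ is left-absorbed by $v_1$ I set $u_1=t\cdot u_1'$ and $u_2=u_2'$; otherwise, by the classification, $t$ commutes with $v_1$ and is left-absorbed by $v_2$, and the commutativity observation (applied to each letter of $u_1'$, which is contained in a letter of $v_1$, against $t$) shows that $t$ commutes with $u_1'$, so $u\approx u_1'\cdot t\cdot u_2'$, and I set $u_1=u_1'$, $u_2=t\cdot u_2'$. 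The parenthetical remark that $u_2$ commutes with $u_1$ is a direct consequence of the same observation.

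The only non-routine point is the commutativity lemma in the second paragraph, which relies essentially on the connectedness of letters in $\Gamma$; everything else is bookkeeping around the induction.
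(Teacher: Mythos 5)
Your proof is correct and follows the same inductive strategy as the paper, which peels off the \emph{last} letter of $u$ rather than the first (the two are mirror images, so the ``hard'' case where a permutation is needed is simply the other one). The commutativity observation you make explicit --- that a letter contained in a letter of $v_1$ must commute with anything commuting with $v_1$ --- is exactly the fact the paper uses silently to justify ``$u'_2$ commutes with $s$'' in its first case, so spelling it out is a welcome clarification rather than a departure.
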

\begin{proof}
  If $u$ is empty, there is nothing to show. Otherwise, write
  $u=u'\cdot s$. By induction on the length of $u$, obtain a
  decomposition $u'\approx u'_1\cdot u'_2$, such that $u'_i$ is
  absorbed by $v_i$ and $u'_2$ commutes with $v_1$. We distinguish two
  cases: If $s$ is absorbed by $v_1$, then $u'_2$ commutes with $s$,
  so decompose $u\approx (u'_1\cdot s)\cdot u'_2$.
  Otherwise, the letter $s$ commutes with $v_1$ and is left-absorbed
  by $v_2$. Set $u\approx u_1\cdot(u_2\cdot s)$.
\end{proof}

\begin{cor}\label{C:hinundher}
  Given a reduced word $u\approx u_1\cdot \tilde u $, where $\tilde u$
  denotes the final segment of $u$, then $[u\cdot u\inv]=u_1\cdot
  \tilde u \cdot {u_1}\inv$.
\end{cor}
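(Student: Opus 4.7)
The strategy is to exhibit $u_1\cdot \tilde u\cdot u_1\inv$ as a non-splitting reduct of $u\cdot u\inv$ and then verify it is itself reduced, whence Corollary \ref{C:non-split-unique} finishes the proof. Since $\tilde u$ is a commuting word, $\tilde u\inv\approx \tilde u$, so $u\inv\approx \tilde u\cdot u_1\inv$ and therefore
\[u\cdot u\inv\approx u_1\cdot\tilde u\cdot \tilde u\cdot u_1\inv.\]
Writing $\tilde u\approx t_1\cdots t_m$ with pairwise commuting letters $t_i$, a sequence of commutations brings $\tilde u\cdot\tilde u$ into the form $t_1\cdot t_1\cdots t_m\cdot t_m$, and each $t_i\cdot t_i$ is removed to $t_i$ by Absorption. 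This is a non-splitting reduction showing $u\cdot u\inv \to u_1\cdot\tilde u\cdot u_1\inv$.

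It remains to check that $u_1\cdot\tilde u\cdot u_1\inv$ is reduced. Suppose for contradiction that there are positions $i<j$ in this word with $s_i\subseteq s_j$ (or $s_j\subseteq s_i$) such that the smaller of the two commutes with all letters occurring strictly between them. If both $i$ and $j$ lie in the left factor $u_1\cdot\tilde u$, then they witness a failure of reducedness of $u$; if both lie in the right factor $\tilde u\cdot u_1\inv\approx u\inv$, then they witness a failure of reducedness of $u\inv$. Neither is possible, so the only remaining case is that one position lies in the $u_1$-factor and the other in the $u_1\inv$-factor; here the intermediate letters include all of $\tilde u$.

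In this cross case, the smaller letter, after relabelling its position to correspond to $u_1$, commutes with $\tilde u$ and with all letters of $u_1$ after its own position (these being exactly the letters of $u_1\cdot\tilde u=u$ lying to its right). Hence that letter is an end of $u$, and by the defining property of $\tilde u$ it must also occur as a letter of $\tilde u$. But then the same letter $s$ appears twice in $u$, once in $u_1$ and once in $\tilde u$, separated only by letters with which the occurrence in $u_1$ commutes. This contradicts reducedness of $u$, since a letter never commutes with itself. Thus $u_1\cdot\tilde u\cdot u_1\inv$ is reduced, and Corollary \ref{C:non-split-unique} yields $[u\cdot u\inv]=u_1\cdot\tilde u\cdot u_1\inv$. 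The delicate point is the cross case, in particular its symmetric subcase where $s_j\subseteq s_i$, which requires translating a position in $u_1\inv$ back to its mirror occurrence in $u_1$ and re-expressing commutation through the prefix of $u_1\inv$ as commutation with the corresponding suffix of $u_1$ in $u$.
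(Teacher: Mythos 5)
Your proof is correct and takes essentially the same approach as the paper: exhibit $u_1\cdot\tilde u\cdot u_1\inv$ as a non-splitting reduct of $u\cdot u\inv$ (via $\tilde u\inv\approx\tilde u$ and absorption of $\tilde u\cdot\tilde u$), then verify it is reduced by noting that any offending pair must cross the middle, producing an end of $u_1$ commuting with $\tilde u$, hence an end of $u$ which would have to occur in $\tilde u$ as well as $u_1$ -- contradicting reducedness since no letter commutes with itself. You merely spell out the case analysis that the paper's two-sentence proof leaves implicit.
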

\begin{proof}
  It suffices to show that the word $u_1\cdot \tilde u \cdot
{u_1}\inv$ is reduced. Otherwise, there is an end $s$ of $u_1$ which
commutes with $\tilde u$. In that case, the letter $s$ is an end of
$u$ and hence it occurs in $\tilde u$, which is a  contradiction.
\end{proof}

For the proof of the next lemma, we will require the following
notation: Recall (\emph{cf.\ }Remark \ref{R:newflags}) that
$\Gamma\setminus\partial s$ is the set of those colours commuting
with $s$. For a letter $t$, denote by $\Cent_t(s)$ the commuting word
with support $t\setminus\partial s$ (\emph{cf.\ }Remark
\ref{R:commuting_word}). For a word $v=t_1\cdots t_n$, set
    \[\Cent_v(s)=\Cent_{t_1}(s)\cdots\Cent_{t_n}(s).\]

\begin{lemma}\label{L:div}(Division Lemma)
  Given reduced words $u\preceq v$, there exists a reduced word $w$,
  unique up to permutation, such that for every reduced word $x$,

  \[ [ x\cdot u] \preceq v \Leftrightarrow x\preceq w.\]

\end{lemma}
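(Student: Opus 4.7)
The proof separates into uniqueness and existence.

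Uniqueness follows directly from the stated equivalence: if $w_1$ and $w_2$ both satisfy it, applying the equivalence with $x=w_i$ (which trivially lies $\preceq$ below itself) yields $[w_i\cdot u]\preceq v$, and then the reverse implication with $w_{3-i}$ in place of $w$ gives $w_i\preceq w_{3-i}$. Hence $w_1\approx w_2$ by antisymmetry of $\preceq$ modulo $\approx$, itself a consequence of the irreflexivity and transitivity of $\prec$.

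For existence, I first observe that the set $S=\{x\text{ reduced}:[x\cdot u]\preceq v\}$ is non-empty (it contains the trivial word since $u\preceq v$) and downward closed under $\preceq$: if $x'\preceq x$ then $x'\cdot u\preceq x\cdot u$, so Lemma~\ref{L:non-split_vs_preq} together with uniqueness of non-splitting reducts (Corollary~\ref{C:non-split-unique}) yields $[x'\cdot u]\preceq[x\cdot u]\preceq v$. Moreover, Corollary~\ref{C:kleinergleich} gives $x\preceq[x\cdot u]\preceq v$ for each $x\in S$, so $S$ lies inside the $\preceq$-downset of $v$. The task is therefore to show that $S$ is a \emph{principal} downset, generated by the sought $w$.

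To identify $w$, I would exploit the Symmetric Decomposition Lemma (Proposition~\ref{P:Dec}) together with the auxiliary centraliser operator $\Cent$ introduced just above the lemma. Given $x\in S$, decompose $x=x_1\cdot x'\cdot c$ and $u=c\cdot u'\cdot u_1$ via Proposition~\ref{P:Dec}, so that $[x\cdot u]=x_1\cdot c\cdot u_1\preceq v$. The constraint that $x_1\cdot c\cdot u_1$ arises from $v$ by splittings severely restricts the pieces: $u_1$ must match a corresponding final part of $v$, $c$ must have support commuting with $u_1$ and lying within the ``excess'' of $v$ relative to $u$, while $x_1$ may vary freely among splittings of an appropriate initial part of $v$. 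The maximal such choice assembles into a canonical $w$ whose letters use $\Cent$ to encode exactly the colours of $v$ that commute with the relevant final portion of $u$.

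The principal obstacle is the verification of both directions of the equivalence. The forward direction $x\preceq w\Rightarrow[x\cdot u]\preceq v$ should reduce to checking $[w\cdot u]\preceq v$ and then invoking monotonicity (Corollary~\ref{C:Ord_Mon}); this amounts to applying Proposition~\ref{P:Dec} to $w$ and $u$ and matching the resulting reduct to a splitting of $v$. The reverse direction is the delicate one: given $x\in S$, the Symmetric Decomposition of $x$ and $u$ must land inside the structure dictated by $w$, and this demands careful bookkeeping of how each letter of $x$ either is absorbed into a letter of $u$ or commutes past $u$ and thereby contributes to the ``excess'' part of $v$ that $w$ records. Controlling all cases of this interaction uniformly is the main technical hurdle.
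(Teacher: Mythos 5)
Your uniqueness argument is correct and is essentially the same observation the paper makes implicitly. Your opening observations about $S$ (nonempty, downward closed, contained in the downset of $v$) are also correct, and you correctly identify Proposition~\ref{P:Dec}, the operator $\Cent$, and Corollary~\ref{C:Ord_Mon} as the tools involved.

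The gap is in the existence argument: you propose to apply the Symmetric Decomposition directly to $x$ and the general word $u$ and then to ``match the resulting reduct to a splitting of $v$,'' and you candidly flag that ``controlling all cases of this interaction uniformly is the main technical hurdle.'' That hurdle is precisely what the paper avoids by a reduction you do not mention: one first reduces to the case where $u$ is a \emph{single letter} $s$, using the observation that for $u = u_1\cdot u_2$ one can take $v/u = (v/u_2)/u_1$ (the condition $[x\cdot u]\preceq v$ unwinds as $[[x\cdot u_1]\cdot u_2]\preceq v$, i.e.\ $[x\cdot u_1]\preceq v/u_2$, i.e.\ $x\preceq (v/u_2)/u_1$). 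Once $u=s$ is a letter, the explicit construction is $w = [v_1\cdot t\cdot \Cent_{v_2}(s)]$, where $v = v_1\cdot t\cdot v_2$ with $s\subset t$ and no letter of $v_2$ containing $s$. The forward direction is then exactly the monotonicity step you describe, and the reverse direction becomes a tractable two-case argument — $s$ absorbed by $x$ (using Lemma~\ref{L:reordering} to split $x\approx x_1\cdot x_2$ with $x_2$ commuting with $s$ and hence $x_2\preceq\Cent_{v_2}(s)$) versus $s$ not absorbed by $x$ (using Proposition~\ref{P:Dec} to write $x\approx x_1\cdot x'$ with $x'$ properly absorbed by $s$ and falling back to the first case for $x_1\cdot s$). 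Without the single-letter reduction, the ``careful bookkeeping'' you anticipate is genuinely unmanageable in the generality you propose; with it, there is a clean, short case split and an explicit formula for $w$. You should add the reduction to $u=s$ as the first step of the existence proof.
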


We denote $w$ by $v/u$. Since $v/u\preceq v/u$, Corollary
\ref{C:kleinergleich} implies that $v/u\preceq v$. Furthermore, since
$1\preceq v/u$, the condition $u\preceq v$ is necessary for the
existence of $v/u$.

\begin{proof}
  Note that we may assume that $u$ consists of a single letter:
  given $u = u_1\cdot u_2$, suppose that the statement holds for both
  $u_1$ and $u_2$. Since $u\preceq v$, we have $u_2\preceq v$, so
$v/u_2$ exists. Now $u_1\cdot u_2\preceq v$ implies $u_1\preceq
v/u_2$. Thus, set $v/u=(v/u_2)/u_1$, which exists.   Note that
  \[[x\cdot u]=[ [x\cdot u_1]\cdot u_2]\preceq v \Leftrightarrow
    [x\cdot u_1]\preceq v/u_2 \Leftrightarrow x\preceq v/u,\] as
desired. Therefore, assume $u=s\preceq v$.

    We can write $v=v_1\cdot t\cdot v_2$, such
    that $s\subset t$ and no letter in $v_2$ contains $s$. Set $w=
    [v_1\cdot t\cdot \Cent_{v_2}(s)]$. No letter in $v_1\cdot t$
    is left-absorbed during the non-splitting reduction
    $v_1\cdot t\cdot \Cent_{v_2}(s)\to w$, for $v$ is reduced.

    Clearly $[w\cdot s]=w\preceq v$. Given $x\preceq w$  reduced, Corollary
    \ref{C:Ord_Mon} implies that  $ [ x\cdot s] \preceq [w\cdot s]
\preceq v$, which gives one implication. For the other, assume that
$[x\cdot s]\preceq v$. We
distinguish two cases~: if $s$ is absorbed by $x$, then
$x=[x\cdot s]\preceq v=v_1\cdot t\cdot v_2$, so we may decompose
$x\approx x_1 \cdot x_2$, where $x_1\preceq v_1\cdot t\cdot$ and
$x_2\preceq v_2$. Since $x_2$ does not right-absorb $s$, Lemma
\ref{L:reordering} implies that  $x_2$ commutes with $s$, which is
right-absorbed by $x_1$. This implies $x_2\preceq\Cent_{v_2}(s)$ and thus
$x\preceq w$.

  If $s$ is not absorbed by $x$, then Proposition \ref{P:Dec} yields a
decomposition $x\approx  x_1\cdot x'$, where $x'$ is properly absorbed by
  $s$ and $[x\cdot s]=x_1\cdot s$. The word $x_1\cdot s$ right-absorbs
$s$, so $x_1\cdot s \preceq w$ by the previous discussion. Since
  $x'\prec s$, we conclude that $x\approx  x_1 \cdot x'\preceq w$, as
desired.
\end{proof}

\begin{definition}\label{D:sr}
 According to Lemma \ref{L:div}, we denote by $\sr(u)$ the largest
reduced word, unique up to permutation, such that for every reduced
word $x$,

\[ [ u\cdot x] \preceq u \Leftrightarrow x\preceq \sr(u).\]
\end{definition}

\begin{cor}\label{C:sr}
Given a reduced word $u$, the word $\sr(u)\preceq u$ is commutative. A
reduced word $x$ is right-absorbed by $u$ \iff $x\preceq \sr(u)$.

 In particular, if $\tilde u$ denotes the final segment of $u$, then
$\tilde u\preceq \sr(u)$.
\end{cor}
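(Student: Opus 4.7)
The plan is to deduce all three statements from the defining property of $\sr(u)$ together with the symmetric version of Corollary \ref{C:absorb=stab}, which says that a reduced word $x$ is right-absorbed by $u$ if and only if $[u\cdot x]=u$.

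First I would establish the characterisation of right-absorption. Note that $u\preceq[u\cdot x]$ always holds by Corollary \ref{C:kleinergleich}. Hence $[u\cdot x]\preceq u$ forces $[u\cdot x]\approx u$, and conversely. Combining this with the symmetric form of \ref{C:absorb=stab}, the condition $[u\cdot x]\preceq u$ in Definition \ref{D:sr} becomes exactly the condition that $x$ is right-absorbed by $u$. This immediately yields the ``iff'' statement in the corollary.

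Next I would prove commutativity of $\sr(u)$ by showing $[\sr(u)\cdot\sr(u)]\approx\sr(u)$ and then invoking Corollary \ref{C:kommwort} via Corollary \ref{C:absorb=stab}. To get the non-trivial inequality $[\sr(u)\cdot\sr(u)]\preceq\sr(u)$, set $y:=[\sr(u)\cdot\sr(u)]$ and apply the defining property of $\sr(u)$ to $y$: it suffices to verify $[u\cdot y]\preceq u$. This will follow by reducing $u\cdot\sr(u)\cdot\sr(u)$ in two different ways and comparing them via the uniqueness of non-splitting reducts (Corollary \ref{C:non-split-unique}). On the one hand, since $[u\cdot\sr(u)]=u$, the word $u\cdot\sr(u)\cdot\sr(u)$ reduces through $u\cdot\sr(u)$ to $u$; on the other hand, reducing the last two factors first yields $u\cdot y$, which then further reduces to $[u\cdot y]$. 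Uniqueness forces $[u\cdot y]\approx u$, hence $y\preceq\sr(u)$. Combined with the automatic inequality $\sr(u)\preceq y$ from Corollary \ref{C:kleinergleich}, this gives $y\approx\sr(u)$, so $\sr(u)$ left-absorbs itself and is therefore commutative.

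Finally, for $\tilde u\preceq\sr(u)$, I would observe that every letter of the final segment $\tilde u$ is by definition an end of $u$, and is therefore right-absorbed by $u$; thus the whole word $\tilde u$ is right-absorbed by $u$, and since $\tilde u$ is commutative (hence reduced), the ``iff'' already proved yields $\tilde u\preceq\sr(u)$.

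The main obstacle is the commutativity argument: it is not directly visible from the definition that the maximal right-absorbed word has to be commutative, and the ``doubling trick'' $[u\cdot\sr(u)\cdot\sr(u)]$ together with the uniqueness of non-splitting reduction is what forces $\sr(u)$ to left-absorb itself.
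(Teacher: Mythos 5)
Your proof is correct and follows essentially the same route as the paper: deriving the ``iff'' characterisation from Corollary~\ref{C:kleinergleich} and the symmetric form of Corollary~\ref{C:absorb=stab}, then using the doubling identity $[u\cdot[\sr(u)\cdot\sr(u)]]=[[u\cdot\sr(u)]\cdot\sr(u)]=u$ to conclude that $\sr(u)$ left-absorbs itself, hence is commutative by Corollary~\ref{C:kommwort}. The only difference is that you spell out the associativity step explicitly via uniqueness of non-splitting reducts (Corollary~\ref{C:non-split-unique}), which the paper asserts without comment, and you give an explicit short argument for $\tilde u\preceq\sr(u)$, which the paper leaves implicit.
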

\begin{proof}
Note that $x$ is right-absorbed by $u$ \iff $[u\cdot x]=u$. Since
$u\preceq [u\cdot x]$, by Corollary \ref{C:kleinergleich}, this is
equivalent to  $[ u\cdot x] \preceq u$, that is $x\preceq \sr(u)$.

In particular, the word $\sr(u)$ is right-absorbed by $u$. Thus
$[u\cdot [\sr(u)\cdot \sr(u)]]=[[u\cdot \sr(u)]\cdot \sr(u)]=[u\cdot
\sr(u)]=u$, so $\sr(u)\preceq [\sr(u)\cdot \sr(u)]\preceq \sr(u)$, which
implies that $\sr(u)$ is commutative, by Corollary \ref{C:kommwort}.
\end{proof}

%%% Local Variables:
%%% mode: latex
%%% TeX-master: "alt"
%%% ispell-local-dictionary: "british"
%%% End:
%Ende des Files \input{alt5} %Non-splitting reductions
%Anfang des Files \input{alt6} %Forking
\section{Forking}\label{S:Forking}

We work inside a big sufficiently saturated model $M$ of the theory
$\psg$, as a universal domain. Recall, by Corollary \ref{C:stab}, that
$\psg$ is $\omega$-stable,. Given a finite tuple $a$ and subsets $C\subset
B$ of $M$, the extension $\tp(a/C)\subset \tp(a/B)$ is \emph{non-forking},
if $\RM(\tp(a/B))=\RM(\tp(a/C))$. More generally, given subsets $A$, $B$
and $C$ of $M$, the set $A$ is \emph{independent from} $B$ over $C$,
denoted by \[A\ind_C B,\]
\noindent if, for every finite tuple $a$ in $A$, the extension
$\tp(a/C)\subset \tp(a/B\cup C)$ is non-forking. This gives rise to a
well-behaved notion of independence, which has, among many other,
the following remarkable properties (\emph{cf.\ }\cite[Corollary
8.5.4 and Theorem 8.5.5]{TZ12}):
\begin{description}
 \item[Symmetry]  If $A\ind_C B$, then $B\ind_C A$.
 \item[Extension] Given a tuple $a$ and subsets $C\subset B$ of $M$, there
is a non-forking extension of $\tp(a/C)$ to $B\cup C$, that is, there is
some realisation of $\tp(a/C)$ which is independent from
$B$ over $C$.
 \item[Stationarity] Every type $p$ over an elementary substructure
$N$ of $M$ is \emph{stationary}, that is, given a subset $B$ of
$M$, there is a unique non-forking extension of $p$ to $N\cup B$.
\item[Invariant Extension] Given a type $p$ over a sufficiently saturated
elementary substructure $N$ of $M$ which is \emph{invariant} over a
small subset $C\subset N$, that is, every automorphism of $N$ fixing $C$
fixes $p$ (as a collection of formulae), then $p$ is the unique
non-forking extension of $p\restr{C}$ to $N$.
\end{description}

Indeed, the last property follows from the fact that a global type,
which is invariant over $C$, does not fork over $C$ \cite[Exercise
  7.1.4]{TZ12}, plus the fact that all non-forking extensions of
$p\restr{C}$ to $N$ are conjugate under automorphisms of $N$
\cite[Theorem 8.5.6]{TZ12}.

In a similar fashion as in \cite[Section 7]{BMPZ13}, we will describe
non-forking over nice sets and canonical bases in $\psg$. We will also
show that this theory, which has weak elimination of imaginaries, has
trivial forking and furthermore is totally trivial, as defined in
\cite{jBG91}.

Recall the terminology introduced in Definition \ref{D:w_FG}: A word
$u$ connects the flag $F$ to the flag $G$ if it is the word of a reduced
path from $F$
to $G$. This word is unique, up to permutation, and denoted by
$\w(F,G)$.

The following result describes the type of a flag over a nice
set and will help us to determine the nature of non-forking.

\begin{prop}\label{P:fusspunkt}
  Given a flag $F$ and a reduced path with word $u$ which connects $F$
  to a flag $G$ lying in a nice set $D$, the following are equivalent:
  \renewcommand{\theenumi}{\alph{enumi}}
  \begin{enumerate}%[a)]
   \item\label{P:fusspunkt:indep} For any flag $G'$ in $D$,
   \[ \w(F,G')= [u\cdot\w(G,G')],\]
   that is, the word connecting $F$ to $G'$ is equivalent to $[u\cdot
     v]$, the non-splitting reduct of $u\cdot v$, where $v$ is the
   reduced word connecting $G$ to $G'$.
  \item\label{P:fusspunkt:kleinstes} The word $u$ is the
    $\preceq$-smallest word connecting $F$ to some flag in $D$.
  \item\label{P:fusspunkt:minimal} The word $u$ is $\preceq$-minimal
    among words connecting $F$ to some flag in $D$.
   \end{enumerate}
\end{prop}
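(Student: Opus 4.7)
The implications (a)$\Rightarrow$(b) and (b)$\Rightarrow$(c) are nearly immediate: (b)$\Rightarrow$(c) holds since every $\preceq$-smallest element is $\preceq$-minimal, while (a)$\Rightarrow$(b) follows from Corollary \ref{C:kleinergleich}, which gives $u\preceq [u\cdot \w(G,G')]=\w(F,G')$ for every $G'\in D$, with equality realised at $G'=G$.

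The substantive content is (c)$\Rightarrow$(a). Fix $G'\in D$, set $v=\w(G,G')$ and $w^*=\w(F,G')$. Concatenating the reduced paths $F\op{u}G$ and $G\op{v}G'$ gives a flag path $F\op{u\cdot v}G'$, which by Lemma \ref{L:RedPath} reduces to a reduced flag path whose word is $\approx w^*$ by Proposition \ref{P:Eindwort}. Hence $w^*\preceq u\cdot v$, and since $w^*$ is reduced, Lemma \ref{L:non-split_vs_preq} gives $w^*\preceq [u\cdot v]$.

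For the reverse inequality, apply the Symmetric Decomposition (Proposition \ref{P:Dec}) to obtain $u\approx u_1\cdot u'\cdot w_0$ and $v\approx w_0\cdot v'\cdot v_1$, with $[u\cdot v]=u_1\cdot w_0\cdot v_1$ reduced, the words $u'$, $w_0$, $v'$ pairwise commuting, $u'$ properly left-absorbed by $v_1$, and $v'$ properly right-absorbed by $u_1$. Permute $F\op{u}G$ via Lemma \ref{L:path_permutation} to $F\op{u_1}F_1\op{u'}F_2\op{w_0}G$. Since a nice subset of a \SC $\Gamma$-space is itself a \SC $\Gamma$-space (Remark \ref{R:nice} and the discussion following Definition \ref{D:nice}), niceness and Lemma \ref{L:path_permutation} applied inside $D$ yield a reduced path $G\op{w_0}G_1\op{v'}G_2\op{v_1}G'$ entirely in $D$. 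Since $F_2\sim_{|w_0|}G\sim_{|w_0|}G_1$, Lemma \ref{L:modlemma} together with Lemma \ref{L:Flagpath} provides a flag path $F_2\op{z}G_1$ for a commuting word $z$ with $|z|\subseteq|w_0|$, so $z\preceq w_0$.

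If $z\prec w_0$ strictly, then the flag path $F\op{u_1\cdot u'\cdot z}G_1$ reduces to a reduced path with $\w(F,G_1)\preceq u_1\cdot u'\cdot z\prec u_1\cdot u'\cdot w_0\approx u$; as $G_1\in D$, this contradicts the $\preceq$-minimality of $u$. Thus $z\approx w_0$ and $F_2\op{w_0}G_1$, yielding a flag path $F\op{u_1\cdot u'\cdot w_0\cdot v'\cdot v_1}G'$. Since $u',w_0,v'$ pairwise commute, permute $u'$ past $w_0,v'$ to sit adjacent to $v_1$ and absorb each of its letters into the letter of $v_1$ that properly contains it (Proper Absorption); symmetrically absorb $v'$ into $u_1$. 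All reductions are Commutation and Proper Absorption, never Splitting, so the resulting reduced word is $u_1\cdot w_0\cdot v_1=[u\cdot v]$, giving $w^*=[u\cdot v]$. The main obstacle is ensuring $G_1\in D$, which rests on the fact that nice subsets of \SC $\Gamma$-spaces are themselves \SC $\Gamma$-spaces in which Lemma \ref{L:path_permutation} permits internal permutation of reduced paths while keeping all intermediate flags inside $D$.
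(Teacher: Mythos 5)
Your proof is correct and follows essentially the same route as the paper's: after dispatching (a)$\Rightarrow$(b)$\Rightarrow$(c) via Corollary~\ref{C:kleinergleich}, both arguments prove (c)$\Rightarrow$(a) by applying the Symmetric Decomposition of Proposition~\ref{P:Dec} to $u$ and $v=\w(G,G')$, using $\preceq$-minimality of $u$ to force the connecting word across the middle ($z$ in your notation, $b$ in the paper's) to be $\approx w$, and then noting the remaining reduction is non-splitting, giving $\w(F,G')=[u\cdot v]$. (A minor slip: $z=\w(F_2,G_1)$ need not itself be commuting; one has $z\preceq\hat z\preceq w_0$ where $\hat z$ is the commuting word with support the colours where $F_2,G_1$ differ -- but this does not affect the argument.)
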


\noindent This generalisation of \cite[Proposition 7.2]{BMPZ13} has
essentially the same proof. Note that a word $u$ satisfying Property
$(\ref{P:fusspunkt:kleinstes})$ is unique, up to permutation, for
$\prec$ is irreflexive.
\begin{proof}
  (\ref{P:fusspunkt:indep})$\to$(\ref{P:fusspunkt:kleinstes}) follows
  from Corollary \ref{C:kleinergleich}. The implication
  (\ref{P:fusspunkt:kleinstes})$\to$(\ref{P:fusspunkt:minimal})
  is trivial.\\

  \noindent For
  (\ref{P:fusspunkt:minimal})$\to$(\ref{P:fusspunkt:indep}), let $v$ be
the word of a reduced path from $G$ to some flag $G'$ in $D$.  By niceness,
we may assume that the path is fully contained in $D$.
  Choose a decomposition $u=u_1\cdot u'\cdot w$ and $w\cdot v'\cdot
  v_1=v$, as in Proposition \ref{P:Dec}, with corresponding paths
 \[F\op{u_1\cdot u'}F_1\op{w}G\op{w}G_1\op{v'\cdot v_1}G',\]
 where $G_1$ is some flag in $D$. The word $b$ connecting $F_1$ to $G_1$
 is a reduct of $w\cdot w$, so $b\preceq w$. The word $c$
 connecting $F$ with $G_1$ is hence a reduct of $u_1\cdot u'\cdot b$, so
  \[c\preceq u_1\cdot u'\cdot b\preceq u_1\cdot u'\cdot
 w\preceq u.\] Minimality of $u$ implies that $c\approx u$, so
 $b\approx w$. Hence, in the reduction of the path $F\op{}G'$, no
 splitting occurred and the resulting word is $u_1\cdot w\cdot
 v_1=[u\cdot v]$, by Proposition \ref{P:Dec}.
\end{proof}

\begin{definition}\label{D:basept}
  A \emph{base-point} of the flag $F$ over the nice set $D$ is a flag
  $G$ in $D$ such that any of the conditions of Proposition
  \ref{P:fusspunkt} hold.
\end{definition}

Recall Definition \ref{D:sr} of $\sr(u)$. By Corollaries
\ref{C:absorb=stab} and \ref{C:sr}, we easily conclude the following:

\begin{cor}\label{C:Fusspunktgleich}
  Let $G$ be a base-point of $F$ over the nice set $D$ and $u$ the
  word which connects $F$ to $G$. Let $v$ be a reduced word connecting
  $G$ to some flag $G_1$ in $D$. The flag $G_1$ is a base-point of $F$
  over $D$ \iff $v$ is right-absorbed by $u$ \iff $u=[u\cdot
    v]$ \iff $v\preceq \sr(u)$.
\end{cor}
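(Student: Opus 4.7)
The plan is to establish the three equivalences as a chain, anchored on the condition $u=[u\cdot v]$, which I expect will be the convenient pivot since it connects directly to both the base-point characterisation via Proposition \ref{P:fusspunkt} and to the absorption/$\sr$ machinery developed in Section \ref{S:NS_Reductions}.

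First I would apply Proposition \ref{P:fusspunkt}(\ref{P:fusspunkt:indep}) to the base-point $G$ with the flag $G_1\in D$. This yields at once that $\w(F,G_1)\approx [u\cdot v]$. Since $G$ is a base-point, the word $u$ is $\preceq$-minimal (in fact the $\preceq$-smallest, up to permutation) among words connecting $F$ to a flag of $D$. By Corollary \ref{C:kleinergleich} we know $u\preceq [u\cdot v]=\w(F,G_1)$, and $\w(F,G_1)$ connects $F$ to a flag of $D$. Consequently $G_1$ is itself a base-point of $F$ over $D$ (i.e., satisfies condition (\ref{P:fusspunkt:minimal}) of Proposition \ref{P:fusspunkt}) if and only if $\w(F,G_1)\approx u$, which is exactly the condition $u=[u\cdot v]$. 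This establishes the first equivalence.

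Next I would handle the equivalence between $u=[u\cdot v]$, the word $v$ being right-absorbed by $u$, and $v\preceq\sr(u)$. This part is essentially bookkeeping on definitions: by Corollary \ref{C:sr}, a reduced word $v$ is right-absorbed by $u$ precisely when $v\preceq\sr(u)$. For the link with $[u\cdot v]=u$, I would note that $u\preceq [u\cdot v]$ holds always (Corollary \ref{C:kleinergleich}), so $[u\cdot v]\preceq u$ is equivalent to $[u\cdot v]\approx u$; and this is the defining property of $\sr(u)$ from Definition \ref{D:sr} applied with $x=v$. Alternatively, one can invoke the right-symmetric form of Corollary \ref{C:absorb=stab}, which drops out of the Symmetric Decomposition Lemma \ref{P:Dec} by reading the decomposition from the other end.

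I do not anticipate any real obstacle: the core content has already been packaged into Proposition \ref{P:fusspunkt} (giving $\w(F,G_1)=[u\cdot v]$), into Corollary \ref{C:sr} (translating right-absorption into $\preceq\sr(u)$), and into the monotonicity/absorption facts \ref{C:kleinergleich}, \ref{C:absorb=stab} and Definition \ref{D:sr}. The mild care needed is only to keep the roles of $u$ (minimal word from $F$ to the old base-point $G$) and $v$ (arbitrary word from $G$ to a candidate $G_1$ inside $D$) straight, and to use niceness of $D$ implicitly so that $\w(G,G')$ is well-defined for $G'\in D$.
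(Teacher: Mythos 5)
Your proof is correct and follows the same route the paper intends: pivot on the identity $\w(F,G_1)=[u\cdot v]$ given by Proposition~\ref{P:fusspunkt}, use minimality (together with Corollary~\ref{C:kleinergleich}) to characterise base-points as the case $[u\cdot v]\approx u$, and then translate this into right-absorption and $v\preceq\sr(u)$ via Definition~\ref{D:sr}, Corollary~\ref{C:sr} and (the right-handed form of) Corollary~\ref{C:absorb=stab} — exactly the ingredients the paper cites.
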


\begin{lemma}\label{L:alpha_kette}(\emph{cf.}\
  \cite[Lemma 7.4]{BMPZ13}) Let $G$ be a base-point of $F$ over the
  nice set $D$ and denote by $P$ a reduced path $F=F_0, \ldots, F_n=G$
  with word $u$. Then $D\cup P$ is nice. It is
  uniquely determined by $G$ and $u$ in the following strong sense: If
  $P'=F'_0,\ldots, F'_n$ is a second path with word $u$ from $F$ to $G$,
  then there is a (unique) isomorphism $D\cup P\to D\cup P'$ which is
  the identity on $D$ and maps each $F_i$ onto $F'_i$.
\end{lemma}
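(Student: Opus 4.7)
The plan is to realise $D\cup P$ as a tower of simple extensions $D=A_0\subset A_1\subset\cdots\subset A_n=D\cup P$, where $A_k=D\cup\{F_{n-k},\ldots,F_n\}$, and verify niceness at each step via Proposition \ref{P:nice=nosplit}. The isomorphism with $D\cup P'$ will then follow from the fact that simple extensions of a given base of a given type are unique up to base-fixing isomorphism sending new flag onto new flag.

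Inductively assume $A_k$ is nice. The flag $F_{n-k-1}$ is $s_{n-k}$-equivalent to $F_{n-k}\in A_k$, so Proposition \ref{P:nice=nosplit} reduces the check to excluding any $F_{n-k-1}\sop{x} H'$ in $M$ with $H'\in A_k$ and $x$ a splitting of $s_{n-k}$. Applying Lemma \ref{L:Flagpath} gives a reduced path $F_{n-k-1}\op{x'} H'$ in $M$ with $x'\preceq x\prec s_{n-k}$. I would then concatenate with the initial segment $F\op{s_1\cdots s_{n-k-1}}F_{n-k-1}$, and, whenever $H'=F_{n-j}$ lies among the added flags, with the sub-path $F_{n-j}\op{s_{n-j+1}\cdots s_n}G$ of $P$ — lifted to a reduced path in $M$ via niceness of $A_k$. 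This produces a path in $M$ from $F$ to a flag of $D$, whose underlying word is obtained from $u$ by replacing $s_{n-k}$ with the proper splitting $x'$ and the intervening letters by trivial splittings, and is therefore $\prec u$. Reducing this path yields a reduced word strictly $\prec u$ connecting $F$ to a flag of $D$, contradicting the minimality of $u$ guaranteed by $G$ being a base-point of $F$ over $D$. The main technical point is exactly this bootstrap in the case $H'\not\in D$: niceness of $A_k$ is needed to turn the sub-path of $P$ inside $A_k$ into a genuine reduced path of $M$ in order to assemble the contradiction.

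For the isomorphism, apply the same analysis to $P'$ to obtain a parallel tower $D=A'_0\subset\cdots\subset A'_n=D\cup P'$. I would construct $\phi_k\colon A_k\to A'_k$ by induction, fixing $D$ pointwise and with $\phi_k(F_{n-j})=F'_{n-j}$ for $0\leq j\leq k$; take $\phi_0=\mathrm{id}_D$. Given $\phi_k$, observe that $A_{k+1}$ is a simple extension of $A_k$ of type $(s_{n-k},F_{n-k})$ while $A'_{k+1}$ is a simple extension of $A'_k$ of type $(s_{n-k},F'_{n-k})=(s_{n-k},\phi_k(F_{n-k}))$. Since simple extensions of a prescribed type over a prescribed base are unique up to base-fixing isomorphism sending the new flag onto the new flag, $\phi_k$ extends to a unique $\phi_{k+1}$ with $\phi_{k+1}(F_{n-k-1})=F'_{n-k-1}$. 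Uniqueness of $\phi_n$ is then forced since every vertex of $D\cup P$ lies in $D$ or in some $F_i$ and its image is prescribed.
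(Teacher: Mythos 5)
Your plan — building $D\cup P$ as a tower $D=A_0\subset A_1\subset\cdots\subset A_n=D\cup P$ of simple extensions from the $G$-end and verifying niceness at each stage via Proposition \ref{P:nice=nosplit} — is the right skeleton and matches the paper's strategy (which packages the same tower as an induction on the length of $u$). There is, however, a genuine gap in the niceness step. At stage $k$, condition (b) of Proposition \ref{P:nice=nosplit} requires you to rule out $F_{n-k-1}\sop{x}H'$ for \emph{every} flag $H'$ contained in $A_k$ and every splitting $x$ of $s_{n-k}$. You implicitly treat the flags of $A_k$ as just the flags of $D$ together with $F_{n-k},\ldots,F_n$; but this is false as soon as $\Gamma\setminus\partial s_{n-j+1}\neq\emptyset$: by Remark \ref{R:newflags}, the passage from $A_{j-1}$ to $A_j$ creates a new "mixed" flag $H^*$ for \emph{each} flag $H$ of $A_{j-1}$ with $H\sim_{\Gamma\setminus\partial s_{n-j+1}}F_{n-j+1}$, and these flags need not occur on $P$. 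For such an $H'$ your concatenation scheme has no designated sub-path of $P$ from $H'$ to $G$, the word $\w(H',G)$ is a priori uncontrolled, and so you cannot produce a word $\prec u$ connecting $F$ to a flag of $D$; the contradiction with minimality of $u$ does not assemble.

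The paper avoids having to enumerate these flags by carrying a stronger inductive invariant. It first shows $D'=D\cup F_{n-1}$ is nice and a simple extension of $D$ — this step only requires Proposition \ref{P:nice=nosplit} relative to $D$-flags, which is exactly the case you handle — and then proves, using Remark \ref{R:newflags} together with minimality of $u$, that $F_{n-1}$ is again a base-point of $F$ over $D'$. (The mixed flags of $D'$ are handled precisely here: a flag $H'\in D'\setminus D$ satisfies $H'\op{s_n}H$ for some $H\in D$ by Remark \ref{R:newflags}, so a word $v\prec u'$ connecting $F$ to $H'$ reduces to a word $\preceq v\cdot s_n\prec u$ connecting $F$ to $H\in D$, a contradiction.) The induction hypothesis is then applied to $(D',u',P_0)$. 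With the base-point invariant in hand, the niceness check at your stage $k$ becomes uniform: if $F_{n-k-1}\sop{x}H'$ with $H'$ any flag of $A_k$, reducing $F\op{s_1\cdots s_{n-k-1}}F_{n-k-1}\sop{x}H'$ yields a reduced word $\prec s_1\cdots s_{n-k}$ connecting $F$ to a flag of $A_k$, contradicting that $F_{n-k}$ is a base-point of $F$ over $A_k$ via Proposition \ref{P:fusspunkt}(\ref{P:fusspunkt:minimal}). That invariant, and the use of Remark \ref{R:newflags} to propagate it, is the missing ingredient in your argument. Once the niceness step is repaired in this way, your construction of the isomorphism by transporting simple extensions stage by stage is correct and is essentially what the paper's induction yields.
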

We will express the last property by saying that the "extension
$F_0\cdots F_n/D$" is uniquely determined, up to isomorphism.
\begin{proof}
 If $u$ is trivial, there is nothing to show. Otherwise, write
 $u=u'\cdot s$. Mi\-ni\-mality of $u$ yields that no splitting of $s$
 can connect $F_{n-1}$ to a flag in $D$. Thus Proposition
 \ref{P:nice=nosplit} implies that the set $D'=D\cup F_{n-1}$ is nice
 and a simple extension of $D$ of type $(s,G)$.

 We will now show that $F_{n-1}$ is a base-point of $F$ over $D'$, by
 Proposition \ref{P:fusspunkt} (\ref{P:fusspunkt:minimal}). Otherwise,
 there is a reduced word $v\prec u'$ which connects $F$ to a flag $H'$
 in $D'$. By minimality of $u$, the flag $H'$ cannot lie in $D$. By
 Remark \ref{R:newflags}, there exists some flag $H$ in $D$ such that
 $H'\op{s} H$. This gives a reduced path from $F$ to $H$ whose word is
some reduction of $v\cdot s$, contradicting the minimality of $u$.

 By induction, if $P_0$ denotes the subpath $F=F_0,\ldots, F_{n-1}$, then
 the set $D'\cup P_0$ is nice and the extension $F_0\cdots F_{n-1}/D'$
 is uniquely determined, up to isomorphism, by $G'$ and $u'$.
 Therefore, the extension $F_0\cdots F_n/D$ is uniquely determined, up
 to isomorphism, by $G$ and $u$.
\end{proof}

\begin{cor}\label{C:fusspunkttyp}
  Given a reduced word $u$ and a flag $G$ in a nice set $D$, there is
  a flag $F$ such that $u$ connects $F$ to $G$, which is a base-point
  of $F$ over $D$. The type of $F$ over $D$ is uniquely determined by
  $G$ and $u$.
\end{cor}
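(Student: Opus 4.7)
The plan is to establish existence by induction on the length of $u$ using the theory of simple extensions, and then read off uniqueness from Lemma \ref{L:alpha_kette} together with Corollary \ref{C:nicetype}.

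For existence, I induct on $n = \mathrm{length}(u)$. The base case $u = 1$ is trivial: set $F = G$. For $n \geq 1$, write $u = u' \cdot s$, where $s$ is the final letter. Working inside the sufficiently saturated ambient model and applying Corollary \ref{C:equiv_forms} (to a finite nice subset of $D$ containing $G$, and then noting the resulting simple extension remains a simple extension of all of $D$ in the ambient model), I obtain a flag $F_{n-1}$ such that $D' = D \cup \{F_{n-1}\}$ is nice in $M$ and is a simple extension of $D$ of type $s, G$. By Proposition \ref{P:nice=nosplit}, no splitting of $s$ connects $F_{n-1}$ to any flag in $D$; by the argument embedded in the proof of that proposition, every reduced word connecting $F_{n-1}$ to a flag in $D$ then satisfies $s \preceq u$. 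Thus $s$ is $\preceq$-minimal among such words, so $G$ is a base-point of $F_{n-1}$ over $D$ by Proposition \ref{P:fusspunkt}(\ref{P:fusspunkt:minimal}).

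Applying the induction hypothesis to the shorter reduced word $u'$, the nice set $D'$, and the flag $F_{n-1} \in D'$, I obtain $F$ with a reduced path $F \op{u'} F_{n-1}$ in $M$ such that $F_{n-1}$ is a base-point of $F$ over $D'$. Concatenation yields a reduced path $F \op{u} G$. To verify that $G$ is a base-point of $F$ over $D$, I use characterisation $(a)$ of Proposition \ref{P:fusspunkt}: for any flag $G^*$ in $D \subset D'$,
\[
\w(F, G^*) \;=\; [u' \cdot \w(F_{n-1}, G^*)] \;=\; [u' \cdot [s \cdot \w(G, G^*)]] \;=\; [u' \cdot s \cdot \w(G, G^*)] \;=\; [u \cdot \w(G, G^*)],
\]
where the first equality uses that $F_{n-1}$ is a base-point of $F$ over $D'$, the second that $G$ is a base-point of $F_{n-1}$ over $D$, and the third the associativity of non-splitting reduction from Corollary \ref{C:non-split-unique}.

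For uniqueness, let $P$ denote the constructed reduced path $F_0 = F, F_1, \ldots, F_n = G$. By Lemma \ref{L:alpha_kette}, the set $D \cup P$ is nice in $M$ and the extension $F_0 \cdots F_n / D$ is uniquely determined, up to a $D$-isomorphism matching the $F_i$ pointwise, by the pair $(G, u)$. Corollary \ref{C:nicetype} then guarantees that the type of $D \cup P$ over $D$ is fixed by its quantifier-free type, and hence the type of $F$ over $D$ depends only on $G$ and $u$.

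The main subtlety is verifying that the base-point property for $G$ survives the inductive step, which amounts to chaining two base-point recursions through the intermediate nice set $D'$. This requires both associativity of the $[\cdot]$ operation and the fact, guaranteed by Proposition \ref{P:nice=nosplit}, that every reduced word from $F_{n-1}$ to a flag in $D$ already absorbs $s$, so that no splitting intervenes when passing from the $D'$-recursion to the $D$-recursion.
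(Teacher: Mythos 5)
Your proof is correct and follows essentially the same route as the paper: induction on the length of $u$, one application of Corollary~\ref{C:equiv_forms} together with Proposition~\ref{P:nice=nosplit} per inductive step, the same $[\,\cdot\,]$-associativity computation for the base-point property, and Lemma~\ref{L:alpha_kette} with Corollary~\ref{C:nicetype} for uniqueness; the only cosmetic difference is that you peel off the \emph{final} letter of $u$ and build the simple extension adjacent to $G$, whereas the paper writes $u=s\cdot u'$, recurses on $u'$ first, and attaches the new flag at the $F$-end. (Minor typo: ``satisfies $s\preceq u$'' should read $s\preceq v$ for the connecting word $v$ in question, as your next sentence correctly uses.)
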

\noindent Recall that the word $\w(F,G)$ is determined only up to a
permutation. So the $\tp(F/D)$ depends only on the equivalence class
of $u$.
\begin{proof}
  Observe that, if such a flag $F$ exists and $P$ denotes the reduced
  path from $F$ to $G$ with word $u$, Lemma \ref{L:alpha_kette} and
  Corollary \ref{C:nicetype} imply that the type of $P$ over $D$ is
  uniquely determined by $G$ and $u$.

  Thus, we need only show existence of such a flag $F$, by induction
  on the length of $u$. If $u=1$, there is nothing to do. Otherwise,
  write $u=s\cdot u'$ and choose a flag $F'$ connecting to $G$ by $u'$
  such that $G$ is a base-point of $F'$ over $D$. Let $P'$ denote the
  reduced path $F'\op{u'}G$. By Lemma \ref{L:alpha_kette}, the set
  $D'=D\cup P'$ is nice. Corollary \ref{C:equiv_forms} yields a simple
  extension $D'\cup F$ of type $(s,F')$. Proposition
  \ref{P:nice=nosplit} implies that $F'$ is base-point of $F$ over
  $D'$. We need only show that $G$ is a base-point of $F$. Hence, let
  $G'$ be an arbitrary flag in $D\subset D'$. We have

  \[\w(F,G')=[s\cdot \w(F',G')]=[s\cdot[u'\cdot  \w(G,G')]]=
     [u\cdot \w(G,G')],\]
as desired.
\end{proof}

If we denote the type of $F$ over $G$,  resp.\ over $D$, by
\[\p_u(G)\text{ resp.\ } \p_u(G)|D ,\]

\vspace{1em}

\noindent we conclude the following.

\begin{prop}\label{P:nicescaffold}
  Let $G$ be a flag in the nice set $D$ and $u$ a reduced word, then
  $\p_u(G)|D$ is the unique non-forking extension of $\p_u(G)$ to $D$.
\end{prop}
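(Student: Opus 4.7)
The plan is to exhibit a single global extension of $\p_u(G)$ over the universal domain, show that it is invariant over the flag $G$, and then invoke the Invariant Extension property recalled at the beginning of the section.

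First I would observe that $M$ is itself a nice subset of itself (the second clause of Definition \ref{D:nice} being vacuous), so Corollary \ref{C:fusspunkttyp} already furnishes a global type $\p_u(G)|M$, realised by any flag $F$ for which $u$ connects $F$ to $G$ and $G$ is a base-point of $F$ over $M$. The essential content of that corollary is that $\p_u(G)|M$ is uniquely determined by the pair $(G,u)$ (up to the equivalence class of $u$). A realisation $F$ of $\p_u(G)|M$ automatically has $G$ as base-point over any nice subset $D$ containing $G$, since $\preceq$-minimality of $u$ among words connecting $F$ to some flag of $M$ entails the same minimality for $D\subset M$; hence the restriction of $\p_u(G)|M$ to $D$ is exactly $\p_u(G)|D$.

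Next I would verify that $\p_u(G)|M$ is invariant over $G$. For any $\sigma\in\mathrm{Aut}(M/G)$ and any realisation $F$ of $\p_u(G)|M$, the image $\sigma(F)$ is still connected to $\sigma(G)=G$ by a reduced path of word $u$, and the $\preceq$-minimality condition of Proposition \ref{P:fusspunkt}(\ref{P:fusspunkt:minimal}) transfers under $\sigma$, so that $G$ remains a base-point of $\sigma(F)$ over $\sigma(M)=M$. By the uniqueness clause of Corollary \ref{C:fusspunkttyp} the flag $\sigma(F)$ then realises $\p_u(G)|M$, which yields invariance.

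The Invariant Extension property therefore identifies $\p_u(G)|M$ as the unique non-forking extension of $\p_u(G)$ to $M$. Its restriction to $D$ is $\p_u(G)|D$, so the latter does not fork over $G$. For uniqueness, any non-forking extension of $\p_u(G)$ to $D$ lifts, by the Extension axiom and transitivity of non-forking, to a non-forking extension to $M$, which must coincide with $\p_u(G)|M$, and therefore restricts back to $\p_u(G)|D$. I do not foresee a serious obstacle; the only care needed is the bookkeeping that the local types $\p_u(G)|D$ attached to different nice sets coherently globalise into a single type $\p_u(G)|M$, which is exactly the recursive description of the connecting word recorded in Proposition \ref{P:fusspunkt}(\ref{P:fusspunkt:indep}).
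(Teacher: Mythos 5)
Your proof is correct and follows essentially the same approach as the paper's: both exhibit the canonical extension ($\p_u(G)|M$ in your case, $\p_u(G)|D'$ for a saturated nice model $D'\supseteq D$ in the paper's), verify invariance over $G$ via the uniqueness in Corollary \ref{C:fusspunkttyp}, and invoke the Invariant Extension principle. You supply several bookkeeping steps the paper leaves implicit — the compatibility of $\p_u(G)|M$ with $\p_u(G)|D$ for all nice $D\ni G$, the explicit invariance check, and the final restriction argument for uniqueness over $D$ — but the underlying argument is the same.
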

\begin{proof}
  By the {\bf Extension} principle, we may replace $D$ by a sufficiently
saturated elementary substructure containing it, which is again nice in
$M$. Since $\p_u(G)|D$ is invariant over $G$, the {\bf Invariant
Extension} principle yields the desired result.
\end{proof}

Since the type $\p_u(G)$ admits a non-forking extension to $D$,
which must coincide with $\p_u(G)|D$, by the previous result, we obtain
the following immediate observation.

\begin{cor}\label{C:forking}
  (\emph{cf.}\ \cite[Lemmata 7.4 and 7.6]{BMPZ13}) Given a flag $F$ and
  a nice set $D$, the flag $G$ in $D$ is a base-point of $F$ over $D$
  \iff $F\ind_G D$.
\end{cor}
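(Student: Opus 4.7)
The plan is to derive the result directly from Proposition \ref{P:nicescaffold}, whose crucial content is the \emph{uniqueness} assertion that $\p_u(G)|D$ is the only non-forking extension of $\p_u(G)$ to $D$. This, combined with the existence Corollary \ref{C:fusspunkttyp}, makes the argument a very short translation of ``base-point'' into ``non-forking''. A helpful preliminary observation is that being a base-point of $F$ over $D$ depends only on $\tp(F/D)$: by Proposition \ref{P:fusspunkt}(\ref{P:fusspunkt:minimal}), the condition is that $\w(F,G)$ is $\preceq$-minimal among the words connecting $F$ to a flag in $D$, and each $\w(F,G')$ for $G'\in D$ is determined by $\tp(F/D)$.

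For the forward direction, assume $G$ is a base-point of $F$ over $D$, and set $u=\w(F,G)$. Proposition \ref{P:fusspunkt}(\ref{P:fusspunkt:indep}) then tells us that, for every flag $G'$ in $D$, the word $\w(F,G')$ is $[u\cdot\w(G,G')]$, which is exactly the prescription defining $\p_u(G)|D$ in Corollary \ref{C:fusspunkttyp}. Thus $\tp(F/D)=\p_u(G)|D$, and Proposition \ref{P:nicescaffold} identifies this as the non-forking extension of $\tp(F/G)=\p_u(G)$ to $D$. By definition of independence, $F\ind_G D$.

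For the converse, assume $F\ind_G D$, and let $u=\w(F,G)$, so that $\tp(F/G)=\p_u(G)$. Then $\tp(F/D)$ is some non-forking extension of $\p_u(G)$ to $D$; by the uniqueness in Proposition \ref{P:nicescaffold}, it must agree with $\p_u(G)|D$. Corollary \ref{C:fusspunkttyp} supplies a flag $F'$ realising $\p_u(G)|D$ for which $G$ is a base-point of $F'$ over $D$. Since $\tp(F/D)=\tp(F'/D)$ and being a base-point is a type-theoretic property of $F$ over $D$ by the preliminary remark, $G$ is also a base-point of $F$ over $D$.

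The real content of the corollary has already been packaged into Proposition \ref{P:nicescaffold} and Corollary \ref{C:fusspunkttyp}; there is no serious obstacle left. The only point worth verifying carefully is the type-invariance of the base-point property, which is immediate from the characterisations of Proposition \ref{P:fusspunkt}.
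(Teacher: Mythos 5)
Your proof is correct and follows essentially the same route as the paper: the paper derives the corollary as an immediate consequence of Proposition~\ref{P:nicescaffold} (the type $\p_u(G)|D$ is the unique non-forking extension of $\p_u(G)$ to $D$) together with Corollary~\ref{C:fusspunkttyp}, which is precisely the argument you give, with the useful extra remark that the base-point condition is type-theoretic in $F$ over $D$ made explicit.
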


Recall that the \emph{canonical base} $\cb(p)$ of a stationary type $p$ is
some set, fixed pointwise by exactly those automorphisms of
$M$ fixing the global non-forking extension $\mathbf{p}$ of $p$ to $M$. If
$\cb(p)$ exists, then it is unique, up to interdefinability, and
$\mathbf{p}$ is the unique non-forking extension to $M$ of its
restriction to $\cb(p)$. Furthermore, if $p$ is a stationary type over
$B$ and $\cb(p)$ exists, then $p$ does not fork over $A\subset B$ if and
only if $\cb(p)$ is algebraic over $A$.

Canonical bases exist as \emph{imaginary} elements in the expansion
$T^\mathrm{eq}$ of an $\omega$-stable theory $T$ \cite[Chapter 8.4]{TZ12}.

As in Remark \ref{R:commuting_word}, given a reduced word $u$, we do
not distinguish between the word $\sr(u)$ and its support. For a flag
$G$, the class of $G$ modulo $\sr(u)$ can be identified with a subset
of the real sort, namely with the set of vertices of $G$ whose colours
do not belong to $\sr(u)$. To simplify the notation, we will denote
this class by $G/\sr(u)$.

\begin{cor}\label{C:cb_type_pG}
  The class $G/\sr(u)$ is a canonical base of\/ $\p_u(G)$.
\end{cor}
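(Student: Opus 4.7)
The plan is to show that, in a sufficiently saturated monster model $M$, the tuple $G/\sr(u)$ is fixed pointwise by exactly those automorphisms $\sigma\in\mathrm{Aut}(M)$ that fix the global non-forking extension $\mathbf{p}_u(G):=\p_u(G)|M$; by uniqueness of the canonical base up to interdefinability, this is enough. Setting $G':=\sigma(G)$, the equivalence I would establish is the chain
\[
\sigma\text{ fixes }\mathbf{p}_u(G) \iff \p_u(G')|M=\p_u(G)|M \iff \w(G,G')\preceq \sr(u) \iff G\sim_{\sr(u)}G' \iff \sigma\!\restriction\!(G/\sr(u))=\mathrm{id}.
\]

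First, I would observe that the global non-forking extension of $\p_u(\sigma(G))$ is precisely $\sigma\mathbf{p}_u(G)$, and by Proposition~\ref{P:nicescaffold} it coincides with $\p_u(G')|M$. This handles the first equivalence. For the last two equivalences, I would invoke Corollary~\ref{C:sr}, which tells us $\sr(u)$ is commutative, so a reduced word $v$ satisfies $v\preceq\sr(u)$ if and only if $|v|\subset\sr(u)$; hence $\w(G,G')\preceq\sr(u)$ is equivalent to $G$ and $G'$ agreeing on every color $\gamma\notin\sr(u)$, which by definition of $G/\sr(u)$ is exactly the statement that $\sigma$ fixes each vertex of $G/\sr(u)$.

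The main technical step is the middle equivalence $\p_u(G')|M=\p_u(G)|M \iff \w(G,G')\preceq\sr(u)$. Since $M$ is nice in itself (by Remark~\ref{R:nice}(\ref{R:nice:space}) and niceness of models in themselves), I would pick a realisation $F$ of $\p_u(G)|M$ in a sufficiently saturated extension. By Corollary~\ref{C:forking}, $G$ is a base-point of $F$ over $M$. If the two non-forking extensions coincide, then $F$ also realises $\p_u(G')|M$, whence $G'$ is likewise a base-point of $F$ over $M$ with $\w(F,G')=u$; Corollary~\ref{C:Fusspunktgleich} then yields $\w(G,G')\preceq\sr(u)$. Conversely, if $\w(G,G')\preceq\sr(u)$, the same corollary gives that $G'$ is a base-point of $F$ over $M$ with $\w(F,G')=[u\cdot \w(G,G')]=u$, so $F$ realises $\p_u(G')|M$; by stationarity (stemming from Proposition~\ref{P:nicescaffold}), the two global non-forking extensions agree.

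I expect the main subtlety to be making sure the parameters are handled correctly: in particular, ensuring that $G$ and $G'$ both live in a common nice set (which is automatic here, as $M$ itself is nice), and that Corollary~\ref{C:Fusspunktgleich} can be applied symmetrically to both $G$ and $G'$. Once these points are set, the chain of equivalences closes the argument.
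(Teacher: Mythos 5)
Your proof is correct and takes essentially the same route as the paper: both arguments hinge on Corollary~\ref{C:Fusspunktgleich} to translate equality of global non-forking extensions into the condition $\w(G,G')\preceq\sr(u)$, and then into $G\sim_{\sr(u)}G'$. You spell out the automorphism characterization of the canonical base and the role of Proposition~\ref{P:nicescaffold} more explicitly than the paper's terse version, but the underlying argument is the same.
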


\begin{proof}
  Two types $\p_u(G)$ and $\p_u(G_1)$ have a common global non-forking
extension \iff $\p_u(G)|D=\p_u(G_1)|D$ for some nice set $D$ which
contains
  $G$ and $G_1$. This is
  equivalent, by Corollary \ref{C:Fusspunktgleich}, to
$\w(G,G_1)\preceq\sr(u)$, which is equivalent to
  $G_1\sim_{\sr(u)} G$.
\end{proof}

\begin{definition}\label{D:Rdiv}
  Given reduced words $u$ and $v$, we say that $u$ is a \emph{proper
    left-divisor} of $v$ if $u\not\approx v$ and there is a reduced
  word $w$ such that $[u\cdot w]=v$.
\end{definition}

If $u$ is a proper left-divisor of $v$, it follows that $u\prec v$. In
particular, being a proper left-divisor is a well-founded relation.
Let $\Rd$ be its foundation rank, and likewise let $\Rkl$ denote the
foundation rank of reduced words with respect to $\prec$.

The foundation rank of types associated to the forking relation is
called \emph{Lascar rank}, denote by $\U$-rank. This means that a type
has Lascar rank at least $\alpha+1$ \iff it has a forking extension of
Lascar rank at least $\alpha$.

The following result can be proved exactly as \cite[Lemmata 7.10 and
  7.11]{BMPZ13}

\begin{lemma}\label{L:URdiv}
  For every flag $G$ and every reduced word $u$,
  \[\U(\p_u(G))=\Rd(u)\leq\RM(\p_u(G))\leq\Rkl(u).\]
\end{lemma}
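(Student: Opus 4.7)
The plan is to establish the three comparisons separately. The middle inequality $\Rd(u)\leq\RM(\p_u(G))$ is the general inequality $\U(p)\leq\RM(p)$ for stationary types in stable theories, applied after the outer identity $\U(\p_u(G))=\Rd(u)$ is secured. The substance therefore lies in proving $\U(\p_u(G))=\Rd(u)$ and $\RM(\p_u(G))\leq\Rkl(u)$, both by transfinite induction on the respective foundation rank of $u$.

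For the $\U$-rank identity, the heart is the following correspondence: forking extensions of $\p_u(G)$, up to non-forking equivalence, are in bijection with proper left-divisors of $u$. If $\Rd(u)=0$ then $u=1$ (since $1$ is a proper left-divisor of every non-trivial reduced word) and $\p_1(G)$ is algebraic, so $\U=0$. For the inductive step, given a proper left-divisor $u'$ with $[u'\cdot w]=u$ and $w\not\approx 1$, Corollary \ref{C:fusspunkttyp} yields a flag $G_1$ with $\w(G_1,G)=w$; applying the corollary a second time inside the nice hull $D$ of $\{G,G_1\}$ produces $F$ with $\w(F,G_1)=u'$ and $G_1$ as base-point of $F$ over $D$. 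Then Proposition \ref{P:fusspunkt}(a) gives $\w(F,G)=[u'\cdot w]=u$, so $F\models\p_u(G)$, and Proposition \ref{P:fusspunkt}(c) together with $u'\prec u$ forces $G$ not to be a base-point of $F$ over $D$, whence by Corollary \ref{C:forking} the extension forks. Its $\U$-rank equals $\U(\p_{u'}(G_1))=\Rd(u')$ by $\U$-invariance and induction, showing $\U(\p_u(G))\geq\Rd(u')+1$. Conversely, any forking extension of $\p_u(G)$ over a nice $D\ni G$ is realised by some $F$ whose base-point $G_1\in D$ must satisfy $G_1\not\sim_{\sr(u)}G$ by Corollary \ref{C:Fusspunktgleich}; setting $u':=\w(F,G_1)$, Proposition \ref{P:fusspunkt}(a) yields $[u'\cdot\w(G_1,G)]=u$ with $u'\not\approx u$, so $u'$ is a proper left-divisor and the extension has $\U$-rank $\Rd(u')$. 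Taking suprema, $\U(\p_u(G))=\sup_{u'\text{ p.l.d.}}(\Rd(u')+1)=\Rd(u)$.

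For the Morley rank bound, I would induct on $\Rkl(u)$ and exhibit a formula $\varphi(x)\in\p_u(G)$ of Morley rank at most $\Rkl(u)$. The natural candidate is the first-order condition $\w(x,G)=u$, expressible through the existence of a reduced path of type $u$ from $x$ to $G$ together with a bounded exclusion of the finitely many $\prec$-smaller reductions. Picking a beginning letter $s$ of $u$, so $u\approx s\cdot u_1$ with $u_1\prec u$ (trivial splitting of $s$), the definable fibration extracting the intermediate flag $F_1$ in the reduced path lands in the locus defined by $\p_{u_1}(G)$, whose Morley rank is at most $\Rkl(u_1)$ by induction. The fibre over $F_1$ sits inside the $\sim_s$-class of $F_1$, restricted by the condition that the concatenated path through $F_1$ does not admit a further $\prec$-reduction.

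The main obstacle will be controlling the Morley rank of this fibre via a splitting analysis on $s$, and summing it additively with the base rank $\Rkl(u_1)$ to obtain the bound $\Rkl(u)$. The contrast between the tight Lascar bound (which tracks only proper left-divisors, hence forking extensions) and the looser Morley bound (which tracks the full $\prec$-relation, accommodating also the splittings of $s$ that refine the $\sim_s$-class of $F_1$) is essential and must be handled by a careful rank computation analogous to \cite[Lemma 7.11]{BMPZ13}, which should transfer with purely notational modifications to the present Coxeter setting.
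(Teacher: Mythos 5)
Your treatment of the $\U$-rank identity is essentially a correct reconstruction of the argument from \cite[Theorem 7.10]{BMPZ13}, which the paper only cites without reproducing. The correspondence between proper left-divisors and forking extensions is the right mechanism: Corollary~\ref{C:fusspunkttyp} supplies the extension for each proper left-divisor $u'$, Proposition~\ref{P:fusspunkt}(a) gives $\w(F,G)=[u'\cdot w]=u$, and the converse direction is handled by observing that the base-point word of any forking extension is a proper left-divisor of $u$. A minor slip: you invoke Corollary~\ref{C:Fusspunktgleich} to say $G_1\not\sim_{\sr(u)}G$, but that corollary is phrased with $G$ as the base-point, which is exactly what fails here; the clean argument is to apply it with $G_1$ as the base-point and conclude $\w(G_1,G)\not\preceq\sr(u')$, or simply to note that $u'\approx u$ would force $\w(G_1,G)$ to be right-absorbed by $u'$ (Corollary~\ref{C:absorb=stab}), making $G$ a base-point after all. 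This doesn't affect the conclusion.

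The Morley-rank upper bound is where the proposal has a genuine gap, and you acknowledge as much. The plan to extract the intermediate flag $F_1$ in a reduced path, bound its locus by $\Rkl(u_1)$ by induction, and then \emph{sum the fibre rank additively} with the base rank is not a legitimate Morley-rank computation: Morley rank does not add under definable fibrations in a general $\omega$-stable theory. (The paper's introduction even flags this as a special feature of groups of finite Morley rank; for arbitrary fibrations one only has Lascar inequalities for $\U$-rank.) What is actually needed is a Cantor--Bendixson analysis of the types extending the equation $\pp_u(x,G)$ --- the correct candidate formula, rather than the non-first-order ``$\w(x,G)=u$'' you propose --- along the lines of the Remark following Lemma~\ref{L:URdiv}: $\RM(\p_u(G))>\alpha$ only if some $\p_v(G)$ with $v\prec u$ already has rank $\geq\alpha$, either as a proper left-divisor extension or as an accumulation point, and then $\preceq$-induction gives the bound $\Rkl(u)$. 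You defer this to \cite[Lemma 7.11]{BMPZ13}, which is also what the paper does, but your own sketch of how the transfer would go (fibration plus additivity) is not a proof and would not survive scrutiny if actually carried out; you would need to reproduce the accumulation-point analysis, not a rank-additivity argument.
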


In general $\Rd(u)$, $\RM(\p_u(G))$ and $\Rkl(u)$ need not agree
(\emph{cf.} \cite[Remark 7.14]{BMPZ13}). They coincide however in the
following special case, the proof of which is a straight-forward
modification of the proof of \cite[Lemma 7.12]{BMPZ13}, together with
 Lemma \ref{L:Zus}.

\begin{lemma}\label{L:ranks}(\emph{cf.}\ \cite[Corollary 7.13]{BMPZ13})
  For every reduced word $u=s_1\cdots s_n$, with $|s_i|\geq |s_{i+1}|$
  for $i=1,\ldots,n-1$,
  \[\Rd(u)=\Rkl(u)=\omega^{|s_1|-1}
  +\dotsb+\omega^{|s_n|-1}.\]
\end{lemma}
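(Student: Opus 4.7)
The plan is to prove the two inequalities $\Rkl(u)\le\rho(u)$ and $\rho(u)\le\Rd(u)$, where $\rho(u):=\omega^{|s_1|-1}+\cdots+\omega^{|s_n|-1}$. Combined with the sandwich $\Rd(u)\le\RM(\p_u(G))\le\Rkl(u)$ from Lemma~\ref{L:URdiv}, this pins all four quantities to the common value $\rho(u)$. For the upper bound, extend $\rho$ to an arbitrary reduced word by sorting its letters in non-increasing order of size and summing $\omega^{|t|-1}$; the result is the Cantor normal form of the resulting ordinal and is invariant under $\approx$. A single step $v\prec u$ replaces one or more letters $s$ of $u$ by splittings, whose letters have size strictly less than $|s|$; in Cantor normal form this trades one term $\omega^{|s|-1}$ for finitely many strictly smaller ones, strictly decreasing $\rho$. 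Well-founded induction on $\prec$ yields $\Rkl(u)\le\rho(u)$.

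For the lower bound, proceed by induction on $\rho(u)$, producing for each $\beta<\rho(u)$ a proper left-divisor $v$ of $u$ with $\Rd(v)\ge\beta$. Write $u=u'\cdot s_n$ with $u'=s_1\cdots s_{n-1}$, so that $\rho(u)=\rho(u')+\omega^{|s_n|-1}$; the range $\beta<\rho(u')$ is handled by $v=u'$, which is a proper left-divisor because $[u'\cdot s_n]=u$. In the remaining range write $\beta=\rho(u')+\beta'$ with $\beta'<\omega^{|s_n|-1}$; the case $|s_n|=1$ is again settled by $v=u'$, so assume $k:=|s_n|\ge2$. Apply Lemma~\ref{L:Zus} to $s_n$ to obtain letters $t_1,t_2\subsetneq s_n$ of size $k-1$. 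Since $t_1\cup t_2=s_n$ is connected, $t_1$ and $t_2$ do not commute; being distinct of equal size, neither contains the other. The alternating word $x_m:=(t_1 t_2)^m$ is therefore reduced, and is a splitting of $s_n$; with $w=s_n$ successive right-absorptions give $[x_m\cdot s_n]=s_n$, hence $[u'\cdot x_m\cdot s_n]=u$, so $u'\cdot x_m$ is a proper left-divisor of $u$ provided $u'\cdot x_m$ is reduced. The inductive hypothesis applied to $x_m$ gives $\Rd(x_m)\ge 2m\,\omega^{k-2}$, and an auxiliary concatenation inequality $\Rd(z\cdot y)\ge\Rd(z)+\Rd(y)$ for $z\cdot y$ reduced (proved by induction on $\Rd(y)$ by transferring a proper left-divisor $y'$ of $y$ to a proper left-divisor $z\cdot y'$ of $z\cdot y$, using the associativity of non-splitting reduction from Corollary~\ref{C:non-split-unique}) yields $\Rd(u'\cdot x_m)\ge\rho(u')+2m\,\omega^{k-2}$, which exceeds $\beta$ for $m$ large.

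The main obstacle is reducedness of the junction in $u'\cdot x_m$. The crucial observation is that no letter of $u'$ can contain both $t_1$ and $t_2$: such a letter would contain $t_1\cup t_2=s_n$, and the commutation required for any absorption at the junction would transfer to $s_n$, contradicting reducedness of $u$. Hence at most one of $t_1,t_2$ is \emph{bad} as the initial letter of the alternation, and we arrange $x_m$ to start with the other; the non-commutation between $t_1$ and $t_2$ then blocks any letter in the interior of $x_m$ from commuting past its neighbour back to $u'$. The same dichotomy underpins the junction reducedness in the concatenation lemma, ensuring that the induction goes through.
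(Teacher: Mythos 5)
Your overall strategy matches the paper's, which delegates to \cite[Lemma 7.12]{BMPZ13} adapted via Lemma~\ref{L:Zus}: combine the sandwich of Lemma~\ref{L:URdiv} with the Cantor-normal-form ordinal $\rho$, bounding $\Rkl$ from above, and bound $\Rd$ from below by exhibiting the alternating splittings $x_m=(t_1t_2)^m$ of the last letter $s_n$. However, the concatenation inequality $\Rd(z\cdot y)\ge\Rd(z)+\Rd(y)$ has a genuine gap as you sketch it: transferring a proper left-divisor $y'$ of $y$ to $z\cdot y'$ requires $z\cdot y'$ to be a reduced word, which does not follow from $z\cdot y$ being reduced, and the dichotomy between $t_1$ and $t_2$ that you invoke only concerns the junction of $u'$ with the initial letter of the full alternation $x_m$; it says nothing about the junction of $u'$ with an arbitrary proper left-divisor $y'\prec x_m$. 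Fortunately the inequality is superfluous: the word $u'\cdot x_m$ is itself reduced with non-increasing letter sizes ($|s_{n-1}|\ge|s_n|=k>k-1=|t_j|$) and $\rho(u'\cdot x_m)=\rho(u')+\omega^{k-2}\cdot 2m<\rho(u)$, so the inductive hypothesis applies to $u'\cdot x_m$ directly and gives $\Rd(u'\cdot x_m)=\rho(u')+\omega^{k-2}\cdot 2m$; since each $u'\cdot x_m$ is a proper left-divisor of $u$, letting $m$ grow yields $\Rd(u)\ge\rho(u')+\omega^{k-1}=\rho(u)$.

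There is also a smaller gap in the dichotomy argument itself. Your observation rules out only the case where a single letter $s_i$ of $u'$, necessarily containing $s_n$, would absorb both $t_1$ and $t_2$. You must also exclude the case where $t_1\subseteq s_i$ commutes with $s_{i+1},\ldots,s_{n-1}$ while $t_2\subseteq s_{i'}$ commutes with $s_{i'+1},\ldots,s_{n-1}$ for some $i<i'$: then $t_1$ commutes with $s_{i'}\supseteq t_2$ and hence with $t_2$, contradicting that $t_1\cup t_2=s_n$ is connected. Once this case is added, at most one of $t_1,t_2$ is right-absorbed by $u'$, and starting $x_m$ with the other does make $u'\cdot x_m$ reduced, as you intended.
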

\begin{remark}
  For an arbitrary reduced word $u$, the Morley rank of the type
  $\p_u(G)$ can be easily computed thanks to the following
  observation: The rank of $\p_u(G)$ is strictly larger than $\alpha$
  \iff either
  \begin{enumerate}
    \item the word $u$ has a proper left-divisor $v$ such that $\p_v(G)$
has at
      least rank $\alpha$,
  \end{enumerate}
  \noindent or
  \begin{enumerate}
    \setcounter{enumi}{1}
  \item the type $\p_u(G)$ is an accumulation point of a family of
types  $\p_v(G)$, each of rank at least $\alpha$.
  \end{enumerate}
\end{remark}

\begin{cor}\label{C:Ranks}
  The theory $\psg$ is $\omega$-stable of Morley rank $\omega^{K-1}$,
  where $K$ is the cardinality of a connected component
of\/  $\Gamma$ of largest size.
\end{cor}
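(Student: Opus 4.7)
The $\omega$-stability of $\psg$ follows at once from Corollary~\ref{C:stab}, so only the Morley rank needs to be computed. The strategy is to use Lemma~\ref{L:URdiv} and Lemma~\ref{L:ranks}, which express the Morley rank of every flag type $\p_u(G)$ in terms of the reduced word $u$, and then to bound the rank of a single-vertex $1$-type by that of an appropriately chosen flag containing the vertex.

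For the lower bound $\RM(\psg)\geq\omega^{K-1}$, let $s$ be a connected component of $\Gamma$ of maximum cardinality $K$; viewed as a single letter, Lemma~\ref{L:ranks} gives $\Rd(s)=\Rkl(s)=\omega^{K-1}$, and hence $\RM(\p_s(G))=\omega^{K-1}$ by Lemma~\ref{L:URdiv}. A realization of $\p_s(G)|D$ is a flag $F$ that differs from $G$ exactly at the $K$ vertices $\{f_\gamma:\gamma\in s\}$. After fixing a linear ordering of $s$, the Lascar inequality
\[
\RM(F/D)\leq\sum_{\gamma\in s}\RM\bigl(f_\gamma/D\cup\{f_{\gamma'}:\gamma'<\gamma\}\bigr)
\]
combined with the additive indecomposability of $\omega^{K-1}$ forces one summand to be at least $\omega^{K-1}$, and monotonicity of Morley rank under restriction of parameters then supplies a $1$-type over $D$ of rank $\geq\omega^{K-1}$.

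For the upper bound $\RM(\psg)\leq\omega^{K-1}$, I take an arbitrary $1$-type of a vertex $v$ of colour $\gamma$ over a nice set $D$, and let $s_\gamma$ denote the connected component of $\gamma$ in $\Gamma$. Using that vertices in distinct components of $\Gamma$ admit no edges (Definition~\ref{D:colsp}), I build a flag $F$ containing $v$ such that $f_\delta=g_\delta$ for every $\delta\notin s_\gamma$, where $G$ is some flag in $D$. Then $\w(F,G)$ has support contained in $s_\gamma$, and choosing $G$ as a base-point of $F$ over $D$ makes $\w(F,G)$ the $\preceq$-minimal such word, which will turn out to be a single sub-letter of $s_\gamma$. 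Lemma~\ref{L:ranks} then bounds $\RM(\p_{\w(F,G)}(G))$ by $\omega^{|s_\gamma|-1}\leq\omega^{K-1}$, and this bound is inherited by $\RM(\tp(v/D))$ since $v$ is a projection of $F$.

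The main obstacle lies in the upper bound: one must verify that the minimal flag $F$ containing $v$ genuinely has $\w(F,G)$ equal to a single letter contained in $s_\gamma$, rather than a more elaborate reduced word such as $s_\gamma\cdot\gamma'\cdot s_\gamma$, whose $\Rkl$-value can exceed $\omega^{K-1}$. Settling this relies on Proposition~\ref{P:nice-bandf} and a careful tracking of how the presence of a single new vertex $v$ forces the difference between $F$ and $G$ to propagate through the component $s_\gamma$, but no further.
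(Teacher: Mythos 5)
Your overall strategy — computing $\RM$ of flag types via Lemmata~\ref{L:URdiv} and~\ref{L:ranks} and transferring the bounds to $1$-types of vertices — is in the spirit of the paper's proof, but there are two genuine gaps, one in each direction.

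For the lower bound, the inequality $\RM(F/D)\leq\sum_{\gamma\in s}\RM\bigl(f_\gamma/D\cup\{f_{\gamma'}:\gamma'<\gamma\}\bigr)$ you invoke is not one of Lascar's inequalities; those are proved for $\U$-rank, and Morley rank is not known to satisfy them in general $\omega$-stable theories. The fix is to run the chain with $\U$ and pass to $\RM$ at the end via $\U\leq\RM$. The paper does this more economically by fixing a single vertex $a$ of the flag: the two $\U$-rank Lascar inequalities $\U(F/a)+\U(a)\leq\U(F)\leq\U(F/a)\oplus\U(a)$, together with $\U(a)>0$ and the additive indecomposability of $\omega^{K-1}=\U(F)$, force $\U(a)=\omega^{K-1}$, whence $\RM(a)\geq\omega^{K-1}$.

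For the upper bound, the step you concede you have not established — that the minimal word $u=\w(F,G)$ to a base-point is a single sub-letter of $s_\gamma$ — is in fact false in general: take $\Gamma=\Kom_3$, $s=\{1,2\}$, $t=\{2,3\}$, $D=\{G\}$, and build $F$ by a simple extension of type $s$ from $G$ followed by one of type $t$; then $\w(F,G)=t\cdot s$ is a two-letter reduced word and $G$ is trivially the base-point. Fortunately the claim is also unnecessary: any reduced word $u$ with $|u|\subset s_\gamma$ satisfies $u\preceq s_\gamma$, since if $s_\gamma$ occurs as a letter it must be the only one, and otherwise every letter is a proper subset of $s_\gamma$ so that $u$ is a splitting of $s_\gamma$. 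Hence $\Rkl(u)\leq\Rkl(s_\gamma)=\omega^{|s_\gamma|-1}\leq\omega^{K-1}$ and Lemma~\ref{L:URdiv} gives the bound on $\RM(\p_u(G))$ with no further work. The paper streamlines further by reducing to connected $\Gamma$ at the outset (the Morley rank of a disjoint union is the maximum of the ranks of the components) and then taking $G$ independent from $F$ over $\emptyset$, so that Corollary~\ref{C:cb_type_pG} forces $u=\Gamma$ outright, avoiding any analysis of base-point words over a general nice set.
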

\begin{proof}
  Decompose $\Gamma=\bigcup\limits_{i=1}^n\Gamma_i$ into its connected
  components. Similarly as in Corollary \ref{C:Unternice}, it is easy
  to see that each restriction $M_i=\A_{\Gamma_i}(M)$ is a model of
  $\mathrm{PS}_{\Gamma_i}$. The structure $M$ can be considered as the
  disjoint union of the structures $M_i$'s, so the Morley rank of $M$
  is the maximum of the Morley ranks of the structures $M_i$. We may
  therefore assume that $\Gamma$ is connected.

  Given any vertex $a$, choose a flag $F$ containing $a$ as well as a
  flag $G$ independent from $F$ over $\emptyset$. If $\p_u(G)$ is the
  type of $F$ over $G$, then the word $u$ must be equal to $\Gamma$,
  since the canonical base $G/\sr(u)$ is algebraic over the empty set.
  By Lemma \ref{L:ranks}, we have
  \[\U(F) = \RM(F) = \Rkl(\Gamma)=\omega^{K-1}.\]
  By Lascar inequalities (\emph{cf.\ }\cite[Exercise 8.6.5]{TZ12}),
    we have that $\U(F/a)+\U(a)\leq\U(F)=\omega^{K-1}$, so
    $\U(a)=\omega^{K-1}$, since $\U(a)>0$. Since
  \[\omega^{K-1}=\U(a)\leq \RM(a)\leq\RM(F)=\omega^{K-1},\] we have
equality, as desired.
\end{proof}

Two types $p$ and $q$, possibly over different sets of
parameters, are \emph{non-orthogonal}, if there is a common extension
$C$ of both sets of parameters, and two realisations $a$
and $b$ of the corresponding non-forking extensions
of $p$ and $q$ to $C$ such that $a$ forks with $b$ over $C$. As in
\cite[Theorem 7.15]{BMPZ13}, we conclude the following.

\begin{remark}\label{C:nonortho}
  Every type over a nice set $D$ is non-orthogonal to some
  $\p_s(G)|D$, where $G$ lies in $D$.
\end{remark}

Given a reduced flag path $P:F\op{u}G$, we will conclude this section
by describing the flags one can obtain from the collection of vertices
of the flags occurring in $P$, as well as describing how the flags in
$P$ can vary (or \emph{wobble}), whilst the endpoints are fixed.

\begin{lemma}\label{L:Flagfusspunkt}(\emph{cf.}\
  \cite[Lemma 6.18]{BMPZ13}) Let $A$ be the set of vertices of the
  flags occurring in a reduced flag path $P$. Then any flag in $A$
  occurs in some permutation of $P$ (\emph{cf.}\ Lemma
\ref{L:path_permutation}).
\end{lemma}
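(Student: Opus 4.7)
The plan is to induct on the length $n$ of the path $P$; the base case $n = 0$ is immediate, since $A$ then equals the vertex set of $F_0$ and determines it uniquely. For the inductive step, I would first use simple connectedness (Proposition \ref{P:Eindwort}) to establish that, for every colour $\gamma$, the successively visited $\gamma$-vertices $a_\gamma^0, \ldots, a_\gamma^{k_\gamma}$ along $P$ are pairwise distinct, where $I_\gamma := \{i : \gamma \in s_i\}$ and $k_\gamma = |I_\gamma|$: any repetition $a_\gamma^j = a_\gamma^{j'}$ with $j < j'$ would let me delete $\gamma$ from the letters $s_{i_\gamma^{j+1}}, \ldots, s_{i_\gamma^{j'}}$, producing an inequivalent reduced word between two flags and contradicting uniqueness. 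This lets me associate to any flag $H$ in $A$ unique indices $j_\gamma \in \{0, \ldots, k_\gamma\}$ with $h_\gamma = a_\gamma^{j_\gamma}$.

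Next I reformulate the problem combinatorially. Define the partial order $\to$ on $\{1, \ldots, n\}$ by $i \to i'$ iff $i < i'$ and $s_i, s_{i'}$ do not commute. Its linear extensions are in bijection with permutations of the word $u = s_1 \cdots s_n$, each of which lifts, via Lemma \ref{L:path_permutation}, to a unique permutation of $P$. The intermediate flags appearing across all such permutations are then parametrised by the down-sets $D$ of $\to$: the flag attached to $D$ has $\gamma$-vertex $a_\gamma^{|D \cap I_\gamma|}$ for every $\gamma$. Hence it suffices, given $H$, to exhibit a down-set $D_H$ of $\to$ with $|D_H \cap I_\gamma| = j_\gamma$ for each $\gamma$. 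The natural candidate is
\[ D_H := \bigcup_\gamma \{i_\gamma^1, \ldots, i_\gamma^{j_\gamma}\}, \]
where $i_\gamma^1 < \cdots < i_\gamma^{k_\gamma}$ enumerate $I_\gamma$.

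Verifying that this $D_H$ works splits into two properties: \emph{consistency}, meaning that for every $i$ and every $\gamma, \delta \in s_i$, the position of $i$ within $I_\gamma$ is at most $j_\gamma$ iff the position of $i$ within $I_\delta$ is at most $j_\delta$ (this ensures the cardinality condition $|D_H \cap I_\gamma| = j_\gamma$); and \emph{downward closure} of $D_H$ under $\to$. Both should follow from the fact that $H$ is a genuine flag in the ambient $\Gamma$-space, combined with Proposition \ref{P:Eindwort}: were either property to fail at some index $i$, I would produce two inequivalent reduced words between the same pair of flags of $M$, by comparing $\w_M(F_0, H)$ with an alternative reduced path routed through an auxiliary flag obtained by modifying $H$ at a single offending vertex and invoking the inductive hypothesis on a strictly shorter sub-path.

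The main obstacle is making the offending-vertex construction precise: the modified flag must still be a flag in the ambient $\Gamma$-space, which requires a careful analysis of how the colours of the letters involved interact under $\to$ and of how the ambient structure constrains adjacencies among the vertices of $A$. Once consistency and downward closure are in hand, any linear extension of $\to$ having $D_H$ as an initial segment of length $|D_H|$ produces, via Lemma \ref{L:path_permutation}, a permutation of $P$ in which $H$ occurs as the flag at position $|D_H|$, completing the proof.
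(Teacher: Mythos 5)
Your approach is a genuinely different route from the paper's, and the combinatorial reformulation via heaps/down-sets is appealing, but the proof has a real gap exactly where you flag it yourself. You reduce the lemma to two claims --- that the assignment $H\mapsto D_H$ is well-defined (consistency) and that $D_H$ is downward closed --- and assert that both should follow from ``$H$ being a flag plus Proposition~\ref{P:Eindwort}'' via an offending-vertex modification; but no argument is given, and the difficulty is substantial. To run the modification you must first know which vertices of $A$ of different colours are actually adjacent in $M$, and this is \emph{not} determined by the combinatorics of the word $u$ alone. Even the opening claim, that the $\gamma$-vertices visited along $P$ are pairwise distinct, is not as direct as stated: deleting $\gamma$ from the letters $s_{i_\gamma^{j+1}},\dots,s_{i_\gamma^{j'}}$ can disconnect those letters, and it is not clear that the result is a weak flag path in $M$ at all, let alone a reduced one contradicting Proposition~\ref{P:Eindwort}. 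In short, the combinatorial skeleton is correct but the crucial lemmas linking the flag structure of $A$ to the poset are precisely the content that must be supplied.

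The paper avoids this by never trying to describe all of $A$ at once. It inducts on the length of $P=F\op{s}H\op{v}G$: Lemma~\ref{L:alpha_kette} shows the vertex set $B$ of the sub-path $H\op{v}G$ is nice, Proposition~\ref{P:nice=nosplit} shows $B\cup F$ is a simple extension of $B$ of type $s,H$, and Remark~\ref{R:newflags} then gives an exact description of the flags in $B\cup F$ that are not already in $B$: each is of the form $K$ with $F\op{w}K\op{s}K_1$ for a unique $K_1\in B$ and $w=\w(H,K_1)$ commuting with $s$. Applying the inductive hypothesis to $K_1$ and appending the segment $F\op{w}K\op{s}K_1$ produces the required permutation. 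It is precisely Remark~\ref{R:newflags} that does the work your offending-vertex step would need to do; if you want to complete your argument, you will effectively have to prove a statement of that strength. So the two proofs are not merely different presentations of the same idea: the paper's nice-set machinery is the load-bearing ingredient that your sketch currently lacks.
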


\begin{proof}
  Write $P:F\op{u}G$ and note first that $G$ is the base-point of $F$
  over $G$. If $u=1$, then $F=G$ is the only flag in $A$, so there is
  nothing to prove. Otherwise, write $u=s\cdot v$ and decompose the
  path $P$ as $F\op{s}H\op{v}G$. If we denote by $B$ the set of
  vertices of the flags occurring in the reduced path $H\op{v}G$,
  Lemma \ref{L:alpha_kette} yields that $B$ is nice. By induction and
  Proposition \ref{P:nice=nosplit}, the nice set $B\cup F$ is a simple
  extension of $B$ of type $(s,H)$.

  Given any flag $K$ in $A$, we distinguish two cases: if $K$ lies in
  $B$, by induction $K$ occurs in some permutation of $H\op{v}G$,
  which induces a permutation of $P$. Otherwise, by Remark
  \ref{R:newflags}, there exist a reduced word $w$ commuting with $s$
  and a flag $K_1$ in $B$ such that $F\op{s} H\op{w} K_1$ and $F\op{w}
  K \op{s} K_1$. By Corollary \ref{C:path_with_commuting_words}, we
  may assume that the second path is a permutation of the first. By
  induction, the flag $K_1$ belongs to a permutation
  $H\op{w}K_1\op{v_2}G$ of $H\op{v}G$. So $F\op{w}
  K\op{s}K_1\op{v_2}G$ is a permutation of $P$.
\end{proof}

The following generalises Remark \ref{R:newflags} and Lemma
\ref{L:Flagfusspunkt}.

\begin{lemma}\label{L:newflag2}
  Let $G$ be a base-point of $F$ over the nice set $D$ and $P$ be a
  reduced path connecting $F$ to $G$. For every flag $K'$ in the nice
  set $D\cup P$, there are flags $K$ occurring in some permutation of
  $P$ and $G'$ in $D$, such that $w=\w(K,G)$ commutes with
  $v=\w(G,G')$ and $K \op{v} K' \op{w} G'$.
\end{lemma}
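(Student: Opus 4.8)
\emph{Approach.} The plan is to argue by induction on the length $n$ of the word $u=\w(F,G)$, closely following the proof of Lemma \ref{L:Flagfusspunkt} but carrying the nice set $D$ along. If $n=0$ then $F=G$ and $D\cup P=D$; given a flag $K'$ in $D$ one takes $K=G$ and $G'=K'$, so that $w=\w(K,G)=1$ commutes with $v=\w(G,G')$, the path $G\op{v}K'$ lives in $D$, and $K'\op{w}G'$ just records $K'=G'$. For $n\geq 1$ I would permute $P$ so that $u\approx s\cdot u'$ with $s$ a beginning of $u$, write $P$ as $F\op{s}H\op{u'}G$, and let $P'$ be the subpath $H\op{u'}G$. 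Using $\prec$-minimality of $u$ (Proposition \ref{P:fusspunkt}(b)) together with Lemma \ref{L:Flagpath} one first checks that $G$ is a base-point of $H$ over $D$, so that $D_1:=D\cup(\text{vertices of the flags of }P')$ is nice by Lemma \ref{L:alpha_kette}, and $D\cup P=D_1\cup F$. One then verifies that no splitting of $s$ connects $F$ to a flag of $D_1$: for a flag lying in $D$ this follows at once from minimality of $u$ (such a connecting word would be $\prec s$, hence cannot be $\succeq u=s\cdot u'$), and for a flag of $D_1\setminus D$ it follows by applying the inductive hypothesis to $(H,G,D,P')$ and noting that, since $u=s\cdot u'$ is reduced, the leading letter $s$ is never absorbed during the reduction of the resulting path, so the connecting word still carries a letter $\supseteq s$. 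Thus, by Proposition \ref{P:nice=nosplit}, $D\cup P=D_1\cup F$ is a simple extension of $D_1$ of type $s,H$.

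Now let $K'$ be a flag of $D\cup P=D_1\cup F$. If $K'$ is a flag of $D_1$, the inductive hypothesis applied to $(H,G,D,P')$ produces $K$ on a permutation of $P'$ and $G'\in D$ with $w=\w(K,G)$ commuting with $v=\w(G,G')$ and $K\op{v}K'\op{w}G'$; prepending the step $F\op{s}H$ to that permutation of $P'$ exhibits $K$ on a permutation of $P$, and we are done. If $K'$ is a new flag, Remark \ref{R:newflags} (applied to the simple extension $D_1\cup F$ of type $s,H$) gives a reduced word $w_0$ commuting with $s$ and a flag $K_1$ of $D_1$ such that $F,K',K_1,H$ form a commuting square with sides $s$ and $w_0$, i.e.\ $\w(F,H)=\w(K',K_1)=s$ and $\w(F,K')=\w(H,K_1)=w_0$. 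Applying the inductive hypothesis to the flag $K_1$ of $D_1$ yields $\bar K$ on a permutation of $P'$ and $G'\in D$ with $\bar K\op{\bar v}K_1\op{\bar w}G'$, where $\bar v=\w(G,G')$ commutes with $\bar w=\w(\bar K,G)$. Writing $\bar p=\w(H,\bar K)$, so that $\bar p\cdot\bar w\approx u'$ and $u\approx s\cdot\bar p\cdot\bar w$, the reduction of the path $H\op{\bar p}\bar K\op{\bar v}K_1$ computes $w_0$; applying the Symmetric Decomposition Lemma \ref{P:Dec} and using that $w_0$ commutes with $s$ one shows that $\bar v$ commutes with $s$. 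Consequently $s\cdot\bar w$ is reduced (it is a subword of the reduced word $u$) and $\bar v$ commutes with $w:=s\cdot\bar w$. Setting $v:=\bar v$, the flag $K$ determined by the reduced path $K'\op{v^{-1}}K\op{w}G$ (which exists since $\w(K',G)=v^{-1}\cdot w$) then satisfies $\w(K,K')=v=\w(G,G')$ and $\w(K,G)=w=\w(K',G')$, and a vertex-by-vertex inspection — using $K'\sim_s K_1$, $K'\sim_{\Gamma\setminus\partial s}F$ and $\bar K\op{\bar v}K_1$ with $\bar v$ commuting with $\bar w$ — shows that every vertex of $K$ already occurs in $P$. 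By Lemma \ref{L:Flagfusspunkt} the flag $K$ therefore occurs on a permutation of $P$, which closes the induction.

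\emph{Main obstacle.} The delicate point is the final case, where $K'$ is a new flag: one must fit together the two commuting squares $F,K',K_1,H$ and $\bar K,K_1,G',G$ — which overlap only in the corner $K_1$ and are linked along $P'$ — into a single commuting square $K,K',G',G$ whose corner $K$ is a flag of $P$. Concretely, the two subtle verifications are that $\bar v=\w(G,G')$ commutes with $s$ (needed for the two squares to be compatible, via Proposition \ref{P:Dec}) and the vertex computation placing $K$ among the vertices of $P$ so that Lemma \ref{L:Flagfusspunkt} applies; both are routine but somewhat intricate uses of the reduction calculus of Section \ref{S:NS_Reductions}, Corollary \ref{C:path_with_commuting_words} and Remark \ref{R:newflags}.
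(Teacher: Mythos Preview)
Your inductive strategy is exactly the paper's: peel off the first step $F\op{s}H$, apply the induction hypothesis to $P':H\op{u'}G$ over $D$, and in the ``new flag'' case pass through the intermediate flag (your $K_1$, the paper's $K'_1$) coming from Remark~\ref{R:newflags}, then apply induction once more. The paper differs from you only in the final construction of $K$, and its route is both cleaner and avoids two small gaps in your sketch.

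First, to invoke the Symmetric Decomposition Lemma you need $w_0=[\bar p\cdot\bar v]$ to be the \emph{non-splitting} reduct; this holds because $\bar K$ is a base-point of $H$ over the nice set $D\cup(\bar K\op{\bar w}G)$ (Lemma~\ref{L:alpha_kette} and Proposition~\ref{P:fusspunkt}(a)), a point you should state. Second, your assertion that ``$s\cdot\bar w$ is a subword of $u$'' presupposes that $s$ commutes with $\bar p$, which you never claim --- but it follows from the very same Proposition~\ref{P:Dec} argument you use for $\bar v$ (both $\bar p$ and $\bar v$ have all their letters contained in letters of $w_0=[\bar p\cdot\bar v]$, and $w_0$ commutes with $s$).

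Once you know $s$ commutes with $\bar p$, the paper simply permutes the path $F\op{s}H\op{\bar p}\bar K$ to $F\op{\bar p}K\op{s}\bar K$, which \emph{immediately} exhibits $K$ on a permutation of $P$ with $\w(K,G)=s\cdot\bar w$; it then checks $\w(K,K')=\bar v$ by reducing $K\op{s}\bar K\op{\bar v}K_1\op{s}K'$ (the two $s$'s cancel since $\bar v$ commutes with $s$). This replaces your ``vertex-by-vertex inspection'' and sidesteps the non-uniqueness issue in defining $K$ via a reduced path $K'\op{v^{-1}}K\op{w}G$ (intermediate flags on such a path are only determined up to wobbling, cf.\ Lemma~\ref{L:Wob}).
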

\begin{proof}
  If $P$ is trivial, set $K=G$ and $G'=K'$. Otherwise, decompose $P$
 into $F\op{s}F'\op{u'}G$ and set $P':F'\op{u'}G$.  Note that $G$ is
also a base-point of $F'$ over $D$. If $K'$
  is contained in $D\cup P'$, find, by induction on the length of
  $P$, a flag $K$ occurring in some permutation of $P'$ and $G'$ in
  $D$, as desired. Otherwise, Remark \ref{R:newflags} implies the
  existence of a flag $K'_1$ in $D\cup P'$ such that $w'=\w(F',K'_1)$
  commutes with $s$ and $F\op{w'} K'\op{s} K'_1$. By induction, we
  find a flag $K_1$ occurring in a permutation of $P'$ and a flag $G'$
  in $D$ such that $w_1=\w(G,G')$ commutes with $v_1=\w(K_1,G)$ and
  $K_1\op{w_1}K'_1\op{v_1}G'$.

  Let $P_1:K_1\op{v_1} G$ be the part of a permutation of $P'$ which
connects $K_1$ to
  $G$. Lemma \ref{L:alpha_kette} implies that the set $D_1=D\cup P_1$
  is nice. Furthermore, the flag $K_1$ is a
  base-point of $F'$ over $D_1$. Set $u_1= \w(F',K_1)$. Since
  $F'\op{u_1} K_1 \op{w_1} K'_1$, we conclude that $w'=[u_1\cdot
    w_1]$. In particular, the letter $s$ must commute with both $u_1$
  and $w_1$. Since $F\op{s} F'\op{u_1} K_1$, there exists thus a
  unique flag $K$ such that $F\op{u_1} K\op{s} K_1$. Clearly, the flag
  $K$ occurs in a permutation of $P$ with word $u_1\cdot s\cdot v_1$
  and $\w(K,G)=s\cdot v_1 =\w(K',G')$. We need only show that
$\w(K,K')$ commutes with $s\cdot v_1$. Since $K\op{u_1\inv}F
  \op{w'}K'$, the word $\w(K,K')$ is some reduction of $u_1\inv\cdot
w'$ (possibly with splitting), so $\w(K,K')$ commutes with $s$, since
both $u_1$ and $w$ do. The flag path $K\op{s}K_1\op{w_1}K'_1\op{s}K'$
must reduce to one with word $\w(K,K')$, which commutes with
$s$, so the word \[  s\cdot w_1\cdot s \approx w_1\cdot s\cdot s \]
must reduce to $w_1=\w(K,K')$, which commutes with $s\cdot v_1$, as
desired.
\end{proof}

\begin{cor}\label{C:Perm}
  Let $A$ be the nice set consisting of the vertices of the
  flags occurring in a reduced flag path $P:F\op{u}G$. Every flag $K$
in $A$ is uniquely determined by $\w(K,G)$. Thus,
  the only automorphism of $A$ fixing one of the endpoints of $P$ is
  the identity.
\end{cor}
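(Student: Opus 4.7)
The plan is first to reduce the second assertion to the first, and then to prove the first by a combinatorial analysis: an elementary commutation in a flag path modifies only one flag, so a carefully staged sequence of swaps can transform one permutation of~$P$ into another without ever disturbing the flag at a prescribed position.

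For the automorphism statement: if $\phi$ is an automorphism of $A$ fixing (say)~$G$ pointwise, then for any flag $K$ in $A$ the image $\phi(K)$ is again a flag in $A$, and since $\phi$ preserves the $\Gamma$-coloured graph structure it preserves the equivalence relations $\sim_s$ and hence $\w(\phi(K),G)=\w(K,G)$. Granting the first assertion, $\phi(K)=K$. Since $\phi$ preserves colours and every vertex of $A$ lies in some flag, $\phi$ fixes every vertex, so $\phi$ is the identity.

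For the first assertion, write $P: F=P_0,\ldots,P_n=G$ with word $u=s_1\cdots s_n$, and suppose $K_1,K_2\in A$ satisfy $\w(K_1,G)\approx\w(K_2,G)=:v$. By Lemma~\ref{L:Flagfusspunkt} each $K_i$ appears as the $j_i$-th flag of some permutation $P^{(i)}$ of~$P$, with word $t^{(i)}=t^{(i)}_1\cdots t^{(i)}_n\approx u$. Suffixes of reduced words are reduced, so Proposition~\ref{P:Eindwort} yields $t^{(i)}_{j_i+1}\cdots t^{(i)}_n\approx v$; matching lengths forces $j_1=j_2=:j$, and right-cancellation (Lemma~\ref{L:init_subw_kuerzen}) then gives $t^{(1)}_1\cdots t^{(1)}_j\approx t^{(2)}_1\cdots t^{(2)}_j$.

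To conclude, the idea is to transform $P^{(1)}$ into $P^{(2)}$ by elementary commutations staged in two blocks. The equivalence of the initial segments first produces a sequence of swaps at positions $a\leq j-1$ carrying the first $j$ letters of $t^{(1)}$ to those of $t^{(2)}$; the equivalence of the final segments then produces a sequence of swaps at positions $a\geq j+1$ carrying the tail to the tail of $t^{(2)}$. Iterated application of Lemma~\ref{L:path_permutation} identifies the resulting path with $P^{(2)}$. Since an elementary commutation at adjacent commuting positions $a,a+1$ alters only the flag at position~$a$ (the new intermediate flag is uniquely determined by its two neighbours and the two commuting letters involved, \emph{cf.}\ Corollary~\ref{C:path_with_commuting_words}), swaps confined to $a\leq j-1$ or $a\geq j+1$ leave the flag at position~$j$ fixed throughout, and therefore $K_1=P^{(1)}_j=P^{(2)}_j=K_2$. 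The main obstacle is precisely this combinatorial step: one must arrange the transformation $t^{(1)}\to t^{(2)}$ without any swap crossing the boundary between positions $j$ and $j+1$, and this is handled by the two-stage procedure above.
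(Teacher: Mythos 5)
Your proof is correct and follows essentially the same strategy as the paper's argument: both rest on Lemma~\ref{L:Flagfusspunkt} to place the flags in permutations of $P$, on cancellation (Lemma~\ref{L:init_subw_kuerzen} together with Proposition~\ref{P:Eindwort}) to match the prefix and suffix words, and on the uniqueness in Lemma~\ref{L:path_permutation} of a path permutation with a prescribed word. Your two-stage commutation scheme simply makes explicit the re-permutation step that the paper states tersely as permuting $Q'$ into $Q_1$.
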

\begin{proof}
  Given flags $K$ and $K'$ in $A$ with $\w(K',G)=\w(K,G)=w$, Lemma
\ref{L:Flagfusspunkt} shows that there are permutations
$Q:F\op{v}K\op{w}G$ and $Q':F\op{v'}K'\op{w'}G$ of $P$.  Since
$w\approx w'$, Lemma
  \ref{L:init_subw_kuerzen} implies that $v'\approx v$. We may
  therefore permute $Q'$ in order to decompose it as
  $Q_1:F\op{v}K'\op{w}G$.  The correspondence between permutations of a
flag path and permutations of the associated word in Lemma
\ref{L:path_permutation} implies that $Q=Q_1$, so $K'=K$.
\end{proof}

\begin{definition}\label{D:Wob}
The \emph{wobbling} of a reduced product $u\cdot v$ is
$\wob(u,v)=\sr(u)\cap \sr(v\inv)$, that is, the collection of those
$\gamma$ in $\Gamma$ which are both
right-absorbed by $u$ and left-absorbed by $v$.
\end{definition}
\noindent Note that $\wob(u,v)$ cannot be equal to $|u|$ nor to $|v|$,
for $u\cdot v$ is reduced.

\begin{lemma}(Wobbling Lemma \emph{cf.}\ \cite[Lemma 6.19]{BMPZ13})
\label{L:Wob}
  Given two reduced paths between the flags $F$ and $G$ with the same
  word $u=s_1\cdots s_i\cdots s_n$,
\begin{figure}[!htbp]
\centering

\begin{tikzpicture}[>=latex,->]

%\matrix (A) [matrix of math nodes, nodes = {node style ge},column
\matrix (A) [matrix of math nodes,column
sep=1cm]
{  & H_1 & \cdots &  H_{n-1} &  \\
 F &   &  &  &   G, \\
   & H'_1 & \cdots & H'_{n-1} &   \\};

 \draw (A-2-1)  edge node[pos=.7, above left] {$s_1$} (A-1-2)
		edge node[pos=.7, below left] {$s_1$} (A-3-2) ;

\draw (A-1-2)  edge  (A-1-3) ;
\draw (A-3-2)  edge (A-3-3);

\draw (A-1-3)  edge  (A-1-4) ;
\draw (A-3-3)  edge (A-3-4);

 \draw[<-] (A-2-5)  edge node[pos=.7, above right] {$s_n$} (A-1-4)
		edge node[pos=.7, below right] {$s_n$} (A-3-4) ;

\end{tikzpicture}

\end{figure}

\noindent then $H_i$ and $H'_i$ are $\wob(s_1\cdots  s_i,
s_{i+1}\cdots s_n)$-equivalent for every $i$ in $\{1,\ldots,n-1\}$.

In particular, the tuple $H_i/\wob(s_1\cdots  s_i, s_{i+1}\cdots
s_n)$ enumerating the vertices of $H_i$ with colours
in $\Gamma\setminus \wob(s_1\cdots  s_i, s_{i+1}\cdots
s_n)$ lies in $\dcl(F,G)$.

\end{lemma}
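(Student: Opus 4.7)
The ``in particular'' clause follows from the main statement by a saturation argument: in a sufficiently saturated elementary extension, every realisation of $\tp(H_i/F,G)$ arises as the $i$-th flag of some reduced path from $F$ to $G$ with word $u$, by Corollary \ref{C:fusspunkttyp}. Hence, once we know that any two such intermediate flags are $\wob(u_1,u_2)$-equivalent, the quotient $H_i/\wob(u_1,u_2)$ is fixed by every automorphism of the ambient model fixing $F\cup G$, placing it in $\dcl(F,G)$.

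For the main statement we argue by induction on $n=|u|$, setting $u_1=s_1\cdots s_i$ and $u_2=s_{i+1}\cdots s_n$. The cases $n\leq 1$ are vacuous since no intermediate positions exist. For $n\geq 2$, let $A\subseteq\Gamma$ be the set of colours at which $H_i$ and $H'_i$ disagree and let $v=\w(H_i,H'_i)$, so $|v|=A$. The goal is to show $A\subseteq\wob(u_1,u_2)=|\sr(u_1)|\cap|\sr(u_2^{-1})|$. From the concatenated (non-reduced) paths $H_i\op{u_1^{-1}}F\op{u_1}H'_i$ and $H_i\op{u_2}G\op{u_2^{-1}}H'_i$, together with Lemmata \ref{L:Flagpath} and \ref{L:RedPath}, one has $v\preceq u_1^{-1}\cdot u_1$ and $v\preceq u_2\cdot u_2^{-1}$. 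In particular, $A\subseteq|u_1|\cap|u_2|$.

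The refinements $A\subseteq|\sr(u_1)|$ and $A\subseteq|\sr(u_2^{-1})|$ are the heart of the proof; by symmetry (reverse both paths, swapping the roles of $u_1$ and $u_2^{-1}$) it suffices to establish the first. Fix $\gamma\in A$; we must show $\gamma\in|\sr(u_1)|$. Let $j\leq i$ be the last occurrence of $\gamma$ in $u_1$, which exists since $A\subseteq|u_1|$. Since $\gamma\notin s_{j+1},\ldots,s_i$, the $\gamma$-vertex of $H_i$ coincides with that of $H_j$, and analogously for the primed path, so $\gamma\in|\w(H_j,H'_j)|$. The closed path obtained by joining $H_j$ to $H'_j$ through $H_i,G,H'_i$ has word reducing (via Lemma \ref{L:RedPath}) to the trivial word by simple connectedness. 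Tracking this reduction via the Reduction Lemma \ref{L:red} and Corollary \ref{C:paarreduktion}, the only way $\gamma$ can remain in $|v|$ is for $\gamma$ to commute with every letter in $s_{j+1}\cdots s_i$, for otherwise an obstruction arising from a neighbouring colour $\delta\in s_k$ adjacent to $\gamma$ (with $j<k\leq i$) would prevent the necessary cancellations—here one invokes the inductive hypothesis, applied to the shorter sub-paths obtained by truncation or by permuting the word $u$ via Lemma \ref{L:path_permutation} so as to isolate the relevant sub-segment. Having $\gamma\in s_j$ together with the commutation with $s_{j+1}\cdots s_i$ is exactly the condition $\gamma\in|\sr(u_1)|$.

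\emph{Main obstacle.} The delicate combinatorial step is showing that any non-right-absorbable colour $\gamma\in|u_1|\setminus|\sr(u_1)|$ must be eliminated from $|v|$: the non-commuting letter $s_k$ after the last occurrence of $\gamma$ produces, in the reduction of the closed loop via simple connectedness, a constraint that forces either a contradiction with the reducedness of $v$ or, through an application of the inductive hypothesis on a permuted sub-path of length strictly less than $n$, the equality of the $\gamma$-vertices. Organising the induction cleanly—in particular, choosing the right permutation of $u$ and the right sub-path to which induction applies—is the technical crux.
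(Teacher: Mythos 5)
Your plan identifies the correct target ($|\w(H_i,H'_i)|\subseteq\sr(u_1)\cap\sr(u_2^{-1})$) and the reduction to $A\subseteq|u_1|\cap|u_2|$ is sound. However, the central combinatorial step is not carried out, and you say so yourself: the paragraph beginning ``Tracking this reduction via the Reduction Lemma'' asserts that ``the only way $\gamma$ can remain in $|v|$ is for $\gamma$ to commute with every letter in $s_{j+1}\cdots s_i$'' but gives no mechanism for this, deferring to an induction whose applicability is unclear. That step is not a detail — it \emph{is} the lemma.

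Moreover, the proposed induction on $n=|u|$ is problematic. Truncating the path (say, starting from $H_j$) changes the word $u_2$, hence changes $\wob(u_1,u_2)$, so the inductive hypothesis for a shorter path does not speak about the original wobbling set. More fundamentally, the statement is about two reduced paths from $F$ to $G$ with the \emph{same} word, while a truncated sub-path has $H_j$ and $H'_j$ as different starting points, so the hypothesis of the lemma is not met and induction cannot be invoked as described. Permuting $u$ via Lemma \ref{L:path_permutation} also does not shorten the word. The paper avoids all of this by inducting on the index $i$ rather than on $n$: letting $w_i=\w(H_i,H'_i)$, the base case $i=1$ uses $H_1\op{s_1}F\op{s_1}H'_1$ together with $H_1\op{s_2\cdots s_n}G$ and Proposition \ref{P:Eindwort} to show $w_1\prec s_1$ and is absorbed by $s_2\cdots s_n$; the inductive step passes from $w_i$ to $w_{i+1}$ via the explicit reduction of $s_{i+1}\cdot w_i\cdot s_{i+1}$, decomposing $w_i\approx w_i^1\cdot w_i^2$ with Lemma \ref{L:reordering}. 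This concrete mechanism for controlling $w_{i+1}$ in terms of $w_i$ is precisely what is missing from your proposal. Your treatment of the ``in particular'' clause via saturation is fine, but the main claim needs the $i\to i+1$ recursion.
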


\begin{proof}
Given two different flag paths as in the above picture, we prove the
statement by induction on the index $i<n$. For $i=1$, let $w_1$ be the
reduced word connecting $H_1$ to $H'_1$. Since $H_1\op{s_1}F\op{s_1}
H'_1$, it follows that $w_1\preceq s_1$ and $w_1$ is right absorbed
by $s_1$.
If $w_1=s_1$, it contradicts
Proposition \ref{P:Eindwort}, since $H_1\op{s_2\cdots s_n}
G$. Therefore, the word $w_1$ is a proper splitting of $s_1$. Furthermore,
since $w_1\cdot s_2\cdots s_n\str s_2\cdots s_n$, no
letter from $s_2\cdots s_n$ can be absorbed during the reduction, for
the word $u$ is reduced.
Corollary \ref{C:absorb=stab} implies that $w_1$ is completely
absorbed by $s_2\cdots s_n$, so $|w_1|\subset \wob(s_1,s_2\cdots s_n)$, as
desired.

Let now $H_i\op{w_i} H'_i$, resp.\ $H_{i+1}\op{w_{i+1}} H'_{i+1}$. By
induction, the word $w_i$ has support in $\wob(s_1\cdots
s_i,s_{i+1}\cdots s_n)$. Lemma \ref{L:reordering} yields that
$w_i\approx w_i^1\cdot w_i^2$, where $w_i^1$ is left-absorbed by
$s_{i+1}$ and $w_i^2$ commutes with $s_{i+1}$ and left-absorbed by
$s_{i+2}\cdots s_n$. Thus $w_i^1\prec s_{i+1}$. Since $$s_{i+1}\cdot
w_i \cdot s_{i+1} \to w_i^2 \cdot s_{i+1} \cdot s_{i+1}$$ reduces to
$w_{i+1}$, we conclude that $w_{i+1}\approx w_i^2\cdot x$, where
$x\preceq s_{i+1}$. Note that $x\prec s_{i+1}$, since
$H_{i+1}\op{s_{i+2}\cdots s_n}G$, so $x$ is a splitting of $s_{i+1}$
which must be absorbed by $s_{i+2}\cdots s_n$ and so is $w_{i+1}$. The
word $w_i^2$ is right-absorbed by $s_1 \cdots s_i$, by induction, and
commutes with $s_{i+1}$. Since $x$ is a proper splitting of $s_{i+1}$,
we have that and $s_1 \cdots s_{i+1} \cdot w_i^2 \cdot x \to s_1
\cdots s_{i+1}$, so $w_{i+1}$ is right-absorbed by $s_1 \cdots
s_{i+1}$, as desired.
\end{proof}

\begin{lemma}(Base-point Lemma \emph{cf.}\ \cite[Lemma
    7.18]{BMPZ13})\label{L:basept} Given a flag $H$ with base-point
  $G$ in the nice set $A$, let $H\op{v} G$ be a reduced flag path
  connecting $H$ to $G$. If $H/W$ lies in $\acl(A)$ for some subset
  $W\subset \Gamma$, then $|v|\subset W$.
\end{lemma}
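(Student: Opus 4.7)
The plan is to argue by contrapositive and by induction on the length $n$ of the reduced word $v = s_1\cdots s_n$. Assuming $H/W \in \acl(A)$, I take any $\gamma \in |v|$ and aim to show $\gamma \in W$; the base case $n = 0$ is vacuous. Write $H \op{s_1} F_1 \op{v'} G$ for the given reduced flag path, with $v' = s_2\cdots s_n$.

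First I would deal with the easy sub-case $\gamma \notin s_1$. A short $\preceq$-minimality argument, using that $G$ is a base-point of $H$ over $A$ with word $v = s_1 \cdot v'$ and that $v$ is reduced, shows $G$ is also a base-point of $F_1$ over $A$ with the shorter word $v'$. Since $H$ and $F_1$ differ only in colors belonging to $s_1$, the vertices of $F_1$ whose colors lie outside $W \cup s_1$ form a sub-tuple of $H/W$, hence $F_1 / (W \cup s_1) \in \acl(A)$. The inductive hypothesis applied to $F_1$, $v'$ and $W \cup s_1$ gives $|v'| \subset W \cup s_1$, and since $\gamma \in |v'|$ and $\gamma \notin s_1$, we conclude $\gamma \in W$.

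The substantive sub-case is $\gamma \in s_1$. Assume for contradiction that $\gamma \notin W$, so the $\gamma$-vertex $h_\gamma$ of $H$ is one of the coordinates of the tuple $H/W$ and in particular algebraic over $A$. I will derive a contradiction by producing infinitely many $A$-conjugates of $h_\gamma$. By Lemma \ref{L:alpha_kette}, the set $D = A \cup \{H, F_1, \ldots, F_{n-1}\}$ is nice. By Axiom (\ref{D:axiome:unendlich}) together with the saturation of the ambient model, the $\sim_\gamma$-class of $H$ contains infinitely many flags $H^{(i)}$ whose $\gamma$-vertex does not lie in $D$; Corollary \ref{C:one_pt_ext} then ensures that each $D \cup \{H^{(i)}\}$ is a nice $\{\gamma\}$-simple extension of $D$ in which $H$ is a base-point of $H^{(i)}$ with word $\{\gamma\}$.

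The heart of the matter is to check that each such $H^{(i)}$ has the same type over $A$ as $H$. Two applications of Proposition \ref{P:fusspunkt} give, for any $F' \in A$,
\[
 \w(H^{(i)}, F') = [\{\gamma\} \cdot \w(H, F')] = [\{\gamma\} \cdot [v \cdot \w(G, F')]] = [v \cdot \w(G, F')],
\]
where the final equality uses the crucial absorption identity $[\{\gamma\} \cdot v] = v$, valid precisely because $\gamma \in s_1$, the leading letter of $v$. Thus $G$ remains a base-point of $H^{(i)}$ over $A$ with word $v$, and Corollary \ref{C:fusspunkttyp} yields $\tp(H^{(i)}/A) = \p_v(G)|A = \tp(H/A)$. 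Each $H^{(i)}$ is therefore $A$-conjugate to $H$, supplying infinitely many pairwise distinct $A$-conjugates of $h_\gamma$ and contradicting $h_\gamma \in \acl(A)$. I expect the main obstacle to be precisely this type-preservation check: it hinges on the absorption identity $[\{\gamma\} \cdot v] = v$, which is what forces the reduction, via the easy sub-case, to a situation in which $\gamma$ already belongs to the leading letter of $v$.
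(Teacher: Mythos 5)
Your proof is correct, but it takes a genuinely different route from the paper's. The paper argues globally and without induction: choose a conjugate $H'\models\tp(H/\acl(A))$ with $H\ind_A H'$; since $H/W\in\acl(A)$, the flags $H$ and $H'$ are $W$-equivalent; by transitivity of non-forking and Proposition \ref{P:fusspunkt} the word $\w(H,H')$ is the non-splitting reduct $[v\cdot v\inv]$, which by Corollary \ref{C:hinundher} equals $v_1\cdot\tilde v\cdot v_1\inv$ and therefore has full support $|v|$; since $|\w(H,H')|\subset W$, this gives $|v|\subset W$ in one stroke. By contrast, you induct on the length of $v$ to push the chosen colour $\gamma$ into the leading letter, and then manufacture infinitely many $A$-conjugates of $H$ with distinct $\gamma$-vertices to contradict algebraicity. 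Both arguments hinge on the same underlying point — that every letter of the base-point word leaves a genuinely free degree of freedom — but the paper packages this as a single computation with the ``round-trip'' word $[v\cdot v\inv]$, while you unravel it letter by letter. Your approach is longer but closer to first principles: it avoids the wobble identity of Corollary \ref{C:hinundher} and the transitivity-of-forking step, trading them for Lemma \ref{L:alpha_kette}, Corollary \ref{C:one_pt_ext}, and a saturation/counting argument. The two small points worth tightening if you were to write this up are: (i) the identity $[\{\gamma\}\cdot[v\cdot\w(G,F')]]=[v\cdot\w(G,F')]$ needs the uniqueness of non-splitting reducts (Corollary \ref{C:non-split-unique}) to reassociate, not just $[\{\gamma\}\cdot v]=v$; and (ii) one should note explicitly that $|D|$ is small relative to the saturation of the ambient model, so that Axiom (\ref{D:axiome:unendlich}) indeed yields infinitely many $\gamma$-vertices outside $D$. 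Neither is a gap, only something to spell out.
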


\begin{proof}
  Choose a flag $H'\models \tp(H/\acl(A))$ with $H\ind_A H'$. Note
  that $G$ is a base-point in $A$ for $H'$ as well. Thus Proposition
  \ref{P:nicescaffold} and transitivity of non-forking imply that
  $H\ind_G H'$. Furthermore, the flags $H$ and $H'$ are
  $W$-equivalent, so the reduced word $w$ connecting $H$ to $H'$ has
  support in $W$. By Proposition \ref{P:fusspunkt}, the word $w$ is
  the non-splitting reduct of $v\cdot v\inv$ and equals $v_1\cdot
  \tilde v\cdot v_1\inv$, where $\tilde v$ is the final segment of
  $v\approx v_1\cdot \tilde v$, by Corollary \ref{C:hinundher}.
  Thus, the set $|v|$ is contained in $|w|\subset W$, as desired.
\end{proof}

\begin{cor}\label{C:nice_acl}
Nice sets are algebraically closed.
\end{cor}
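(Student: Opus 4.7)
The plan is to prove $\acl(A)\subseteq A$, since the reverse inclusion is immediate. I will take an arbitrary vertex $a\in\acl(A)$, say of colour $\gamma$, and show directly that $a$ already lies in $A$.

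Since the ambient model $M$ is a $\Gamma$-space, I may choose a flag $H$ containing $a$. Using that $\preceq$ is well-founded on reduced words, there exists a base-point $G$ of $H$ over $A$ (see Proposition \ref{P:fusspunkt} and Definition \ref{D:basept}); fix a reduced connecting path $H\op{v}G$ witnessing this.

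The main step is to apply the Base-point Lemma \ref{L:basept} with $W=\Gamma\setminus\{\gamma\}$. In the notation introduced just before Lemma \ref{L:Wob}, the tuple $H/W$ enumerates those vertices of $H$ whose colour lies in $\Gamma\setminus W=\{\gamma\}$, hence it consists of the single vertex $a$. By hypothesis $a\in\acl(A)$, so the hypothesis of Lemma \ref{L:basept} is fulfilled and we conclude that $|v|\subseteq\Gamma\setminus\{\gamma\}$; in other words, the colour $\gamma$ does not occur in the word $v$.

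Finally, in any reduced path $H\op{v}G$, consecutive flags differ only at colours appearing in $|v|$, so the $\gamma$-vertex is preserved throughout. It follows that $a$ coincides with the $\gamma$-vertex of $G$, which is a vertex of a flag contained in $A$ and therefore lies in $A$, as desired. I do not anticipate a genuine obstacle here: the whole argument amounts to recognising $a$ as the tuple $H/(\Gamma\setminus\{\gamma\})$ and feeding it into the Base-point Lemma, so this corollary is really a direct packaging of Lemma \ref{L:basept}.
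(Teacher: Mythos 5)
Your proof is correct and follows exactly the same route as the paper: choose a flag $H$ through $a$, take a base-point $G$ over $A$, apply the Base-point Lemma \ref{L:basept} with $W=\Gamma\setminus\{\gamma\}$, and conclude that the connecting word avoids $\gamma$, so $a$ is the $\gamma$-vertex of $G\subset A$.
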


\begin{proof}
 Let $b$ be algebraic over the nice set $A$. Suppose that $b$ has
colour $\gamma$ and choose some flag $H$ containing it. Pick some
base-point $G$ for $H$ over $A$, and let $P:H\op{u}G$ be a reduced flag
path connecting $H$ to $G$.  By assumption, the set
$H/(\Gamma\setminus\{\gamma\})=\{b\}$ lies in $\acl(A)$, so
$|u|\subset \Gamma\setminus\{\gamma\}$, by Lemma \ref{L:basept}, that
is, the element $b$ equals the $\gamma$-vertex of $G$, which lies in
$A$.
\end{proof}

%%% Local Variables:
%%% mode: latex
%%% TeX-master: "alt"
%%% ispell-local-dictionary: "british"
%%% End:
%Ende des Files \input{alt6} %Forking
%Anfang des Files \input{alt7} %Equationality
\section{Equationality}\label{S:Equationality}

In this section, we will show that the theory  $\psg$ is equational.

\begin{definition}\label{D:Eq}
A parameter-free formula $\varphi(x,y)$, where the tuple $x$ has
length $n$, is an $n$-\emph{equation} if the family of finite
intersections of instances $\varphi(x,a)$ (where
$a$ belongs to a sufficiently saturated model $N$) has the descending
chain condition (DCC).

A complete theory $T$ is $n$-\emph{equational} if every definable set
in $N^n$ is a Boolean combination of instances of $n$-equations. A
theory is \emph{equational} if it is $n$-equational for every $n$ in
$\N$.
\end{definition}

Stability, a wider class class containing $\omega$-stable theories
\cite[Section 8.2]{TZ12}, is preserved under naming parameters and
bi-interpretability. The same holds for equationality \cite{Ju00}. However,
it is unknown whether equationality follows from $1$-equationality, which
itself implies stability for formulae $\varphi(x,y)$, where $x$ is a
single variable, and thus stability \cite{PiSr84}. The rest of this section
is devoted to showing that the theory $\psg$ is equational. As in the
previous section, let $M$ denote a sufficiently saturated model of $\psg$,
inside of which we
work.

\begin{definition}\label{D:P_u}
  Given a word $u=s_1\cdots s_n$ and a flag $G$ in $M$, let
$\pp_u(X,G)$ be the formula stating the existence of a
  sequence  $X=F_0, \ldots, F_n=G$ of flags with
  $F_{i-1}\sim_{s_i} F_i$ for $1\leq i\leq n$.
\end{definition}

It follows from Lemmata \ref{L:modlemma}, \ref{L:path_permutation} and
\ref{L:Flagpath} that $M\models \pp_u(F,G)$ \iff there exists some
reduction $u'\preceq u$ with $F\op{u'} G$. Thus, if $w\preceq
u$, then the sentence $\forall X \forall Y \left( \pp_w(X,Y)
\rightarrow \pp_u(X,Y)\right)$ holds in all $\Gamma$-spaces. When the word
$w$ is reduced, the converse holds on  models of $\psg$,
as shown below.

\begin{remark}\label{R:v_preceq_u}
  Let $w$ be a reduced word and $w_1,\ldots,w_n$ be arbitrary words.
Then \[\psg
  \models \forall X \forall Y\, \Bigl( \pp_w(X,Y) \to
  \bigvee_{i=1}^n\pp_{w_i}(X,Y)\Bigr)\] \iff $w\preceq w_i$ for some
  $i$.
\end{remark}

\begin{proof}
  We need only prove left-to-right.  By Corollary
\ref{C:fusspunkttyp}, there are flags $F$ and $G$
  in our saturated model $M$ such that $F\op{w} G$. Thus
$\pp_{w_i}(F,G)$ for some $i$, so $F\op{w'} G$ for some
reduced $w'\preceq w_i$. Proposition \ref{P:Eindwort} implies that
$w\approx w'$, so $w\preceq w_i$.
\end{proof}

\begin{lemma}\label{L:Komp}
  Given two arbitrary  words $u$ and $v$, there is a finite
collection of reduced words $w_1,\ldots,w_n$ such that, for any
reduced word $w$,
  \[ w\preceq u,v \Longleftrightarrow \bigvee_{i=1}^n w\preceq w_i.\]
  \noindent In particular, the words $w_i\preceq u,v$ for all $i$.
\end{lemma}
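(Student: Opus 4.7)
The plan is to combine simple connectedness with a compactness argument applied to the formulas $\pp_u$ from Definition \ref{D:P_u}. Let $W$ denote a set of representatives, modulo $\approx$, of all reduced words $w$ with $w \preceq u$ and $w \preceq v$. I first claim that $\psg$ proves
\[ \forall X \forall Y \Bigl(\pp_u(X,Y) \wedge \pp_v(X,Y) \leftrightarrow \bigvee_{w \in W} \pp_w(X,Y)\Bigr), \]
understood as an infinitary equivalence. The right-to-left direction holds in every $\Gamma$-space, since each $w \in W$ satisfies $w \preceq u$ and $w \preceq v$. For the left-to-right direction, I would work in an arbitrary $M' \models \psg$: if $\pp_u(F,G)$ and $\pp_v(F,G)$ hold, Lemma \ref{L:Flagpath} yields reduced words $u' \preceq u$ and $v' \preceq v$ together with reduced flag paths $F \op{u'} G$ and $F \op{v'} G$ in $M'$. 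Simple connectedness and Proposition \ref{P:Eindwort} then force $u' \approx v'$, so this common reduced word represents an element of $W$.

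Next, I would apply compactness to the theory
\[ \psg \cup \{\pp_u(F,G),\, \pp_v(F,G)\} \cup \{\neg\pp_w(F,G) : w \in W\}, \]
with $F, G$ fresh constants. By the previous step this theory is inconsistent, so finitely many reduced words $w_1, \ldots, w_n \in W$ witness the inconsistency. These words are by construction $\preceq u$ and $\preceq v$, and satisfy
\[ \psg \models \forall X \forall Y \Bigl(\pp_u(X,Y) \wedge \pp_v(X,Y) \to \bigvee_{i=1}^n \pp_{w_i}(X,Y)\Bigr). \]

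Finally, I translate this back into a statement about $\preceq$. The right-to-left direction of the lemma is immediate from transitivity of $\preceq$ together with $w_i \preceq u, v$. For the other direction, given a reduced word $w$ with $w \preceq u$ and $w \preceq v$, Corollary \ref{C:fusspunkttyp} supplies flags $F$ and $G$ in $M$ with $F \op{w} G$. Then both $\pp_u(F,G)$ and $\pp_v(F,G)$ hold, so some $\pp_{w_i}(F,G)$ does, yielding a reduced path $F \op{w'} G$ with $w' \preceq w_i$. Uniqueness of the reduced word connecting two flags in a \SC space (Proposition \ref{P:Eindwort}) gives $w \approx w'$, whence $w \preceq w_i$. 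The only real subtlety I anticipate is setting up the compactness step cleanly around the infinitary disjunction; the rest is a direct application of simple connectedness.
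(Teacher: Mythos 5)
Your proof is correct and follows essentially the same route as the paper: show in a saturated model that $\pp_u(F,G)\wedge\pp_v(F,G)$ forces $\pp_w(F,G)$ for some reduced $w\preceq u,v$ (via simple connectedness and Proposition \ref{P:Eindwort}), extract a finite disjunction by compactness, and translate back to $\preceq$. The only cosmetic difference is that you inline the content of Remark \ref{R:v_preceq_u} (via Corollary \ref{C:fusspunkttyp} and Proposition \ref{P:Eindwort}) instead of citing it directly for the final step.
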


\begin{proof}
  In $M$, the formula $\pp_u(F,G)\land\pp_v(F,G)$
implies that $\pp_w(F,G)$ for some reduced word $w\preceq u,v$. By
compactness, there is a finite set $w_1,\ldots,w_n$ of reduced
words satisfying that $w_i\preceq u,v$ for $i=1,\ldots, n$ and
 \[ \psg
  \models \forall X \forall Y\, \Bigl(\pp_u(X,Y)\land\pp_v(X,Y)
\to \bigvee_{i=1}^n\pp_{w_i}(X,Y)\Bigr).\] Thus, if $w\preceq u,v$,
the
  formula $\pp_w(X,Y)$ implies $\bigvee_{i=1}^n\pp_{w_i}(X,Y)$.
  Whence  $w\preceq w_i$ for some
  $i$, by Remark \ref{R:v_preceq_u}.
\end{proof}

\begin{lemma}\label{L:divisors}
  Given flags $G_1$ and $G_2$ in $M$, and reduced words $u_1$ and
  $u_2$ such that neither $\pp_{u_1} (X,G_1)$ nor
  $\pp_{u_2}(X,G_2)$ imply the other, if $P$ denotes a reduced path
  from $G_1$ to $G_2$, then the conjunction
  $\pp_{u_1}(X,G_1)\land\pp_{u_2}(X,G_2)$ is equivalent to
  \[\bigvee_{i=1}^n\pp_{w_i} (X,H_i)\]
  for some flags
  $H_1,\ldots,H_n$ occurring in some permutation of $P$ and reduced
  words $w_1,\ldots, w_n\prec u_1,u_2$.
\end{lemma}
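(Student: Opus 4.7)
The plan is to produce, for each flag $F$ realising the conjunction $\pp_{u_1}(F,G_1)\wedge\pp_{u_2}(F,G_2)$ in the sufficiently saturated $M$, a pair $(H_F,w_F)$ with $H_F$ a flag occurring in some permutation of $P$ and $w_F$ a reduced word satisfying $w_F\prec u_1,u_2$ as well as $\pp_{w_F}(F,H_F)$, and then reduce the a priori infinite disjunction $\bigvee_{(H,w)}\pp_w(X,H)$ to a finite one by compactness. Let $A$ denote the nice set of vertices of the flags of $P$ (nice by Lemma \ref{L:alpha_kette}), and choose a base-point $H_F$ of $F$ over $A$. By Lemma \ref{L:Flagfusspunkt}, $H_F$ appears in some permutation of $P$, yielding a decomposition $\w(G_1,G_2)\approx a\cdot b$ with $G_1\op{a}H_F\op{b}G_2$. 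Setting $w_F=\w(F,H_F)$, Proposition \ref{P:fusspunkt} and Corollary \ref{C:kleinergleich} immediately give $w_F\preceq\w(F,G_i)\preceq u_i$ for $i=1,2$.

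The heart of the argument is refining these to strict inequalities $w_F\prec u_i$. Suppose for contradiction that $w_F\approx u_1$. Then $\w(F,G_1)=[u_1\cdot a^{-1}]\approx u_1$, so $a^{-1}$ is right-absorbed by $u_1$, i.e.\ $a^{-1}\preceq\sr(u_1)$. Corollary \ref{C:sr} forces $\sr(u_1)$ to be commutative, hence so is $a^{-1}$; thus $a\approx a^{-1}$ is also right-absorbed, giving $[u_1\cdot a]=u_1$. For an arbitrary $X$ with $\pp_{u_1}(X,G_1)$, choose $v'\preceq u_1$ with $X\op{v'}G_1$; concatenating yields the weak flag path $X\sop{v'\cdot a\cdot b}G_2$, whose reduction produces a reduced flag path $X\op{v''}G_2$ with $v''\preceq v'\cdot a\cdot b\preceq u_1\cdot a\cdot b$. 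Lemma \ref{L:non-split_vs_preq} then gives $v''\preceq[u_1\cdot a\cdot b]=[u_1\cdot b]=\w(F,G_2)\preceq u_2$, so $\pp_{u_2}(X,G_2)$ holds. This contradicts the hypothesis that $\pp_{u_1}(X,G_1)$ does not imply $\pp_{u_2}(X,G_2)$; the case $w_F\approx u_2$ is symmetric.

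For the converse direction, the same ingredients give $\pp_{w_F}(X,H_F)\Rightarrow\pp_{u_1}(X,G_1)\wedge\pp_{u_2}(X,G_2)$: from $X\op{v'}H_F$ with $v'\preceq w_F$, any reduction of the concatenated weak path $X\sop{v'\cdot a^{-1}}G_1$ yields a reduced word $v''\preceq w_F\cdot a^{-1}$, whence $v''\preceq[w_F\cdot a^{-1}]=\w(F,G_1)\preceq u_1$ by Lemma \ref{L:non-split_vs_preq}, and symmetrically for $G_2$. Compactness in $M$ then collapses the disjunction to a finite one $\bigvee_{i=1}^n\pp_{w_i}(X,H_i)$ of the required form. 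I expect the main technical obstacle to be the commutativity step in the case analysis: the contradiction when $w_F\approx u_i$ relies essentially on Corollary \ref{C:sr} to convert right-absorption of $a^{-1}$ by $u_i$ into right-absorption of $a$, which is what forces the unwanted implication between the two $\pp$-formulas.
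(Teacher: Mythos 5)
Your overall strategy is sound and close in spirit to the paper's, though more direct: the paper first shows that $F$ realises the conjunction iff some flag $H$ in a permutation of $P$ satisfies $\pp_{u_1/v_1}(F,H)\wedge\pp_{u_2/v_2}(F,H)$ (via the Division Lemma \ref{L:div}) and then invokes Lemma \ref{L:Komp} for each such $H$, whereas you attach to each realising flag $F$ the single pair $(H_F,w_F)$ and invoke compactness once at the end. Both routes hinge on the same two facts: that the base-point $H_F$ occurs in a permutation of $P$ (Lemma \ref{L:Flagfusspunkt}), and that $w_F\approx u_1$ forces $a$ to be right-absorbed by $u_1$ so that $\pp_{u_1}(X,G_1)$ implies $\pp_{u_2}(X,G_2)$. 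Your version makes this last implication more explicit, which is a genuine improvement in exposition over the paper's terse ``Hence $\pp_{u_1}(X,G_1)=\pp_{u_1}(X,H)$''.

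There is however a specific flaw in your justification of that last fact. You argue: $a\inv\preceq\sr(u_1)$ and $\sr(u_1)$ is commutative (Corollary \ref{C:sr}), \emph{hence} $a\inv$ is commutative, hence $a\approx a\inv$. The inference ``$x\preceq c$ with $c$ commutative $\Rightarrow$ $x$ commutative'' is false: replacing a letter of a commuting word by a splitting can destroy commutativity. For instance, if $\Gamma$ is the path $1-2-3-4$, the word $\{1,2\}\cdot\{2,3\}$ is a reduced splitting of the single letter $\{1,2,3\}$ (so $\preceq$ it), but its two letters do not commute. The conclusion you need, namely $a\preceq\sr(u_1)$, is nonetheless true, but for a different reason: by Definition \ref{D:abs} a word is right-absorbed by $u_1$ exactly when each of its letters is, and this is a condition on the underlying set of letters, which is the same for $a$ and $a\inv$; so $a\inv$ right-absorbed by $u_1$ gives $a$ right-absorbed by $u_1$ directly (then $a\preceq\sr(u_1)$ by Corollary \ref{C:sr}, and $[u_1\cdot a]=u_1$). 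With that repair the rest of your argument goes through.
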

\noindent
If $\pp_{u_1} (X,G_1)$ and $ \pp_{u_2}(X,G_2)$ are disjoint, set
$n=0$. If $G_1=G_2$, this is the content of Lemma \ref{L:Komp}.

\begin{proof}
  Choose any realisation $F\models
\pp_{u_1}(X,G_1)\land\pp_{u_2}(X,G_2)$ and
a basepoint $H$ of $F$ over the nice set determined by the reduced
path $P$. The flag $H$ occurs in some permutation of $P$, by
Lemma \ref{L:Flagfusspunkt}.

Set $x=\w(F,H)$ and $v_i=\w(H,G_i)$ for $i=1,2$. Observe that
$\w(F,G_i)\preceq u_i$ for $i=1,2$. Proposition \ref{P:fusspunkt}
implies that $[x\cdot v_i]=\w(F,G_i)\preceq u_i$, so $x\preceq
u_i/v_i$  for $i=1,2$, by Lemma \ref{L:div}.  We obtain the following
diagram:

  \begin{figure}[h]
    \centering
    \begin{tikzpicture}[>=latex,text height=1ex,text depth=1ex]
      \fill  (0,0)  node [below left]  {$G_1$}  circle (2pt);
      \fill   (4,0)  node [below right]  {$G_2$} circle (2pt);
      \fill   (2,2)  node [above] {$F$} circle (2pt);
      \fill   (2,0)  node [below=3pt] {$H$} circle (2pt);

      \draw[->]  (2,2) -- node [left]  {$\preceq u_1$} (0,0) ;
      \draw[->]  (2,2)-- node [right] {$\preceq u_2$} (4,0) ;
      \draw[->]  (2,2)-- node [pos=.4, left] {$x$} (2,0) ;
      \draw[->]  (2,0)-- node [below] {$v_2$} (4,0) ;
      \draw[->]  (2,0)-- node [below] {$v_1$} (0,0) ;
    \end{tikzpicture}
  \end{figure}

  \noindent In particular, a flag $F$ realises
  $\pp_{u_1}(X,G_1)\land\pp_{u_2}(X,G_2)$ \iff
  there is some flag $H$ occurring in some permutation of $P$ with
  \[ \pp_{u_1/v_1}(F,H)\wedge \pp_{u_2/v_2}(F,H).\]
  Lemma \ref{L:Komp} applied to $u_1/v_1$ and $u_2/v_2$ yields reduced
  words $w_1,\ldots, w_n$ describing the above intersection. We
  need only show that $w_j\prec u_1,u_2$ for $j=1,\ldots,n$.
Clearly $w_j\preceq u_i/v_i\preceq u_i$. Suppose however that
   $w_j\approx u_1$ for some $j=1,\ldots,n$, so $u_1\preceq
u_i/v_i$ and thus $u_1\preceq [u_1\cdot v_1]\preceq u_1$. Hence
$\pp_{u_1}(X,G_1)=\pp_{u_1}(X,H)$. Also $u_1\preceq [u_1\cdot
v_2]\preceq u_2$, so $\pp_{u_1}(X,G_1)=\pp_{u_1}(X,H)\subset
  \pp_{u_2}(X,H) \subset \pp_{u_2}(X,G_2)$, contradicting our
  hypothesis.
\end{proof}

Since the relation $\prec$ is well-founded, we conclude the following.

\begin{cor}\label{C:P_uEq}
  The formulae $\pp_u(X,Y)$ are equations.
\end{cor}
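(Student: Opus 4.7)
The plan is to proceed by well-founded induction on $u$ with respect to $\prec$. The base case $u = 1$ is immediate: $\pp_1(X,Y)$ is the equality of flags $X$ and $Y$, so any finite intersection of instances is either a singleton or empty, and the descending chain condition holds trivially.

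For the inductive step, I would assume that $\pp_w$ is an equation for every reduced word $w \prec u$ and argue the same for $\pp_u$. Given a finite intersection $\Phi = \bigcap_{i=1}^k \pp_u(X, G_i)$, I plan to reduce it to the inductive hypothesis using Lemma \ref{L:divisors}. Whenever two of the conjuncts $\pp_u(X, G_i)$ and $\pp_u(X, G_j)$ are incomparable under inclusion, that lemma rewrites their intersection as a finite disjunction $\bigvee_\ell \pp_{w_\ell}(X, H_\ell)$ with each $w_\ell \prec u$. Distributing this disjunction over the remaining conjuncts and iteratively applying Lemma \ref{L:divisors} to each new product $\pp_{w_\ell}(X, H_\ell) \wedge \pp_u(X, G_m)$ -- which either collapses by containment or decreases the accompanying word further -- one expresses $\Phi$, in finitely many steps, either as a single instance of $\pp_u$ (when the original instances form a chain under inclusion) or as a finite disjunction of finite conjunctions of $\pp_{w}$-instances with all $w \prec u$.

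To extract the descending chain condition, I would consider a hypothetical infinite strict descending chain $B_0 \supsetneq B_1 \supsetneq \cdots$ of finite intersections of $\pp_u$-instances. Each strict descent either decreases the number of irreducible $\pp_u$-factors remaining (a bounded quantity) or introduces at least one new $\pp_w$-factor with $w \prec u$ in their place. Well-foundedness of $\prec$ together with the inductive hypothesis -- which yields the descending chain condition for finite conjunctions of $\pp_w$-instances at all lower levels, since equationality is preserved under finite conjunction -- forces the chain to stabilise.

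The main obstacle is that finite disjunctions do not preserve equationality in general, so naive Boolean closure over the induction hypothesis is insufficient. The argument must instead track a well-founded rank of the normal form obtained above (morally, the multiset of reduced words appearing, ordered by the multiset extension of $\prec$) and verify that this rank strictly decreases along any hypothetical descending chain. Well-foundedness of $\prec$, which is the unifying thread of Section \ref{S:NS_Reductions}, is precisely what makes this complexity measure work.
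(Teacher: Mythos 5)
Your overall plan — a $\prec$-well-founded induction together with Lemma~\ref{L:divisors} as the engine — is exactly what the paper's one-line justification ("Since the relation $\prec$ is well-founded\dots") is gesturing at, and you correctly flag the genuine subtlety that a disjunction of equations is not automatically an equation. However, as written the argument has two concrete gaps.

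First, the phrase "decreases the number of irreducible $\pp_u$-factors remaining (a bounded quantity)" conflates the rewriting of one fixed finite intersection with the passage down a descending chain. Inside a single $\bigcap_{i\le k}\pp_u(X,G_i)$ the number of $\pp_u$-factors is indeed bounded by $k$, but along a strictly descending chain $B_0\supsetneq B_1\supsetneq\cdots$ one keeps introducing fresh conjuncts $\pp_u(X,G_{k+1}),\pp_u(X,G_{k+2}),\dots$, so no global bound is available, and the termination measure must account for this.

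Second, the multiset-of-words rank you propose does not obviously decrease in the "collapses by containment" branch of Lemma~\ref{L:divisors}: if $\pp_u(X,G_m)\subset\pp_{w_\ell}(X,H_\ell)$ the intersection is $\pp_u(X,G_m)$, whose word is $u$ and not $\prec u$, so a disjunct of low rank could be replaced by one of rank $u$ again. To rule this out you need the observation — which the paper has the tools for but does not state here — that $\pp_{u_1}(X,G)\subset\pp_{u_2}(X,H)$ forces $u_1\preceq u_2$. (Take $F$ with $\w(F,G)=u_1$ and $F\ind_G H$, so by Proposition~\ref{P:fusspunkt} one has $\w(F,H)=[u_1\cdot\w(G,H)]\preceq u_2$, and then $u_1\preceq[u_1\cdot\w(G,H)]$ by Corollary~\ref{C:kleinergleich}.) A special case worth recording is that two $\pp_u$-instances with the same $u$ are either equal or incomparable: from $\pp_u(X,G)\subset\pp_u(X,G')$ one gets $[u\cdot\w(G,G')]=u$, hence $\w(G,G')\preceq\sr(u)$ by Corollaries~\ref{C:absorb=stab} and~\ref{C:sr}, and then the reverse inclusion follows. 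With these containment facts in hand the multiset ordering does the job, but without them the rank argument as stated is not closed.
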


By bi-interpretatibility, in order to conclude that the theory $\psg$
is equational, we need only show that every formula whose free
variables enumerate flags is a Boolean combination of formulae
$\pp_u(X,G)$. For that, we will first introduce the notion of nice
hulls.

A colour-preserving graph homomorphism $f:A\to B$ between two
$\Gamma$-spaces induces a homomorphism $\chi(f): \chi(A)\to \chi(B)$
between
the chamber systems of flags of $A$ and $B$. It is easy to see that
this defines an isomorphism between the category of $\Gamma$-spaces
and the category of dual quasi-buildings. (\emph{cf.} Theorem
\ref{T:biinterpretierbar})

\begin{definition}\label{D:contring}
  Suppose that both $A$ and $B$
  are \SC. Given $X\subset \chi(A)$ and $Y\subset \chi(B)$,
  the map $\phi: X\to Y$ is \emph{contracting} \iff for any
  two flags $F$ and $G$ in $X$,
  \[ \w_B(\phi(F),\phi(G))\preceq \w_A(F,G).\]
\noindent We say that $\phi$ is an isometry if \[
\w_B(\phi(F),\phi(G)) =
\w_A(F,G).\]
\end{definition}

The following is easy to see:

\begin{lemma}\label{L:Corresp}
  If $A$ and $B$ are \SC, a map $\chi(A)\to\chi(B)$ is contracting
  \iff it is an homomorphism of chamber systems.
\end{lemma}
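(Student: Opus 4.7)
The plan is to prove both implications by translating between the chamber-system relations $\sim^\gamma$ and the reduced-word length $\w$ on flags, exploiting that the support of $\w_A(F,G)$ consists precisely of colours where $F$ and $G$ differ (after splittings).

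For the direction contracting $\Rightarrow$ homomorphism, I would take $\phi$ contracting and $F\sim^\gamma G$. Since $F$ and $G$ coincide at the $\gamma$-vertex, Corollary \ref{C:permute_path} supplies a commuting weak flag path $F\sop{u}G$ with support $|u|\subset\Gamma\setminus\{\gamma\}$. Lemma \ref{L:Flagpath} then yields $\w_A(F,G)\preceq u$, whence $|\w_A(F,G)|\subset\Gamma\setminus\{\gamma\}$. By contraction, $|\w_B(\phi(F),\phi(G))|\subset|\w_A(F,G)|\subset\Gamma\setminus\{\gamma\}$, so $\phi(F)$ and $\phi(G)$ agree at the $\gamma$-vertex, meaning $\phi(F)\sim^\gamma\phi(G)$.

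For the reverse implication, I would start with a reduced flag path $F=F_0\op{s_1}\cdots\op{s_n}F_n=G$ realising $u=\w_A(F,G)$. At each step, $F_{i-1}\sim^\gamma F_i$ for every $\gamma\notin s_i$; by hypothesis so do their $\phi$-images, meaning $\phi(F_{i-1})$ and $\phi(F_i)$ agree outside $s_i$. The set $C_i\subset s_i$ of differing colours may be empty or disconnected, so decomposing $C_i$ into its connected components and invoking Lemma \ref{L:modlemma} produces a weak flag path $\phi(F_{i-1})\sop{v_i}\phi(F_i)$ where $v_i$ is a splitting of $s_i$ in the sense of Definition \ref{D:splitting}: trivial when $C_i=\emptyset$, a proper splitting when $\emptyset\neq C_i\subsetneq s_i$, or $s_i$ itself when $C_i=s_i$. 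Concatenating gives $\phi(F)\sop{v_1\cdots v_n}\phi(G)$ with $v_1\cdots v_n\preceq s_1\cdots s_n=u$, read off directly from the definition of $\prec$. Lemmata \ref{L:Flagpath} and \ref{L:RedPath} then produce a reduced path $\phi(F)\op{v''}\phi(G)$ with $v''\preceq u$, yielding $\w_B(\phi(F),\phi(G))\preceq\w_A(F,G)$.

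The argument is essentially formal once the dictionary between $\sim^\gamma$ and supports of reduced words is in place; the sole point requiring care is permitting $v_i$ to be the trivial splitting when $\phi$ collapses the $i$-th step, so that the concatenation remains $\preceq$-dominated by $u$. No genuine obstacle arises, and the simple connectedness of $A$ and $B$ enters only through the well-definedness (up to equivalence) of $\w_A$ and $\w_B$ guaranteed by Proposition \ref{P:Eindwort}.
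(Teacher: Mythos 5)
Your argument is correct, and since the paper dismisses this lemma with "the following is easy to see" and provides no proof, there is no written argument to compare against. The observation you isolate at the outset --- that in a \SC $\Gamma$-space the support of $\w(F,G)$ is precisely the set of colours where $F$ and $G$ differ, so that $F\sim^\gamma G$ is equivalent to $\gamma\notin|\w(F,G)|$ --- is exactly the bridge needed, and your two implications follow from it in the standard way (supports can only shrink under $\preceq$, and conversely applying $\phi$ stepwise to a reduced path replaces each letter $s_i$ by a word $v_i\preceq s_i$, giving a path whose word is $\preceq u$ and hence, after reducing and invoking Proposition~\ref{P:Eindwort} in $B$, dominates $\w_B$). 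One cosmetic remark: in the first direction you invoke only Lemma~\ref{L:Flagpath} to pass from $F\sop{u}G$ to $\w_A(F,G)\preceq u$; strictly speaking one also needs Lemma~\ref{L:RedPath} and Proposition~\ref{P:Eindwort} to land on the reduced word $\w_A(F,G)$, as you correctly do in the reverse direction --- but this is the same lemma chain and you clearly have it in hand.
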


\begin{definition}\label{D:spec}
  If $A\subset B$ are $\Gamma$-spaces, a \emph{specialisation} from
  $B$ to $A$ is a homomorphism $f:B\to A$ such that
  $\rest{f}{A}=\mathrm{Id}_A$.
\end{definition}

\begin{remark}\label{R:Spec_collapse}
 If $B=A\cup F$ is a simple extension of $A$ of type $(s,G)$, then
the map $f$ fixing all elements of $A$ which sends the
$\gamma$-vertex of $F$ to the $\gamma$-vertex of $G$, for $\gamma$
in $s$, is a specialisation.
\end{remark}

\begin{lemma}\label{L:spec_nice}
  Given $\Gamma$-spaces $A\subset B$, with $B$ \SC, the subspace
$A$ is nice in $B$ \iff $B$ can be specialised to $A$.
\end{lemma}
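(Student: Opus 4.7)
The plan is to prove the two directions separately: the ``if'' implication via a short graph-homomorphism argument and the ``only if'' via a transfinite construction from iterated simple extensions. For the ``if'' direction, suppose $f \colon B \to A$ is a specialisation. The first clause of niceness in Definition \ref{D:nice} is immediate since $A$ is a $\Gamma$-space. For the second, I argue contrapositively: if flags $F, G$ in $A$ are joined in $B$ by a weak flag path $F = F_0, \ldots, F_m = G$ with word $x = t_1 \cdots t_m$ a splitting of some letter $s$, then applying $f$ yields flags $f(F_j)$ in $A$ (still flags, since $f$ preserves colours and distinct colours give distinct images) with $f(F_0) = F$, $f(F_m) = G$, and consecutive flags differing only in a subset of $t_j$. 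Lemma \ref{L:modlemma} lets me refine this into a genuine weak flag path in $A$ whose word is $\preceq x \prec s$, and Lemma \ref{L:Flagpath} extracts a flag path in $A$ with word $\prec s$, contradicting $F \op{s} G$ in $A$.

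For the ``only if'' direction, assume $A$ is nice in $B$. I construct a specialisation $f \colon B \to A$ by transfinite recursion on a well-ordering $\{b_\alpha : \alpha < \kappa\}$ of $B \setminus A$, building a chain $A = A_0 \subset A_1 \subset \cdots$ of subsets nice in $B$ with $b_\alpha \in A_{\alpha+1}$ and coherent specialisations $f_\alpha \colon A_\alpha \to A$. At a successor stage $\alpha+1$, Proposition \ref{P:nice-bandf} applied inside the simply connected space $B$ produces a nice subset $A_{\alpha+1} \supseteq A_\alpha \cup \{b_\alpha\}$ reached from $A_\alpha$ by finitely many simple extensions; iterating Remark \ref{R:Spec_collapse} gives a specialisation $g \colon A_{\alpha+1} \to A_\alpha$ fixing $A_\alpha$ pointwise, and I set $f_{\alpha+1} = f_\alpha \circ g$, which extends $f_\alpha$. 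At a limit stage $\lambda$ I take $A_\lambda = \bigcup_{\alpha < \lambda} A_\alpha$ and $f_\lambda = \bigcup_{\alpha < \lambda} f_\alpha$; since any two flags of $A_\lambda$ and any finite flag path between them already live in some $A_\alpha$, the niceness of $A_\alpha$ in $B$ transfers to niceness of $A_\lambda$ in $B$, and coherence ensures $f_\lambda$ is well defined. The union $f = f_\kappa \colon B \to A$ is the desired specialisation.

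The main obstacle will be maintaining the niceness invariant of $A_\alpha$ in $B$ along the recursion: at successors this is immediate from Proposition \ref{P:nice-bandf}, while at limits it relies on the finiteness of reduced flag paths, which confines any hypothetical witness of a failure of niceness to some bounded level of the chain. The ``if'' direction is brief but hinges on the point that a colour-preserving graph morphism cannot identify two vertices of the same flag, so that the $f$-image of a weak flag path can be reassembled into one through Lemma \ref{L:modlemma}.
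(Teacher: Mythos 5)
Your proof is correct and takes essentially the same route as the paper. For the forward direction you apply the specialisation to a splitting path in $B$ and use Lemma~\ref{L:modlemma} to reassemble the images into a weak flag path in $A$ with $\prec$-smaller word; the paper does the same thing, only phrased in terms of pushing forward a reduced path. For the converse your transfinite recursion is just an unfolded version of the paper's Zorn's-lemma argument (take a maximal specialisation $f\colon C\to A$ with $C$ nice in $B$ and extend via Proposition~\ref{P:nice-bandf} and Remark~\ref{R:Spec_collapse}); your explicit limit-stage verification — that a union of an increasing chain of nice sets is nice because any witnessing flags (and any putative splitting path between them) already lie in a fixed $A_\alpha$ — is the exact content that Zorn's lemma quietly requires, so spelling it out is a small but genuine improvement in rigour over the paper's terse presentation.
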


\begin{proof}
  Suppose that $B$ can be specialised to $A$, and let $P:F\op{u}G$
be a reduced path in $B$ connecting two flags $F$ and $G$ in $A$. The
specialisation maps $P$ to a connecting
  path $P'$ in $A$ between $F$ and $G$ with word $u'\preceq u$, so $A$
is nice in $B$.

  If $A$ is nice in $B$, observe that $A$ specialises to itself.
Choose then a maximal specialisation $f:C\to A$, with $C\subset B$
nice in $B$. If $C\neq B$, Proposition \ref{P:nice-bandf} yields a
proper simple extension $C'$ of $C$, which is again nice in $B$. By
the remark \ref{R:Spec_collapse}, there is a specialisation $C'\to
C$. Composing it  with $f$ contradicts the maximality of $f$.
\end{proof}

Given two nice subsets $N_1$ and $N_2$ of our fixed saturated model
$M$, an \emph{isomorphism} means a bijection $f:N_1\to N_2$ such that
both $f$ and $f\inv$ are homomorphisms of $\Gamma$-spaces. If
$N_1$ and $N_2$ have  a common subset $A$ , we say that they are
$A$\emph{-isomorphic}, denoted by $N_1\simeq_A N_2$, if there is an
isomorphism between $N_1$ and $N_2$ fixing all elements in $A$.

\begin{definition}\label{D:incompressible_hull}
 Let $N\subset M$ be nice and $A$ be some subset of $N$.
  \begin{enumerate}
  \item We say that $N$ is a \emph{nice hull} of $A$ if every nice
subset of $M$ containing $A$ has a nice subset $N'$ which is
$A$-isomorphic to $N$.
  \item\label{D:incompressible_hull:inc} The $\Gamma$-space $N$ is
\emph{incompressible} over $A$ if every $A$-homomorphism $f:N\to N$ is
an automorphism of $N$.
  \item The nice subset $N$ is \emph{strongly incompressible} over $A$
if every $A$-homomorphism $f:N\to M$ induces an isomorphism of $N$
with a nice subset of $M$.
  \end{enumerate}
\end{definition}

We will see in Proposition \ref{P:starr} that, if $N$ is
incompressible over $A$, then the only $A$-endomorphism of $N$ is the
identity.

Lemma \ref{L:spec_nice} implies the following easy observation.
\begin{remark}\label{R:incomp_nosubset}
If $N$ is incompressible over $A$, then it contains no proper nice
subset $A\subset N'\subsetneq N$. Likewise if $N$ is strongly
incompressible over $A$.
\end{remark}

\begin{lemma}\label{L:incompressible_hull}\par\indent
  \begin{enumerate}
  \item\label{L:incompressible_hull:strong} If the nice set $N$ is
    strongly incompressible over $A$, then it is incompressible and a
    nice hull of $A$.
  \item\label{L:incompressible_hull:iso} If $N$ is a nice hull of $A$
    and $N'$ is incompressible over $A$, then $N\simeq_A N'$.
  \end{enumerate}
\end{lemma}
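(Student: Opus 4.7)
My plan is to handle the two items separately, each by a short reduction to the preceding lemmas.

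For part $(\ref{L:incompressible_hull:strong})$, I will first establish the nice-hull property. Given any nice subset $C$ of $M$ containing $A$, I will invoke Lemma \ref{L:spec_nice} (using that $M$ is \SC as a model of $\psg$) to obtain a specialisation $\sigma\colon M \to C$. The restriction $\sigma\restr{N}\colon N \to M$ is then an $A$-homomorphism, since $\sigma$ fixes $C\supset A$ pointwise. By strong incompressibility of $N$, this restriction induces an isomorphism of $N$ with the nice subset $\sigma(N)\subset C$ of $M$. A quick verification using transitivity of niceness shows that a subset of $C$ which is nice in $M$ is automatically nice as a subset of $C$ (any splitting in $C$ would also be a splitting in $M$), so $\sigma(N)$ provides the required $A$-isomorphic nice subset of $C$. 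For incompressibility, I will take an arbitrary $A$-endomorphism $f\colon N \to N$ and view it as an $A$-homomorphism into $M$; strong incompressibility forces $f(N)$ to be a nice subset of $M$ isomorphic to $N$. Since $f(N)\subset N$ contains $A$, Remark \ref{R:incomp_nosubset} (applied to the strongly incompressible $N$) then yields $f(N)=N$, so $f$ is an automorphism.

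For part $(\ref{L:incompressible_hull:iso})$ the plan is shorter: I apply the defining property of a nice hull of $A$ (Definition \ref{D:incompressible_hull}) to the nice set $N'$ itself, obtaining a nice subset $N_0\subset N'$ which is $A$-isomorphic to $N$; in particular $A\subset N_0$. Since $N'$ is incompressible over $A$, Remark \ref{R:incomp_nosubset} forbids any proper nice subset of $N'$ containing $A$, forcing $N_0=N'$. The $A$-isomorphism $N\simeq_A N_0$ then becomes the desired $N\simeq_A N'$.

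The only slightly subtle point I anticipate is in part $(\ref{L:incompressible_hull:strong})$: namely, verifying that the image $\sigma(N)$, known to be nice in $M$ by strong incompressibility, is also nice as a subset of the intermediate $C$ in the sense of Definition \ref{D:incompressible_hull}. This is a routine consequence of transitivity of niceness plus the observation that the non-splitting condition $F\op{s}G$ becomes only easier to preserve when passing from $M$ to $C\subset M$. Everything else is a direct application of Lemma \ref{L:spec_nice}, Remark \ref{R:incomp_nosubset} and the definitions.
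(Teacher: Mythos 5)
Your proposal is correct and follows essentially the same route as the paper: for part (1), use Lemma \ref{L:spec_nice} to specialise $M$ onto an arbitrary nice superset of $A$ and then invoke strong incompressibility, and separately apply Remark \ref{R:incomp_nosubset} to any $A$-endomorphism; for part (2), extract an $A$-isomorphic nice subset of $N'$ and force equality by Remark \ref{R:incomp_nosubset}. The extra observation you flag -- that a subset of $C$ which is nice in $M$ is automatically nice in $C$, since the condition $F\op{s}G$ only becomes easier to satisfy in a smaller ambient set -- is correct and fills a small gap the paper leaves implicit.
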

\begin{proof}
  For $(\ref{L:incompressible_hull:strong})$, let $N$ be
  strongly incompressible over $A$. To show that $N$ is incompressible
over $A$, consider an $A$-homomorphism $f:N\to N$. This must induce
  an $A$-isomorphism with a nice subset $N_1$ of $N$ containing $A$,
so $N_1=N$ by Remark \ref{R:incomp_nosubset}.

 Let us now show that $N$ is a nice hull. Given any nice subset $N'$
of $M$ containing $A$, choose a specialisation
$f:M\to N'$, by Lemma \ref{L:spec_nice}. The map $\rest{f}{N}$ must
then induce an $A$-isomorphism of $N$ with a nice subset of $N'$, as
desired.

 Suppose now $N$ and $N'$ are as stated in
$(\ref{L:incompressible_hull:iso})$. Since $N$ is a nice hull, there
is some
nice subset $N''$ of $N'$ which is $A$-isomorphic to $N$. We conclude
that $N''=N'$ by the Remark \ref{R:incomp_nosubset}.
\end{proof}

We now have all the necessary ingredients to conclude the following
result.
\begin{theorem}\label{T:Nice_hulle}
  Every subset $A$ of $M$ has a unique, up to $A$-isomorphism,
strongly incompressible extension. If $A$ is finite, so is this
extension.
\end{theorem}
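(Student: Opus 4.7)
Uniqueness follows immediately from Lemma~\ref{L:incompressible_hull}: if $N$ and $N'$ are both strongly incompressible extensions of $A$, then part~(\ref{L:incompressible_hull:strong}) shows each is both incompressible over $A$ and a nice hull of $A$, so part~(\ref{L:incompressible_hull:iso}) gives $N\simeq_A N'$.

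For existence with $A$ finite, the plan is to construct a minimal-cardinality finite nice extension of $A$ in $M$ and then show it is strongly incompressible. First, finite nice subsets of $M$ containing $A$ do exist: start from any single flag (which is nice) and iteratively apply Proposition~\ref{P:nice-bandf} to absorb each element of $A$ one by one. Among the resulting finite nice extensions, pick $N$ of minimum cardinality.

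To see this $N$ is strongly incompressible, consider an arbitrary $A$-homomorphism $f\colon N\to M$, and realise $N$ as a tower of simple extensions $N_0\subset N_1\subset\cdots\subset N_k=N$ in the sense of Proposition~\ref{P:nice-bandf}. At each step, $f(N_{i+1})$ is obtained from $f(N_i)$ either by a genuine simple extension of the same type (when $f$ is injective on the new flag), or by a collapse onto an already present flag, which produces a specialisation in the sense of Remark~\ref{R:Spec_collapse}. In the first case, the image remains nice by Proposition~\ref{P:nice=nosplit}: any splitting of the new type in $M$ would pull back under $f$ to a splitting of the original simple extension, contradicting its niceness in $N$. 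In the second, niceness propagates by Lemma~\ref{L:spec_nice}, since the specialisation extends to one of the ambient model. Hence $f(N)$ is a nice subset of $M$ containing $A$, and by the minimal choice of $|N|$ we have $|f(N)|\geq|N|$, forcing $f$ to be injective and so an isomorphism of $N$ with the nice subset $f(N)$ of $M$.

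For infinite $A$, write $A=\bigcup_{n<\omega}A_n$ as an increasing union of finite subsets and build recursively an increasing chain $N_0\subset N_1\subset\cdots$ of nice subsets of $M$ with $A_n\subset N_n$, by applying the finite case to $A_{n+1}\cup N_n$ at stage $n+1$ and using uniqueness to identify the strongly incompressible extension of $A_{n+1}$ as an extension of $N_n$. The union $N=\bigcup_n N_n$ is nice in $M$ by transitivity of niceness along such chains, and is strongly incompressible because any $A$-homomorphism of $N$ restricts to each $N_n$, whose strong incompressibility was already established. The principal technical obstacle throughout is the step in the finite case verifying that $f(N)$ remains nice when $f$ collapses new vertices: this will likely require a careful combinatorial analysis using the Reduction Lemma~\ref{L:red} together with Proposition~\ref{P:nice=nosplit}, to rule out spurious splittings in $f(N)$ that did not already arise in $N$.
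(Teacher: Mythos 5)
Your uniqueness argument is exactly the paper's, but your existence strategy diverges and, as written, has a real gap which you yourself flag at the end. You take a minimum-cardinality finite nice extension $N$ of $A$ and argue directly that every $A$-homomorphism $f\colon N\to M$ has nice image. The crux is the claim that, when $f$ carries the new flag $F^*$ of a simple extension $N_{i+1}=N_i\cup F^*$ of type $s,G$ to fresh vertices, a splitting of the type in $M$ ``would pull back under $f$ to a splitting of the original simple extension.'' No such pullback exists: a weak flag path in $M$ witnessing $f(F^*)\sop{x}H$ for some splitting $x$ of $s$ and some flag $H$ in $f(N_i)$ may pass through flags lying entirely outside $f(N)$, and $f$ is defined only on $N$, so Proposition~\ref{P:nice=nosplit} gives you no handle on it. Your dichotomy (genuine simple extension vs.\ collapse onto an already present flag) is also too coarse: $f$ may carry $F^*$ onto a flag of $M$ that is neither disjoint from $f(N_i)$ nor contained in it. And even granting niceness of $f(N)$, vertex-cardinality minimality yields only injectivity of $f$; you would still have to check that $f$ creates no new edges, i.e.\ that it is an isomorphism onto $f(N)$.

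The paper sidesteps all of this by building $N$ inductively on $|A|$: it adjoins to a strongly incompressible extension $N_0$ of $A_0$ a reduced flag path $P\colon F\op{u}G$ from a flag $F$ through the new point $a$ to a base-point $G$ in $N_0$, with $u$ chosen $\preceq$-minimal over all such $F$ and over all $A_0$-copies of $N_0$. Then for any $A$-homomorphism $f$, the map is contracting (Lemma~\ref{L:Corresp}), so $\w(f(F),f(G))\preceq u$, and $\preceq$-minimality forces equality; hence $f(G)$ is a base-point of $f(F)$ over $f(N_0)$, $f(P)$ is a reduced path, and Lemma~\ref{L:alpha_kette} gives niceness of $f(N)$ directly. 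The point is that word-level $\preceq$-minimality, not cardinality minimality of the vertex set, is what controls the behaviour of an arbitrary homomorphism; your global-cardinality plan lacks an analogous handle, and that is precisely the ``careful combinatorial analysis'' you acknowledge is missing.
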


\begin{proof}
  We give the proof for $A$ finite, and leave the general case to
  reader.
  %Beweis im Parkplatz

  Proceed by induction over $|A|$. If $A$ is empty, then any flag
  is strongly incompressible over $\emptyset$. Otherwise, write
  $A=A_0\cup\{a\}$ and choose, by
  induction, a finite strongly incompressible extension $N_0$ of
$A_0$. Among all flags passing through $a$, choose one, say $F$,
with $\preceq$-minimal word $u=\w_M(F,G)$, where $G$ is some
base-point of $F$ over $N_0$. Assume furthermore that $u$ is
$\preceq$-minimal
among all possible $A_0$-copies of $N_0$. Let $P:F\op{u}G$ be a
reduced flag path  connecting $F$ to $G$. Set $N=N_0\cup P$, which is
a finite nice subset of $M$, by Lemma \ref{L:alpha_kette}. In order to
show that $N$ is strongly incompressible over $A$, consider an
$A$-homomorphism $f:N\to M$. By induction, the map $\rest{f}{N_0}$
induces an $A_0$-isomorphism between $N_0$ and the nice set $f(N_0)$,
so $f(N_0)$ is also strongly incompressible over $A_0$. The map $f$ is
contracting,
by Lemma \ref{L:Corresp}, so $\w(f(F),f(G))\preceq u$. Since $a$ is
contained in $f(F)$, minimality of $u$ implies that
$\w(f(F),f(G))=u$, thus $f(G)$ is a base-point
  of $f(F)$ over $f(N_0)$. Therefore, the set $f(P)$ determines a
reduced path from $f(F)$ to $f(G)$. Hence, the set $f(N)$ is nice, by
Lemma \ref{L:alpha_kette}, and $f$ is an $A$-isomorphism, as desired.
\end{proof}

Together with Lemma \ref{L:incompressible_hull}, we conclude:
\begin{cor}\label{C:nicehull}
  Every set $A$ has a nice hull $\nh(A)$, which is
incompressible and unique, up to $A$-isomorphism.  If $A$ is finite,
then so is $\nh(A)$.
\end{cor}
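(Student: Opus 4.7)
The plan is to derive this corollary directly by combining Theorem~\ref{T:Nice_hulle} with Lemma~\ref{L:incompressible_hull}, so there is essentially no new work to do; the two parts of Lemma~\ref{L:incompressible_hull} were tailored exactly to make this deduction immediate.

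First, for existence I would invoke Theorem~\ref{T:Nice_hulle} to obtain a strongly incompressible extension $N$ of $A$, which is finite whenever $A$ is. By Lemma~\ref{L:incompressible_hull}$(\ref{L:incompressible_hull:strong})$, this $N$ is simultaneously incompressible over $A$ and a nice hull of $A$, so we may set $\nh(A)=N$; this witnesses that a nice hull exists, is incompressible, and is finite when $A$ is finite.

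For uniqueness, suppose $N_1$ and $N_2$ are two nice hulls of $A$ which are incompressible over $A$. Applying Lemma~\ref{L:incompressible_hull}$(\ref{L:incompressible_hull:iso})$ with the nice hull $N_1$ and the incompressible extension $N_2$ yields $N_1\simeq_A N_2$, which gives uniqueness up to $A$-isomorphism. No obstacle arises here: the entire content has been absorbed into Theorem~\ref{T:Nice_hulle} and Lemma~\ref{L:incompressible_hull}, and the corollary is merely their formal combination.
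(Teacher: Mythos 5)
Your proof is correct and matches the paper's intended argument exactly: the paper introduces the corollary with the words ``Together with Lemma~\ref{L:incompressible_hull}, we conclude,'' so the content really is just the formal combination of Theorem~\ref{T:Nice_hulle} with the two parts of Lemma~\ref{L:incompressible_hull}, as you describe. The only minor refinement worth noting is that uniqueness holds across \emph{all} nice hulls, not just the incompressible ones: any nice hull is $A$-isomorphic to the strongly incompressible $N$ by Lemma~\ref{L:incompressible_hull}$(\ref{L:incompressible_hull:iso})$, hence any two nice hulls are $A$-isomorphic to each other.
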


\noindent Thus, the three notions in
Definition \ref{D:incompressible_hull} coincide.

\begin{cor}\label{C:acl_endlich}
  The algebraic closure $\acl(A)$ of a finite set $A$ is finite and
  contained in $\nh(A)$.
\end{cor}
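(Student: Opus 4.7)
The plan is to combine three earlier results: (i) Corollary \ref{C:nicehull}, which says that every finite set $A$ has a nice hull $\nh(A)$ which is again finite, (ii) Corollary \ref{C:nice_acl}, which says that nice sets are algebraically closed in $M$, and (iii) the monotonicity of $\acl$.

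More precisely, first I would note that $A \subset \nh(A)$, so by monotonicity $\acl(A) \subset \acl(\nh(A))$. Since $\nh(A)$ is a nice subset of $M$, Corollary \ref{C:nice_acl} yields $\acl(\nh(A)) = \nh(A)$, so $\acl(A) \subset \nh(A)$. Finiteness of $\acl(A)$ then follows from the finiteness of $\nh(A)$ guaranteed by Corollary \ref{C:nicehull}.

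There is no real obstacle here; the corollary is essentially a formal consequence of the two preceding corollaries together with standard properties of $\acl$. The content of the statement lies entirely in the results it invokes, namely the construction of a finite nice hull in Theorem \ref{T:Nice_hulle} (which passes to Corollary \ref{C:nicehull}) and the algebraic closedness of nice sets established through the Base-point Lemma \ref{L:basept}.
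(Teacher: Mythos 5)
Your proof is correct and follows exactly the same route as the paper: use Corollary \ref{C:nicehull} to get a finite nice hull, then Corollary \ref{C:nice_acl} (nice sets are algebraically closed) together with monotonicity of $\acl$ to conclude $\acl(A)\subset\nh(A)$, and finiteness follows.
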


\begin{proof}
  Let $\nh(A)$ be the nice hull of a finite set $A$. Nice sets are
  algebraically closed, by Corollary \ref{C:nice_acl}. Thus, the set
  $\acl(A)$ is contained in $\nh(A)$, which is finite.
\end{proof}

\begin{prop}\label{P:starr}
  The nice hull $\nh(A)$ is rigid over $A$, that is, its only
  automorphism fixing $A$ pointwise is the identity.
\end{prop}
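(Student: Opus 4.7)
The plan is to proceed by induction on $|A|$; the case of infinite $A$ reduces to the finite case via Corollary \ref{C:nicehull}. The base case $A = \emptyset$ is immediate: $\nh(\emptyset)$ is a single flag, and any colour-preserving automorphism of a flag must fix every vertex, since each colour appears exactly once.

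For the inductive step, I would write $A = A_0 \cup \{a\}$. If $a$ already lies in $\nh(A_0)$, then $\nh(A) = \nh(A_0)$, and every $A$-automorphism is in particular an $A_0$-automorphism, hence the identity by induction. Otherwise, the proof of Theorem \ref{T:Nice_hulle} presents $\nh(A)$ as $N_0 \cup P$, where $N_0 = \nh(A_0)$ and $P \colon F_0 \op{s_1} F_1 \op{s_2} \cdots \op{s_n} F_n = G$ is a reduced path from a flag $F_0$ containing $a$ to a base-point $G \in N_0$ of $F_0$ over $N_0$, whose word $u = s_1 \cdots s_n$ is chosen to be $\preceq$-minimal. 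Iterating Lemma \ref{L:alpha_kette} and Proposition \ref{P:nice=nosplit} exhibits a chain of simple extensions
\[
N_0 \subsetneq N_0 \cup F_{n-1} \subsetneq \cdots \subsetneq N_0 \cup F_{n-1} \cup \cdots \cup F_0 = \nh(A),
\]
each step being a simple extension of type $s_i, F_i$.

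The crucial intermediate observation will be that any simple extension $D(F^*)$ of a nice set $D$ of type $s, F$ is rigid over $D$: a $D$-automorphism permutes the new vertices $\{f^*_\gamma \mid \gamma \in s\}$, and since these are distinguishable by colour (one new vertex per colour in $s$), colour-preservation forces each to be fixed. Iterating this observation along the chain, any automorphism of $\nh(A)$ fixing $N_0$ pointwise must be the identity. Given an arbitrary $A$-automorphism $\sigma$ of $\nh(A)$, the problem therefore reduces to showing that $\sigma$ restricts to the identity on $N_0$. By the inductive hypothesis applied to $N_0 = \nh(A_0)$ over $A_0$, it then suffices to verify that $\sigma(N_0) = N_0$ as subsets of $\nh(A)$, after which $\sigma|_{N_0}$ is an $A_0$-automorphism of $\nh(A_0)$ and hence the identity.

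The hard part will be establishing this equality $\sigma(N_0) = N_0$. My plan is to exploit the $\preceq$-minimality of $u$: since $\sigma$ fixes $a$, the image $\sigma(P) \colon \sigma(F_0) \op{u} \sigma(G)$ is a reduced path of the same word $u$ from a flag containing $a$ to a base-point of $\sigma(F_0)$ over $\sigma(N_0)$, and $\sigma(N_0)$ is a nice $A_0$-isomorphic copy of $N_0$ inside $\nh(A)$ by strong incompressibility. The minimality of $u$ among all admissible choices of flag through $a$ and of $A_0$-copy of the nice hull will then force the decomposition $\nh(A) = \sigma(N_0) \cup \sigma(P)$ to coincide with $\nh(A) = N_0 \cup P$, yielding $\sigma(N_0) = N_0$ and completing the induction.
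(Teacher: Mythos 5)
Your inductive framework is correct (induct on $|A|$, base case $\nh(\emptyset)$ a single flag, inductive step $\nh(A)=N_0\cup P$), and the reduction to the two subclaims --- (a) any automorphism fixing $N_0$ pointwise is the identity, and (b) an $A$-automorphism $\sigma$ satisfies $\sigma(N_0)=N_0$ --- is the right strategy. However, both subclaims contain genuine gaps.

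For (a), ``iterating rigidity of simple extensions'' does not work as stated. The observation that every $D$-automorphism of $D(F^*)$ fixing $D$ is the identity concerns automorphisms of the simple extension \emph{itself}; it does not apply to an automorphism of the larger space $\nh(A)$ unless you first show that this automorphism preserves each intermediate set $N_0\cup F_{n-1}\cup\cdots\cup F_i$. This is not automatic: if a colour recurs in several letters $s_i$ of $u$, there are several new vertices of that colour along the path, and an automorphism fixing $N_0$ could a priori permute them while respecting colours. The paper closes this gap with Corollary \ref{C:Perm}: an automorphism fixing $N_0$ pointwise fixes the endpoint $G$, preserves $\nh(A)\setminus N_0$, hence restricts to an automorphism of the nice set determined by $P$ fixing an endpoint, which Corollary \ref{C:Perm} shows to be the identity. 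That corollary (a nontrivial result with its own proof via Lemma \ref{L:Flagfusspunkt}) is what you actually need.

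For (b), the gap is more serious: $\preceq$-minimality of $u$ cannot force $\sigma(N_0)=N_0$. Since $\sigma$ is an automorphism fixing $a$, the image path $\sigma(P)$ from $\sigma(F_0)\ni a$ to $\sigma(G)\in\sigma(N_0)$ has the \emph{same} word $u$, so the minimality condition is satisfied by $\sigma(N_0),\sigma(P)$ just as well, and no contradiction arises if $\sigma(N_0)\neq N_0$. Minimality pins down the word, not the particular copy of the nice hull. The paper instead chooses a specialisation $\phi:N_0\cup P\to N_0$ collapsing the path onto $G$ (Remark \ref{R:Spec_collapse}, iterated along the chain); then $(\phi\circ f)\restr{N_0}$ is an $A_0$-endomorphism of $N_0$, which strong incompressibility upgrades to an automorphism and the inductive hypothesis forces to be the identity, giving that $f$ fixes $N_0\setminus G$ pointwise. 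A separate word-combinatorics argument using Lemma \ref{L:newflag2} then shows $f(G)\in N_0$, yielding $f(N_0)=N_0$. This specialisation trick is the essential idea missing from your proposal.
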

\begin{proof}
  Again, we leave the case of infinite $A$ to the reader and assume
  $A$ finite.
  %Beweis ist im Parkplatz

  If $A$ is empty, recall that $\nh(\emptyset)$ consists of a single
  flag, so the result is obvious. By the proof of Theorem
  \ref{T:Nice_hulle}, if $A=A_0\cup\{a\}$, then $\nh(A)=N_0\cup P$,
  where $N_0$ is a nice hull of $A_0$ and $P$ is a reduced flag path
  connecting a flag $F$ containing $a$ to its basepoint $G$ over
  $N_0$. Furthermore, the word $u=\w(F,G)$ is $\preceq$-minimal among
  all words $\w(F,G')$, where $G'$ has the same type as $G$ over
  $A_0$. If $\gamma$ denotes the colour of $a$, then $u$ has a unique
  beginning $s$, which contains $\gamma$.

  By Remark \ref{R:Spec_collapse}, choose a specialisation
  $\phi:N_0\cup P\to N_0$, collapsing the whole path $P$ onto $G$. Let
  now $f$ be some automorphism of $\nh(A)$ fixing $A$. The map
  $\rest{(\phi\circ f)}{N_0}$, which is an $A_0$-automorphism, by
  strong incompressibility of $N_0$, must then be the identity, by
  induction on $|A|$. Thus $f$ is the identity on $N_0\setminus G$.

  Since $f$ is an automorphism, we have that
  $\w(f(F),f(G))=\w(F,G)=u$. As the flag $f(F)$ lies in $N_0\cup P$,
  Lemma \ref{L:newflag2} yields, after permutation, flags $K$ with
  $P:F\op{u_1} K\op{u_2} G$ and $G'$ in $N_0$ such that
  $u_2=\w(f(F),G')$ commutes with $v=\w(K,f(F))=\w(G,G')$. If $P'$
  denotes the subpath $K\op{u_2} G$, the flag $K$ is a base-point of
  $F$ over $N_0\cup P'$. Notice that the flag $f(F)$ lies in $N_0\cup
  P'$. Thus, the word $\w(F,f(F))$ equals $[u_1\cdot v]$.
  Since $a=f(a)$ is contained in $f(F)$, the flags $F$ and $f(F)$ are
  $(\Gamma\setminus \{\gamma\})$-equivalent, so $\gamma$ does not
  occur in $\w(F,f(F))$. If $u_1$ is not trivial, then $s$ must be its
  beginning, for $u$ has only $s$ as a beginning, and hence, either
  $s$ or a larger letter containing it must occur in $\w(F,f(F))$.
  Thus, we conclude that $u_1=1$ and the word $v=\w(F,f(F))$ commutes
  with $u$.

  As the flag $f(G)$ lies in $N_0\cup P$, by Lemma \ref{L:newflag2} there
  are flags $\tilde K$ with $P:F\op{\tilde u_1} \tilde K\op{\tilde
    u_2} G$ and $\tilde G$ in $N_0$ such that
  $\tilde v=\w(\tilde K,f(G))$ commutes with $\tilde u_2=\w(f(G),
  \tilde G)$. As above, we see that $\w(F, f(G)) = [\tilde u_1\cdot
  \tilde v]$. We deduce the following:
  $$\tilde u_1\cdot \tilde u_2\cdot v\approx v\cdot u \approx
  v\cdot \w(f(F),f(G)) \approx [\tilde u_1\cdot \tilde v].$$
  Corollary \ref{C:Red_inkompr} implies that
  $\tilde u_2\cdot v$ is a final subword of $\tilde v$, which commutes
  with $\tilde u_2$. We conclude that $\tilde u_2=1$ and hence
  $f(G)=\tilde G$ lies in $N_0$. The
  map $f$ maps $N_0$ to itself, so it is the identity on $N_0$. Since $f$
  induces a permutation of $P$,  Corollary \ref{C:Perm} implies that $f$ is
  the identity on $N$.
\end{proof}
\begin{cor}\label{C:acl_rigid}
  The algebraic closure $\acl(A)$ of $A$ is rigid over $A$, so it equals
$\dcl(A)$.
\end{cor}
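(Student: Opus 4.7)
The plan is to combine the rigidity of the nice hull (Proposition \ref{P:starr}) with the existence of specialisations onto nice subsets (Lemma \ref{L:spec_nice}) in order to force every $A$-automorphism of the ambient monster $M$ to fix $\acl(A)$ pointwise; the equality $\acl(A)=\dcl(A)$ then follows immediately, as the inclusion $\dcl(A)\subset\acl(A)$ is automatic.

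First, I would fix an arbitrary $A$-automorphism $\sigma$ of $M$ together with an element $b\in\acl(A)$, with the goal of showing $\sigma(b)=b$. Since $\nh(A)$ is nice in the simply connected $\Gamma$-space $M$, Lemma \ref{L:spec_nice} yields a specialisation $f\colon M\to\nh(A)$, that is, an $A$-homomorphism which is the identity on $\nh(A)$. Restricting $f\circ\sigma$ to $\nh(A)$ produces an $A$-homomorphism from $\nh(A)$ into $\nh(A)$. By incompressibility of the nice hull (Corollary \ref{C:nicehull}), this restriction must be an $A$-automorphism of $\nh(A)$, and by Proposition \ref{P:starr} it is therefore the identity on $\nh(A)$.

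The conclusion is then a simple comparison. By Corollary \ref{C:acl_endlich} (and, for infinite $A$, by Corollaries \ref{C:nice_acl} and \ref{C:nicehull}), we have $\acl(A)\subset\nh(A)$; hence both $b$ and $\sigma(b)$ belong to $\nh(A)$, since $\sigma(b)\in\sigma(\acl(A))=\acl(A)$. Because $f$ is the identity on $\nh(A)$, we obtain $f(\sigma(b))=\sigma(b)$, while the previous paragraph gives $f(\sigma(b))=b$. Therefore $\sigma(b)=b$, and as $\sigma$ was arbitrary, $b\in\dcl(A)$.

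No serious obstacle arises in executing this plan: all of the heavy machinery is already available. The only point that needs a moment's care is verifying that the composition $f\circ\sigma$ maps $\nh(A)$ into $\nh(A)$ rather than merely into $M$, but this is built into the definition of specialisation, and the combinatorial rigidity behind Proposition \ref{P:starr} does the rest of the work.
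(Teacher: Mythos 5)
Your proof is correct, and it makes the argument a bit more explicit than the one in the paper. The paper's own proof goes by noting $\nh(A)=\nh(\acl(A))$ and then asserting that an $A$-automorphism of $\acl(A)$ extends to an $A$-automorphism of $\nh(A)$ (this implicitly uses the uniqueness up to isomorphism of nice hulls from Corollary~\ref{C:nicehull}), and finishes with Proposition~\ref{P:starr}. You instead take an arbitrary $A$-automorphism $\sigma$ of the monster, pick a specialisation $f\colon M\to\nh(A)$ via Lemma~\ref{L:spec_nice}, and observe that $(f\circ\sigma)\restriction{\nh(A)}$ is an $A$-endomorphism of $\nh(A)$; incompressibility promotes it to an automorphism and Proposition~\ref{P:starr} then forces it to be the identity, after which the simple comparison $f(\sigma(b))=\sigma(b)=b$ for $b\in\acl(A)\subset\nh(A)$ closes the argument. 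Both proofs hinge on Proposition~\ref{P:starr}; yours trades the (implicit) extension-of-automorphisms step for a concrete specialisation-plus-incompressibility step, which removes the need to invoke uniqueness of nice hulls and makes the passage from an ambient automorphism to an automorphism of $\nh(A)$ completely mechanical. Your remark that the containment $\acl(A)\subset\nh(A)$ in the infinite case follows from Corollaries~\ref{C:nice_acl} and~\ref{C:nicehull} is also a sensible addition that the paper leaves to the reader.
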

\begin{proof}
  Note that $\acl(A)$ is contained in $\nh(A)$, by
Corollary \ref{C:acl_endlich}, so $\nh(A)=\nh(\acl(A))$. Every
$A$-automorphism of $\acl(A)$ extends to an automorphism of $\nh(A)$,
which must then be the identity, by Proposition \ref{P:starr}.
\end{proof}
\begin{prop}\label{P:Stat}
  All types are stationary.
\end{prop}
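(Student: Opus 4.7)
Plan: A type $p \in S(A)$ is stationary if and only if its canonical basis (for the global non-forking extension) lies in $\dcl^{\mathrm{eq}}(A)$. The strategy is to reduce to types over nice sets, where the explicit description of canonical bases $G/\sr(u)$ from Corollary~\ref{C:cb_type_pG} does all the work, and then use the rigidity of the nice hull (Proposition~\ref{P:starr}) to descend back to $A$.

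First I would observe that, by the extension and transitivity of non-forking, it suffices to show that $p$ has a unique non-forking extension to the nice hull $N = \nh(A)$: any non-forking extension of $p$ to a larger set $B$ lifts non-forkingly to $\nh(B \cup N)$ and restricts to a non-forking extension of $p$ to $N$, and by Proposition~\ref{P:nicescaffold} types of the form $\p_u(G)$ are stationary once $G$ is fixed, so uniqueness over $N$ propagates to uniqueness over $B$. Given any non-forking extension $q \in S(N)$ of $p$, Corollary~\ref{C:forking} yields a base-point $G \in N$ of any realisation $F \models q$, so $q = \p_u(G)|N$ with $u = \w(F, G)$; by Corollary~\ref{C:cb_type_pG}, the canonical basis of $q$ is the real tuple $G/\sr(u)$, lying in $N$.

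Next I would show that if $q_1, q_2 \in S(N)$ are both non-forking extensions of $p$, their canonical bases coincide, forcing $q_1 = q_2$. Realise $F_i \models q_i$; since $F_1 \equiv_A F_2$, there is $\sigma \in \mathrm{Aut}(M/A)$ with $\sigma(F_1) = F_2$. By Corollary~\ref{C:nicehull} the image $\sigma(N)$ is another nice hull of $A$, and $\sigma$ carries the base-point data $(G_1, u_1)$ of $q_1$ to analogous data $(\sigma(G_1), u_1)$ for $\tp(F_2/\sigma(N))$. Using Proposition~\ref{P:starr} (the rigidity of nice hulls over $A$) together with $\dcl(A) = \acl(A)$ from Corollary~\ref{C:acl_rigid}, the real tuple $G_1/\sr(u_1)$ is forced to be $A$-fixed, hence agrees with the canonical basis of $q_2$ obtained from any base-point $G_2$ of $F_2$ in $N$. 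Corollary~\ref{C:Fusspunktgleich} (uniqueness of base-points up to $\sr(u)$-equivalence) then gives $G_1 \sim_{\sr(u_1)} G_2$ and $u_1 \approx u_2$, so $q_1 = q_2$.

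The main obstacle is the transfer step: the automorphism $\sigma \in \mathrm{Aut}(M/A)$ does not stabilise $N$ set-wise, only mapping it to an $A$-isomorphic nice hull $\sigma(N) \subset M$. One must argue carefully that the canonical basis $G/\sr(u)$, although a priori attached to a particular embedding of $\nh(A)$ in $M$, is actually invariant under all of $\mathrm{Aut}(M/A)$; rigidity (Proposition~\ref{P:starr}) plus the fact that $G/\sr(u)$ is real and subject to $\acl = \dcl$ (Corollary~\ref{C:acl_rigid}) is precisely what bridges this gap.
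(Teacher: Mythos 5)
Your proposal identifies the two essential ingredients --- the explicit, real-valued description of canonical bases from Corollary~\ref{C:cb_type_pG} and the equality $\acl(A)=\dcl(A)$ from Corollary~\ref{C:acl_rigid} --- but the way you assemble them leaves a genuine gap, precisely at the ``transfer step'' you flag at the end. The missing link is algebraicity: you need $G/\sr(u)\in\acl(A)$ \emph{before} Corollary~\ref{C:acl_rigid} can upgrade it to $\dcl(A)$, and neither Proposition~\ref{P:starr} nor the automorphism argument delivers that. Rigidity (Proposition~\ref{P:starr}) only controls $A$-automorphisms of $\nh(A)$ itself; an arbitrary $\sigma\in\mathrm{Aut}(M/A)$ moves $N=\nh(A)$ to a different copy, so rigidity gives you no foothold, and you cannot conclude that $\sigma$ fixes $G/\sr(u)$ without first knowing it lies in $\acl(A)$. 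In short, your argument presupposes the algebraicity you are trying to establish.

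The paper's proof closes this gap with one standard forking fact, stated explicitly just before the proposition: if a type does not fork over $A$, the canonical base of its global non-forking extension is algebraic over $A$. With that in hand, the argument is a one-liner and needs no reduction to nice hulls and no automorphism bookkeeping: take a global non-forking extension $q=\p_u(G)|M$ of $p=\tp(F/A)$; then $\cb(q)=G/\sr(u)$ is a real tuple by Corollary~\ref{C:cb_type_pG}, it lies in $\acl(A)$ because $q$ does not fork over $A$, hence in $\dcl(A)$ by Corollary~\ref{C:acl_rigid}, and stationarity follows. Your first reduction --- passing to non-forking extensions over $N=\nh(A)$ --- is therefore superfluous; it only introduces the bookkeeping problem about $\sigma(N)\neq N$ that your last paragraph wrestles with. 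If you replace the appeal to rigidity by the forking fact above, the rest of your argument collapses to the paper's direct proof.
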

This needs no longer hold if we consider types over subsets of
$M^\mathrm{eq}$.
\begin{proof}
  We need only prove the statement for $1$-types, and thus it suffices
to prove it for types of a single flag. Let $p$ be the type of a flag
$F$ over the parameter set $A$ and consider $q$ a global non-forking
extension of $p$ to $M$. Since $q=\p_u(G)|M$ for some flag $G$ in $M$, its
canonical parameter is $B=G/\sr(u)$, by Corollary \ref{C:cb_type_pG},
which is interdefinable with a set of real elements. Since $B$ is
algebraic over $A$, it is  hence definable over $A$, by Corollary
\ref{C:acl_rigid}. The type $p$ is thus stationary.
\end{proof}

We will show that the theory $\psg$ is equational, by proving that the
type of finitely many flags is determined by the collection of words
connecting each pair of flags. For two flags this follows
from Lemma \ref{L:alpha_kette}: A reduced path $P:F\op{u}G$ determines a
nice subset, whose type is determined by $u$. For three flags $F$, $G$
and $H$ as below:

\begin{figure}[h]
   \centering

   \begin{tikzpicture}[>=latex,text height=1ex,text depth=1ex]

     \fill  (0,0)  node [below left]  {$G$}  circle (2pt);
     \fill   (3,0)  node [below right]  {$H$} circle (2pt);
     \fill   (1.5,1.5)  node [above] {$F$} circle (2pt);

     \draw[->] (0,0) -- node [left] {$u$} (1.5,1.5) -- node [right] {$v$}
(3,0) ;
          \draw[->] (0,0)-- node [below]
          {$w$} (3,0) ;
   \end{tikzpicture}

 \end{figure}

\noindent we need the following Proposition.

\begin{prop}\label{P:Canred}
  \begin{enumerate}[(1)]
  \item\label{P:Canred:woerter} Given $u$, $v$ and $w$ reduced words
    with $u\cdot v\str w$, there is a decomposition:
    \begin{align*}u&\approx u_1\cdot \alpha\inv \cdot c\inv,
      \\ v&\approx c \cdot \beta \cdot v_1, \\ w&\approx u_1\cdot
      x\cdot v_1,
    \end{align*}
    \noindent such that $\alpha$, $\beta$ and $x$ pairwise commute,
    the word $x$ is properly right-absorbed by $c$, the word $\alpha$
    is properly left-absorbed by $v_1$, and $\beta$ is right-absorbed
    by $u_1$. The words $u_1$, $v_1$, $c$, $x$, $\alpha$ and $\beta$
    are unique up to permutation.
  \item\label{P:Canred:flagge} Assume further that $F$, $G$ and $H$
    are flags such that $\w(G,F)=u$, $\w(F,H)=v$ and $\w(G,H)=w$. Then
    there is a reduced path $P:G \op{w} H$, and a base-point $K$ of
$F$
    over $P$ such that:
    \begin{align*} \w(G,K)&= u_1\\
    \w(K,H)&=x\cdot v_1, \\
\w(F,K)&=c\cdot\alpha\cdot\beta
    \end{align*}
  \end{enumerate}
\end{prop}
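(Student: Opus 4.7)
The plan is to establish part $(2)$ first, using the basepoint machinery of Section \ref{S:Forking}, and then obtain part $(1)$ by realising the abstract words geometrically.

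For part $(2)$, work inside the finite nice hull of $\{F,G,H\}$ (Corollary \ref{C:nicehull}) and fix a reduced flag path $P\colon G\op{w}H$; let $A$ denote the nice subset generated by $P$ (Lemma \ref{L:alpha_kette}). Choose a base-point $K$ of $F$ over $A$ whose position on $P$ is as far from $G$ as possible, i.e.\ with $\w(G,K)$ of maximal length. By Lemma \ref{L:Flagfusspunkt}, after a permutation of $P$ we may assume $K\in P$; write $u_1=\w(G,K)$, $v^*=\w(K,H)$ (so $w\approx u_1\cdot v^*$) and $z=\w(F,K)$. Proposition \ref{P:fusspunkt} then yields $v=[z\cdot v^*]$ and $u=[u_1\cdot z^{-1}]$.

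Apply the Symmetric Decomposition Lemma \ref{P:Dec} to $z\cdot v^*$. Its commuting ``middle'' word must be trivial: otherwise each of its letters would lie in $\sr(z)$ (being right-absorbed by $z$ inside the commuting end of the decomposition), so by Corollary \ref{C:Fusspunktgleich} we could shift $K$ one step further along $P$ whilst preserving the basepoint property, contradicting maximality of $u_1$. Hence the decomposition reads $z\approx U\cdot\alpha$ and $v^*\approx x\cdot v_1$, with $\alpha$ properly left-absorbed by $v_1$, $x$ properly right-absorbed by $U$, and $\alpha,x$ commuting, giving $v\approx U\cdot v_1$. Now apply Proposition \ref{P:Dec} again, this time to $u_1\cdot z^{-1}=u_1\cdot\alpha^{-1}\cdot U^{-1}$: this isolates the subword $\beta^{-1}$ of $U^{-1}$ that slides past $\alpha^{-1}$ (they commute) and is absorbed by $u_1$. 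Splitting $U\approx c\cdot\beta$ accordingly produces $u\approx u_1\cdot\alpha^{-1}\cdot c^{-1}$ and $v\approx c\cdot\beta\cdot v_1$; the pairwise commutations of $\alpha,\beta,x$ together with the remaining absorption properties follow immediately from the two applications of Proposition \ref{P:Dec}.

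For part $(1)$, invoke Remark \ref{R:paths_exists}: in an $\omega$-saturated model of $\psg$ pick a reduced flag path $G\op{w}H$, which the hypothesis $u\cdot v\str w$ allows to be refined to a (non-reduced) flag path $G\op{u}F\op{v}H$, yielding $\w(G,F)=u$, $\w(F,H)=v$, $\w(G,H)=w$. Part $(2)$ then supplies the desired decomposition. Uniqueness up to permutation of $u_1,v_1,c,x,\alpha,\beta$ is inherited from the uniqueness in Proposition \ref{P:Dec} (applied to both $z\cdot v^*$ and $u_1\cdot z^{-1}$) together with the rigidity of the nice hull over $\{F,G,H\}$ (Proposition \ref{P:starr}).

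The main obstacle is the second decomposition step: the split $U\approx c\cdot\beta$ is not read off from the first application of Proposition \ref{P:Dec} alone (which has trivial middle by construction) but from how letters of $U^{-1}$ interact with $u_1$ across $\alpha^{-1}$. Making this bookkeeping precise---in particular verifying that $\beta$ is uniquely characterised as the maximal commuting subword of (the end of) $U$ that is right-absorbed by $u_1$, and that this absorption need not be proper---is the most delicate part of the argument.
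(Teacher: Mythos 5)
Your choice of base‑point is wrong, and this is fatal. The paper selects $K$ so that $y=\w(F,K)$ is $\preceq$-minimal, then \emph{moves $K$ twice}: first \emph{towards} $G$ (replacing $K$ by $K'$ with $G\op{u_1}K'\op{x'}K$ to arrange $x'=1$), and again at the very end (the step that makes $\alpha$ \emph{properly} absorbed by $v_1$, via Corollary \ref{C:proper_absorb}). Your rule---pick the base-point with $|\w(G,K)|$ maximal---does produce the nice trivial-middle consequence in the first application of Proposition \ref{P:Dec}, but it does not land on the $K$ the statement requires, and there is no way to extract $\beta$ afterwards. A concrete counterexample: let $\Gamma$ be the complete graph on $\{a,b,c\}$, $u=\{b,c\}\cdot\{a,b\}$, $v=\{a,b\}\cdot\{b,c\}$, $w=\{b,c\}\cdot\{a\}\cdot\{b,c\}$ (so $u\cdot v\str w$ via the splitting $\{a,b\}\cdot\{a,b\}\str\{a\}$). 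The unique decomposition is $u_1=\{b,c\}$, $c=\{a,b\}$, $x=\{a\}$, $v_1=\{b,c\}$, $\alpha=\beta=1$, so the correct $K$ sits at position $\{b,c\}$ on $P$. But the flag $K_2$ at position $\{b,c\}\cdot\{a\}$ is \emph{also} a base-point ($\w(F,K)=\w(F,K_2)=\{a,b\}$, since $\{a\}\in\sr(\{a,b\})$), and your rule chooses $K_2$. With $K_2$ the first P:Dec gives $z=U=\{a,b\}$, $\alpha=x=1$, $v_1=\{b,c\}$; but the second, applied to $u_1\cdot z\inv=(\{b,c\}\cdot\{a\})\cdot\{a,b\}$, has $(u_1)'=\{a\}\ne 1$, so $u_1$ is not even an initial subword of $u=\{b,c\}\cdot\{a,b\}$, and no $\beta$ exists making $u\approx u_1\cdot\alpha\inv\cdot c\inv$.

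Beyond the wrong extremal choice, the step you flag as ``the most delicate part'' is in fact where the proof lives. What you describe as ``the second P:Dec isolates the subword $\beta^{-1}$ that slides past $\alpha^{-1}$ (they commute)'' presupposes the very commutation of $\alpha$ and $\beta$ which has to be proved. The paper establishes this using the condition $(\ddag)$ (no end of $y$ lies in $\wob(w_1,w_2)$), which follows from $\preceq$-minimality of $y$ but \emph{not} from maximality of $|u_1|$: in the example above $\wob(w_1,w_2)=\emptyset$ for both $K$ and $K_2$, so $(\ddag)$ alone does not discriminate between them either---one really needs both minimality and the two re-adjustments of $K$, together with Lemma \ref{L:initial}, the dedicated argument that $\beta$ and $x$ commute, and Corollary \ref{C:proper_absorb}. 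The uniqueness part also cannot simply be ``inherited'' from Proposition \ref{P:Dec}; the paper proves it by a separate initial/final-subword analysis together with an auxiliary claim. None of this is addressed in your proposal.
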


\begin{figure}[h]
   \centering

   \begin{tikzpicture}[>=latex,text height=1ex,text depth=1ex]

     \fill  (0,0)  node [below left]  {$G$}  circle (2pt);
     \fill   (6,0)  node [below right]  {$H$} circle (2pt);
     \fill   (3,3)  node [above] {$F$} circle (2pt);
      \fill   (3,0)  node [below=3pt] {$K$} circle (2pt);

     \draw[->] (0,0) -- node [left] {$u_1\cdot \alpha\inv \cdot c\inv$}
(3,3);
     \draw[->] (3,3)-- node [right] {$c \cdot \beta \cdot v_1$}
     (6,0) ;
     \draw[->] (3,3)-- node [pos=.75, right]
          {$ c\cdot\alpha\cdot\beta$} (3,0) ;
          \draw[->] (3,0)-- node [below] {$x\cdot v_1$} (6,0) ;
          \draw[->] (0,0)-- node [below]
          {$u_1$} (3,0) ;
   \end{tikzpicture}

 \end{figure}

 Observe that the existence of
a decomposition as in (\ref{P:Canred:woerter}) implies $u\cdot v\str
w$. Indeed, since  $x$ is properly right-absorbed by $c=t_m\cdots t_1$,
Lemma \ref{L:reordering} implies that $x\approx x_1\cdots x_m$, where
$x_i$ is
a splitting $t_i$ and commutes with $t_j$, whenever $j<i$.
Thus,
\begin{align*}
 c\inv\cdot c=t_1\cdots t_m\cdot t_m\cdots t_1 &\str t_1\cdots
t_{m-1}\cdot x_m\cdot t_{m-1}\cdots t_1 \str \\
&\str t_1\cdots
t_{m-1}\cdot t_{m-1}\cdot x_m\cdot t_{m-2}\cdots t_1 \str \cdots
\end{align*}

\noindent can be reduced to $x$. Therefore,

\begin{align*}
u\cdot v \approx u_1\cdot \alpha\inv \cdot c\inv \cdot c\cdot \beta\cdot
v_1 &\str u_1\cdot \alpha\inv\cdot x\cdot \beta\cdot v_1 \str \\
&\str u_1\cdot\beta \cdot x\cdot \alpha\inv\cdot v_1 \str u_1\cdot x\cdot
v_1 \approx w.
\end{align*}

\noindent By Lemma \ref{L:Flagfusspunkt}, we may assume
in (\ref{P:Canred:flagge}) that the flag $K$ occurs in the path $P$.

\begin{proof}
  Let us first prove the existence of such a decomposition. By Remark
  \ref{R:paths_exists}, find flags $F$, $G$ and $H$ such that $G\op{u}
  F \op{v} H$ and $G\op{w}H$. Let $P$ be a reduced flag path
  connecting $G$ to $H$. By Lemma
  \ref{L:Flagfusspunkt}, we may choose a base-point $K$ of $F$ over $P$
 which occurs in the path $P$. Set $\w(G,K)= w_1$,
$\w(K,H)=w_2$, and $\w(F,K)=y$. Assume that $y$ is $\preceq$-minimal among
all choices of $P$, which implies

  \[ \text{No end of $y$ is contained in } \wob(w_1,w_2) \tag{$\ddag$}.\]

  \noindent Indeed, if $y=y'\cdot s$, with $s\subset
\wob(w_1,w_2)$, decompose $F\op{y'}K'\op{s}K$ for some flag $K'$.
Observe that $K$ is also a base-point for $K'$ over the nice set
determined by $P$, so by Proposition  \ref{P:fusspunkt} the
reduction of $s\cdot w_1\inv$, resp.\  $s\cdot w_2$, is non-splitting and
equals $w_1\inv$, resp.\ $w_2$.  Replacing  $K$ by $K'$, we obtain a
permutation $P'$ of $P$, so that  $F$ connects to $P'$ with word $y'$,
contradicting the minimality  of $y$.

  Proposition \ref{P:fusspunkt} implies that $[ y\cdot w_1\inv]=u\inv$ and
  $[y\cdot w_2]=v$. By Proposition \ref{P:Dec}, up to permutations
of $w_1$ and $u$,  write:
  \begin{align*} w_1&=u_1\cdot x', \\ y&\approx c_1\inv \cdot \beta
    ,\\ u&=u_1\cdot c_1
  \end{align*}
  \noindent where $x'$ and $\beta$ commute, the word $x'$ is properly
  left-absorbed by $c_1$ and $\beta$ is right-absorbed by $u_1$.
  Let $K'$ be a flag in $P$ such that $G\op{u_1}K'\op{x'}K$. Since
  $x'$ is right-absorbed by $y$, the flag $K'$ is also a base-point of
  $F$ over $P$ by Corollary \ref{C:Fusspunktgleich}. Replacing $K'$ by
$K$, we may assume that $x'=1$.

  Likewise, write
  \begin{align*}w_2&= x\cdot v_1, \\ y&\approx c_2\cdot \alpha,\\ v&=
c_2\cdot v_1
  \end{align*}
  \noindent where $x$ and $\alpha$ commute, the word $x$ is properly
  right-absorbed by $c_2$ and $\alpha$ is left-absorbed by $v_1$.

 Note that $y=c_2\cdot \alpha \approx c_1\inv\cdot \beta$. However, no end
  of $\alpha$ can be an end of $\beta$, by $(\ddag)$. Thus, every
 end of $\alpha$ commutes with $\beta$ and Lemma \ref{L:initial}
  yields that $\alpha$ commutes with $\beta$ and is a final subword
  of $c_1\inv$. Likewise for $\beta$ and $c_2$. After possible
  permutations of $c_1$ and $c_2$, write:
  \begin{align*}
    c_1\inv&=c\cdot \alpha, \\
    c_2&=c\cdot \beta.
  \end{align*}

  Let us now show that $\beta$ and $x$ commute. Otherwise, write
  $x=x_1\cdot s\cdot x_2$, where $x_1$ and $\beta$ commutes, but $\beta$
and $s$ do not. Since
  $x$ is right-absorbed by $c_2=c\cdot\beta$, the letter $s$
  must be absorbed by $\beta$, and  therefore right-absorbed by $u_1$.
Since $s$ commutes with $x_1$, the word $w=u_1\cdot
  x\cdot v_1$ is not reduced, which is a contradiction.
\noindent  Hence, the word $x$ is properly right-absorbed by $c$.

  We have now
  \begin{align*}
    u&= u_1\cdot\alpha\inv\cdot c\inv,\\
    v&= c\cdot\beta\cdot v_1,\\
    w&=w_1\cdot w_2=u_1\cdot x\cdot v_1,\\
    y&=c\cdot\alpha\cdot \beta.
  \end{align*}
  The only property left to show is that $\alpha$ is properly
  left-absorbed by $v_1$. Otherwise, apply Corollary \ref{C:proper_absorb}
to  produce, up to permutation, the following decompositions:
  \begin{align*}
    \alpha&=\alpha'\cdot\omega &\omega\cdot v_2&=v_1,
  \end{align*}
  where $\omega$ is a commuting word, the word $\alpha'$ is properly
  left-absorbed by $v_2$, and $\alpha'$ and $\omega$ commute. Since
  $w_2\approx \omega\cdot x\cdot v_2$, there is a flag $K'$ in some
permutation of  $P$
  such that $K\op{\omega}K'\op{x\cdot v_2}H$. Now, the word $\omega$ is
  right-absorbed by $y\approx c\cdot \beta\cdot \alpha'
\cdot \omega$, so the flag $K'$ is also a base-point of $F$ over $P$, by
Corollary \ref{C:Fusspunktgleich}.  Replacing $K$ by $K'$, and
substituting:
  \begin{align*}
    u_1&\rightsquigarrow u_1\cdot\omega\\
    v_1&\rightsquigarrow v_2\\
    \alpha&\rightsquigarrow\alpha'\\
    \beta&\rightsquigarrow\beta\cdot\omega
  \end{align*}
\noindent gives that the new words $u_1$, $v_1$, $c$, $x$,
  $\alpha$, $\beta$ and the new base-point $K$ have the desired
  properties.

  For uniqueness, let $u_1\cdot a$ be the largest common initial
  subword of $u=u_1\cdot\alpha\inv\cdot c\inv$ and $w=u_1\cdot x\cdot v_1$
  (\emph{cf.} the discussion after Definition \ref{D:beginning}). Since
$\alpha$ and $x$ commute and each one is properly left
  absorbed by $v_1$, resp.\ $c\inv$, the word $a$ must commute
with $\alpha$ and $x$, and is an initial subword of both $c\inv \approx
a\cdot c'$ and $v_1\approx a\cdot v'_1$. In particular, the words
$\alpha\cdot c'$ and $x\cdot v_1'$ are uniquely determined, up to a
  permutation, for $u_1\cdot a$ is.

  Observe that $c'$ is the largest common final subword of $\alpha\cdot
c'$ and $v\inv={v_1'}\inv\cdot
  a\inv\cdot\beta\inv\cdot a\cdot c'$, for otherwise, the largest
common
  final subword would then contain a letter $s$ from $\alpha$, which is
  an end of ${v_1'}\inv\cdot a\inv\cdot\beta\inv\cdot a$,
contradicting that $\alpha$ is properly left-absorbed by $v_1'$.
Therefore, the words $\alpha$ and
  $a\inv\cdot\beta\cdot a\cdot v_1'$ are uniquely determined.

  Since $x$ and $a\inv\cdot\beta\cdot a$ commute, the word
  $v_1'$ is the largest common final subword of $x\cdot v_1'$ and
  $a\inv\cdot\beta\cdot a\cdot v_1'$, so $x$ and
  $a\inv\cdot\beta\cdot a$ are uniquely determined. The result now follows
by applying the following auxiliary result to the words $u_1\cdot a$
and $a\inv\cdot\beta\cdot a$, in order to determine $u_1$, $a$ and
$\beta$, as desired.

\begin{claim}
Given reduced words $e\approx u\cdot a$ and $f\approx a\inv\cdot
b\cdot a$, where $u$ right-absorbs
  $b$, then $a$, $u$ and $b$ are uniquely determined by $e$ and $f$.

\end{claim}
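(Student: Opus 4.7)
I would proceed by induction on the length of $a$. In the base case $a=1$, the hypotheses collapse to $e=u$ and $f=b$, so $u$ and $b$ are read off from $e$ and $f$ directly.

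For the inductive step, let $s$ be an end of $a$, so that $a\approx a_0\cdot s$. Since $s$ is the last letter of $a$ and hence the first letter of $a\inv$, we have $e\approx u\cdot a_0\cdot s$ and $f\approx s\cdot(a_0\inv\cdot b\cdot a_0)\cdot s$, which exhibits $s$ as a common end of $e$ and of $f$. Peeling $s$ off the end of $e$ and off both ends of $f$ produces reduced subwords $e_0\approx u\cdot a_0$ and $f_0\approx a_0\inv\cdot b\cdot a_0$, in which $u$ still right-absorbs $b$. By the induction hypothesis applied to $(e_0,f_0)$, the triple $(u,a_0,b)$ is uniquely determined, and hence so is $(u,a,b)=(u,a_0\cdot s,b)$, given the choice of $s$.

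The key remaining task is to show that any alternative valid decomposition $(u',a',b')$ of $(e,f)$ admits exactly the same peeling, that is, the end $s$ of $a$ is necessarily also an end of $a'$. Since $s$ is an end of $e=u'\cdot a'$, either $s$ is already an end of $a'$ (and we induct) or $s$ is an end of $u'$ commuting with $a'$. In the second case, $s$ must also occur as an end of $f=(a')\inv\cdot b'\cdot a'$ while commuting with $a'$, which (since no letter commutes with itself) forces $s$ to be a letter of $b'$ that commutes with $a'$; as $b'$ is right-absorbed by $u'$, it follows that $s\in|\sr(u')|$. Using Lemma \ref{L:reordering} and Corollary \ref{C:proper_absorb}, one then permutes $u'$ to bring $s$ to its right boundary, removes the corresponding occurrence of $s$ from $b'$, and pushes $s$ through $a'$ to obtain a new decomposition $(u'',a'\cdot s,b'')$ with strictly larger $a$-part. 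Iterating this process terminates in a decomposition in which $s$ is a genuine end of the $a$-part, at which point the induction hypothesis applied to the peeled pair forces $(u',a',b')\approx(u,a,b)$.

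The main obstacle is the careful bookkeeping required in this last rearrangement: one must verify that moving $s$ out of $|b'|$ and the right boundary of $u'$ and across $a'$ simultaneously preserves the reducedness of both $u'\cdot a'$ and of the palindromic word $(a')\inv\cdot b'\cdot a'$, as well as the right-absorption condition. This combinatorial verification, leveraging the Symmetric Decomposition Lemma (Proposition \ref{P:Dec}) to control how the commuting and absorbed parts interact, is the technically delicate step; once it is in place, the induction yields the claim.
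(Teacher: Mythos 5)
Your overall strategy coincides with the paper's: induct by peeling an end $s$ of $a$ off of $e$ and off both sides of $f$, and reduce the problem to showing that $s$ is also an end of the $a$-part of any competing decomposition $(u',a',b')$. Where you diverge is in the case $s$ is an end of $u'$ commuting with $a'$, and there your proposal has a genuine gap. You try to \emph{repair} the rival decomposition by removing $s$ from $b'$ and forming $(u'',a'\cdot s,b'')$ with $u'\approx u''\cdot s$ and $a''=a'\cdot s$. But this new triple cannot satisfy $f\approx (a'')^{-1}\cdot b''\cdot a''$: after moving the commuting letter $s$ outward, $(a'\cdot s)^{-1}\cdot b''\cdot (a'\cdot s)$ acquires an extra $s$ at each end, while $b''$ has \emph{fewer} $s$'s than $b'$ did, so the occurrence-count of $s$ in the resulting word does not match that of $f$ (one checks this both when $s$ appears once in $b'$ and when it appears twice). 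So the iteration you propose never actually produces a legal decomposition, and the delicacy you flag at the end is not a bookkeeping issue but a genuine obstruction.

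The missing idea is that no repair is needed, because the right-absorption hypothesis immediately yields a contradiction. Since $s$ is an end of $a$, it occurs as two distinct end-letters of $f\approx a^{-1}\cdot b\cdot a$. If $s$ is an end of $u'$ commuting with $a'$, then $s\notin a'$, so both of these occurrences must lie in $b'$: up to permutation $b'\approx s\cdot b''\cdot s$, and reducedness of $b'$ forces some letter of $b''$ not to commute with $s$. But $u'\approx u_0'\cdot s$ has $s$ as its last letter, so ``$u'$ right-absorbs $b'$'' means in particular that every letter of $b'$ is contained in some letter of $u'$ and commutes with all later letters of $u'$ --- in particular with $s$. This is the contradiction; one then peels $s$ and inducts, with no surgery on the rival decomposition required. (The paper phrases the induction on $|f|$ rather than $|a|$, but this is cosmetic.)
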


\begin{proof}[{\it Proof of the claim:}]
\renewcommand{\qed}{\noindent\emph{End of the proof of the
claim.}}

We proceed by induction on the length of $f$. Clearly, if
$f=1$, then $b=a=1$ and thus $e=u$.

Otherwise, note that $e$ right-absorbs $f$
\iff $a=1$, in which case $e=u$ and $f=b$.
Therefore, if $e$ does not right-absorb $f$,  write
$a=a'\cdot s$. In particular, the word $e$ has an end
which is simultaneously a beginning and an end of $f$.

We show first that the only possible ends $t$ of $e$ which are both a
beginning and an end of
$f$ are exactly the ends of $a$. If not, the end $t$ must be an end of
$u$  which commutes with $a$. Likewise $b\approx t\cdot b'\cdot t$,
which contradicts that $u$ right-absorbs $b$, since $b'$ and $t$ do
not commute.

Removing $s$ yields the reduced words $u\cdot a'$ and ${a'}\inv \cdot
b\cdot a'$, and the result now follows by induction.

\end{proof}
\end{proof}

\begin{cor}\label{C:Flagtypedist}
  The type $\tp(F,G,H)$ of three flags $F$, $G$ and $H$ is uniquely
  determined by $\w(F,G)$, $\w(G,H)$ and $\w(F,H)$.
\end{cor}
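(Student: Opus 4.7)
The plan is to build a finite nice subset containing $F$, $G$, $H$ whose isomorphism type (together with the distinguished flags) depends only on the three connecting words $u=\w(F,G)$, $v=\w(F,H)$ and $w=\w(G,H)$; then Corollary \ref{C:nicetype} finishes the job, since the quantifier-free type of a nice set is determined by its isomorphism type as a $\Gamma$-space.

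First I would construct the nice set in two steps using Lemma \ref{L:alpha_kette}. Starting from the singleton nice set $\{G\}$, the flag $G$ is tautologically a base-point of $H$ over $\{G\}$, so if $P:G\op{w}H$ is a reduced path, then $A=\{G\}\cup P$ is nice and determined up to isomorphism (fixing $G$ and $H$) by $G$ and $w$. Now invoke Proposition \ref{P:Canred}(\ref{P:Canred:flagge}) on the triple $(F,G,H)$: it produces a base-point $K$ of $F$ over $P$ (hence over $A$) which can be taken to lie on $P$, together with a reduced path $Q:F\op{c\cdot\alpha\cdot\beta}K$, where the letters $u_1,v_1,c,x,\alpha,\beta$ come from the canonical decomposition in Proposition \ref{P:Canred}(\ref{P:Canred:woerter}). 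A second application of Lemma \ref{L:alpha_kette} shows that $N=A\cup Q$ is nice and uniquely determined (fixing $A$) by $K$ and the word $c\cdot\alpha\cdot\beta$.

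The key point is that the six words $u_1,v_1,c,x,\alpha,\beta$ are, by the uniqueness clause of Proposition \ref{P:Canred}(\ref{P:Canred:woerter}), functions of $u$, $v$, $w$ alone. Moreover the position of $K$ on $P$ is pinned down by $\w(G,K)=u_1$ via Corollary \ref{C:Perm}, so $K$ is intrinsically determined from the data $(G,H,w)$. Consequently, if $(F',G',H')$ is a second triple of flags with the same three pairwise connecting words, the analogous construction produces a nice set $N'\supset\{F',G',H'\}$ which is isomorphic to $N$ via a map sending $F\mapsto F'$, $G\mapsto G'$, $H\mapsto H'$. By Corollary \ref{C:nicetype}, $N$ and $N'$ have the same quantifier-free type, so $\tp(F,G,H)=\tp(F',G',H')$.

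The only genuine subtlety is checking at each stage that the base-points inherited from Proposition \ref{P:Canred} match the base-points required by Lemma \ref{L:alpha_kette}: specifically, that $K$ really is a base-point of $F$ over the whole nice set $A$ and not just over the path $P$. This follows because every flag of $A$ lies on $P$ (by construction), and Proposition \ref{P:fusspunkt}(\ref{P:fusspunkt:minimal}) then identifies the base-point relative to $P$ with the one relative to $A$. Once this is in place the argument is a straightforward two-step assembly, and no further combinatorial work is needed.
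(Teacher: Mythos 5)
Your proposal is correct and takes essentially the same route as the paper's proof: apply Proposition \ref{P:Canred} to produce the canonical decomposition and the base-point $K$ on the path $P:G\op{w}H$, then observe that the nice set generated by $P$ together with the path from $F$ to $K$ has isomorphism type determined by the data, and conclude via Corollary \ref{C:nicetype}. The only stylistic difference is that the paper invokes Corollary \ref{C:fusspunkttyp} to pin down the type of $F$ over the nice set determined by $P$ in one step, whereas you unfold that corollary into a second explicit application of Lemma \ref{L:alpha_kette}; the underlying argument is identical.
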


\begin{proof}
  Proposition \ref{P:Canred} yields a reduced path
  $P:G\op{w_1}K\op{w_2}H$ from $G$ to $H$, where  $K$ is a
  base-point of $F$ over the nice set determined by $P$,  such that
$w_1$, $w_2$ and $\w(F,K)$ are uniquely determined, up to
permutation, by the words $\w(F,G)$, $\w(G,H)$ and
  $\w(F,H)$. By Corollary \ref{C:fusspunkttyp}, the type of $F$ over $P$ is
  uniquely determined by $\w(F,K)$ and $K$. Note that $H$ is a base-point
of $G$ over the nice set $H$, so  Lemma \ref{L:alpha_kette} yields that
the type of $P=G,\ldots, K, \ldots, H$ is determined by
  the word $w_1\cdot w_2$. Thus, the type
  of $GKH$ is determined by the equivalence classes of $w_1$ and
  $w_2$, as desired.
\end{proof}

To extend this result to arbitrary sets of flags, we need the following
lemma. Given a $\Gamma$-space $A$, recall that $\chi(A)$ denotes the
chamber system of the flags in $A$ (\emph{cf.}\ the discussion before
Definition \ref{D:contring}).

\begin{lemma}\label{L:nice_flagmenge}
  A non-empty collection $X$ of flags of $M$ equals $\chi(A)$ for some nice
  subset $A$ of $M$ \iff, whenever $F$ and $G$ in $X$ are connected by a
  reduced word $u$ in $M$, then there is path in $X$ with word $u$
  connecting $F$ to $G$.
\end{lemma}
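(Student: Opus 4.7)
The forward direction is immediate from the equivalent characterisation of niceness given just after Definition \ref{D:nice}, valid because $M$ is \SC: if $F,G\in X=\chi(A)$ are joined by a reduced word $u$ in $M$, then there is a reduced flag path from $F$ to $G$ with word $u$ lying entirely inside $A$, and its intermediate flags automatically belong to $\chi(A)=X$.

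For the reverse direction, I set $A:=\bigcup_{F\in X}F$, the vertex union of the flags of $X$. The inclusion $X\subset\chi(A)$ is clear. Once the nontrivial inclusion $\chi(A)\subset X$ is established, niceness of $A$ follows at once: condition (i) of Definition \ref{D:nice} is trivial and condition (ii) becomes precisely the lifting hypothesis applied to $X=\chi(A)$. To prove $\chi(A)\subset X$, I fix $H\in\chi(A)$ and, for each colour $\gamma\in\Gamma$, pick a flag $H_\gamma\in X$ with $(H_\gamma)_\gamma=h_\gamma$. The strategy is to exhibit $H$ as an intermediate flag of a reduced path in $M$ between two flags of $X$: by the lifting hypothesis the whole path, and in particular $H$, then lies in $X$.

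To produce such a path I would apply Proposition \ref{P:Canred} to the triple $(H_\gamma,H,H_\delta)$, with $H$ playing the middle role: the proposition yields a reduced path $H_\gamma\op{w}H_\delta$ in $M$ (with $w=\w_M(H_\gamma,H_\delta)$) and a base-point $K$ of $H$ over this path, together with explicit decompositions of $\w_M(H_\gamma,H)$, $\w_M(H,H_\delta)$ and $w$. Since the reduced path $H_\gamma\op{w}H_\delta$ lifts into $X$ by hypothesis, one obtains $K\in X$, and one then argues by $\preceq$-induction on the word $\w_M(H,K)$ predicted by the decomposition: if it is trivial, then $K=H$ already lies on a reduced path inside $X$, and we are done; otherwise the decomposition produces strictly smaller data, and varying the choice of $\gamma$ and $\delta$ allows one to iterate.

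The main obstacle is precisely this iteration. A generic reduced path from a flag of $X$ to $H$ may have intermediate flags whose vertices do not lie in $A$, so the lifting hypothesis cannot be applied naively. What makes the argument work is the combination of the base-point formalism of Section \ref{S:Forking}, the Wobbling Lemma \ref{L:Wob} (which controls the ambiguity between two reduced paths with the same word), and the three-flag analysis of Proposition \ref{P:Canred}, all of which jointly exploit the simple connectedness of $M$ to reroute reduced paths through flags of $X$ and force $H$ into $X$.
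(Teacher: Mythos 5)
Your forward direction is correct and is the same as the paper's: it is exactly the equivalent characterisation of niceness recorded after Definition~\ref{D:nice}. The reverse direction has a real gap. The central claim --- that if $H$ (later, the base-point $K$) lies on a reduced path in $M$ between two flags of $X$, then the lifting hypothesis forces it into $X$ --- is not a valid inference. The hypothesis only produces \emph{some} path in $X$ with the given word; it says nothing about the particular path in $M$ passing through $H$ or $K$. Reduced paths with fixed endpoints and fixed word are not unique: by the Wobbling Lemma~\ref{L:Wob} the intermediate flags can vary over the whole wobbling set, so the lifted path may run through completely different flags, and $K$ need not be in $X$. You acknowledge this (``a generic reduced path \ldots may have intermediate flags whose vertices do not lie in $A$''), but the ensuing appeal to Proposition~\ref{P:Canred}, base-points and the Wobbling Lemma to ``reroute'' the path is a gesture rather than an argument: no inductive invariant is identified that would make the iteration converge.

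The paper's proof avoids chasing $H$ itself. It shows, by induction on $r$, that any $r$-element subset $S$ of $A$ lying in a common flag of $M$ lies in a flag of $X$. Writing $S=\{a_1,\ldots,a_r\}\subset H$, with $\gamma$ the colour of $a_r$ and $T$ the set of colours of $a_1,\ldots,a_{r-1}$, one picks $F\in X$ containing $a_r$ and, by induction, $G\in X$ containing $a_1,\ldots,a_{r-1}$. Then $\w(F,H)$ avoids $\gamma$ and $\w(H,G)$ is disjoint from $T$, so every letter of a reduct $w$ of their concatenation either avoids $\gamma$ or is disjoint from $T$, giving $w\approx w_1\cdot w_2$ with $\gamma\notin|w_1|$ and $|w_2|\cap T=\emptyset$. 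Applying the hypothesis to $F\op{w}G$ produces a flag $H'\in X$ after the $w_1$-segment; whatever $H'$ actually is (the wobbling is irrelevant), it agrees with $F$ at colour $\gamma$ and with $G$ on $T$, hence contains $S$. When $S$ is the full vertex set of $H$ this forces $H'=H\in X$. The idea you are missing is to choose $F$ and $G$ so that the vertices you need are \emph{invariant} along the relevant segment of the lifted path, rendering the non-uniqueness of intermediate flags harmless.
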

\noindent If $X=\chi(A)$, the remark
after Definition \ref{D:nice} implies that $A$ is the union of the
flags in $X$.
\begin{proof}
  One direction is an equivalent definition of niceness, so we need
  only show that, if $X$ satisfies the right-hand condition, then
  $X=\chi(A)$ for some nice subset $A$. Set $A$ the collection of all
  vertices of flags in $X$. In order to show that $X=\chi(A)$ and that
  $A$ is nice, it suffices to show that any flag in $A$ is a flag
  from $X$, that is, we need only show, by induction on $|S|$, that
  given a collection of elements $S$ in $A$ lying in a flag $H$ in
  $M$, there is a flag in $X$ containing $S$. If $S$ is a singleton,
  there is nothing to prove. Otherwise, enumerate $S=\{a_1,\ldots,
  a_r\}$ and set $T\subset \Gamma$ the collection of colours of
  $a_1,\ldots,a_{r-1}$, and $\gamma$ the colour of $a_r$. Choose a
  flag $F$ in $X$ containing $a_r$ and, by induction, a flag $G$ in
  $X$ containing $a_1,\ldots,a_{r-1}$. Observe that

  \[ F\sim_{\Gamma\setminus\{\gamma\}} H \sim_{\Gamma\setminus T} G,\]
  \noindent so there are reduced words $u$ and $v$, such that $\gamma$
  does not occur in $u$, the support of $v$ is disjoint from $T$ and
  $F\op{u}H\op{v}G$. Reducing this path in $M$ gives a word $w_1\cdot
  w_2$, where $\gamma$ does not occur in $w_1$ and each letter in
  $w_2$ is disjoint from $T$. By assumption, there is a reduced path
  in $X$ of the form $F\op{w_1} H'\op{w_2} G$. Clearly, the path $H'$
  lies in $X$ and contains the set $S$, as desired.
\end{proof}
We will now extend Corollary \ref{C:Flagtypedist} to arbitrarily many
flags.

\begin{theorem}\label{T:Eq}
  The type $\tp(F_1, \ldots,F_n)$ of a sequence of flags is uniquely
  determined by $\w(F_i, F_j)$ for $1\leq i\neq j\leq n$.
\end{theorem}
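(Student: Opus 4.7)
My plan is to prove this by induction on $n$. The base cases $n = 2$ and $n = 3$ follow respectively from Lemma~\ref{L:alpha_kette} combined with Corollary~\ref{C:nicetype} (since $\w(F_1, F_2)$ determines the isomorphism type of the nice set given by a reduced path between them) and from Corollary~\ref{C:Flagtypedist}, itself established via Proposition~\ref{P:Canred}.

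For the inductive step, assuming the result for tuples of size at most $n$ with $n \geq 3$, consider $(F_1, \ldots, F_{n+1})$. The inductive hypothesis applied to $(F_1, \ldots, F_n)$ determines $\tp(F_1, \ldots, F_n)$ and hence, by Corollaries~\ref{C:nicehull} and \ref{C:nicetype}, the nice hull $N = \nh(F_1, \ldots, F_n)$ up to isomorphism. By Corollary~\ref{C:fusspunkttyp}, it then suffices to produce a base-point $G$ of $F_{n+1}$ over $N$ together with the word $\w(F_{n+1}, G)$, both extracted canonically from the pairwise words $\w(F_i, F_{n+1})$. I would construct such a base-point iteratively via Proposition~\ref{P:Canred}. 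Set $G_1 := F_1$; at stage $i$, assuming that $G_i \in \dcl(F_1, \ldots, F_i, F_{n+1})$ is a base-point of $F_{n+1}$ over $\nh(F_1, \ldots, F_i)$ with $\w(F_{n+1}, G_i)$ already determined, I invoke the inductive hypothesis on the subtuple $(F_1, \ldots, F_{i+1}, F_{n+1})$ (of size at most $n$) to compute $\w(G_i, F_{i+1})$ from the original pairwise words, and then apply Proposition~\ref{P:Canred} to the triple $(F_{n+1}, G_i, F_{i+1})$ to produce a new flag $G_{i+1}$ on a reduced path from $G_i$ to $F_{i+1}$, with $\w(F_{n+1}, G_{i+1})$ and the position of $G_{i+1}$ on the path uniquely determined. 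After $n - 1$ iterations, $G_n$ is the desired base-point.

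The main technical obstacle will be to verify that each $G_{i+1}$ output by Proposition~\ref{P:Canred} is in fact a base-point of $F_{n+1}$ over the entire nice hull $\nh(F_1, \ldots, F_{i+1})$, and not merely over the narrower nice set traced by the reduced path from $G_i$ to $F_{i+1}$. My plan is to settle this through the $\preceq$-minimality criterion of Proposition~\ref{P:fusspunkt}\,$(c)$: the nice hull $\nh(F_1, \ldots, F_{i+1})$ is obtained from $\nh(F_1, \ldots, F_i)$ by a tower of simple extensions (Proposition~\ref{P:nice-bandf}) whose new flags are constrained by Proposition~\ref{P:nice=nosplit}, so the $\preceq$-minimality built into Proposition~\ref{P:Canred} should preclude a strictly shorter word connecting $F_{n+1}$ to any flag in the enlarged hull. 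Once this minimality is secured at every stage, the word $\w(F_{n+1}, G_n)$ emerges as a function of the pairwise words alone, and the inductive step is complete.
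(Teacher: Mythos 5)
Your proposal takes a genuinely different route from the paper: you set up an induction on the number of flags and attempt to construct a base-point of $F_{n+1}$ over $\nh(F_1,\ldots,F_n)$ by iterating Proposition~\ref{P:Canred} across the flags $F_2,\ldots,F_n$. The paper instead works with surjective \emph{isometries} (bijections between collections of flags preserving the connecting words $\w$): it shows that any such isometry extends to one between collections satisfying the niceness criterion of Lemma~\ref{L:nice_flagmenge}, hence is elementary by Corollary~\ref{C:nicetype}; the extension step glues on a fresh reduced path and appeals to stationarity (Proposition~\ref{P:Stat}) together with the three-flag case (Corollary~\ref{C:Flagtypedist}). This avoids the induction on $n$ entirely, and with it the difficulties your scheme runs into.

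Your sketch has two genuine gaps. First, an off-by-one problem: the final iteration at stage $i=n-1$, which produces $G_n$, needs $\w(G_{n-1},F_n)$, and by your own scheme this comes from the inductive hypothesis applied to $(F_1,\ldots,F_n,F_{n+1})$ --- a tuple of size $n+1$, exactly the case being proved, not one covered by the hypothesis; your parenthetical ``(of size at most $n$)'' is false at $i=n-1$. Second, the ``main technical obstacle'' you identify is a real gap, and the $\preceq$-minimality of Proposition~\ref{P:Canred} does not close it. That proposition produces a flag $K$ that is a base-point of $F_{n+1}$ over the nice set traced by a \emph{single reduced path} from $G_i$ to $F_{i+1}$; the minimality built into it is minimality among flags on that path. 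The nice hull $\nh(F_1,\ldots,F_{i+1})$ will in general be much larger and contain flags nowhere near that path, and nothing in Proposition~\ref{P:Canred} rules out that some such flag admits a $\prec$-smaller connecting word from $F_{n+1}$. Upgrading a path-level base-point to a hull-level base-point seems to require combining the independences $F_{n+1}\ind_K P$ and $F_{n+1}\ind_K \nh(F_1,\ldots,F_i)$ into $F_{n+1}\ind_K \nh(F_1,\ldots,F_{i+1})$, which is a total-triviality argument; but Proposition~\ref{P:totally_trivial} is derived in the paper \emph{from} Theorem~\ref{T:Eq}, so it cannot be invoked here without circularity. Both gaps would need substantive new arguments to close.
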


\begin{proof}
  Recall the notion of isometry, as in Definition \ref{D:contring}.
  Given two collections of flags $X$ and $X'$, we will show that a
  surjective isometry $\phi:X\to X'$ induces an elementary map between
  nice subsets of $M$.

  Suppose first that $X$ satisfies the conditions of Lemma
  \ref{L:nice_flagmenge}. Then so does $X'$, and there are nice
  subsets $A$ and $A'$ such that $X=\chi(A)$ and $X'=\chi(A')$. In
  particular, the map $\phi$ induces a graph isomorphism $f:A\to A'$,
  for two flags $F$ and $G$ in $A$ are $\gamma$-equivalent \iff
  $\w(F,G)$ does not contain $\gamma$. By Remark \ref{R:nice}
  $(\ref{R:nice:space})$, nice subsets are $\Gamma$-spaces, so $f$ is
  an elementary map, by Corollary \ref{C:nicetype}.

  Thus, we need only show that $\phi$ can be extended to some supersets of
  flags, which satisfy the conditions of \ref{L:nice_flagmenge}. Given
  flags $F$ and $G$ in $X$, with respective images $F'$ and $G'$ in
  $X'$, if $\w(F,G)=u$, then choose reduced paths $P:F\op{u}G$ and
  $P':F'\op{u'}G'$ with
  \[X \ind\limits_{F,G} P\quad \text{ and }\quad X'
  \ind\limits_{F',G'} P'.\] Let $\psi$ be the isometry $P\to P'$ which
  maps $(F,G)$ to $(F',G')$. In order to show that $\phi\cup\,\psi$ is
  a well-defined isometry between $X\cup P$ and $X'\cup P'$, consider
  some flag $H$ in $X$. It suffices to show that $\phi\cup\psi$
  induces an isometry $H\cup P\to\phi(H)\cup P'$. By Corollary
  \ref{C:Flagtypedist}, the map $\rest{\phi}{H,F,G}$ is elementary. So
  is $\psi$, by Lemma \ref{L:alpha_kette}. As the type $\tp(H/F,G)$ is
  stationary, by Proposition \ref{P:Stat}, it follows that
  $\rest{\phi}{H,F,G}\cup\,\psi$ is an elementary map and thus an
  isometry.

  Iterating the above process countably many times, we obtain
  the desired superset satisfying \ref{L:nice_flagmenge}.
\end{proof}

The above result together and Corollary \ref{C:P_uEq}
yield the following, by compactness.

\begin{cor}\label{C:Eq}
  The theory $\psg$  is equational.
\end{cor}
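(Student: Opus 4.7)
The plan is to combine the two ingredients just established. Theorem \ref{T:Eq} says that the type of a tuple $(F_1,\dots,F_n)$ of flags is determined by the pairwise connecting words $\w(F_i,F_j)$, while Corollary \ref{C:P_uEq} says that each of the formulae $\pp_u(X,Y)$ is an equation. Because bi-interpretability preserves equationality (as recalled after Definition \ref{D:Eq}) and $\psg$ is bi-interpretable with the theory of its flag structures (Theorem \ref{T:biinterpretierbar}), it suffices to work in the flag language and show that every parameter-free formula $\varphi(X_1,\dots,X_n)$ is, modulo $\psg$, a Boolean combination of formulae from
\[\mathcal{E}_n=\{\pp_u(X_i,X_j)\mid u\text{ a reduced word},\ 1\le i\ne j\le n\};\]
definable sets with parameters are handled by absorbing the parameters into the tuple of variables and then appealing to the parameter-free case.

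The main intermediate point is that $\mathcal{E}_n$ separates the complete types in $S_n(\psg)$. Given two distinct complete types $p\neq q$, Theorem \ref{T:Eq} produces a pair of indices $(i,j)$ such that the reduced words $w^p_{ij}$ and $w^q_{ij}$ attached to $p$ and $q$ are not equivalent, so one may assume $w^q_{ij}\not\preceq w^p_{ij}$. By Lemmata \ref{L:modlemma}, \ref{L:path_permutation} and \ref{L:Flagpath}, the formula $\pp_u(F,G)$ holds exactly when there exists a reduced $u'\preceq u$ with $F\op{u'}G$, that is, when $\w(F,G)\preceq u$. Taking $u=w^p_{ij}$ therefore puts $\pp_{w^p_{ij}}(X_i,X_j)\in p\setminus q$, yielding the desired separation.

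Once separation is in hand, the conclusion is the soft Stone-space compactness argument: inside the clopen set $[\varphi]\subset S_n(\psg)$, fix $p\in[\varphi]$ and, for every $q\in[\neg\varphi]$, choose $\phi_{p,q}\in\mathcal{E}_n$ with $\phi_{p,q}\in p$ and $\neg\phi_{p,q}\in q$; compactness of $[\neg\varphi]$ cuts the cover $\{[\neg\phi_{p,q}]\}_q$ down to finitely many pieces, whose conjunction $\psi_p$ lies in $p$ and implies $\varphi$; compactness of $[\varphi]$ then reduces the cover $\{[\psi_p]\}_p$ to a finite one, exhibiting $\varphi$ as a finite disjunction of finite conjunctions of formulae in $\mathcal{E}_n$. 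Together with Corollary \ref{C:P_uEq}, this shows that every definable set in $n$ variables is a Boolean combination of instances of equations, so $\psg$ is $n$-equational for every $n$. The only genuinely new content is the separation step; the compactness argument is entirely formal, which is precisely why the paper just writes \emph{``by compactness''}.
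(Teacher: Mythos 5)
Your proposal is correct and follows the same route as the paper: reduce to the flag language via bi-interpretability, use Theorem~\ref{T:Eq} to show the $\pp_u$-formulae separate types, invoke Corollary~\ref{C:P_uEq} to see those formulae are equations, and finish by the standard Stone-space compactness argument. You have simply unpacked the paper's terse ``by compactness'' into the explicit separation step (using that $\pp_u(F,G)$ holds iff $\w(F,G)\preceq u$, and that $\preceq$ is a partial order up to $\approx$) and the two-layer compactness covering, and spelled out the routine absorption of parameters into the variable tuple; all of this is accurate and matches the intended argument.
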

Equationality, or rather, Theorem \ref{T:Eq} allows us to show total
triviality and hence weak elimination of imaginaries.
\begin{prop}[{\emph{cf.}\ \cite[Lemma 7.22]{BMPZ13}}]
  \label{P:totally_trivial}
  The theory $\psg$ is totally trivial: over any set of parameters $D$,
  given tuples $a$, $b$ and $c$ such that $a$ is independent both from
  $b$ and from $c$ over $D$, then $a$ is independent from $b,c$ over
  $D$.
\end{prop}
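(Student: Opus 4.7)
The plan is to reduce the claim to a combinatorial statement about base-points and words and then combine the two independence hypotheses using the ``pairwise'' description of types (Theorem~\ref{T:Eq}, Corollary~\ref{C:Flagtypedist}) together with the canonical decomposition of Proposition~\ref{P:Canred}. First, by the finite character of non-forking and the fact that nice hulls are algebraically closed (Corollary~\ref{C:nice_acl}), I may replace $D$, $b$, and $c$ by their nice hulls and assume $D\subset M$ is nice and $b,c$ are finite nice subsets. A standard application of transitivity lets me assume moreover that $a$ is a single flag $F$, since $\ind_D$ on the left can be checked flag by flag inside $\nh(Da)$. By Corollary~\ref{C:forking} combined with Corollary~\ref{C:cb_type_pG}, the hypothesis $F\ind_D b$ amounts to the existence of a base-point $G_b$ of $F$ over $\nh(Db)$ with $G_b/\sr(\w(F,G_b))\subset\nh(D)$, and similarly for $c$; using Corollary~\ref{C:Fusspunktgleich}, which lets one move freely within a base-point's $\sr$-class, I may further arrange $G_b, G_c\in\nh(D)$.

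The task then becomes producing a base-point of $F$ over $\nh(Dbc)$ that already lies in $\nh(D)$. For this I would apply Proposition~\ref{P:Canred} to the triangle $G_b\leftarrow F\rightarrow G_c$: it yields a reduced path $G_b\op{w_1}K\op{w_2}G_c$ and a base-point $K$ of $F$ over the nice set spanned by this path, with $\w(F,K)$ determined canonically (up to permutation) by $\w(F,G_b)$, $\w(F,G_c)$, and $\w(G_b,G_c)$. Since $G_b,G_c\in\nh(D)$ and $\nh(D)$ is itself nice, this reduced path lies in $\nh(D)$ by niceness, so $K\in\nh(D)$. It remains to show that $K$ remains a base-point of $F$ over the larger nice hull $\nh(Dbc)$, equivalently (by the $\preceq$-minimality criterion of Proposition~\ref{P:fusspunkt}) that no flag $H\in\nh(Dbc)$ yields a strictly $\prec$-smaller word $\w(F,H)$. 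This last point is where the equational nature of $\psg$ enters: by Theorem~\ref{T:Eq}, the type of $(F,G_b,G_c,H)$ over $\emptyset$ is determined by its six pairwise words, and using Lemma~\ref{L:div} and Corollary~\ref{C:kleinergleich} one checks that if such an $H$ existed, then combining its word $\w(H,G_b)$ with the minimality of $\w(F,G_b)$ over $\nh(Db)$, and $\w(H,G_c)$ with the minimality of $\w(F,G_c)$ over $\nh(Dc)$, would force $\w(F,H)\succeq\w(F,K)$, contradicting the assumed decrease.

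The main obstacle is precisely this last propagation step: showing that $\preceq$-minimality of $\w(F,K)$ witnessed separately over $\nh(Db)$ and over $\nh(Dc)$ upgrades to minimality over the full nice hull $\nh(Dbc)$, even though the latter can contain flags $H$ that are ``mixed'' with respect to $b$ and $c$. The combinatorial bookkeeping requires the uniqueness clause in the canonical decomposition of Proposition~\ref{P:Canred} as well as a careful use of the Wobbling Lemma~\ref{L:Wob} to control the ambiguity of $K$ within its $\sr(\w(F,K))$-class; once the propagation is verified, $K$ is a base-point of $F$ over $\nh(Dbc)$ lying in $\nh(D)$, which by Corollary~\ref{C:forking} means $F\ind_D bc$.
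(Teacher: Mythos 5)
Your approach diverges substantially from the paper's and, as you yourself flag, contains a genuine gap. The final ``propagation step'' --- showing that the flag $K\in\nh(D)$ obtained from the triangle on $G_b,G_c$ remains a base-point of $F$ over the much larger nice set $\nh(Dbc)$ --- is not actually carried out, and it is essentially equivalent to the proposition itself. A flag $H\in\nh(Dbc)$ need not lie in $\nh(Db)$ or $\nh(Dc)$, so neither of your two separate minimality statements applies to it, and the appeal to Lemma~\ref{L:div}, Corollary~\ref{C:kleinergleich}, Proposition~\ref{P:Canred} and the Wobbling Lemma in the last paragraph is not an argument but a list of tools. Note also that the triangle $G_b\leftarrow F\rightarrow G_c$ you feed into Proposition~\ref{P:Canred} is degenerate: once you have arranged $G_b,G_c\in\nh(D)$, both are base-points of $F$ over the nice set $\nh(D)$, so by Proposition~\ref{P:fusspunkt} the words $\w(F,G_b)$ and $\w(F,G_c)$ are equivalent and $\w(G_b,G_c)\preceq\sr(\w(F,G_b))$ by Corollary~\ref{C:Fusspunktgleich}. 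The ``new'' flag $K$ is just another base-point in the same $\sr$-class; nothing has been gained.

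The paper avoids this dead end by running the argument in the opposite direction. It fixes the base-point $G$ of $F$ over $D$ (the intersection side, not the union side), chooses an auxiliary copy $F'$ of $\tp(F/G)$ that is \emph{constructed} to be independent from $D$, $H=b$ and $K=c$ over $G$, and then shows the pairwise words of $F$ and of $F'$ with every flag among $D,H,K$ agree. Here the hypotheses $F\ind_D H$ and $F\ind_D K$ are used only one flag at a time, via the formula $\w(F,H)=[\w(F,G)\cdot\w(G,H)]$ of Proposition~\ref{P:fusspunkt}(a). Theorem~\ref{T:Eq} then gives $\tp(F/D,H,K)=\tp(F'/D,H,K)$, and since $F'$ is independent by construction, so is $F$. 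The same two ingredients you identify --- the base-point description of forking and the pairwise-word description of types --- suffice, but they must be used to transfer an independence that holds by fiat, not to verify minimality over the union intrinsically. As written, your proposal has not proved the proposition.
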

By induction on the length of the tuples (\emph{cf.} \cite[Lemma
4]{jBG91}), it is easy to see that it suffices to check total triviality
for singletons $a$, $b$ and $c$.
\begin{proof}
  By taking a non-forking extension to a small submodel containing
$D$, we may assume that $D$ is nice. Since every element is
contained in a flag, it suffices to show total
triviality when $a$, $b$ and $c$ enumerate three flags $F$, $H$ and
$K$.

  Let $G$ be a base-point of $F$ over $D$. In particular, we have
  $F\ind_GH$ and $F\ind_G K$.

  Choose another realisation $F'$ of $\tp(F/G)$ such that
  $F'\ind_G H,K$. Then $\w(F',G)=\w(F,G)$ and
  \[\w(F',H)=[\w(F',G)\cdot \w(G,H)]=[\w(F,G)\cdot
    \w(G,H)]=\w(F,H),\]

\noindent  since $F\ind_G H$. Likewise for $\w(F,K)$. Theorem \ref{T:Eq}
implies
that $F$ and $F'$ have the same type over $G,H,K$, so $F\ind_G H,K$, as
desired.
\end{proof}
Since $\psg$ is $\omega$-stable, we conclude by \cite[Proposition
7]{jBG91} the following.
\begin{cor}\label{C:perfect}
  The theory $\psg$ is perfectly trivial, that is, given given any set
of parameters $D$ and tuples $a$, $b$ and $c$ such that $a$ and $b$
are both independent over $D$, then so are they  over $D\cup\{c\}$.
\end{cor}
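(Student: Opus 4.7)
The plan is to deduce this as an immediate formal consequence of results already established for $\psg$ together with a general fact from the literature. By Corollary~\ref{C:stab}, the theory $\psg$ is $\omega$-stable, and by Proposition~\ref{P:totally_trivial} it is totally trivial. The implication \emph{totally trivial $+$ $\omega$-stable $\Longrightarrow$ perfectly trivial} is the content of \cite[Proposition~7]{jBG91}, and applying it yields the corollary with no further work.

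Were one to prefer a self-contained derivation, the natural strategy would mirror the proof of Proposition~\ref{P:totally_trivial}. One reduces to singletons $a$, $b$, $c$ by induction on the lengths of the tuples (\emph{cf.} \cite[Lemma~4]{jBG91}), then to the case where $a$, $b$, $c$ enumerate three flags $F$, $H$, $K$ and $D$ is a nice set containing all relevant base-points. Fix a base-point $G$ of $F$ over $D$ and choose, by the Extension property and stationarity (Proposition~\ref{P:Stat}), a realisation $F'\models\tp(F/G)$ with $F'\ind_G H,K$. Using Proposition~\ref{P:fusspunkt} together with the word-level description of the canonical base $G/\sr(\w(F,G))$ from Corollary~\ref{C:cb_type_pG}, one verifies that $\w(F',H)=\w(F,H)$ and $\w(F',K)=\w(F,K)$, after which Theorem~\ref{T:Eq} gives $F\equiv_{G,H,K} F'$, hence $F\ind_{GK} H$ and a fortiori $F\ind_{DK} H$. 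The only delicate point in such a direct proof is showing that, after enlarging parameters by $K$, the base-point $G$ of $F$ over $D$ remains compatible with $F'\ind_G H,K$; this is exactly what the combinatorial analysis of base-points in Section~\ref{S:Forking} buys us, and is what \cite{jBG91} packages abstractly. For clarity, we follow the citation route.
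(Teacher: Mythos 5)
Your primary route---invoking $\omega$-stability from Corollary~\ref{C:stab}, total triviality from Proposition~\ref{P:totally_trivial}, and then Goode's \cite[Proposition~7]{jBG91}---is exactly the paper's proof, and it is correct and complete. The alternative self-contained sketch you append is not needed (and, as you note yourself, the base-point bookkeeping after adjoining $K$ to the parameters would require more care), but since you explicitly defer to the citation route, the proposal stands as given.
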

An $\omega$-stable theory $T$ has \emph{weak elimination of
  imaginaries} if the canonical base of every stationary type can be
chosen to be a subset of the real sort. In this case, types over
algebraically closed sets are always stationary.

\begin{cor}\label{C:WEI}
  The theory $\psg$ has weak elimination of imaginaries, and given
any stationary type  $\tp(a_1,\ldots, a_n/B)$, then
  \[\cb(\tp(a_1\ldots a_n/B))=\cb(\tp(a_1/B))\cup\cdots\cup
\cb(\tp(a_n/B)).\]
\end{cor}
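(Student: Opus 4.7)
My plan is to establish the two assertions of the corollary separately: first the equality
\[\cb(\tp(a_1,\ldots,a_n/B)) = \cb(\tp(a_1/B))\cup\cdots\cup\cb(\tp(a_n/B)),\]
as a consequence of perfect triviality, and then weak elimination of imaginaries by reducing an arbitrary real element to a flag containing it, for which Corollary \ref{C:cb_type_pG} already exhibits a canonical base in the real sort.

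For the equality, write $\bar a=(a_1,\ldots,a_n)$ and set $C_i=\cb(\tp(a_i/B))$ and $C=C_1\cup\cdots\cup C_n$. The inclusion $C\subseteq\dclq(\cb(\tp(\bar a/B)))$ is immediate, since each $\tp(a_i/B)$ is a restriction of $\tp(\bar a/B)$. For the reverse inclusion it suffices to prove $\bar a\ind_C B$, so that $\tp(\bar a/B)$ does not fork over $C$ and hence $\cb(\tp(\bar a/B))$ lies in $\dclq(C)$. Now $a_i\ind_{C_i}B$ and therefore $a_i\ind_C B$ by monotonicity; perfect triviality (Corollary \ref{C:perfect}) then yields $a_i\ind_{C\cup\{a_j:j\neq i\}}B$. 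Applying the standard transitivity principle for non-forking inductively in $n$ collapses these independencies into $\bar a\ind_C B$, as desired.

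For weak elimination of imaginaries, it is enough, by the equality just established, to handle the case of a single real element $a$. Since every vertex lies in a flag, I would choose a flag $F$ through $a$ with $F\ind_a B$, which exists by the extension axiom for non-forking. From $a\in\dcl(F)$ one immediately obtains $\cb(\tp(a/B))\subseteq\dclq(\cb(\tp(F/B)))$. For the converse inclusion, I would combine $a\ind_{\cb(\tp(a/B))}B$ with $F\ind_a B$ via transitivity of non-forking, using that $\cb(\tp(a/B))\subseteq\aclq(B)$, to obtain $F\ind_{\cb(\tp(a/B))}B$, whence $\cb(\tp(F/B))\subseteq\dclq(\cb(\tp(a/B)))$. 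The two canonical bases are therefore interdefinable. Now $\tp(F/B)$ coincides with $\p_u(G)|\nh(B)$ for a base-point $G$ of $F$ over the nice hull $\nh(B)$ and $u=\w(F,G)$, so Corollary \ref{C:cb_type_pG} identifies $\cb(\tp(F/B))$ with the real class $G/\sr(u)$. Consequently $\cb(\tp(a/B))$, and hence every $\cb(\tp(\bar a/B))$, is interdefinable with a real tuple.

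The delicate step is the transitivity argument identifying $\cb(\tp(a/B))$ with $\cb(\tp(F/B))$: one must verify that absorbing the canonical base on the right of each independency is legitimate, which follows from its algebraicity over $B$. The decomposition of $\cb(\tp(\bar a/B))$ into coordinate canonical bases is then essentially a formal consequence of perfect triviality, with no further geometric input from the combinatorics of $\Gamma$-spaces.
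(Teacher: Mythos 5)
Your overall strategy is the same as the paper's: reduce the multi-tuple case to the unary case via triviality of forking, then reduce the unary case to a flag through the given vertex, and finally invoke Corollary~\ref{C:cb_type_pG}. Two points deserve comment, one stylistic and one a genuine gap.

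On the equality, you route through perfect triviality (Corollary~\ref{C:perfect}) followed by an inductive use of transitivity. This works, but it is a detour: from $a_i\ind_C B$ for each $i$, symmetry gives $B\ind_C a_i$, and then total triviality (Proposition~\ref{P:totally_trivial}) applied $n-1$ times gives $B\ind_C a_1\cdots a_n$ directly, hence $\bar a\ind_C B$. This is what the paper uses and it avoids the chain of base-enlargements.

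The genuine gap is in the last step of the WEI argument. You write that $\tp(F/B)$ \emph{coincides with} $\p_u(G)\,|\,\nh(B)$ and then read off the canonical base from Corollary~\ref{C:cb_type_pG}. These are types over different domains: $\tp(F/B)$ is over $B$ while $\p_u(G)|\nh(B)$ is over the nice hull $\nh(B)$, and the hull is in general strictly larger than $\acl(B)$ (for instance, for a single vertex $a$ of colour $\gamma$ in the graph $\gamma-\delta$, the hull adjoins a $\delta$-vertex which is not algebraic over $a$). To transfer the canonical base from $\tp(F/\nh(B))$ down to $\tp(F/B)$ you would need $F\ind_B\nh(B)$, and this does not follow from $F\ind_a B$ (nor from $F\ind_a\nh(B)$, since $a\in\dcl(F)\setminus\acl(B)$ blocks the transitivity step). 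The paper avoids this entirely by first passing to a small elementary substructure $N\supseteq B$ and replacing $p$ by its non-forking extension to $N$: since $N$ is itself nice, $\nh(N)=N$, the identification $\tp(F/N)=\p_u(G)|N$ is immediate from Corollary~\ref{C:fusspunkttyp}, and Corollary~\ref{C:cb_type_pG} applies cleanly. With that one modification (work over a model, which suffices for WEI since canonical bases are preserved under non-forking extension to a model), the rest of your argument goes through.
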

\begin{proof}
  Considering a small elementary substructure $N$, we may assume that
the stationary type $p=\tp(a_1,\ldots,a_n/N)$. Suppose that the
canonical base of every unary type over $N$ is  interdefinable with a
finite subset of the real sort. Thus, we may choose finite subsets
$C_1,\ldots,C_n$ such that $\cb(\tp(a_i/N))$ is interdefinable with
$C_i$. Total triviality (Proposition \ref{P:totally_trivial}) implies
that $p$ does not fork over $C=C_1\cup\dotsb\cup C_n$. By Proposition
\ref{P:Stat}, the restriction of $p$ to $C$ is stationary, so $C$ is
interdefinable with $\cb(p)$.

Therefore, we need only show that the canonical base of every unary
$\tp(a/N)$ over the nice set $N$ is interdefinable with a
finite subset of the real sort. Given a flag $F$ containing $a$
independent from $N$, the type $\tp(F/a)$ is stationary, so
$\cb(\tp(a/N))$ is interdefinable with $\cb(\tp(F/N))$. Corollary
\ref{C:cb_type_pG} yields now the desired result.

\end{proof}

\begin{cor}\label{C:2based}
  The canonical base of a type is algebraic over two
independent realisations.
\end{cor}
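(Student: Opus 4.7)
The plan is to reduce, via Corollary \ref{C:WEI} and the decomposition of canonical bases into their coordinates, to the case of a type $p=\tp(F/D)$ of a single flag $F$; the case of an arbitrary element then follows by passing to a flag through it, since independent realisations of a flag type restrict to independent realisations of each vertex type. We may assume $D$ is nice; let $G$ be a base-point of $F$ over $D$ and $u=\w(F,G)$. By Corollary \ref{C:cb_type_pG}, the canonical base $\cb(p)$ is interdefinable with $G/\sr(u)$, so it suffices to show $G/\sr(u)\in\acl(F_1,F_2)$ for two independent realisations $F_1,F_2$ of $p$.

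First I would observe that, since $G\in D$ is a common base-point, $F_1\ind_G F_2$; by Proposition \ref{P:fusspunkt}, $\w(F_1,F_2)=[u\cdot u^{-1}]$, and writing $u\approx u_1\cdot\tilde u$ with $\tilde u$ the (commuting) final segment of $u$, Corollary \ref{C:hinundher} identifies this reduct with $w=u_1\cdot\tilde u\cdot u_1^{-1}$. The key construction is to exhibit an intermediate flag $K$ at position $u_1$ in a reduced $F_1\to F_2$ path that simultaneously plays the role of the intermediate in the reduced path $F_1\op{u_1}K\op{\tilde u}G$. To produce it, I would build the composite (non-reduced) path $F_1\op{u_1}K\op{\tilde u}G\op{\tilde u}K'\op{u_1^{-1}}F_2$, where $K'$ is the mirror intermediate on $G\op{u^{-1}}F_2$, and apply the non-splitting reduction $\tilde u\cdot\tilde u\to\tilde u$ (valid because $\tilde u$ is commuting) to the composite word $u_1\cdot\tilde u\cdot\tilde u\cdot u_1^{-1}$; this collapses the path at the flag level to a reduced path $F_1\op{u_1}K\op{v}K'\op{u_1^{-1}}F_2$ of word $w$, for some $v\preceq\tilde u$, in which $K$ indeed sits at position $u_1$ from $F_1$.

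Now the Wobbling Lemma \ref{L:Wob} asserts that any flag at this position in a reduced $F_1$-to-$F_2$ path is determined by $F_1,F_2$ modulo
\[
\wob(u_1,\tilde u\cdot u_1^{-1})=\sr(u_1)\cap\sr(u_1\cdot\tilde u)=\sr(u_1)\cap\sr(u)\subseteq\sr(u),
\]
so $K/\sr(u)\in\dcl(F_1,F_2)$. On the other hand, from $K\op{\tilde u}G$ and $\tilde u\preceq\sr(u)$ (Corollary \ref{C:sr}), we obtain $G\sim_{\sr(u)}K$, i.e.\ $G/\sr(u)=K/\sr(u)$. Combining these gives $\cb(p)\in\dcl(F_1,F_2)\subseteq\acl(F_1,F_2)$, as required.

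The main difficulty will be the clean identification of $K$ as both the intermediate flag on $F_1\op{u}G$ at position $u_1$ and the flag at position $u_1$ of some reduced $F_1\op{w}F_2$ path; this is essentially an instance of Proposition \ref{P:Canred} applied to the triple $(F_1,G,F_2)$, whose decomposition collapses drastically here because $\tilde u$ is commuting, and the verification that the flag-level reduction tracks the word-level absorption uses Lemma \ref{L:alpha_kette}. A minor additional point is justifying the reduction to unary types, which relies on the $\cb$-decomposition formula in Corollary \ref{C:WEI} together with the preservation of independence under projections.
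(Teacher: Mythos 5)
Your proof is correct and follows essentially the same approach as the paper's: both reduce to the type of a single flag, exploit $\w(F_1,F_2)=[u\cdot u^{-1}]=u_1\cdot\tilde u\cdot u_1^{-1}$ via Corollary \ref{C:hinundher}, place the intermediate flag $K$ at position $u_1$ simultaneously on the path to the base-point $G$ and on the reduced $F_1$-to-$F_2$ path, and apply the Wobbling Lemma together with $\tilde u\preceq\sr(u)$ to get $G/\sr(u)=K/\sr(u)\in\dcl(F_1,F_2)$. The only cosmetic differences are that you route the reduction to unary types through Corollary \ref{C:WEI} rather than invoking total triviality directly, and you spell out the identification of $K$ (which the paper only records in a diagram) in more detail.
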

\begin{proof}
Again by Proposition \ref{P:totally_trivial} and taking small elementary
substructures, we need only consider a unary type $p$ over some nice set
$A$. Let $a$ and $a'$ be two independent
realisations of $p$. Choose a flag $F$ containing $a$ independent
from $A$ over $a$. Since the type of $F$ over $a$ is stationary, it
follows that $\cb(a/A)$ is interdefinable with $\cb(F/A)$. By
automorphisms, we may find a flag $F'$ containing $a'$ with the same
type than $F$ over $A$ and $$ F'\ind_{a'} AF.$$
\noindent In particular, we have that $$ F F'\ind_{a,a'} A$$ and $$
F\ind_A F',$$ thus, if $\cb(F/A) \subset \acl(F,F')\cap A$, then
$\cb(a/A)=\cb(F/A)\subset \acl(a,a')$, as desired.

Therefore, we need only prove the statement for the type of a flag $F$
over $A$.  Let $F'\ind_A F$ and choose a base-point $G$ of $F$ over
$A$. Suppose that $P:F\op{u} G$ and let $\tilde u$ be the final
segment of $u\approx u_1\cdot \tilde u$. Then $w(F,F')= [u\cdot
u\inv]=u_1\cdot \tilde u \cdot {u_1}\inv$, by
Corollary \ref{C:hinundher}.

We obtain the following diagram:

\begin{figure}[h]
\centering

\begin{tikzpicture}[>=latex,text height=.25ex,text depth=0.25ex]

\fill node [below right] (0,0) (G) {} circle
(2pt);
\fill (-1,1)  node [left] (K) {} circle (2pt);
\fill (-2,2) node (F) {}  circle (2pt);
\fill (2,2)  node (FA) {} circle (2pt);
\fill (1,1) node [right] (KA) {} circle (2pt);

\node (G1) [below right of=G, node distance=4mm] {$G$};
\node (K1) [below left of=K, node distance=4mm] {$K$};
\node (KA1) [below right of=KA, node distance=4mm] {$K'$};
\node (FA1) [right of=FA, node distance=4mm] {$F'$};
\node (F1) [left of=F, node distance=4mm] {$F$};

 \draw[->] (0,0) -- (1,1) node[pos=.5, below right] {$\tilde u$};
  \draw[->] (1,1) -- (2,2) node[pos=.5, below right] {$u_1\inv$}
;
 \draw[->] (-2,2)  -- (-1,1) node[pos=.5, below left] {$u_1$}
;
 \draw[->] (-1,1) -- (0,0) node[pos=.5, below left] {$\tilde u$};
  \draw[->] (-1,1) -- (1,1) node[pos=.5, above] {$\tilde u$};
\end{tikzpicture}
\end{figure}

Since $\wob(u_1,\tilde u\cdot
u_1\inv)\subset \sr(u)$, Lemma \ref{L:Wob} implies that
$K/\sim_{\sr(u)}$ lies in  $\acl(F,F')$. Observe that
$K\sim_{\tilde u} G$ and $\tilde u\preceq \sr(u)$, by Corollary
\ref{C:sr}. Thus, the canonical base $\cb(F/A)$ is contained in
$\acl(F,F')$ as well.

\end{proof}

%%% Local Variables:
%%% mode: latex
%%% TeX-master: "alt"
%%% ispell-local-dictionary: "british"
%%% End:
%Ende des Files \input{alt7} %Equationality
%Anfang des Files \input{alt8} %Ampleness
\section{Ampleness}\label{S:Ampleness}

Recall the definition of $n$-ampleness \cite{Pi00,Ev03}.

\begin{definition}\label{D:CM} A stable theory $T$ is
  \emph{$n$-ample} if, working inside a sufficiently saturated model and
possibly over parameters, there are  real tuples $a_0,\ldots,
a_n$ satisfying the following conditions:
  \begin{enumerate}
  \item\label{D:CM:acl}
    $\aclq(a_0,\ldots,a_i)\cap\aclq(a_0,\ldots,a_{i-1},a_{i+1})=
    \aclq(a_0,\ldots,a_{i-1})$ for every $0\leq i<n$,
  \item\label{D:CM:ind} $a_{i+1} \ind_{a_i} a_0,\ldots, a_{i-1}$ for
    every $1\leq i<n$,
  \item\label{D:CM:dep} $a_n \nind a_0$.
  \end{enumerate}
\end{definition}

Note that $T$ is $n$-ample \iff $T^\mathrm{eq}$ is \cite[Corollary
  2.4]{BMPZ13}. Furthermore, if $T$ is $n$-ample, it is $(n-1)$-ample.
A theory is $1$-based \iff it is not $1$-ample. It is CM-trivial \iff
is not $2$-ample.

In order to find an upper bound for the ample degree of $\psg$, we
will use the following result.

\begin{lemma}\label{L:redmod}(\cite[Remarks 2.3 and 2.5]{BMPZ13})
  If $T$ is $n$-ample, there are tuples $a_0,\ldots,a_n$
  enumerating small elementary substructures of an ambient saturated
  model such that for every $0\leq i<n-1$
  \renewcommand{\theenumi}{\alph{enumi}}
  \begin{enumerate}
  \item\label{L:redmod:ind} $a_n \ind_{a_{i-1}} a_i$.
  \item\label{L:redmod:inter} $\aclq(a_i,a_{i+1})\cap\aclq(a_i,a_n)=
    \aclq(a_i)$.
  \item\label{L:redmod:nind} $a_n \nind\limits_{\aclq(a_i)\cap
    \aclq(a_{i+1})} a_i$.
  \end{enumerate}
\end{lemma}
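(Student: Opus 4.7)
The plan is to promote tuples witnessing the standard Definition \ref{D:CM} of $n$-ampleness to tuples enumerating small elementary submodels, carrying the three listed conditions along the way. As a starting point take $c_0, \ldots, c_n$ satisfying (\ref{D:CM:acl}), (\ref{D:CM:ind}), (\ref{D:CM:dep}) of Definition \ref{D:CM}; since $n$-ampleness is preserved under the $T \leftrightarrow T^\mathrm{eq}$ correspondence (cited in the excerpt), I may close each $c_i$ under $\aclq$ without loss of generality.

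First I would construct the models $a_i$ inductively. Pick $a_0$ a small elementary submodel containing $c_0$ with $a_0 \ind_{c_0} c_1 \ldots c_n$; having built $a_0, \ldots, a_{i-1}$, pick $a_i$ a small submodel containing $a_{i-1}$ and $c_i$ with $a_i \ind_{a_{i-1} c_i} c_{i+1} \ldots c_n$, invoking Extension at each step. The result is a chain $a_0 \subseteq \cdots \subseteq a_n$ that behaves, relative to the backbone $(c_j)$, like an independent enlargement: each $a_i$ adds fresh material over the segment $c_0, \ldots, c_i$ while remaining independent over $c_i$ from the later $c_j$'s.

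Next I would verify the three conditions. For (a), iterate condition (\ref{D:CM:ind}) by transitivity to obtain $c_n \ind_{c_i} c_0 \ldots c_{i-1}$, then use the Step~1 choices together with symmetry/transitivity of non-forking to lift this to $a_n \ind_{a_{i-1}} a_i$. For (b), combine condition (\ref{D:CM:acl}) with the standard identity $\aclq(Ab) \cap \aclq(Ac) = \aclq(A)$ (valid for algebraically closed $A$ whenever $b \ind_A c$), applied to bases extracted from the $a_j$'s, to see that the enlarged intersection $\aclq(a_i,a_{i+1}) \cap \aclq(a_i,a_n)$ collapses to $\aclq(a_i)$. For (c), the intersection $\aclq(a_i) \cap \aclq(a_{i+1})$ is controlled by (b) and sits inside $\aclq(a_i)$; were $a_n$ independent from $a_i$ over this base, transitivity combined with the intersection formulas of Step~1 would propagate independence all the way back to $c_n \ind c_0$, contradicting condition (\ref{D:CM:dep}).

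The main obstacle is step (b): intersections of algebraic closures are notoriously delicate under model enlargements, and one needs the ``new'' material in $a_i$, $a_{i+1}$, $a_n$ beyond the backbone $c_0, \ldots, c_{i+1}$ and $c_n$ to be sufficiently independent that it contributes nothing extra to the intersection. Ensuring this is exactly the point of the careful choices in Step~1 and constitutes the technical heart of the lemma.
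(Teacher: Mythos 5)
The paper offers no proof of this lemma at all --- it is imported wholesale from \cite[Remarks 2.3 and 2.5]{BMPZ13} --- so your argument has to stand on its own, and its first step is already fatal: you build the models as a nested chain $a_0\subseteq a_1\subseteq\cdots\subseteq a_n$. Nesting is incompatible with the conclusion. For condition (c): since $a_i\subseteq a_{i+1}$, the base is $\aclq(a_i)\cap\aclq(a_{i+1})=\aclq(a_i)$, and every set is independent from $a_i$ over $\aclq(a_i)$, so $a_n\nind_{\aclq(a_i)\cap\aclq(a_{i+1})}a_i$ can never hold. For condition (b): with $a_i\subseteq a_{i+1}\subseteq a_n$ the left-hand side is $\aclq(a_{i+1})\cap\aclq(a_n)=\aclq(a_{i+1})$, which properly contains $\aclq(a_i)$ unless $a_{i+1}=a_i$ (models are algebraically closed in the real sort). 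And condition (a), read literally, forces $a_i\subseteq\aclq(a_{i-1})$: since $a_i\subseteq a_n$, any element of $a_i\setminus\acl(a_{i-1})$ realises over $a_i$ an algebraic --- hence forking --- type. Your sketch of (c) (``independence over the intersection would propagate back to $c_n\ind c_0$'') cannot be repaired either: in your configuration the independence in question holds for trivial reasons while $c_n\nind c_0$ persists, so no such propagation is available.

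The actual content of the lemma is that the models must be put in \emph{general position}, not stacked: one takes $a_i$ to be a small model containing the relevant core (say $c_0,\ldots,c_i$) chosen independent, over that core, from the remaining witnesses and the previously constructed models, so that $a_{i+1}$ does \emph{not} contain $a_i$ and the intersections $\aclq(a_i)\cap\aclq(a_{i+1})$ and $\aclq(a_i,a_{i+1})\cap\aclq(a_i,a_n)$ collapse back down to algebraic closures of data from the original configuration, where conditions (1)--(3) of Definition \ref{D:CM} can be brought to bear. The auxiliary fact you invoke for (b) --- for $A=\aclq(A)$, $b\ind_A c$ implies $\aclq(Ab)\cap\aclq(Ac)=\aclq(A)$ --- is correct and is exactly the tool used there, but it only bites once the models are spread out. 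Your closing paragraph rightly identifies control of these intersections as the technical heart of the statement; the difficulty is that the nested choice in your first step destroys precisely the information those intersections are meant to retain. The detailed verification is the content of \cite[Remarks 2.3 and 2.5]{BMPZ13}.
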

Recall that, given a subset $X$ (of some cartesian power) of a structure
$M$, the \emph{induced structure} on $X$ is the set of all relations on
every cartesian power of $X$ which are definable in $M$ without
parameters.
\begin{lemma}\label{L:ample_persistent}
  Let $X$ be a subset, definable without parameters, of a model $M$ of a
stable theory $T$. If the theory of $X$ equipped with the induced
structure is $n$-ample, then so is $T$.
\end{lemma}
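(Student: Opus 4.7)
The plan is to lift a witnessing configuration for $n$-ampleness in the induced theory $T_X$ to one in $T^{\mathrm{eq}}$, using that a definable set in a stable theory is stably embedded. Since $T$ is $n$-ample \iff $T^{\mathrm{eq}}$ is, it suffices to produce imaginary witnesses in $T^{\mathrm{eq}}$.

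By stability of $T$, the definable set $X$ is stably embedded: every $M$-definable subset of any $X^k$ is already definable with parameters from $X$. Consequently, each sort $S$ of $(T_X)^{\mathrm{eq}}$ is naturally identified with an $\emptyset$-interpretable sort of $T^{\mathrm{eq}}$: if $S = X^k/E$ for an $\emptyset$-definable equivalence relation $E$ of $T_X$, then $E$ is also $\emptyset$-definable in $M$, and stable embeddedness ensures no collapse occurs. Thus $(T_X)^{\mathrm{eq}}$ embeds into $T^{\mathrm{eq}}$.

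First I would verify that algebraic closure agrees: for any $A \subset (T_X)^{\mathrm{eq}}$,
\[\aclq_{T_X}(A) \;=\; \aclq_T(A) \cap (T_X)^{\mathrm{eq}},\]
because an imaginary $b$ in $(T_X)^{\mathrm{eq}}$ has finitely many $M$-conjugates over $A$ \iff the same is true in $T_X$, again by stable embeddedness applied to the $M$-definable (hence $X$-definable) orbit. Next I would verify that non-forking agrees: given tuples $a,B,C$ in $(T_X)^{\mathrm{eq}}$, if $a\ind_B^{T_X} C$ then $a\ind_B^{T} C$, since any $T$-formula $\varphi(x,c)$ over $B\cup C$ witnessing forking would, by stable embeddedness, be equivalent to a $T_X$-formula over the same parameters, contradicting non-forking in $T_X$.

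With these two facts in hand, let $a_0,\dots,a_n$ in $(T_X)^{\mathrm{eq}}$ witness $n$-ampleness of $T_X$ as in Definition~\ref{D:CM}. Viewing them as tuples in $T^{\mathrm{eq}}$, conditions $(\ref{D:CM:acl})$ and $(\ref{D:CM:ind})$ transfer directly from $T_X$ to $T$, and condition $(\ref{D:CM:dep})$ ($a_n \nind a_0$) is preserved since forking in $T$ on tuples of $(T_X)^{\mathrm{eq}}$ implies forking in $T_X$ (via the same equivalence). Hence $T^{\mathrm{eq}}$, and thus $T$, is $n$-ample.

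The main obstacle is the careful justification of stable embeddedness and the resulting agreement of $\aclq$ and non-forking on $(T_X)^{\mathrm{eq}} \hookrightarrow T^{\mathrm{eq}}$; these are by-now standard consequences of stability for $\emptyset$-definable sets, and I would simply invoke them.
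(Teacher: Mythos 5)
Your two preparatory observations (that stable embeddedness gives $\aclq_{T_X}(A) = \aclq_T(A)\cap (T_X)^{\mathrm{eq}}$ and that non-forking agrees) are correct and are essentially the same facts the paper relies on. The gap is in the sentence ``conditions $(\ref{D:CM:acl})$ and $(\ref{D:CM:ind})$ transfer directly from $T_X$ to $T$.'' Condition $(\ref{D:CM:ind})$ does transfer directly by your forking claim, but condition $(\ref{D:CM:acl})$ does not. What you have established, translating the $T_X$-statement through the $\aclq$-agreement, is only
\[
\aclq_T(a_{\le i})\cap\aclq_T(a_{<i},a_{i+1})\cap (T_X)^{\mathrm{eq}}
 \;=\;
\aclq_T(a_{<i})\cap (T_X)^{\mathrm{eq}},
\]
whereas $(\ref{D:CM:acl})$ for $T$ asks for the equality of the full intersections in $T^{\mathrm{eq}}$, without restricting to imaginaries coded from $X$. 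A priori the left-hand side could contain an imaginary $e$ of $T^{\mathrm{eq}}$ with no representative in $X$ that does not lie in $\aclq_T(a_{<i})$, and nothing you have said rules this out.

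The paper closes precisely this gap with a short but essential claim: for $A,B\subseteq X$, any $e\in\aclq_T(A)\cap\aclq_T(B)$ is algebraic over $\aclq_T(A)\cap\aclq_T(B)\cap \peq$ (the imaginaries having a representative in $X$). The proof picks algebraicity witnesses $E_a$ over $A$ and $F_b$ over $B$, observes that the canonical parameter $d$ of the finite set $E_a\cap F_b$ lies in $\dclq(a,b)\subseteq \peq$ and in both $\aclq_T(A)$ and $\aclq_T(B)$, and that $e\in\aclq_T(d)$. With this claim, the $T_X$-equality of the $\peq$-parts yields the full $T^{\mathrm{eq}}$-equality, since $\aclq_T$ of the $\peq$-part recovers the whole intersection. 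Your proof needs this extra step (or an equivalent argument) to be complete; ``standard consequences of stable embeddedness'' as usually stated give the $\aclq$-agreement on the interpretable sorts but not the control over imaginaries outside $(T_X)^{\mathrm{eq}}$.
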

\begin{proof}
  \nc{\peq}{X^{\mathrm{eq}}} We may assume that $M$ is sufficiently
  saturated. Let $a_0,\ldots,a_n$ in $X$ witness that $X$ is $n$-ample
  for some $n$ in $\N$. Since $X$ is equipped with the full induced
  structure from $M$, properties $(\ref{D:CM:ind})$ and
  $(\ref{D:CM:dep})$ of Definition \ref{D:CM} hold, when we consider
  these tuples in $M$. Furthermore, working in $M$, we have that
\[\aclq(a_0,\ldots,a_i)\cap\aclq(a_0,\ldots,a_{i-1},a_{i+1})\cap
  \peq= \aclq(a_0,\ldots,a_{i-1})\cap\peq,\] where $\peq$ denotes those
  imaginary elements of $M^\mathrm{eq}$ having some representative in
$X$. The following result together with $(\ref{D:CM:acl})$ yield the
desired result.
  \begin{claim}
    If $A$ and $B$ are subsets of $X$, then
    \[\aclq(A)\cap\aclq(B)=
    \aclq\left(\aclq(A)\cap\aclq(B)\cap\peq\right).\]
  \end{claim}
  \begin{proof}[{\it Proof of the claim:}]
    \renewcommand{\qed}{\noindent\emph{End of the proof of the
        claim.}}   If $e$
lies in $\aclq(A)\cap\aclq(B)$, it is witnessed by finite
definable sets $E_a=\varphi(x,a)$, resp. $F_b=\psi(x,b)$, with $a$ in $A$,
resp. $b$ in $B$.  The canonical parameter $d$ of the finite set
$E_a\cap F_b$, which contains $e$, belongs to $\peq$, since $\peq$ is
definably closed in $M^\mathrm{eq}$. It lies in
$\aclq(A)$, resp.\ $\aclq(B)$, because, if $E_a$, resp. $F_b$, is
fixed, there are only finitely many possibilities for the subset $E_a\cap
F_b$.

 \end{proof}
\end{proof}

As in the previous sections, we will work inside a sufficiently
saturated $\Gamma$-space $M$, a model of $\psg$.

A subgraph $Y\subset \Gamma$ is \emph{full} if, whenever two vertices
$x$ and $y$ in $Y$ are adjacent in $\Gamma$, then so are they in $Y$.

\begin{cor}\label{C:Unternice}
  Let $\Gamma'$ be a full subgraph of $\Gamma$ and $F$ a fixed flag in
  $M$. Consider the $\Gamma'$-residue of $F$
  \[X=\{ F' \text{ flag in } M
  \,| \, F'\sim_{\Gamma'} F \}.\] The set $X$ is the collection of
  flags of some nice set $A$. The restriction $M'=\A_{\Gamma'}(A)$ to
  vertices with colours in $\Gamma'$ is a model of
  $\mathrm{PS}_{\Gamma'}$.

  If $\mathrm{PS}_{\Gamma'}$ is $n$-ample, so is $\psg$.
\end{cor}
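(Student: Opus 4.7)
The plan proceeds in three steps. First, I verify that $X = \chi(A)$ for a nice subset $A\subset M$ by invoking Lemma~\ref{L:nice_flagmenge}: two flags $F',F''$ in $X$ differ from $F$ only in colours in $\Gamma'$, so the reduced word $\w(F',F'')$ has support in $\Gamma'$ by Corollary~\ref{C:permute_path}, and, by fullness of $\Gamma'$, its letters are connected in $\Gamma'$ as well as in $\Gamma$. Any reduced path $F'\op{u}F''$ in $M$ has intermediate flags that differ from $F'$ only in colours appearing in $u$, all contained in $\Gamma'$, so these flags stay in $X$; the condition of the lemma is satisfied.

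Second, flags in $X$ correspond bijectively to $\Gamma'$-flags of $M'$ via the $\Gamma'$-restriction map, with inverse appending $F|_{\Gamma\setminus\Gamma'}$. Well-definedness of the inverse follows from the observation that any $v\in\A_\gamma(M')$ lies in some flag of $X$, hence is adjacent in $M$ to each $f_\delta$ with $\delta\in\Gamma\setminus\Gamma'$ adjacent to $\gamma$. Under this identification the axioms of $\mathrm{PS}_{\Gamma'}$ follow from those of $\psg$: the $\Gamma'$-space conditions are inherited, infinite residues are immediate because for $\gamma\in\Gamma'$ the full $\sim_\gamma$-class of a flag of $X$ already lies in $X$ and is infinite in $M$, and simple connectedness is obtained by lifting any closed reduced flag path in $M'$ to a reduced flag path in $A\subset M$ (which stays in $X$ because all letters are contained in $\Gamma'$), then applying simple connectedness of $M$.

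For the ampleness assertion, I name $F$ as a parameter so that $M'$ becomes parameter-free definable, and appeal to Lemma~\ref{L:ample_persistent}: it suffices that the structure induced on $M'$ from $M$ with $F$ named be $n$-ample. The crux is that this induced structure coincides with the $\mathrm{PS}_{\Gamma'}$-structure on $M'$. For a tuple $a$ in $M'$, Corollaries~\ref{C:nicehull} and~\ref{C:nicetype} together with Theorem~\ref{T:el} show that $\tp_M(a/F)$ is determined by the isomorphism type of $\nh(Fa)$ over $F$. Any vertex of $\nh(Fa)$ with colour $\delta\in\Gamma\setminus\Gamma'$ lies in some flag of $A$, which is a flag of $X$, and thus coincides with $f_\delta$; so $\nh(Fa)$ decomposes as $F|_{\Gamma\setminus\Gamma'}$ together with a $\Gamma'$-nice subgraph of $M'$ carrying the $\mathrm{PS}_{\Gamma'}$-nice hull of $a$, glued along the fixed graph structure of $\Gamma$. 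Consequently $\tp_M(a/F)$ is determined by $\tp_{M'}(a)$, and the two structures define the same relations on tuples from $M'$. The main obstacle is justifying this structural identification carefully; once established, $n$-ample witnesses for $\mathrm{PS}_{\Gamma'}$ also serve as $n$-ample witnesses in the induced structure, and Lemma~\ref{L:ample_persistent} concludes that $\psg$ is $n$-ample.
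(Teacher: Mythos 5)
Your proof is correct and follows essentially the same strategy as the paper: Lemma~\ref{L:nice_flagmenge} (plus simple connectedness to see that any reduced word between two flags of $X$ has support in $\Gamma'$) identifies $X$ as $\chi(A)$, fullness of $\Gamma'$ transfers the $\Gamma'$-space, infinite-residue and simple-connectedness axioms, and Lemma~\ref{L:ample_persistent} together with the fact that the $M$-type over $F$ of a tuple from $M'$ is determined by its $M'$-type over $F'=\rest{F}{\Gamma'}$ gives the ampleness transfer. Your third step is phrased via the decomposition of the nice hull $\nh(Fa)$ into $\rest{F}{\Gamma\setminus\Gamma'}$ and a nice subset of $M'$, whereas the paper directly lifts an $M'$-elementary map between nice subsets $D_i'\subset M'$ to an $M$-elementary map between $D_i'\cup(F\setminus M')$; both hinge on the same observation that a nice subset of $M'$ augmented by $\rest{F}{\Gamma\setminus\Gamma'}$ is nice in $M$, so this is a repackaging rather than a different route (and, as a minor point, the citation to Corollary~\ref{C:permute_path} for the support claim should really be to Proposition~\ref{P:Eindwort} together with Lemmata~\ref{L:modlemma}, \ref{L:Flagpath} and~\ref{L:RedPath}).
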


\begin{proof}
  If two flags from $X$ are connected by a reduced flag-path $P$ with
  word $u$, the $|u|$ is a subset of $\Gamma'$, by simple
  connectedness. Thus, all flags in $P$ belong to $X$, so $X=\chi(A)$,
  for some nice subset $A$, by Lemma \ref{L:nice_flagmenge}.

  The restriction $G\mapsto G'=\rest{G}{\Gamma'}$ is a bijection
  between the flags in $A$ (i.e.\ the elements of $X$) and the flags
  of $M'$. Since $\Gamma'$ is full, the Coxeter group generated by
  $\Gamma'$ is a subgroup of $(W,\Gamma)$, so a reduced word on the
  letters of $\Gamma'$ remains so as a word in $\Gamma$. It follows
  that $M'$ is a simply connected $\Gamma'$-space. For every
  $\gamma\in \Gamma'$ and $G$ in $X$, every flag in $M$ which is
  $\gamma$-equivalent to $G$ belongs again to $X$. Thus $M'$ is a
  model of $\mathrm{PS}_{\Gamma'}$.

  In order to show that, if $\mathrm{PS}_{\Gamma'}$ is $n$-ample, so
  is $\psg$, we need only show, by Lemma \ref{L:ample_persistent},
  that the induced structure on $M'$ (as an definable subset of $M$
  with parameters in $F$) coincides with the structure of $M'$ as a
  model of $\mathrm{PS}_{\Gamma'}$. That is, that every definable
  relation on $M'$ which is definable in $M$ over $F$ is then
  $F$-definable in the $\Gamma'$-space $M'$, or equivalently, that a
  type in $M'$ over $F$ determines a unique type in $M$ over $F$.
  Assume therefore that the tuples $c_1$ and $c_2$ have the same type
  in $M'$ over $F'$. Choosing nice sets $D'_i$ in $M'$, for $i=1,2$
  containing $F', c_i$, there is an elementary map $f':D'_1\to D'_2$
  in $M'$, which maps $c_1$ to $c_2$ and is the identity on $F'$. The
  sets $D_i=D'_i\cup (F\setminus M')$ are clearly nice in $A$, and
  hence in $M$. The map $f'$ extends to an elementary map between
  $D_1$ and $D_2$, which is the identity on $F$. Thus the tuples $c_1$
  and $c_2$ have the same type over $F$ in $M$, as desired.
\end{proof}

Recall (Definition \ref{D:beginning}) that a letter $s$ is an end of the
word $u$ if $u\approx v\cdot s$. The final segment of $u$, denoted by
$\tilde u$, is the commuting subword consisting of all ends of $u$. When a
word $w$ is commuting, we will identify it with its support $|w|$.

\begin{lemma}\label{L:Endstueckwirdgroesser}(\cite[Lemma 8.2 and
    Proposition 8.3]{BMPZ13}) Consider nice sets $A$ and $B$ and a
  flag $F$ such that $F\ind_B A$ and $\acl(AB)\cap\acl(AF)=
  \acl(A)$. Let $u = u_B$ (resp.\ $u_A$) be the minimal word
  connecting $F$ to a flag $G_B$ in $B$ (resp.\ $G_A$ in $A$).
  Consider the reduced word $v$ which connects $G_B$ to $G_A$ and the
  associated symmetric decomposition
  \begin{align*}
    u&=u_1\cdot u'\cdot w& w\cdot v'\cdot v_1&=v,
  \end{align*}
  as in Proposition \ref{P:Dec}. Then the word $w\cdot v_1$ is
  commuting. Furthermore, if $$F\nind\limits_{A\cap B} A,$$\noindent
  then
  $$|v'| \nsubseteq \tilde u \subsetneq \tilde u_A,$$
  \noindent where $\tilde u$ and  $\tilde u_A$ are the final segments
of $u$ and $u_A$, respectively.
\end{lemma}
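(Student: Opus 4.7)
The argument proceeds by applying Proposition \ref{P:Canred} to the triple of flags $F$, $G_B$, $G_A$ with $\w(F,G_B) = u$, $\w(G_B, G_A) = v$, and $\w(F, G_A) = [u \cdot v] = u_A$. This yields a reduced flag path $P_A\colon F \op{u_A} G_A$ together with a distinguished flag $K$ lying on a suitable permutation of $P_A$ with $\w(F, K) = u_1$, $\w(K, G_A) = w \cdot v_1$, and $\w(G_B, K) = c \cdot \alpha \cdot \beta$, where the letter $x$ of Proposition \ref{P:Canred} is identified (up to equivalence) with $w$ of Proposition \ref{P:Dec}. Moreover, $K$ is a base-point of $G_B$ over the nice set determined by $P_A$.

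For the first claim that $w \cdot v_1$ is commuting, the strategy is as follows. The flag $K$ lies in $\nh(\{F\} \cup A) \subseteq \acl(AF)$, and its vertices of colours outside $|w \cdot v_1|$ coincide with the corresponding vertices of $G_A$ and hence already lie in $A$. Since $K$ is a base-point of $G_B$ over $P_A$, we have $G_B \ind_K P_A$; combined with $F \ind_B A$, which yields $F \ind_{G_B} AB$, a careful chase shows that the class $K/|w \cdot v_1|$, consisting of the vertices of $K$ in the colours outside $|w \cdot v_1|$, lies in $\acl(AB) \cap \acl(AF)$. By the CM-triviality hypothesis, this intersection equals $\acl(A) = A$. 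Applying the Base-point Lemma \ref{L:basept} to $K$ over the relevant nice subset, the only way this compatibility can be realised is if $\w(K, G_A) = w \cdot v_1$ itself is a commuting word, equivalently $w \cdot v_1 \preceq \sr(u_A)$ by Corollary \ref{C:sr}.

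For the second claim, assume $F \nind_{A \cap B} A$, so the canonical base $G_A/\sr(u_A)$ of $\tp(F/A)$ (Corollary \ref{C:cb_type_pG}) is not algebraic over $A \cap B$. Since $F \ind_B A$, the canonical base of $\tp(F/AB)$ is $G_B/\sr(u)$, which lies in $\acl(B)$. Suppose, aiming for a contradiction, that $|v'| \subseteq |\tilde u|$. Then $v'$ would commute with the final segment of $u$, so after permutation one could push $v'$ to the end of $u$ and have $v' \preceq \sr(u)$. This would allow the reduction of $u \cdot v$ via absorption of $v'$ alone, producing a base-point of $F$ inside a nice subset of $\acl(A \cap B)$ whose word is strictly $\preceq$-smaller than $u_A$, contradicting $F \nind_{A \cap B} A$. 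Hence $|v'| \not\subseteq |\tilde u|$. The strict inclusion $\tilde u \subsetneq \tilde u_A$ then follows because $|\tilde u_A|$ contains $|w \cdot v_1|$ by the first claim, while $|\tilde u|$ contains only $|u' \cdot w|$ among letters past $|u_1|$; since $|u'| \subseteq |v_1|$ with at least one letter of $|v_1|$ genuinely not commuting with the rest of $u$ (otherwise $v_1$ would itself be left-absorbed by $u$ and $u_A$ could be further shortened), the inclusion is strict.

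The principal technical difficulty is the precise tracking of which vertices of $K$ lie in $\acl(AB)$, which requires a careful interplay between the Wobbling Lemma \ref{L:Wob} and the CM-triviality hypothesis in order to transfer information between the canonical bases governed by the words $u$, $v$, and $u_A$.
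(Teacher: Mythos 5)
There is a genuine gap, and it lies exactly where you wave your hands. Applying Proposition \ref{P:Canred} to the triple $F,G_B,G_A$ produces a flag $K$ that is a base-point of $G_B$ over a path $P_A\colon F\op{u_A}G_A$. The Wobbling Lemma \ref{L:Wob} then places $K/\wob(u_1,w\cdot v_1)$ in $\dcl(F,G_A)\subset\acl(AF)$, so the $\acl(AF)$ half is fine. But there is no reason for $K$, or any quotient of it by a subset of $\Gamma$, to lie in $\acl(AB)$: the entire dependence of $K$ on the configuration comes through $F$, not through $B$. This is exactly the point that requires real work. The paper's proof handles it by constructing a \emph{second} flag $H$ on the path between $G_B$ and $G_A$ (hence definable over $AB$), namely $G_B\op{v'\cdot a}H\op{b}G_A$ where $w\cdot v_1\approx a\cdot b$ and $b$ is the commuting final segment, and then shows $K/S=H/S$ for $S=\sr(a)\cup\Cent(a)$ using that the word $u_b$ joining $K$ to $H$ commutes with $a$. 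Without such an $H$ there is nothing in $\acl(AB)$ to intersect with, and ``a careful chase'' does not supply it.

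Even granting the acl claim, your application of the Base-point Lemma \ref{L:basept} is circular. You apply it with $W=|w\cdot v_1|$ to the pair $(K,G_A)$ for which $\w(K,G_A)=w\cdot v_1$; the conclusion is the tautology $|w\cdot v_1|\subset|w\cdot v_1|$, and no information about commutativity of $w\cdot v_1$ comes out. The paper's argument avoids this by first splitting off the commuting final segment $b$ of $w\cdot v_1$ and then applying the Base-point Lemma with the smaller set $S$ built from $a$, which forces $|b|\subset S$ and ultimately $a=1$. That intermediate decomposition is essential and is absent from your outline.

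The strictness argument also breaks on a concrete point: from $|v'|\subseteq|\tilde u|$ you infer ``$v'$ would commute with the final segment of $u$''. This is false. A letter $s$ of $v'$ contained in $|\tilde u|$ is contained in some letter $t$ of $\tilde u$, and $s$ does not commute with $t$ (in this paper's convention no letter commutes with any superset of itself, in particular not with itself). The paper's route is quite different: it first proves $\tilde u\subset\tilde u_A$ by another wobbling argument, observes that $w\cdot v_1\preceq\tilde u_A\preceq\sr(u_A)$, and then shows that $|v'|\subset\sr(u_A)$ would make $G_A/\sr(u_A)=G_B/\sr(u_A)$, forcing $F\ind_{A\cap B}A$ via Corollary \ref{C:cb_type_pG}; the two non-inclusions follow. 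Your ``$v_1$ would itself be left-absorbed by $u$'' justification for strictness of $\tilde u\subsetneq\tilde u_A$ is likewise not an argument.

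Finally, a minor point: identifying the $x$ of Proposition \ref{P:Canred} with the $w$ of Proposition \ref{P:Dec} is not automatic. The $x$ there is properly right-absorbed by $c$ and arises from splittings of $c^{-1}\cdot c$, while the $w$ of the symmetric decomposition is a commuting common middle segment; even when the reduction $u\cdot v\str u_A$ is non-splitting, checking that the two decompositions coincide needs an argument (essentially the uniqueness clauses in both propositions), which you do not give.
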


\begin{proof}
  By transitivity of non-forking, we have that $F\ind_{G_B} G_A$,
so $$u_A= [u\cdot v]=u_1\cdot w\cdot v_1.$$

  We will first show that $w\cdot v_1$ is commuting. Considering its
final segment, write $w\cdot v_1\approx a\cdot b$ where $b$ is a commuting
    word. In order to show that $a$ is trivial, we need only show that
$b$ and $a$ commute.

Now, the word $u'\cdot w$ is left absorbed by $w\cdot
    v_1$, so write $u'\cdot w\approx u_a\cdot u_b$, by Lemma
\ref{L:reordering},  where $u_a$ is left-absorbed
    by $a$ and $u_b$ commutes with $a$ in order to be left-absorbed by $b$.
Choose a flag path $P:G_B\op{v'\cdot a}H\op{b}G_A$, for some flag $H$.
Choosing $P$ independent from $F$ over $G_B,G_A$, we may assume that
$F\ind_{G_B} P$, so  $G_B$ is a base-point of $F$ over the nice set
determined by $P$. Therefore

\[\w(F,H)=[u\cdot v'\cdot a]=u_1\cdot u'\cdot w\cdot v'\cdot a
    =u_1\cdot u_b\cdot u_a\cdot v'\cdot a=u_1\cdot a\cdot u_b.\]

\noindent Choose now a flag $K$ with $F\op{u_1\cdot a}K\op{u_b}H$. Observe
that $\w(K,G_A)\preceq [u_b\cdot b]=b$. On the other hand, the word
    $\w(F,G_A)=[u\cdot v]=u_1\cdot w\cdot v_1\approx u_1\cdot a\cdot
    b$, so $\w(K,G_A)=b$.

    \begin{figure}[h]
    \centering

    \begin{tikzpicture}[>=latex,text height=.25ex,text depth=0.25ex]

      \fill node [below left] (0,0) (FB) {$G_B$} circle (2pt);
      \fill (2.3,2.3)  node (H) {} circle (2pt);
      \fill (1.5,3) node (K) {}  circle (2pt);
      \fill (3,3)  node (FA) {} circle (2pt);
      \fill (-3,3) node [left] (F) {$F$} circle (2pt);

      \node (H1) [below right of=H, node distance=4mm] {$H$};
      \node (K1) [above right of=K, node distance=2mm] {$K$};
      \node (FA1) [right of=FA, node distance=4mm] {$G_A$};

      \draw[->] (0,0) -- (2.3,2.3) node[pos=.5, below right] {$v'\cdot
        a$};
      \draw[->] (2.3,2.3) -- (3,3) node[pos=.5, below right]
           {$b$} ;
      \draw[->] (-3,3) -- (0,0) node[pos=.5, below left]
           {$u$} ;
      \draw[->] (-3,3) -- (1.5,3) node[pos=.5, above
        right] {$u_1\cdot a$};
      \draw[->] (1.5,3) -- (3,3)
      node[pos=.5, above] {$b$};
      \draw[->] (1.5,3) -- (2.3,2.3)
           node[pos=.5, below left] {$u_b$};
    \end{tikzpicture}
  \end{figure}

Set $\Cent(a)$ to be the set of colours commuting with $a$. Clearly
$\Cent(a)\cap\sr(a)=\emptyset$ and both $\wob(v'\cdot a, b)$ and
$\wob(u_1\cdot a, b)$ are
  subsets of $S=\sr(a)\cup\Cent(a)$. Lemma \ref{L:Wob} implies
that $H/S$ lies in $\acl(AB)$ and $K/S$ in $\acl(AF)$. Since
 $u_b$ commutes with $a$ and $K\op{u_b} H$, we have that
  \[K/S=H/S\in\acl(AB)\cap\acl(AF)=\acl(A).\]
    Lemma \ref{L:basept} implies that $b\subset S$. However, no letter
    of $b$ is contained in $\sr(a)$, since $a\cdot b$ is reduced and $b$
is commuting, so  $b$ and $a$ commute, as desired.

Let us now show that $\tilde u\subset\tilde u_A$. Since
$a=1$ and $b=w\cdot v_1$, the previous diagram yields the
following picture:

  \begin{figure}[h]
    \centering

    \begin{tikzpicture}[>=latex,text height=.25ex,text depth=0.25ex]

      \fill node [below left] (0,0) (FB) {$G_B$} circle (2pt);
      \fill (1.5,1.5)  node (H) {} circle (2pt);
      \fill (0,3)  node (K) {} circle (2pt);
      \fill (-1.5,1.5) node (F1) {}  circle (2pt);
      \fill (3,3)  node (FA) {} circle (2pt);
      \fill (-3,3) node [left] (F) {$F$} circle (2pt);

      \node (HA) [below right of=H, node distance=4mm] {$H$};
      \node (KA) [above right of=K, node distance=2mm] {$K$};
      \node (GA1) [right of=FA, node distance=4mm] {$G_A$};

      \draw[->] (-3,3)  -- (-1.5,1.5) node[pos=.5, below left]
           {$u_1$} ;
      \draw[->]  (-1.5,1.5)-- (0,0) node[pos=.5, below left]
           {$u'\cdot w$};
      \draw[->] (-3,3) -- (0,3) node[pos=.5, above right]
            {$u_1$} ;
      \draw[->] (0,3) -- (3,3) node[pos=.6, above]
           {$w\cdot v_1$} ;

      \draw[->]  (-1.5,1.5) -- (0,3) node[pos=.5, below left]
           {$v'$} ;
      \draw[->] (0,0) -- (1.5,1.5) node[pos=.5, below right]
           {$v'$} ;

      \draw[->]   (1.5,1.5) -- (3,3) node[pos=.5, below right]
           {$w\cdot v_1$} ;

      \draw[->]  (0,3) -- (1.5,1.5)   node[pos=.6, below left]
           {$u'\cdot w$} ;

    \end{tikzpicture}
  \end{figure}

  \noindent The final segment $\tilde u=\hat u_1\cdot \tilde u'\cdot w$ for
  some commuting final subword $\hat u_1$ of $u_1$, which commutes with
  $u'\cdot w$. Since $v'$ is (properly) right-absorbed by $u_1$, we
  have $\sr(v')\subset\sr(u_1)\subset T= \hat u_1\cup
  \Cent(\hat u_1)$, so
  \[\wob(v',w\cdot v_1)\subset\wob(u_1,w\cdot v_1)\subset T.\]
  Note that $|u'\cdot w|\subset T$ and as above, the flag $K/T$ lies in
  $\acl(A)$. Lemma \ref{L:basept} implies that $|w\cdot v_1|\subset
  T$. Since the word $\hat u_1\cdot w\cdot v_1$ is reduced, no letter
of the commuting word $w\cdot v_1$ is contained in $\hat u_1$.
  Therefore $w\cdot v_1$ and $\hat u_1$ commute, so
  \[\tilde u=\hat u_1\cdot w\cdot\tilde u'\subset
  \hat u_1\cdot w\cdot v_1=\tilde u_A.\]

  To conclude, notice that if $|v'|\subset \sr(u_A)$, then
$G_A/\sr(u_A)=G_B/\sr(u_A)$, which would imply $F\ind_{A\cap B} A$ by
Corollary \ref{C:cb_type_pG}. Therefore, if $F\nind_{A\cap B} A$,
then neither $\tilde u=\tilde u_A$ nor $|v'|\subset \tilde  u$: for the
first case, if $\tilde u=\tilde u_A$, then $\tilde u'=v_1$ and hence
$u'=v_1=1$, since $\tilde
  u'$ is properly absorbed by $v_1$. For the latter, if $|v'|\subset \tilde
 u$, then $|v'|\subset \tilde  u_A\subset\sr(u_A)$.
\end{proof}

If the graph $\Gamma$ has no edges, the theory $\psg$ is the theory of
an infinite set $M$ partitioned into $|\Gamma|$ many infinite sets
$\mathcal A_\gamma$. This is a trivial theory of Morley rank $1$
(and degree $|\Gamma|$) which is easily seen not to be
$1$-ample.

For a graph with at least one edge, we define its \emph{minimal valency}
as the minimum of the valencies of non-isolated vertices. In particular, it
is at least $1$.

\begin{theorem}\label{T:amplebounds}
  Let $\Gamma$ be a graph with at least one edge. Let $r$ be its
minimal valency and $n$ in $\N$ be maximal such that the graph $[0,n]$:

 \begin{figure}[h]
   \centering

   \begin{tikzpicture}[->=latex,text height=1ex,text depth=1ex]

     \fill  (0,0)  node [above]  {$0$}  circle (2pt);
     \fill   (1,0)  node [above]  {$1$} circle (2pt);
     \fill   (2,0)  node [above] {$2$} circle (2pt);
     \fill   (5,0)  node [above] {$n-1$} circle (2pt);
     \fill   (6,0)  node [above] {$n$} circle (2pt);

     \draw  (0,0) --  (1,0) --  (2,0) ;
     \draw[dashed] (2,0) -- (5,0) ;
     \draw (5,0) -- (6,0) ;
   \end{tikzpicture}

 \end{figure}

 \noindent embeds as a full subgraph of\/ $\Gamma$. Then the theory
 $\psg$ is $n$-ample but not $(|\Gamma|-r+1)$-ample.
\end{theorem}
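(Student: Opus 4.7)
For the lower bound ($n$-ample), the graph $[0,n]$ embeds as a full subgraph of $\Gamma$, so by Corollary~\ref{C:Unternice} it suffices to show that $\mathrm{PS}_{[0,n]}$ is $n$-ample. Via Theorem~\ref{T:biinterpretierbar}, this is the theory of the free $n$-dimensional pseudospace, whose $n$-ampleness was proved in \cite{BMPZ13, kT14}. So $\psg$ is $n$-ample.

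For the upper bound, set $N=|\Gamma|-r+1$ and suppose, towards a contradiction, that $\psg$ is $N$-ample. Apply Lemma~\ref{L:redmod} to obtain tuples $a_0,\ldots,a_N$ enumerating small elementary substructures of a sufficiently saturated ambient model; these are nice subsets. Since the theory is totally trivial (Proposition~\ref{P:totally_trivial}) and every real element lies in a flag, one may fix a single flag $F\in a_N$ witnessing the relevant non-independence at each index. For each $i$, let $G_i$ be a base-point of $F$ over $a_i$ (Corollary~\ref{C:forking}), set $u_i=\w(F,G_i)$, and let $\tilde u_i$ be its final segment, identified with its support in $\Gamma$.

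Iteratively applying Lemma~\ref{L:Endstueckwirdgroesser} to successive pairs $(A,B)=(a_i,a_{i-1})$, whose hypotheses $F\ind_B A$ and $\acl(AB)\cap\acl(AF)=\acl(A)$ translate directly from the independence and intersection conditions in Lemma~\ref{L:redmod}, and using $F\nind_{A\cap B}A$, yields a strict chain of commuting subsets
\[\tilde u_0\subsetneq\tilde u_1\subsetneq\cdots\subsetneq\tilde u_{N-1}\subset\Gamma.\]
As $u_0$ is non-trivial we have $|\tilde u_0|\geq 1$, hence $|\tilde u_{N-1}|\geq N=|\Gamma|-r+1$.

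The contradiction rests on the combinatorial claim that $|\tilde u_{N-1}|\leq|\Gamma|-r$. Fix a non-isolated vertex $\gamma\in\Gamma$ of minimum valency $r$ with neighbors $\delta_1,\ldots,\delta_r$. Exploiting the supplementary conclusion $|v_i'|\nsubseteq\tilde u_{i-1}$ from Lemma~\ref{L:Endstueckwirdgroesser}, together with the fact that the letters of $\tilde u_{N-1}$ are pairwise non-adjacent connected subsets of $\Gamma$, one tracks the ``new'' vertex added at each step and shows that the closed neighborhood $\{\gamma,\delta_1,\ldots,\delta_r\}$ of a minimum-valency vertex forces at least $r$ vertices of $\Gamma$ to remain outside $|\tilde u_{N-1}|$. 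This final combinatorial bound is the main obstacle; the earlier steps (extracting the flag $F$ from the ample configuration via total triviality, producing base-points, and iterating Lemma~\ref{L:Endstueckwirdgroesser}) are essentially routine applications of the preceding results.
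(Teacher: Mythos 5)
Your lower bound, and the setup of the upper bound---invoking Lemma~\ref{L:redmod}, using total triviality to reduce $a_N$ to a single flag $F$, choosing base-points $G_i$ with words $u_i$, and iterating Lemma~\ref{L:Endstueckwirdgroesser} to obtain a strict chain of final segments---match the paper's proof. The genuine gap is the final combinatorial step, which you yourself flag as ``the main obstacle''; the sketch you propose for it does not work.

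You fix an \emph{arbitrary} vertex $\gamma$ of minimum valency $r$ in advance and assert that its closed neighborhood forces $r$ vertices of $\Gamma$ to remain outside $|\tilde u_{N-1}|$. But a pre-chosen minimum-valency vertex has no connection to the chain construction, and there is no mechanism preventing its neighbors from lying inside the final segment. The paper's argument runs in the opposite direction: rather than bounding the large final segment by $|\Gamma|-r$ via a fixed vertex, it extracts from the construction a \emph{specific} non-isolated vertex whose valency is then bounded in terms of the final segment. Concretely, apply Lemma~\ref{L:Endstueckwirdgroesser} once more to the triangle $(F,a_0,a_1)$. Beyond the strict containment $\tilde u^1\subsetneq\tilde u^0$, the lemma produces a word $v'$ in the symmetric decomposition of $u^1$ and $\w(G_1,G_0)$ that is properly right-absorbed by $u^1$ with $|v'|\nsubseteq\tilde u^1$. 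Pick $\gamma\in|v'|\setminus\tilde u^1$. The letter of $v'$ containing $\gamma$ is properly contained in a connected letter $s$ of $u^1$ with $|s|\geq 2$, so $\gamma$ is non-isolated; and because $\gamma$ is right-absorbed by $u^1$ yet does not lie in any end of $u^1$, it commutes with all of $\tilde u^1$ while $\gamma\notin\tilde u^1$. Hence the valency of $\gamma$ is at most $|\Gamma|-|\tilde u^1|-1$. Since $r$ is the \emph{minimum} valency over non-isolated vertices and $|\tilde u^1|\geq N-1$, this gives $r\leq|\Gamma|-N$, i.e., $N<|\Gamma|-r+1$. This single, targeted use of the $|v'|\nsubseteq\tilde u$ clause---identifying a vertex \emph{from the construction} with small valency---is exactly what your outline is missing; ``tracking new vertices'' into the neighborhood of a pre-fixed minimum-valency vertex is not a substitute.
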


If $\Gamma$ contains a full subgraph isomorphic to $[0,n]$, for some
$n\in \N$, then $n\leq |\Gamma|$ and $r\leq |\Gamma|-n$, since the
graph $[0,n]$ has minimal valency $1$. Thus $|\Gamma|-r+1$ is always
bigger than $n$, as expected. For the graph $[0,n]$, the theorem says
that its associated theory is $n$-ample but not $(n+1)$-ample
(\emph{cf.}\ \cite[Theorem 3.3]{kT14}, \cite[Theorem 8.4]{BMPZ13}),
hence the bounds are best possible, similarly as for the graph
consisting $0,\ldots,n+1$ arranged in a circular way;

\vskip1mm
\hskip3.5cm\begin{tikzpicture}[scale=2,cap=round,>=latex]

        \draw[thick] (0cm,0cm) circle(1cm);

        \foreach \x in {30,60,...,150} {

          \filldraw[black] (\x:1) circle(1pt);
        }

        \foreach \x in {-15,0,15, -75,-90,255,165,180,195} {
          \filldraw[black] (\x:1.15cm) circle(.5pt);

          %               \filldraw[black] (\x:1) circle(1pt);
        }

        \draw (30:1.15cm)  node[fill=white] {$2$};
        \draw (60:1.18cm)  node[fill=white] {$1$};
        \draw (90:1.15cm)  node[fill=white] {$0$};
        \draw (120:1.23cm)  node[fill=white] {$n+1$};
        \draw (150:1.23cm)  node[fill=white] {$n$};

\end{tikzpicture}
\vskip2mm

\noindent which has valency $2$, so its theory is $n$-ample yet not
$(n+1)$-ample.

In particular, the theory of $\Gamma$ is not $1$-based \iff $\Gamma$
contains at least one edge. The complete graph $\Kom_n$ has minimal
valency $n-1$ and the theory $\mathrm{PS}_{\Kom_n}$ is CM-trivial for
every $n$.

\begin{proof}
  Suppose that $[0,n]$ embeds as a full subgraph in $\Gamma$. Fix some
  flag $F$ and consider the collection of flags $[0,n]$-equivalent to
  $F$. Corollary \ref{C:Unternice} and \cite[Theorem
    8.4]{BMPZ13} imply that $\psg$ is $n$-ample.\\

  Suppose now that $\psg$ is $N$-ample for some natural number $N$,
  and let $a_0,\ldots, a_N$ be enumerations of small models as in
  Lemma \ref{L:redmod}. Total triviality of $\psg$ implies that we may
  replace $a_N$ by a flag $F$. For $0\leq i \leq N-1$, let $u^i$ be
  the reduced word connecting $F$ to a base-point $G_i$ in the nice
  set $a_i$. Lemma \ref{L:Endstueckwirdgroesser} applied to each
  triangle $(F, a_i, a_{i+1})$ implies that the final segment $\tilde
  u^{i+1}$ of $u^{i+1}$ is properly contained in the final segment
  $\tilde u^i$ of $u^i$. In particular $|\tilde u^{1}|\geq N-1$.

  Let $v$ be the reduced word connecting $G_1$ to $G_0$. Lemma
  \ref{L:Endstueckwirdgroesser} implies the existence of a word $v'$
  which is properly right-absorbed by $u^1$ and such that $|v'|$ is
  not contained in $\tilde u^1$. Let $\gamma$ be in $|v'|\setminus
\tilde u^1$. Since $v'$ is absorbed by $u^1$, so is $\gamma$. Thus
$\gamma$  commutes with $\tilde u^1$ and  must be
properly right-absorbed by $u^1$. Hence $\gamma$ is
  not isolated and has the valency at most  $|\Gamma|-|\tilde u^1|-1$.
  So  $r\leq |\Gamma|-N$, that is $N < |\Gamma|-r+1$.
\end{proof}

%%% Local Variables:
%%% mode: latex
%%% TeX-master: "alt"
%%% ispell-local-dictionary: "british"
%%% End:
%Ende des Files \input{alt8} %Ampleness
%Anfang des Files \input{alt9} %Bibliography

%Ende des Files \input{alt9} %Bibliography

\end{document}